\setlist{labelindent=1pt,itemsep=.5em}
\setlist[itemize]{leftmargin=1.2cm}
\setlist[enumerate]{itemindent=0em,leftmargin=1cm}
\setlist[enumerate,1]{label={\upshape(\roman*)}}
\newcommand{\id}{\mathrm{Id}}
\begin{document}

\title*{Double constructions of biHom-Frobenius algebras}
\titlerunning{Double constructions of biHom-Frobenius algebras}
\author{Mahouton Norbert Hounkonnou, Gb\^ev\`ewou Damien  Houndedji, Sergei  Silvestrov}
\authorrunning{Mahouton Norbert Hounkonnou, Gb\^ev\`ewou Damien  Houndedji, Sergei  Silvestrov}
\institute{Mahouton Norbert Hounkonnou \at International Chair in Mathematical Physics and Applications,
ICMPA-UNESCO Chair,  University of Abomey-Calavi, 072 BP 50, Cotonou, Rep. of Benin. \\
\email{norbert.hounkonnou@cipma.uac.bj, hounkonnou@yahoo.fr}
\and Gb\^ev\`ewou Damien  Houndedji \at International Chair in Mathematical Physics and Applications,
ICMPA-UNESCO Chair,   University of Abomey-Calavi, 072 BP 50, Cotonou, Rep. of Benin. \\ \email{houndedjid@gmail.com}
\and
Sergei Silvestrov \at Division of Applied Mathematics, School of Education, Culture and Communication, M\"alardalen University, Box 883, 72123 V\"aster{\aa}s, Sweden. \email{sergei.silvestrov@mdh.se} }

%
%
%
%
%


\maketitle
\label{Chap:HoukonnouHoundedjiSilvestrov:DoubconstrbiHomFrobalgs}

\abstract*{This paper addresses a Hom-associative algebra built as a direct sum of a given Hom-associative
algebra $(\mathcal{A}, \cdot, \alpha)$ and its dual $(\mathcal{A}^{\ast}, \circ, \alpha^{\ast}),$ endowed with a
non-degenerate symmetric bilinear form $\mathcal{B},$ where $\cdot$ and $\circ$ are the products defined
on $\mathcal{A}$ and $\mathcal{A}^{\ast},$ respectively, $\alpha$ and $\alpha^{\ast}$ stand for the corresponding algebra
homomorphisms. Such a double construction, also called Hom-Frobenius algebra, is interpreted in terms of
an  infinitesimal Hom-bialgebra. The same procedure is applied to characterize the double construction of
biHom-associative algebras, also called biHom-Frobenius algebra. Finally, a double construction of Hom-
dendriform algebras, also called double construction of Connes cocycle or symplectic Hom-associative
algebra, is performed.
Besides, the concept of biHom-dendriform algebras is introduced and discussed.
Their bimodules and matched pairs are also constructed, and related relevant properties are given.
\keywords{hom-assoicative algebra, biHom-associative algebra, biHom Frobenius algebra, biHom-dendriform algebra} \\
{\bf MSC2020 Classification:} 17D30}

\abstract{This paper addresses a Hom-associative algebra built as a direct sum of a given Hom-associative
algebra $(\mathcal{A}, \cdot, \alpha)$ and its dual $(\mathcal{A}^{\ast}, \circ, \alpha^{\ast}),$ endowed with a
non-degenerate symmetric bilinear form $\mathcal{B},$ where $\cdot$ and $\circ$ are the products defined
on $\mathcal{A}$ and $\mathcal{A}^{\ast},$ respectively,  and $  \alpha$ and $\alpha^{\ast}$ stand for the corresponding algebra homomorphisms. Such a double construction, also called Hom-Frobenius algebra, is interpreted in terms of an infinitesimal Hom-bialgebra. The same procedure is applied to characterize the double construction of biHom-associative algebras, also called biHom-Frobenius algebra. Finally, a double construction of Hom-dendriform algebras, also called double construction of Connes cocycle or symplectic Hom-associative
algebra, is performed. Besides, the concept of biHom-dendriform algebras is introduced and discussed.
Their bimodules and matched pairs are also constructed, and related relevant properties are given.
\keywords{Hom-assoicative algebra, biHom-associative algebra, biHom Frobenius algebra, biHom-dendriform algebra} \\
{\bf MSC2020 Classification:} 17D30, 17B61}

\section{Introduction}

The Hom-algebraic structures originated from  quasi-deformations of Lie algebras of
vector fields which gave rise to quasi-Lie algebras, defined as  generalized
Lie structures in which the skew-symmetry and Jacobi conditions are twisted.
Hom-Lie algebras and more general quasi-Hom-Lie algebras where introduced first by Silvestrov and his students Hartwig and Larsson in \cite{HnkHndSilvdcbihomfrobalg:HLS}, where the general quasi-deformations and discretizations of Lie algebras of vector fields using general twisted derivations, $\sigma$-derivations, and a general method for construction of deformations of Witt and Virasoro type algebras based on twisted derivations has been developed. The initial motivation came from examples of $q$-deformed Jacobi identities discovered in $q$-deformed versions and other discrete modifications of differential calculi and homological algebra, $q$-deformed Lie algebras and other algebras important in string theory, vertex models in conformal field theory, quantum mechanics and quantum field theory, such as the $q$-deformed Heisenberg algebras, $q$-deformed oscillator algebras, $q$-deformed Witt, $q$-deformed Virasoro algebras and related $q$-deformations of infinite-dimensional algebras  \cite{HnkHndSilvdcbihomfrobalg:AizawaSaito,HnkHndSilvdcbihomfrobalg:ChaiElinPop,HnkHndSilvdcbihomfrobalg:ChaiIsLukPopPresn,HnkHndSilvdcbihomfrobalg:ChaiKuLuk,HnkHndSilvdcbihomfrobalg:ChaiPopPres,HnkHndSilvdcbihomfrobalg:CurtrZachos1,HnkHndSilvdcbihomfrobalg:DamKu,HnkHndSilvdcbihomfrobalg:DaskaloyannisGendefVir,HnkHndSilvdcbihomfrobalg:Hu,HnkHndSilvdcbihomfrobalg:Kassel92,
HnkHndSilvdcbihomfrobalg:LiuKQuantumCentExt,HnkHndSilvdcbihomfrobalg:LiuKQCharQuantWittAlg,HnkHndSilvdcbihomfrobalg:LiuKQPhDthesis}.

Possibility of studying, within the same framework, $q$-deformations of Lie algebras and such well-known generalizations of Lie algebras as the color and super Lie algebras provided further general motivation for development of quasi-Lie algebras and subclasses of quasi-Hom-Lie algebras and Hom-Lie algebras. The general abstract quasi-Lie algebras and the subclasses of quasi-Hom-Lie algebras and Hom-Lie algebras, as well as their color (graded) counterparts, color (graded) quasi-Lie algebras, color (graded) quasi-Hom-Lie algebras and color (graded) Hom-Lie algebras, including in particular the super quasi-Lie algebras, super quasi-Hom-Lie algebras, and super Hom-Lie algebras, have been introduced in
\cite{HnkHndSilvdcbihomfrobalg:HLS,
HnkHndSilvdcbihomfrobalg:LarssonSilvJA2005:QuasiHomLieCentExt2cocyid,
HnkHndSilvdcbihomfrobalg:LarssonSilvCM2005:QuasiLieAl,
HnkHndSilvdcbihomfrobalg:LSGradedquasiLiealg,
HnkHndSilvdcbihomfrobalg:SigSilGrquasiLieWitttype:CzJPhys2006,HnkHndSilvdcbihomfrobalg:SigSilv:LiecolHomLieWittcex:GLTbdSpr2009}.
In \cite{HnkHndSilvdcbihomfrobalg:MS:homstructure}, Hom-associative algebras have been introduced.
Hom-associative algebras is a generalization of  the  associative algebras with the associativity law twisted by
a linear map. In \cite{HnkHndSilvdcbihomfrobalg:MS:homstructure}, Hom-Lie admissible algebras generalizing Lie-admissible algebras, were introduced as Hom-algebras such that the commutator product, defined using the multiplication in a Hom-algebra, yields a Hom-Lie algebra, and Hom-associative algebras were shown to be Hom-Lie admissible. Moreover, in \cite{HnkHndSilvdcbihomfrobalg:MS:homstructure}, more general $G$-Hom-associative algebras including Hom-associative algebras, Hom-Vinberg algebras (Hom-left symmetric algebras), Hom-pre-Lie algebras (Hom-right symmetric algebras), and some other Hom-algebra structures, generalizing $G$-associative algebras, Vinberg and pre-Lie algebras respectively, have been introduced
and shown to be Hom-Lie admissible, meaning that for these classes of Hom-algebras, the operation of taking commutator leads to Hom-Lie algebras as well. Also, flexible Hom-algebras have been introduced, connections to Hom-algebra generalizations of derivations and of adjoint maps have been noticed, and some low-dimensional Hom-Lie algebras have been described.
The enveloping algebras of Hom-Lie algebras were considered in \cite{HnkHndSilvdcbihomfrobalg:Yau:HomEnv} using combinatorial objects of weighted binary trees. In \cite{HnkHndSilvdcbihomfrobalg:HeMaSiUnAlHomAss}, for Hom-associative algebras and Hom-Lie algebras, the envelopment problem, operads, and the Diamond Lemma and Hilbert series for the Hom-associative operad and free algebra have been studied. Strong Hom-associativity yielding a confluent rewrite system and a basis for the free strongly hom-associative algebra has been considered in \cite{HnkHndSilvdcbihomfrobalg:He:stronghomassociativity}. An explicit constructive way, based on free Hom-associative algebras with involutive twisting, was developed in \cite{HnkHndSilvdcbihomfrobalg:GuoZhZheUEPBWHLieA} to obtain the universal enveloping algebras and Poincar{\'e}-Birkhoff-Witt type theorem for Hom-Lie algebras with involutive twisting map. Free
Hom-associative color algebra on a Hom-module and enveloping algebra of color Hom-Lie algebras with involutive twisting and also with more general conditions on the powers of twisting map was constructed, and Poincar{\'e}–Birkhoff–Witt type theorem was obtained in \cite{HnkHndSilvdcbihomfrobalg:ArmakanSilvFarh:envalcolhomLieal,HnkHndSilvdcbihomfrobalg:ArmakanSilvFarh:envalcerttypecolhomLieal}.
It is worth noticing here that, in the subclass of Hom-Lie algebras, the  skew-symmetry is untwisted, whereas the Jacobi identity is twisted by a single linear map and contains three terms as in Lie algebras, reducing to ordinary Lie algebras when the twisting linear map is the identity map.

Hom-algebra structures include their classical counterparts and open new broad possibilities for deformations, extensions to Hom-algebra structures of representations, homology, cohomology and formal deformations, Hom-modules and hom-bimodules, Hom-Lie admissible Hom-coalgebras, Hom-coalgebras, Hom-Hopf algebras, Hom-bialgebras, $L$-modules, $L$-comodules and Hom-Lie quasi-bialgebras, $n$-ary generalizations of biHom-Lie algebras and biHom-associative algebras and generalized derivations, Rota-Baxter operators, Hom-dendriform color algebras, Rota-Baxter bisystems and covariant bialgebras, Rota-Baxter cosystems, coquasitriangular mixed bialgebras, coassociative Yang-Baxter pairs, coassociative Yang-Baxter equation and generalizations of Rota-Baxter systems and algebras, curved $\mathcal{O}$-operator systems and their connections with tridendriform systems and pre-Lie algebras  \cite{HnkHndSilvdcbihomfrobalg:AmmarEjbehiMakhlouf:homdeformation,
HnkHndSilvdcbihomfrobalg:Bakayoko:LaplacehomLiequasibialg,
HnkHndSilvdcbihomfrobalg:Bakayoko:LmodcomodhomLiequasibialg,
HnkHndSilvdcbihomfrobalg:BakBan:bimodrotbaxt,
HnkHndSilvdcbihomfrobalg:BakyokoSilvestrov:HomleftsymHomdendicolorYauTwi,
HnkHndSilvdcbihomfrobalg:BakyokoSilvestrov:MultiplicnHomLiecoloralg,
HnkHndSilvdcbihomfrobalg:BenMakh:Hombiliform,
HnkHndSilvdcbihomfrobalg:BenAbdeljElhamdKaygorMakhl201920GenDernBiHomLiealg,
HnkHndSilvdcbihomfrobalg:CaenGoyv:MonHomHopf,
HnkHndSilvdcbihomfrobalg:HassanzadehShapiroSutlu:CyclichomolHomasal,
HnkHndSilvdcbihomfrobalg:kms:narygenBiHomLieBiHomassalgebras2020,
HnkHndSilvdcbihomfrobalg:LarssonSigSilvJGLTA2008,
HnkHndSilvdcbihomfrobalg:LarssonSilvJA2005:QuasiHomLieCentExt2cocyid,
HnkHndSilvdcbihomfrobalg:MaMakhSil:CurvedOoperatorSyst,
HnkHndSilvdcbihomfrobalg:MaMakhSil:RotaBaxbisyscovbialg,
HnkHndSilvdcbihomfrobalg:MaMakhSil:RotaBaxCosyCoquasitriMixBial,
HnkHndSilvdcbihomfrobalg:MakhSil:HomHopf,HnkHndSilvdcbihomfrobalg:MakhSilv:HomDeform,
HnkHndSilvdcbihomfrobalg:MakhSilv:HomAlgHomCoalg,
HnkHndSilvdcbihomfrobalg:RichardSilvestrovJA2008,
HnkHndSilvdcbihomfrobalg:RichardSilvestrovGLTbnd20092009,
HnkHndSilvdcbihomfrobalg:Sheng:homrep,
HnkHndSilvdcbihomfrobalg:SilvestrovParadigmQLieQhomLie2007,
HnkHndSilvdcbihomfrobalg:Yau:ModuleHomalg,
HnkHndSilvdcbihomfrobalg:Yau:HomEnv,
HnkHndSilvdcbihomfrobalg:Yau:HomHom,
HnkHndSilvdcbihomfrobalg:Yau:HombialgcomoduleHomalg}.

The notion of biHom-associative algebras was introduced in \cite{HnkHndSilvdcbihomfrobalg:GrMakMenPan:Bihom1}.
In fact, a biHom-associative algebra is a (nonassociative) algebra $\mathcal{A}$ endowed with two
commuting multiplicative linear maps $\alpha, \beta: \mathcal{A} \rightarrow \mathcal{A}$ such that $\alpha(a)(bc) =(ab)\beta(c)$, for all $a, b, c \in \mathcal{A}$. This concept arose in the study of algebras in so-called group Hom-categories. In \cite{HnkHndSilvdcbihomfrobalg:GrMakMenPan:Bihom1}, the authors introduced biHom-Lie
algebras (also by using the categorical approach) and biHom-bialgebras. They discussed these
new structures by presenting some basic properties and constructions (representations,
twisted tensor products, smash products, etc.).

A  Frobenius algebra  is an associative algebra equipped with a  non-degenerate invariant bilinear form. This type of algebras also plays an important role in different areas of   mathematics and
physics, such as statistical models over two-dimensional graphs \cite{HnkHndSilvdcbihomfrobalg:BordFilkNowak:alclact} and topological quantum field theory
\cite{HnkHndSilvdcbihomfrobalg:Kock:frobalg2Dtopqft}. In \cite{HnkHndSilvdcbihomfrobalg:BaiC:doublconstrfrobalg},  Bai described associative analogs of Drinfeld's double constructions for Frobenius algebras and for associative algebras equipped with  non-degenerate Connes cocycles.
We note that two different types of constructions are  involved:
\begin{enumerate}[label=\upshape{(\roman*)}]
\item the Drinfeld's double type constructions, from a Frobenius algebra or
from an associative algebra equipped with a Connes cocyle; and
\item the Frobenius algebra obtained from  anti-symmetric solution of
associative Yang-Baxter equation and  non-degenerate Connes cocycle
obtained from a symmetric solution of a $D$-equation.
\end{enumerate}

The aim of the present work is to establish the double constructions of biHom-Frobenius algebras and  Hom-associative algebra equipped with a Connes cocyle, generalizing the double constructions of  Frobenius algebras and Connes cocycle described in \cite{HnkHndSilvdcbihomfrobalg:BaiC:doublconstrfrobalg} by
twisting the defining axioms by a certain twisting map. When the twisting map happens to
be the identity map, one gets the ordinary algebraic structures. Furthermore, the
bialgebras of  related  double constructions are built. We define the antisymmetric infinitesimal biHom-bialgebras and Hom-dendriform $D$-bialgebras.

The paper is organized as follows. In Section \ref{sec:bimodmatchedpairshomassalg}, we introduce the concepts of matched pairs
of Hom-associative algebras and establish some relevant properties. In Section \ref{sec:dblcnstrinvhomfrobal}, we perform the double constructions of multiplicative Hom-Frobenius algebras and antisymmetric infinitesimal Hom-bialgebras. In Section \ref{sec:dblcnstrinvbihomfrobal}, we define the  bimodule of biHom-associative
algebras,  and achieve the double constructions of multiplicative biHom-Frobenius algebras and antisymmetric
infinitesimal biHom-bialgebras. Section \ref{sec:dblcnstrinvsymhomassal} deals with the double constructions of involutive symplectic Hom-associative algebras. Section \ref{sec:matchedpbihomassal}
is devoted to the matched pairs of biHom-associative algebras and related important  characteristics. In Section \ref{sec:conclremarks}, we end with some concluding remarks.

\section{Bimodules and matched pairs of Hom-associative algebras}
\label{sec:bimodmatchedpairshomassalg}
\subsection{Bimodules of Hom-associative algebras}
\label{subsec:bimodmatchedpairshomassalg:bimodhomassalg}
\begin{definition}[\cite{HnkHndSilvdcbihomfrobalg:MS:homstructure}]
A Hom-associative algebra is a triple $(\mathcal{A}, \cdot, \alpha)$ consisting of a linear space $\mathcal{A}$ over a field $\mathcal{K}$, $
\mathcal{K}$-bilinear map $\cdot$: $\mathcal{A}\otimes\mathcal{A}\rightarrow \mathcal{A}$ and a linear space map $\alpha: \mathcal{A}
\rightarrow\mathcal{A}$ satisfying the Hom-associativity property:
\begin{eqnarray}
\alpha(x)\cdot(y\cdot z)=(x\cdot y)\cdot\alpha(z).
\end{eqnarray}
If, in addition, $\alpha$ satisfies the multiplicativity property
\begin{eqnarray}
\alpha(x\cdot y)=\alpha(x)\cdot\alpha(y),
\end{eqnarray}
then $(\mathcal{A}, \cdot, \alpha)$ is said to be multiplicative.
\end{definition}
\begin{remark}
If $\alpha=\id_{\mathcal{A}},$ $(\mathcal{A}, \cdot, \id_{A}),$ simply denoted $(\mathcal{A}, \cdot),$ is an associative algebra.
\end{remark}
\begin{example}
Let $\lbrace e_{1}, e_{2}, e_{3}\rbrace$ be a basis of a $3$-dimensional vector space $\mathcal{A}$ over $\mathcal{K}$. The following multiplication $\cdot$ and map $\alpha$ on $\mathcal{A}$ define a Hom-associative algebra:
\begin{eqnarray}
&&e_{1}\cdot e_{1}= e_{1},\ e_{1}\cdot e_{2}= e_{2}\cdot e_{1}= e_{3},\cr
&&\alpha(e_{1})= a_{1}e_{2} + a_{2}e_{3},\ \alpha(e_{2})= b_{1}e_{2} + b_{2}e_{3}, \alpha(e_{3})= c_{1}e_{2} + c_{2}e_{3},
\end{eqnarray}
where $a_{1}, a_{2}, b_{1}, b_{2}, c_{1}, c_{2} \in \mathcal{K}.$
\end{example}
\begin{definition}
A Hom-module is a pair $(V, \beta),$ where $V$ is a $\mathcal{K}$-vector space, and $\beta: V\rightarrow V$ is a linear map.
\end{definition}

We will use in this article a definition of bimodule of a Hom-associative algebras
including Hom-modules maps conditions \eqref{Hombimodule:bl:eq1}, \eqref{Hombimodule:br:eq2}, while we note that there are also other definitions of Hom-modules and Hom-bimodules of Hom-associative algebras, for example the more general notions requiring only \eqref{Cond:HomBimod:lpb},  \eqref{Cond:HomBimod:rpb} and \eqref{Cond:HomBimod:lar}, \cite{HnkHndSilvdcbihomfrobalg:Bakayoko:LmodcomodhomLiequasibialg,
HnkHndSilvdcbihomfrobalg:BakBan:bimodrotbaxt, HnkHndSilvdcbihomfrobalg:MakhSil:HomHopf,
HnkHndSilvdcbihomfrobalg:MakhSilv:HomAlgHomCoalg,
HnkHndSilvdcbihomfrobalg:MakhSilv:HomDeform,
HnkHndSilvdcbihomfrobalg:HassanzadehShapiroSutlu:CyclichomolHomasal,
HnkHndSilvdcbihomfrobalg:Yau:ModuleHomalg,
HnkHndSilvdcbihomfrobalg:Yau:HombialgcomoduleHomalg}.

In order to avoid, when necessary, the ambiguity of the general category endomorphisms notation $End(L)$ for endomorphisms of $L$ as linear space, algebra or other structure, throughout this paper, we will use the notation $gl(L)$ for the set of all linear transformations on a linear space $L$, and viewing it context-dependent, as a linear space, as a associative algebra with usual associative composition product, as a Lie algebra of all linear transformations on $L$ with the usual commutator product of the associative composition product (usual notation for Lie algebras), or as other structure type on the set $gl(L)$.

\begin{definition}
Let $(\mathcal{A}, \cdot, \alpha)$ be a Hom-associative algebra and let $(V, \beta)$ be a Hom-module. Let $ l, r: \mathcal{A} \rightarrow gl(V) $ be two linear maps. The quadruple  $(l, r, \beta, V)$ is called a bimodule of $\mathcal{A}$ if for all $ x, y \in  \mathcal{A}, v \in V $:
\begin{align} \label{Cond:HomBimod:lpb}
 l(x\cdot y)\beta(v) &= l(\alpha(x))l(y)v,\\
 \label{Cond:HomBimod:rpb}
 r(x\cdot y)\beta(v) &= r(\alpha(y))r(x)v, \\
 \label{Cond:HomBimod:lar}
 l(\alpha(x))r(y)v &= r(\alpha(y))l(x)v,
\\
\label{Hombimodule:bl:eq1}
\beta(l(x)v) &= l(\alpha(x))\beta(v), \\
\label{Hombimodule:br:eq2}
\beta(r(x)v) &= r(\alpha(x))\beta(v).
\end{align}
\end{definition}
\begin{proposition}
Let $(\mathcal{A}, \cdot, \alpha)$ be a Hom-associative algebra and let $(V, \beta)$ be a Hom-module. Let $ l, r: \mathcal{A} \rightarrow gl(V) $ be two linear maps. The quadruple  $(l, r, \beta, V)$  satisfies a Hom-bimodule properties \eqref{Cond:HomBimod:lpb}, \eqref{Cond:HomBimod:rpb}, \eqref{Cond:HomBimod:lar} of a Hom-associative algebra $(\mathcal{A}, \cdot, \alpha )$ if and only if the direct sum  of vector spaces, $\mathcal{A} \oplus V$,  is turned into a Hom-associative algebra  by defining multiplication in $ \mathcal{A} \oplus V $ by
\begin{eqnarray}
(x_{1} + v_{1}) \ast (x_{2} + v_{2}) &=& x_{1} \cdot x_{2} + (l(x_{1})v_{2} + r(x_{2})v_{1}),\cr
(\alpha\oplus\beta)(x_{1} + v_{1})&=&\alpha(x_{1}) + \beta(v_{1})
\end{eqnarray}
for all $ x_{1}, x_{2} \in  \mathcal{A}, v_{1}, v_{2} \in V$.
\end{proposition}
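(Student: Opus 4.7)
My plan is to verify the Hom-associativity axiom for the product $\ast$ on $\mathcal{A}\oplus V$ by direct computation, and show that the resulting identity decomposes into exactly the three stated bimodule conditions; the ``if and only if'' will then fall out of matching terms. Concretely, I would fix $X_i = x_i + v_i \in \mathcal{A}\oplus V$ for $i=1,2,3$ and expand both sides of
\[
(\alpha\oplus\beta)(X_1) \ast (X_2 \ast X_3) \;=\; (X_1\ast X_2)\ast (\alpha\oplus\beta)(X_3)
\]
using the definitions of $\ast$ and $\alpha\oplus\beta$, then project onto the $\mathcal{A}$ and $V$ summands. The $\mathcal{A}$-components are $\alpha(x_1)\cdot(x_2\cdot x_3)$ and $(x_1\cdot x_2)\cdot\alpha(x_3)$, which coincide by the Hom-associativity of $(\mathcal{A},\cdot,\alpha)$, so they contribute nothing new.

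The $V$-components are the core of the argument. The left-hand side yields
\[
l(\alpha(x_1))l(x_2)v_3 \;+\; l(\alpha(x_1))r(x_3)v_2 \;+\; r(x_2\cdot x_3)\beta(v_1),
\]
while the right-hand side yields
\[
l(x_1\cdot x_2)\beta(v_3) \;+\; r(\alpha(x_3))l(x_1)v_2 \;+\; r(\alpha(x_3))r(x_2)v_1.
\]
For the ``if'' direction, assuming \eqref{Cond:HomBimod:lpb}, \eqref{Cond:HomBimod:rpb} and \eqref{Cond:HomBimod:lar}, each of the three pairs of terms (grouped according to which of $v_1,v_2,v_3$ they act on) is equal, so the full Hom-associativity on $\mathcal{A}\oplus V$ holds. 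For the converse, I would isolate the three identities by specialization: taking $v_1 = v_2 = 0$ with $v_3$ arbitrary forces the $v_3$-terms to match, recovering \eqref{Cond:HomBimod:lpb}; taking $v_1 = v_3 = 0$ with $v_2$ arbitrary yields \eqref{Cond:HomBimod:lar}; and taking $v_2 = v_3 = 0$ with $v_1$ arbitrary yields \eqref{Cond:HomBimod:rpb}.

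There is no conceptual obstacle; the entire argument is a careful bookkeeping of six terms that pair off in exactly the expected way once the Hom-associativity of $\mathcal{A}$ is used to cancel the $\mathcal{A}$-component. I note in passing that conditions \eqref{Hombimodule:bl:eq1} and \eqref{Hombimodule:br:eq2} play no role in this equivalence, since the proposition only asserts Hom-associativity of $(\mathcal{A}\oplus V, \ast, \alpha\oplus\beta)$ and does not demand that $\alpha\oplus\beta$ be multiplicative for $\ast$; those two additional conditions would be required precisely to upgrade the construction to a \emph{multiplicative} Hom-associative algebra.
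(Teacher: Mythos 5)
Your proof is correct and follows essentially the same route as the paper's: expand both sides of Hom-associativity on $\mathcal{A}\oplus V$, observe that the $\mathcal{A}$-components match by Hom-associativity of $\mathcal{A}$, pair the $V$-terms according to which $v_j$ they act on, and recover each of \eqref{Cond:HomBimod:lpb}, \eqref{Cond:HomBimod:rpb}, \eqref{Cond:HomBimod:lar} in the converse direction by setting the other two $v_j$ to zero. Your closing remark that \eqref{Hombimodule:bl:eq1} and \eqref{Hombimodule:br:eq2} are not needed here is also consistent with the statement as formulated.
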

\begin{proof}
Let $v_{1}, v_{2}, v_{3}\in V$ and $x_{1}, x_{2},x_{3}\in \mathcal{A}.$
The left-hand side and right-hand side of Hom-associativity of
$( \mathcal{A} \oplus V, \ast, \alpha\oplus\beta )$  are expended as follows:
\begin{align*}\label{condit. du Bimod.}
& ((x_{1} + v_{1}) \ast (x_{2} + v_{2}))\ast(\alpha\oplus\beta)(x_{3}+v_{3}) \\
& \quad =((x_{1} + v_{1}) \ast (x_{2} + v_{2}))\ast(\alpha(x_{3})+ \beta(v_{3})) \\
& \quad =(x_{1} \cdot x_{2} + (l(x_{1})v_{2} + r(x_{2})v_{1}))\ast(\alpha(x_{3})+ \beta(v_{3})) \\
& \quad =(x_{1} \cdot x_{2})\cdot \alpha(x_{3}) + (l(x_{1} \cdot x_{2})\beta(v_{3}) + r(\alpha(x_{3}))(l(x_{1})v_{2} + r(x_{2})v_{1}))) \\
& \quad =(x_{1} \cdot x_{2})\cdot \alpha(x_{3}) + (l(x_{1} \cdot x_{2})\beta(v_{3}) + r(\alpha(x_{3}))l(x_{1})v_{2} + r(\alpha(x_{3})) r(x_{2})v_{1}) \\
& (\alpha\oplus\beta)(x_{1}+v_{1}) \ast ((x_{2} + v_{2}) \ast (x_{3} + v_{3}))\\
& \quad =(\alpha(x_{1})+ \beta(v_{1})) \ast ((x_{2} + v_{2}) \ast (x_{3} + v_{3}))\\
& \quad =(\alpha(x_{1})+ \beta(v_{1}))\ast (x_{2} \cdot x_{3} + (l(x_{2})v_{3} + r(x_{3})v_{2})) \\
& \quad =\alpha(x_{1}) \cdot (x_{2} \cdot x_{3}) + l(\alpha(x_{1}))(l(x_{2})v_{3} + r(x_{3})v_{2})) + r(x_{2} \cdot x_{3})\beta(v_{1}) \\
&\quad = \alpha(x_{1}) \cdot (x_{2} \cdot x_{3}) + (l(\alpha(x_{1}))l(x_{2})v_{3}
+ l(\alpha(x_{1}))r(x_{3})v_{2} + r(x_{2} \cdot x_{3})\beta(v_{1}))
\end{align*}
These elements of $\mathcal{A} \oplus V$ are equal if and only if
\begin{align*}
& \alpha(x_{1}) \cdot (x_{2} \cdot x_{3})  = (x_{1} \cdot x_{2})\cdot \alpha(x_{3}) \\
& l(x_{1} \cdot x_{2})\beta(v_{3})
+ r(\alpha(x_{3}))l(x_{1})v_{2} + r(\alpha(x_{3})) r(x_{2})v_{1} \\
& \quad \quad \quad \quad \quad \quad \quad \quad =
l(\alpha(x_{1}))l(x_{2})v_{3} + l(\alpha(x_{1}))r(x_{3})v_{2} + r(x_{2} \cdot x_{3})\beta(v_{1})
\end{align*}
for all $x_{1}, x_{2},x_{3}\in \mathcal{A}, v_{1}, v_{2}, v_{3} \in V$.
This holds if and only if the hom-associativity holds, and for each $j=1,2,3$ the respective $V$ terms
involving $v_{j} \in V$ are equal. If the terms are equal then the sums are equal. If the summs are equal then the terms should be equal if one specifies all or two of $v_1, v_2, v_3$ to zero element of $V$ and using that linear transformations map zero to zero.
Since,
\begin{align*}
\text{Hom-associativity} & \Leftrightarrow &
\alpha(x_{1}) \cdot (x_{2} \cdot x_{3}) &= (x_{1} \cdot x_{2})\cdot \alpha(x_{3}),
\ \forall x_{1}, x_{2},x_{3}\in \mathcal{A} \\
\eqref{Cond:HomBimod:lpb} & \Leftrightarrow &
l(x_{1} \cdot x_{2})\beta(v_{3}) &= l(\alpha(x_{1}))l(x_{2})v_{3}, \ \forall x_{1}, x_{2},x_{3}\in \mathcal{A} \\
\eqref{Cond:HomBimod:lar} & \Leftrightarrow & l(\alpha(x_{1}))r(x_{3})v_{2} &= r(\alpha(x_{3}))l(x_{1})v_{2}, \ \forall x_{1}, x_{2},x_{3}\in \mathcal{A} \\
\eqref{Cond:HomBimod:rpb} & \Leftrightarrow &
r(\alpha(x_{3})) r(x_{2})v_{1} &= r(x_{2} \cdot x_{3})\beta(v_{1}), \ \forall x_{1}, x_{2},x_{3}\in \mathcal{A}.
\end{align*}
the proof is complete.
\qed
\end{proof}
We denote such a Hom-associative algebra $(\mathcal{A}\oplus V, \ast, \alpha + \beta),$
 or $\mathcal{A}\times_{l, r, \alpha, \beta}V.$
\begin{example}
Let $(\mathcal{A}, \cdot, \alpha)$ be a multiplicative Hom-associative algebra. Let $L_{\cdot x}$ and $R_{\cdot x}$ denote the left and right multiplication operators, respectively, i. e.
$L_{\cdot x}(y)=x\cdot y, R_{\cdot x}(y) = y\cdot x$ for any $ x, y \in  \mathcal{A}$. Let $L_{\cdot} :  \mathcal{A} \rightarrow gl( \mathcal{A})$ with $ x \mapsto L_{\cdot x}$ and
$ R_{\cdot} : \mathcal{A} \rightarrow gl(\mathcal{A})$ with $ x \mapsto R_{\cdot x}$ (for every $ x \in  \mathcal{A} $) be two linear maps.
Then, the triples $(L_{\cdot}, 0, \alpha), (0, R_{\cdot}, \alpha)$ and $ (L_{\cdot}, R_{\cdot}, \alpha)$ are bimodules of $(\mathcal{A}, \cdot, \alpha)$.
\end{example}
\begin{proposition}
Let $(l, r, \beta, V)$ be bimodule of a multiplicative Hom-associative algebra $(\mathcal{A}, \cdot, \alpha)$. Then, $(l\circ\alpha^{n}, r\circ\alpha^{n}, \beta, V)$ is a bimodule of $\mathcal{A}$ for any integer $n.$
\end{proposition}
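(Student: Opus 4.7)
The plan is to verify, one by one, the five defining properties of a bimodule for the pair of maps $l' := l \circ \alpha^{n}$ and $r' := r \circ \alpha^{n}$, reducing each check to the corresponding axiom for $(l, r, \beta, V)$ with arguments replaced by their $\alpha^{n}$-images. The only tool really needed is that a multiplicative $\alpha$ iterates to multiplicative $\alpha^{n}$; everything else is bookkeeping.

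First, I would record the auxiliary fact $\alpha^{n}(x \cdot y) = \alpha^{n}(x) \cdot \alpha^{n}(y)$ for all non-negative integers $n$, proved by an immediate induction from the multiplicativity of $\alpha$ (with $n=0$ being trivial). Then, for condition \eqref{Cond:HomBimod:lpb}, I would write
\begin{equation*}
l'(x \cdot y)\beta(v) = l(\alpha^{n}(x \cdot y))\beta(v) = l(\alpha^{n}(x) \cdot \alpha^{n}(y))\beta(v) = l(\alpha(\alpha^{n}(x)))\, l(\alpha^{n}(y))v = l'(\alpha(x))\, l'(y)v,
\end{equation*}
where the third equality uses \eqref{Cond:HomBimod:lpb} for $(l,r,\beta,V)$ at the pair $(\alpha^{n}(x), \alpha^{n}(y))$, and the last equality uses $\alpha \circ \alpha^{n} = \alpha^{n} \circ \alpha$. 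The checks of \eqref{Cond:HomBimod:rpb} and \eqref{Cond:HomBimod:lar} are carried out by the same template: substitute $x \mapsto \alpha^{n}(x)$, $y \mapsto \alpha^{n}(y)$ in the original axioms, apply multiplicativity of $\alpha^{n}$, and rewrite every $l \circ \alpha^{n}$ (respectively $r \circ \alpha^{n}$) as $l'$ (respectively $r'$).

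For the Hom-module compatibility conditions \eqref{Hombimodule:bl:eq1} and \eqref{Hombimodule:br:eq2}, the computation reads
\begin{equation*}
\beta(l'(x)v) = \beta(l(\alpha^{n}(x))v) = l(\alpha(\alpha^{n}(x)))\beta(v) = l(\alpha^{n}(\alpha(x)))\beta(v) = l'(\alpha(x))\beta(v),
\end{equation*}
using \eqref{Hombimodule:bl:eq1} for $(l,r,\beta,V)$ at the element $\alpha^{n}(x)$ together with $\alpha \circ \alpha^{n} = \alpha^{n} \circ \alpha$; the argument for $r'$ is identical with \eqref{Hombimodule:br:eq2}.

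There is no real obstacle: the proof is a routine application of multiplicativity and of the original bimodule axioms. The one point that deserves a comment is the case of negative $n$, which tacitly requires $\alpha$ to be invertible; in that situation, invertibility combined with multiplicativity of $\alpha$ forces $\alpha^{-1}$ to be multiplicative as well (apply $\alpha^{-1}$ to $\alpha(x) \cdot \alpha(y) = \alpha(x \cdot y)$), so the inductive argument extends to all integer exponents and the same computations go through verbatim.
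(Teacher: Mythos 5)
Your proof is correct and follows essentially the same route as the paper: apply multiplicativity of $\alpha^{n}$ to pull the product inside, invoke the original bimodule axiom at the shifted arguments $(\alpha^{n}(x),\alpha^{n}(y))$, and commute $\alpha$ past $\alpha^{n}$; the paper likewise writes out only the first identity and dismisses the rest as similar. Your closing remark about negative $n$ requiring invertibility of $\alpha$ is a point the paper silently glosses over, and is a worthwhile addition.
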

\begin{proof}
We have
\begin{eqnarray*}
l\circ\alpha^{n}(x\cdot y)\beta(v)= l(\alpha^{n}(x)\cdot\alpha^{n}(y))\beta(v)= l(\alpha(\alpha^{n}(x)))l(\alpha^{n}(y))v\cr
=l(\alpha^{n+1}(x))l(\alpha^{n}(y))v= l\circ\alpha^{n}(\alpha(x))l\circ\alpha^{n}(y)v.
\end{eqnarray*}
Similarly, the other relations are established.
\qed
\end{proof}
\begin{example}
Let $(\mathcal{A}, \cdot, \alpha)$ be a multiplicative Hom-associative algebra. Then, the quadruple $(L_{\cdot}\circ\alpha^{n}, R_{\cdot}\circ\alpha^{n}, \alpha, \mathcal{A})$ is a bimodule of $\mathcal{A}$ for any finite integer $n.$
\end{example}
\begin{example}
Let $(\mathcal{A}, \cdot, \alpha)$ be a multiplicative associative algebra, and $\beta: \mathcal{A}\rightarrow \mathcal{A}$ be a morphism. Then, $\mathcal{A}_{\beta}=(\mathcal{A}, \cdot_{\beta}=\beta\circ\cdot, \alpha_{\beta}=\beta\circ\alpha)$ is a multiplicative Hom-associative algebra.
 Hence $(L_{\cdot_{\beta}}\circ\alpha^{n}_{\beta}, R_{\cdot_{\beta}}\circ\alpha^{n}_{\beta}, \alpha_{\beta}, \mathcal{A})$ is a bimodule of $\mathcal{A}$ for any finite integer $n.$
\end{example}
\subsection{Matched pairs of Hom-associative algebras}
\label{subsec:bimodmatchedpairshomassalg:matchedpairshomassalg}
\begin{theorem}\label{theo. of matched pairs}
Let $(\mathcal{A}, \cdot, \alpha)$ and $(\mathcal{B}, \circ, \beta)$ be two
Hom-associative algebras. Suppose there are linear maps $l_{ \mathcal{A}}, r_{
\mathcal{A}}: \mathcal{A}\rightarrow gl(\mathcal{B})$ and $l_{ \mathcal{B}}, r_{
\mathcal{B}} :  \mathcal{B} \rightarrow gl( \mathcal{A})$ such that the quadruple
 $(l_{ \mathcal{A}}, r_{
\mathcal{A}},  \beta, \mathcal{B})$ is
a bimodule of $\mathcal{A},$ and $(l_{ \mathcal{B}}, r_{ \mathcal{B}}, \alpha,
\mathcal{A})$ is a bimodule of $\mathcal{B},$
satisfying, for any $ x, y \in  \mathcal{A}, a,b \in  \mathcal{B}$, the following {\it  conditions}:
\begin{eqnarray} \label{match. pair1}
l_{ \mathcal{A}}(\alpha(x))(a\circ b) = l_{ \mathcal{A}}(r_{ \mathcal{B}}(a)x)\beta(b) + (l_{ \mathcal{A}}(x)a) \circ \beta(b),
\end{eqnarray}
\begin{eqnarray} \label{match. pair2}
r_{ \mathcal{A}}(\alpha(x))(a\circ b)= r_{ \mathcal{A}}(l_{ \mathcal{B}}(b)x)\beta(a) + \beta(a) \circ(r_{ \mathcal{A}}(x)b),
\end{eqnarray}
\begin{eqnarray} \label{match. pair3}
l_{ \mathcal{B}}(\beta(a))(x\cdot y)= l_{ \mathcal{B}}(r_{ \mathcal{A}}(x)a)\alpha(y) + (l_{ \mathcal{B}}(a)x) \cdot \alpha(y),
\end{eqnarray}
\begin{eqnarray} \label{match. pair4}
r_{\mathcal{B}}(\beta(a))(x\cdot y) = r_{ \mathcal{B}}(l_{ \mathcal{A}}(y)a)\alpha(x) + \alpha(x) \cdot (r_{ \mathcal{B}}(a)y),
\end{eqnarray}
{\small{
\begin{eqnarray} \label{match. pair5}
l_{ \mathcal{A}}(l_{ \mathcal{B}}(a)x)\beta(b) + (r_{ \mathcal{A}}(x)a)\circ \beta(b) - r_{ \mathcal{A}}(r_{ \mathcal{B}}(b)x)\beta(a) - \beta(a)\circ (l_{ \mathcal{A}}(x)b) = 0,
\end{eqnarray}}}
\begin{eqnarray}\label{match. pair6}
l_{ \mathcal{B}}(l_{ \mathcal{A}}(x)a)\alpha(y) + (r_{ \mathcal{B}}(a)x)\cdot \alpha(y) - r_{ \mathcal{B}}(r_{ \mathcal{A}}(y)a)\alpha(x) - \alpha(x)\cdot (l_{ \mathcal{B}}(a)y) = 0.
\end{eqnarray}
 Then, there is a Hom-associative algebra
 structure on the direct sum $\mathcal{A}\oplus\mathcal{B}$ of
the underlying vector spaces of $\mathcal{A}$ and $\mathcal{B}$ given by
\begin{eqnarray}\label{match. pair product}
(x + a) \ast (y + b) &=& (x\cdot y + l_{ \mathcal{B}}(a)y + r_{ \mathcal{B}}(b)x) + (a\circ b +  l_{ \mathcal{A}}(x)b +  r_{ \mathcal{A}}(y)a),\cr
(\alpha\oplus\beta)(x + a)&=&\alpha(x) + \beta(a)
\end{eqnarray}
for all $ x, y \in  \mathcal{A}, a,b \in  \mathcal{B}$.
\end{theorem}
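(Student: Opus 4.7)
The plan is to verify the Hom-associativity identity
$$(\alpha \oplus \beta)(x+a) \ast \bigl((y+b) \ast (z+c)\bigr) = \bigl((x+a) \ast (y+b)\bigr) \ast (\alpha \oplus \beta)(z+c)$$
for all $x, y, z \in \mathcal{A}$ and $a, b, c \in \mathcal{B}$ by expanding both sides via the product formula \eqref{match. pair product} and then matching the resulting sub-identities one by one to the eight hypotheses: Hom-associativity of $\mathcal{A}$, of $\mathcal{B}$, the two bimodule axiom systems, and the six compatibility relations \eqref{match. pair1}--\eqref{match. pair6}.

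First I would project the identity onto the $\mathcal{A}$-summand and the $\mathcal{B}$-summand separately. Since by \eqref{match. pair product} each product $\ast$ is a sum of one term in $\mathcal{A}$ and one term in $\mathcal{B}$, and the twist $\alpha \oplus \beta$ preserves the grading, the single vector identity in $\mathcal{A} \oplus \mathcal{B}$ splits into two independent identities, one valued in $\mathcal{A}$ and one valued in $\mathcal{B}$.

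Next I would exploit the trilinearity of both sides in the three arguments $x+a$, $y+b$, $z+c$, together with the independence of the six inputs $x, y, z \in \mathcal{A}$ and $a, b, c \in \mathcal{B}$, to split each projected identity into the eight sub-cases indexed by whether each slot is supplied from $\mathcal{A}$ or from $\mathcal{B}$. This is the same ``zero out the other inputs'' separation trick already used in the preceding bimodule proposition. Identifying each resulting sub-identity with a hypothesis then finishes the argument: the case $(\mathcal{A}, \mathcal{A}, \mathcal{A})$ on the $\mathcal{A}$-side is precisely the Hom-associativity of $(\mathcal{A}, \cdot, \alpha)$, and $(\mathcal{B}, \mathcal{B}, \mathcal{B})$ on the $\mathcal{B}$-side is the Hom-associativity of $(\mathcal{B}, \circ, \beta)$; the cases with exactly one $\mathcal{B}$-input, projected onto the $\mathcal{A}$-summand, reproduce the three bimodule axioms \eqref{Cond:HomBimod:lpb}--\eqref{Cond:HomBimod:lar} for $(l_\mathcal{B}, r_\mathcal{B}, \alpha, \mathcal{A})$, and projected onto the $\mathcal{B}$-summand reproduce \eqref{match. pair3}, \eqref{match. pair6}, and \eqref{match. pair4}; symmetrically, the cases with exactly one $\mathcal{A}$-input give on the $\mathcal{B}$-side the three bimodule axioms for $(l_\mathcal{A}, r_\mathcal{A}, \beta, \mathcal{B})$, and on the $\mathcal{A}$-side the conditions \eqref{match. pair1}, \eqref{match. pair5}, and \eqref{match. pair2}.

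The main obstacle is bookkeeping: sixteen sub-identities in total, each a sum of several compositions of the maps $l_\mathcal{A}, r_\mathcal{A}, l_\mathcal{B}, r_\mathcal{B}, \alpha, \beta$ and the products $\cdot, \circ$, whose positions and arguments must be tracked carefully. A systematic tabulation indexed by slot-type is essential to avoid sign and symbol confusion, but no individual comparison is deep; each sub-identity follows by direct matching with the appropriate axiom, mirroring the strategy used in the earlier bimodule proposition.
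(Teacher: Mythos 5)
Your strategy---expand both sides of the Hom-associativity identity for $\ast$, project onto the $\mathcal{A}$- and $\mathcal{B}$-summands, separate by multilinearity into the eight slot-type cases, and match each nontrivial sub-identity with a hypothesis---is exactly the paper's proof (which states it in one line and leaves the expansion to the reader), and the method is sound: the nontrivial sub-identities are collectively precisely the hypotheses of the theorem. However, your tabulation has the two projections swapped in both mixed cases. With exactly one $\mathcal{B}$-input, it is the $\mathcal{A}$-projection that yields \eqref{match. pair3}, \eqref{match. pair4}, \eqref{match. pair6} (these involve $l_{\mathcal{B}},r_{\mathcal{B}}$ acting on products $x\cdot y$, hence land in $\mathcal{A}$), while the $\mathcal{B}$-projection yields the bimodule axioms of $(l_{\mathcal{A}},r_{\mathcal{A}},\beta,\mathcal{B})$; symmetrically, with exactly one $\mathcal{A}$-input the $\mathcal{A}$-projection gives the bimodule axioms of $(l_{\mathcal{B}},r_{\mathcal{B}},\alpha,\mathcal{A})$ and the $\mathcal{B}$-projection gives \eqref{match. pair1}, \eqref{match. pair2}, \eqref{match. pair5}. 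For instance, taking $x,y\in\mathcal{A}$ in the first two slots and $c\in\mathcal{B}$ in the third, the $\mathcal{A}$-component of the identity reads
\begin{equation*}
r_{\mathcal{B}}(\beta(c))(x\cdot y)=\alpha(x)\cdot(r_{\mathcal{B}}(c)y)+r_{\mathcal{B}}(l_{\mathcal{A}}(y)c)\alpha(x),
\end{equation*}
which is \eqref{match. pair4}, while the $\mathcal{B}$-component reads $l_{\mathcal{A}}(x\cdot y)\beta(c)=l_{\mathcal{A}}(\alpha(x))l_{\mathcal{A}}(y)c$, which is \eqref{Cond:HomBimod:lpb} for $(l_{\mathcal{A}},r_{\mathcal{A}},\beta,\mathcal{B})$. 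This is only a bookkeeping slip, not a flaw in the approach; with the attributions corrected the proof goes through as you describe.
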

\begin{proof}
Let $v_{1}, v_{2}, v_{3}\in V$ and $x_{1}, x_{2},x_{3}\in \mathcal{A}.$ Set
\begin{eqnarray}\label{condit. du Bimod. 2}
[(x_{1} + v_{1}) \ast (x_{2} + v_{2})]\ast(\alpha(x_{3})+ \beta(v_{3}))=
 (\alpha(x_{1})+ \beta(v_{1}))\ast[(x_{2} + v_{2}) \ast (x_{3} + v_{3})],\nonumber
\end{eqnarray}
 which is developed to  obtain \eqref{match. pair1}-\eqref{match. pair6}. Then, using the  relations \begin{eqnarray*}
\beta(l_{\mathcal{A}}(x)a)= l_{\mathcal{A}}(\alpha(x))\beta(a),\ \beta(r_{\mathcal{A}}(x)a)= r_{\mathcal{A}}(\alpha(x))\beta(a),
\end{eqnarray*}
\begin{eqnarray*}
\alpha(l_{\mathcal{B}}(a)x)= l_{\mathcal{B}}(\beta(a))\alpha(x),\ \alpha(r_{\mathcal{B}}(a)x)= r_{\mathcal{B}}(\beta(a))\alpha(x),
\end{eqnarray*} we show that $\ast$ is a Hom-associative algebra.
\qed
\end{proof}
We denote this Hom-associative algebra by
$(\mathcal{A}\bowtie \mathcal{B}, \ast, \alpha + \beta)$ or $  \mathcal{A} \bowtie^{l_{ \mathcal{A}}, r_{ \mathcal{A}}, \beta}_{l_{ \mathcal{B}}, r_{ \mathcal{B}}, \alpha}  \mathcal{B}.$
\begin{definition}
Let $ (\mathcal{A},\cdot, \alpha)$ and $(\mathcal{B}, \circ, \beta)$ be two Hom-associative
algebras. Suppose that there are linear maps
 $ l_{ \mathcal{A}}, r_{ \mathcal{A}} :  \mathcal{A} \rightarrow gl( \mathcal{B}) $ and
$ l_{ \mathcal{B}}, r_{ \mathcal{B}} :  \mathcal{B} \rightarrow gl( \mathcal{A}) $ such that
$ (l_{ \mathcal{A}}, r_{ \mathcal{A}}, \beta) $ is a bimodule of $  \mathcal{A} $ and $ (l_{ \mathcal{B}}, r_{ \mathcal{B}}, \alpha) $
is a bimodule of $  \mathcal{B} $. If the conditions  \eqref{match. pair1} - \eqref{match. pair6} are satisfied, then,
 $ ( \mathcal{A},  \mathcal{B}, l_{ \mathcal{A}}, r_{ \mathcal{A}}, \beta, l_{ \mathcal{B}}, r_{ \mathcal{B}}, \alpha) $
 is called a \textbf{matched pair of Hom-associative algebras}.
\end{definition}
\section{Double constructions of involutive Hom-Frobenius algebras and antisymmetric infinitesimal Hom-bialgebras}
\label{sec:dblcnstrinvhomfrobal}
In this section, we consider the multiplicative Hom-associative algebra $(\mathcal{A}, \cdot, \alpha)$ such that $\alpha$ is involutive, i.e, $\alpha^{2}= \id_{\mathcal{A}}$.
\subsection{Double constructions of involutive Hom-Frobenius algebras}
\label{subsec:dblcnstrinvhomfrobal:dcifral}
\begin{definition}
Let $V_{1} $, $ V_{2} $ be two vector spaces. For a linear map $ \phi : V_{1} \rightarrow V_{2} $, we denote the dual (linear) map by $ \phi^{\ast} : V^{\ast}_{2} \rightarrow V^{\ast}_{1} $ given by
\begin{eqnarray*}
\langle v, \phi^{\ast}(u^{\ast})\rangle = \langle \phi(v), u^{\ast} \rangle \mbox{ for all } v \in V_{1} ,  u^{\ast} \in V^{\ast}_{2}.
\end{eqnarray*}
\end{definition}
\begin{lemma}
Let $(l, r, \beta, V)$ be a bimodule of a multiplicative Hom-associative algebra $(\mathcal{A}, \cdot, \alpha)$, and
 let $ l^{\ast}, r^{\ast} $ : $  \mathcal{A} \rightarrow gl(V^{\ast}) $ be the linear maps given for all $ x \in  \mathcal{A} $, $ u^{\ast} \in V^{\ast}$, $v \in V ,$ by
\begin{eqnarray}
\langle l^{\ast}(x)u^{\ast}, v \rangle := \langle l(x)v, u^{\ast} \rangle, \langle r^{\ast}(x)u^{\ast}, v \rangle := \langle r(x)v, u^{\ast} \rangle.
\end{eqnarray}
Then
\begin{enumerate} 
\item $(r^{\ast}, l^{\ast}, \beta^{\ast}, V^{\ast})$ is a bimodule of $(\mathcal{A}, \cdot, \alpha)$;
\item $(r^{\ast}, 0, \beta^{\ast}, V^{\ast})$ and $ (0, l^{\ast}, \beta^{\ast}, V^{\ast})$ are also bimodules of $\mathcal{A}$.
\end{enumerate}
\end{lemma}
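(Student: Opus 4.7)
The plan is to verify each bimodule axiom on the dual side by pairing with an arbitrary test vector $v \in V$, pushing the action through the defining relations of $l^\ast$, $r^\ast$, $\beta^\ast$, and then reducing to the corresponding axiom for the original bimodule $(l,r,\beta,V)$. Multiplicativity $\alpha(x\cdot y) = \alpha(x)\cdot \alpha(y)$ together with involutivity $\alpha^2 = \mathrm{Id}_{\mathcal{A}}$ will be used to reshuffle the twisting map; nondegeneracy of the canonical pairing $V^\ast \times V \to \mathcal{K}$ then allows stripping the test vector.

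For part (i), take the candidate left and right actions to be $\widetilde{l} = r^\ast$ and $\widetilde{r} = l^\ast$, with twist $\widetilde{\beta} = \beta^\ast$. First I would check the two $\beta^\ast$-compatibility relations \eqref{Hombimodule:bl:eq1}--\eqref{Hombimodule:br:eq2}: pairing $\langle \beta^\ast(r^\ast(x)v^\ast), v\rangle$ against $v \in V$, the defining relations reduce this to $\langle v^\ast, r(x)\beta(v)\rangle$, while $\langle r^\ast(\alpha(x))\beta^\ast(v^\ast), v\rangle = \langle v^\ast, \beta(r(\alpha(x))v)\rangle$; the two expressions agree by \eqref{Hombimodule:br:eq2} for $(l,r,\beta,V)$, and similarly for $l^\ast$. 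Next I would verify the left-module axiom \eqref{Cond:HomBimod:lpb} for the dual actions, i.e.\ $r^\ast(x\cdot y)\beta^\ast(v^\ast) = r^\ast(\alpha(x))r^\ast(y)v^\ast$: dualizing gives the required identity $r(\alpha(x)\cdot\alpha(y))\beta(v) = r(y)r(\alpha(x))v$, which follows from \eqref{Cond:HomBimod:rpb} applied to $\alpha(x),\alpha(y)$ once multiplicativity is used, and then from $\alpha^2 = \mathrm{Id}$ to cancel the redundant $\alpha$'s. The right-module axiom \eqref{Cond:HomBimod:rpb} for the dual actions is handled symmetrically from \eqref{Cond:HomBimod:lpb}. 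Finally, the crossed axiom \eqref{Cond:HomBimod:lar} dualizes to $l(y)r(\alpha(x))v = r(x)l(\alpha(y))v$, which is precisely \eqref{Cond:HomBimod:lar} for $(l,r,\beta,V)$ applied at $\alpha(y)$ and $\alpha(x)$, together with one application of $\alpha^2 = \mathrm{Id}$.

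Part (ii) then comes essentially for free: for $(r^\ast, 0, \beta^\ast, V^\ast)$ the axioms involving the right action \eqref{Cond:HomBimod:rpb} and \eqref{Hombimodule:br:eq2} are trivial, the crossed axiom \eqref{Cond:HomBimod:lar} reads $0 = 0$, and the remaining nontrivial ones \eqref{Cond:HomBimod:lpb} and \eqref{Hombimodule:bl:eq1} are exactly the statements about $r^\ast$ already established in part (i). The argument for $(0, l^\ast, \beta^\ast, V^\ast)$ is the mirror image.

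The main technical friction will be bookkeeping the placements of $\alpha$: without involutivity one would produce extra $\alpha^2$ factors that cannot be absorbed, so the hypothesis $\alpha^2 = \mathrm{Id}_{\mathcal{A}}$ (standing throughout this subsection) is what makes the swap between left and right on the dual side work. I do not anticipate any conceptual obstacle beyond careful dualization; the computations are the standard adjointness manipulations, repeated once per axiom.
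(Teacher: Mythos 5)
Your proposal is correct and follows essentially the same route as the paper's proof: each dual axiom is verified by pairing with a test vector, pushing the adjoints through, and reducing to the corresponding axiom for $(l,r,\beta,V)$ via the Hom-module map conditions, multiplicativity of $\alpha$, and the standing involutivity $\alpha^{2}=\mathrm{Id}_{\mathcal{A}}$ to absorb the extra twist. You correctly identify both the $l\leftrightarrow r$ swap forced by the order reversal under adjoints and the indispensable role of involutivity, which is exactly where the paper's computation relies on $\alpha^{2}=\mathrm{Id}$.
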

\begin{proof} \rm{(i)} Let $ (l, r, \beta, V)$ be a bimodule of a multiplicative Hom-associative algebra $  (\mathcal{A}, \cdot, \alpha)$. We show that
 $ (r^{\ast}, l^{\ast}, \beta^{\ast}, V^{\ast}) $ is a bimodule of $\mathcal{A}$. For $ x, y \in  \mathcal{A}, u^{\ast} \in V^{\ast}, v \in V $,
\begin{enumerate}
\item[(i-1)] the computation
\begin{eqnarray*}
 \langle r^{\ast}(x\cdot y)\beta^{\ast}(u^{\ast}), v \rangle  &=& \langle \beta(r(x\cdot y)v), u^{\ast} \rangle
                                        =\langle r(\alpha(x\cdot y))\beta(v), u^{\ast} \rangle
                                   \cr &=&\langle r(\alpha(x)\cdot \alpha(y))\beta(v), u^{\ast} \rangle
                                   =\langle r(\alpha^{2}(y))r(\alpha(x))v, u^{\ast} \rangle
                                        \cr &=& \langle (r(y)r(\alpha(x)))^{\ast}u^{\ast},v \rangle
                                        = \langle r^{\ast}(\alpha(x))r^{\ast}(y)u^{\ast}, v \rangle
\end{eqnarray*}
 leads to  $ r^{\ast}(x\cdot y)\beta^{\ast}(u^{\ast})= r^{\ast}(\alpha(x))r^{\ast}(y)u^{\ast} $;

\item[(i-2)] the computation
\begin{eqnarray*}
 \langle l^{\ast}(x\cdot y)\beta^{\ast}(u^{\ast}), v \rangle   &=& \langle \beta(l(x\cdot y)(v)), u^{\ast} \rangle
                                     = \langle l(\alpha(x\cdot y))\beta(v), u^{\ast}\rangle
                                   \cr &=& \langle l(\alpha(x)\cdot \alpha(y))\beta(v), u^{\ast} \rangle
                             = \langle l(\alpha^{2}(x))l(\alpha(y))\beta(v), u^{\ast} \rangle
                                        \cr &=& \langle(l(x)l(\alpha(y)))^{\ast}u^{\ast},v \rangle
                                        = \langle l^{\ast}(\alpha(y))l^{\ast}(x)u^{\ast}, v \rangle
\end{eqnarray*}
 gives  $ l^{\ast}(x\cdot y)\beta^{\ast}(u^{\ast})= l^{\ast}(\alpha(y))l^{\ast}(x)u^{\ast} $;

\item[(i-3)] the computation
\begin{multline*}
\langle r^{\ast}(\alpha(x))l^{\ast}(y)u^{\ast}, v\rangle  =                                           \langle l(y)r(\alpha(x))v, u^{\ast} \rangle\cr
=\langle l(\alpha^{2}(y))r(\alpha(x))v, u^{\ast} \rangle =\langle (l\circ\alpha)(\alpha(y))(r\circ\alpha)(x))v, u^{\ast}\rangle \cr 
= \langle r(\alpha^{2}(x))l(\alpha(y))v, u^{\ast}\rangle
=\langle r(x)l(\alpha(y))v, u^{\ast}\rangle\cr
= \langle l^{\ast}(\alpha(y))r^{\ast}(x)u^{\ast},v \rangle
\end{multline*}
 yields $r^{\ast}(\alpha(x))l^{\ast}(y)u^{\ast}= l^{\ast}(\alpha(y))r^{\ast}(x)u^{\ast}$.
 \begin{eqnarray*}
 \langle \beta^{\ast}(r^{\ast}(x))u^{\ast}, v\rangle &=&\langle  r(x)(\beta(v)), u^{\ast}\rangle =\langle  r(\alpha^{2}(x))(\beta(v)), u^{\ast}\rangle\cr
 &=&\langle (r\circ\alpha)(\alpha(x))(\beta(v)), u^{\ast}\rangle
 =\langle \beta(r(\alpha(x)))v, u^{\ast}\rangle\cr
 &=&\langle r^{\ast}(\alpha(x))\beta^{\ast}(u^{\ast}), v\rangle.
 \end{eqnarray*}
 Then $\beta^{\ast}(r^{\ast}(x))u^{\ast}= r^{\ast}(\alpha(x))\beta^{\ast}(u^{\ast}).$
Similarly, one can show that\\ $\beta^{\ast}(l^{\ast}(x))u^{\ast}= l^{\ast}(\alpha(x))\beta^{\ast}(u^{\ast}).$
Hence,  $ (r^{\ast}, l^{\ast}, \beta^{\ast}, V^{\ast}) $ is a bimodule of $  \mathcal{A}.$
\end{enumerate}
{\rm (ii)} Analogously, $(r^{\ast}, 0, \beta^{\ast}, V^{\ast})$ and $ (0, l^{\ast}, \beta^{\ast}, V^{\ast}) $ are shown to be
bimodules of $  \mathcal{A}.$
\qed
\end{proof}
\begin{definition}
Let $(\mathcal{A}, \cdot, \alpha)$ be a Hom-associative algebra, and $B: \mathcal{A}\times\mathcal{A}\rightarrow K$ be a bilinear form on $\mathcal{A}.$ Then,
\begin{enumerate}
\item $B$ is said  to be nondegenerate if
\begin{eqnarray}
\mathcal{A}^{\bot}=\left\lbrace x\in\mathcal{A}/ B(y, x)=0, \forall y\in A \right\rbrace =0;
\end{eqnarray}
\item  $B$ is said to be symmetric if
\begin{eqnarray}
B(x, y)=B(y, x);
\end{eqnarray}
\item  $B$ is said to be  $\alpha$-invariant if
\begin{eqnarray}
B(\alpha(x)\cdot\alpha(y), \alpha(z))= B(\alpha(x), \alpha(y)\cdot\alpha(z)).
\end{eqnarray}
\end{enumerate}
\end{definition}
\begin{definition}
 A Hom-Frobenius algebra is a Hom-associative algebra with a non-degenerate invariant bilinear form.
\end{definition}
\begin{definition}
We call $(\mathcal{A}, \alpha, B)$ a   \textbf{double construction of an
involutive Hom-Frobenius algebra} associated to
$(\mathcal{A}_1, \alpha_{1})$ and $({\mathcal A}_1^*, \alpha^{\ast}_{1})$ if it satisfies the following conditions:
\begin{enumerate}[label=\upshape{(\roman*)}]
\item $ \mathcal{A} = \mathcal{A}_{1}
\oplus \mathcal{A}^{\ast}_{1} $ as the direct sum of vector
spaces;
\item $(\mathcal{A}_1, \alpha_{1})$ and $({\mathcal A}_1^*, \alpha^{\ast}_{1})$ are Hom-associative subalgebras of $
(\mathcal{A}, \alpha)$ with $\alpha=\alpha_{1}\oplus\alpha^{\ast}_{1}$;
\item $B$ is the natural non-degerenate ($\alpha_{1}\oplus\alpha^{\ast}_{1}$)-invariant symmetric
bilinear form on $\mathcal{A}_{1} \oplus \mathcal{A}^{\ast}_{1} $
given, for all $x, y \in \mathcal{A}_{1}, a^{\ast}, b^{\ast} \in \mathcal{A}^{\ast}_{1},$ by
\begin{eqnarray} \label{quadratic form}
 B(x + a^{\ast}, y + b^{\ast}) &=& \langle x, b^{\ast} \rangle +  \langle a^{\ast}, y \rangle,\cr B((\alpha + \alpha^{\ast})(x + a^{\ast}), y + b^{\ast})&=&  B(x + a^{\ast}, (\alpha + \alpha^{\ast})(y + b^{\ast})),
\end{eqnarray}
where $ \langle  , \rangle $ is the natural pair between the vector space $ \mathcal{A}_{1} $ and dual space $ \mathcal{A}^{\ast}_{1}$.
\end{enumerate}
\end{definition}
Let $(\mathcal{A}, \cdot, \alpha)$ be an involutive Hom-associative algebra. Suppose that there is an involutive Hom-associative algebra structure $"\circ"$ on its dual space $\mathcal{A}^{\ast}.$ We construct an involutive Hom-associative algebra structure on the direct sum $\mathcal{A}\oplus\mathcal{A}^{\ast}$ of the underlying vector spaces of $\mathcal{A}$ and $\mathcal{A}^{\ast}$ such that $(\mathcal{A}, \cdot, \alpha)$ and
$(\mathcal{A}^{\ast}, \circ, \alpha^{\ast})$ are Hom-subalgebras,  equipped with the non-degenerate ($\alpha_{1}\oplus\alpha^{\ast}_{1}$)-invariant symmetric bilinear form on
$\mathcal{A}\oplus\mathcal{A}^{\ast}$ given by the equation \eqref{quadratic form}. In other words,
$(\mathcal{A}\oplus\mathcal{A}^{\ast}, \alpha\oplus\alpha^{\ast}, B)$ is an involutive symmetric Hom-associative algebra. Such a construction is called a double construction of an
involutive Hom-Frobenius algebra associated to $(\mathcal{A}, \cdot, \alpha)$ and
$(\mathcal{A}^{\ast}, \circ, \alpha^{\ast})$.
\begin{theorem}\label{Frobenius theorem}
Let $(\mathcal{A}, \cdot, \alpha) $ be an involutive Hom-associative algebra. Suppose that there is an involutive Hom-associative algebra structure $"\circ"$ on its
dual space $ \mathcal{A}^{\ast} $. Then, there is a double construction of an involutive Hom-Frobenius algebra associated to $(\mathcal{A}, \cdot, \alpha)$
and $(\mathcal{A}^{\ast}, \circ, \alpha^{\ast})$ if and only if $(\mathcal{A}, \mathcal{A}^{\ast}, R^{\ast}_{\cdot}, L^{\ast}_{\cdot}, \alpha^{\ast}, R^{\ast}_{\circ},  L^{\ast}_{\circ}, \alpha)$
is a matched pair of involutive Hom-associative algebras.
\end{theorem}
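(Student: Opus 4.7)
The plan is to establish the equivalence by translating between the two structures via the natural pairing between $\mathcal{A}$ and $\mathcal{A}^{\ast}$. By the preceding Lemma, $(R^{\ast}_{\cdot}, L^{\ast}_{\cdot}, \alpha^{\ast}, \mathcal{A}^{\ast})$ is automatically a bimodule of $(\mathcal{A}, \cdot, \alpha)$ and, by symmetry, $(R^{\ast}_{\circ}, L^{\ast}_{\circ}, \alpha, \mathcal{A})$ is a bimodule of $(\mathcal{A}^{\ast}, \circ, \alpha^{\ast})$; hence only the six compatibility conditions \eqref{match. pair1}--\eqref{match. pair6} need analysis.

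For the forward direction, apply Theorem \ref{theo. of matched pairs} to the given matched-pair data to obtain a multiplicative Hom-associative algebra $(\mathcal{A} \oplus \mathcal{A}^{\ast}, \ast, \alpha \oplus \alpha^{\ast})$ whose cross products are
\begin{align*}
x \ast a^{\ast} &= R^{\ast}_{\cdot}(x)\, a^{\ast} + L^{\ast}_{\circ}(a^{\ast})\, x, \\
a^{\ast} \ast x &= L^{\ast}_{\cdot}(x)\, a^{\ast} + R^{\ast}_{\circ}(a^{\ast})\, x.
\end{align*}
It then remains to verify the $(\alpha \oplus \alpha^{\ast})$-invariance of $B$ as defined in \eqref{quadratic form}. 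After expanding both sides of $B((x + a^{\ast}) \ast (y + b^{\ast}),\,\alpha(z) + \alpha^{\ast}(c^{\ast})) = B((\alpha + \alpha^{\ast})(x + a^{\ast}),\,(y + b^{\ast}) \ast (z + c^{\ast}))$, each resulting term is, by the very definition of the dual actions, a pairing of the form $\langle u \cdot v, w^{\ast}\rangle$ or $\langle u^{\ast} \circ v^{\ast}, w\rangle$, and the two sides match termwise once the involutive property $\alpha^{2} = \id_{\mathcal{A}}$ is used to absorb the twist introduced by $\alpha^{\ast}$.

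For the converse, start from a double construction and decompose the mixed products along $\mathcal{A} \oplus \mathcal{A}^{\ast}$, writing
\begin{align*}
x \ast a^{\ast} &= \rho(x) a^{\ast} + \sigma(a^{\ast})x, \\
a^{\ast} \ast x &= \lambda(x) a^{\ast} + \tau(a^{\ast})x,
\end{align*}
with $\rho(x) a^{\ast},\lambda(x) a^{\ast} \in \mathcal{A}^{\ast}$ and $\sigma(a^{\ast})x,\tau(a^{\ast})x \in \mathcal{A}$. Feeding these expressions into the invariance identity $B(u \ast v,\,(\alpha \oplus \alpha^{\ast})(w)) = B((\alpha \oplus \alpha^{\ast})(u),\,v \ast w)$ evaluated on mixed triples of the types $(x, a^{\ast}, y)$, $(x, a^{\ast}, b^{\ast})$, $(a^{\ast}, x, y)$ and $(a^{\ast}, x, b^{\ast})$, together with non-degeneracy of the pairing, forces $\rho = R^{\ast}_{\cdot}$, $\lambda = L^{\ast}_{\cdot}$, $\sigma = L^{\ast}_{\circ}$, $\tau = R^{\ast}_{\circ}$. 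Once these identifications are in place, the Hom-associativity of $\ast$ applied to the mixed triples $(x, y, a^{\ast}), (x, a^{\ast}, y), (a^{\ast}, x, y)$ (and their left--right reverses) splits into $\mathcal{A}$- and $\mathcal{A}^{\ast}$-components which are precisely the six matched-pair conditions \eqref{match. pair1}--\eqref{match. pair6}.

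The main obstacle is the delicate left/right bookkeeping in dualising the multiplication operators: as the preceding Lemma shows, the dual of a bimodule $(L, R)$ is $(R^{\ast}, L^{\ast})$, with the roles of left and right actions swapped, so each of the four contributions in the expansion of $B(\cdot \ast \cdot, \cdot)$ must be paired with precisely the correct operator to recover the $\rho, \lambda, \sigma, \tau$ identification. The involutive hypothesis $\alpha^{2} = \id_{\mathcal{A}}$ is used repeatedly to absorb the iterated twists that arise from combining dualisation with multiplicativity of $\alpha$, without which the identifications and the termwise matching in the invariance computation would not close.
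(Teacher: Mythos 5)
Your proposal is correct and follows essentially the same route as the paper: the forward direction builds $(\mathcal{A}\oplus\mathcal{A}^{\ast},\ast,\alpha\oplus\alpha^{\ast})$ from Theorem \ref{theo. of matched pairs} and checks the $(\alpha\oplus\alpha^{\ast})$-invariance of $B$ termwise, while the converse decomposes the mixed products and reads off conditions \eqref{match. pair1}--\eqref{match. pair6} from Hom-associativity; your cross-product formulas $x\ast a^{\ast}=R^{\ast}_{\cdot}(x)a^{\ast}+L^{\ast}_{\circ}(a^{\ast})x$ and $a^{\ast}\ast x=L^{\ast}_{\cdot}(x)a^{\ast}+R^{\ast}_{\circ}(a^{\ast})x$ agree with the paper's. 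If anything, your explicit use of non-degeneracy of $B$ to force the identifications $\rho=R^{\ast}_{\cdot}$, $\lambda=L^{\ast}_{\cdot}$, $\sigma=L^{\ast}_{\circ}$, $\tau=R^{\ast}_{\circ}$ in the converse makes a step explicit that the paper leaves implicit.
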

\begin{proof}
Let us consider the following four maps:
\begin{eqnarray*}
&&L^{\ast}_{\cdot}: \mathcal{A} \rightarrow gl(\mathcal{A}^{\ast}), \langle L^{\ast}_{\cdot}(x)u^{\ast}, v \rangle = \langle
L_{\cdot}(x)v, u^{\ast} \rangle = \langle x\cdot v, u^{\ast} \rangle,\cr
&&R^{\ast}_{\cdot} : \mathcal{A} \rightarrow gl(\mathcal{A}^{\ast}), \langle R^{\ast}_{\cdot}(x)u^{\ast}, v \rangle = \langle
R_{\cdot}(x)v, u^{\ast} \rangle = \langle v\cdot x, u^{\ast} \rangle,\cr
&&R^{\ast}_{\circ} : \mathcal{A}^{\ast} \rightarrow gl(\mathcal{A}), \langle R^{\ast}_{\circ}(x^{\ast})u,
v^{\ast} \rangle = \langle R_{\circ}(x^{\ast})v^{\ast}, u \rangle = \langle v^{\ast} \circ x^{\ast}, u\rangle, \cr
&&L^{\ast}_{\circ} : \mathcal{A}^{\ast} \rightarrow gl(\mathcal{A}), \langle L^{\ast}_{\circ}(x^{\ast})u,
 v^{\ast} \rangle = \langle L_{\circ}(x^{\ast})v^{\ast}, u \rangle = \langle x^{\ast} \circ v^{\ast},
  u\rangle,
\end{eqnarray*}
for all $ x, v, u \in \mathcal{A} $, $ x^{\ast}, v^{\ast}, u^{\ast} \in \mathcal{A}^{\ast} $.
If $ (\mathcal{A}, \mathcal{A}^{\ast}, R^{\ast}_{\cdot}, L^{\ast}_{\cdot}, \alpha^{\ast}, R^{\ast}_{\circ},  L^{\ast}_{\circ}, \alpha)$ is a matched pair of multiplicative Hom-associative algebras, then $(\mathcal{A}\bowtie \mathcal{A}^{\ast}, \ast, \alpha + \alpha^{\ast})$ is a multiplicative Hom-associative algebra with the product $ \ast $  given by the equation \eqref{match. pair product}, and the bilinear form $ {B}(\cdot, \cdot) $ defined by the
equation \eqref{quadratic form} is $(\alpha\oplus\alpha^{\ast})$-invariant, that is  $ {B}[(\alpha(x) + \alpha^{\ast}(a^{\ast}))\ast (\alpha(y) + \alpha^{\ast}(b^{\ast})), (\alpha(z) + \alpha^{\ast}(c^{\ast}))] = {B}[\alpha(x) + \alpha^{\ast}(a^{\ast}), (\alpha(y) + \alpha^{\ast}(b^{\ast}))\ast(\alpha(z) + \alpha^{\ast}(c^{\ast}))]$
for all $ x, y \in \mathcal{A}^{\ast},  a^{\ast}, b^{\ast} \in \mathcal{A}^{\ast},$ and $ (x + a^{\ast})\ast
(y + b^{\ast}) = (x\cdot y + l_{{B}}(a)y + r_{{B}}(b)x ) + (a\circ b +
l_{\mathcal{A}}(x)b + r_{\mathcal{A}}(y)a ), $ with $l_{\mathcal{A}} = R^{\ast}_{\cdot}, r_{\mathcal{A}} = L^{\ast}_{\cdot}, l_{{B}} = R^{\ast}_{\circ}, r_{{B}} = L^{\ast} _{\circ} $. Indeed, we
have
\begin{eqnarray*}
&&{B}[(\alpha(x) + \alpha^{\ast}(a^{\ast}))\ast (\alpha(y) + \alpha^{\ast}(b^{\ast})), (\alpha(z) + \alpha^{\ast}(c^{\ast}))]\cr
&&= {B}[(\alpha(x)\cdot\alpha(y) + l_{A^{\ast}}
(\alpha^{\ast}(a^{\ast}))\alpha(y) + r_{A^{\ast}}(\alpha^{\ast}(b^{\ast}))\alpha(x)) + (\alpha^{\ast}(a^{\ast})\circ\alpha^{\ast}(b^{\ast})\cr
 &&+ l_{\mathcal{A}}(\alpha(x))\alpha^{\ast}(b^{\ast}) + r_{\mathcal{A}}(\alpha(y))\alpha^{\ast}(a^{\ast})), \alpha(z) + \alpha^{\ast}(c^{\ast})]
                                                        \cr &&= \langle \alpha(x)\cdot\alpha(y), \alpha^{\ast}(c^{\ast})\rangle +
                                                     \langle \alpha^{\ast}(c^{\ast}) \circ \alpha^{\ast}(a^{\ast}), \alpha(y)\rangle + \langle \alpha^{\ast}(b^{\ast})\circ\alpha^{\ast}(c^{\ast}), \alpha(x)\rangle \cr
&& + \langle\alpha^{\ast}(a^{\ast})\circ\alpha^{\ast}(b^{\ast}), \alpha(z)\rangle + \langle \alpha(z)\cdot\alpha(x), \alpha^{\ast}(b^{\ast})\rangle +
                                                              \langle \alpha(y)\cdot\alpha(z), \alpha^{\ast}(a^{\ast})\rangle
\end{eqnarray*}
and
\begin{eqnarray*}
&&{B}[\alpha(x) + \alpha^{\ast}(a^{\ast}), (\alpha(y) + \alpha^{\ast}(b^{\ast}))\ast(\alpha(z) + \alpha^{\ast}(c^{\ast}))]\cr
&&= {B}[\alpha(x) + \alpha^{\ast}(a^{\ast}), (\alpha(y)\cdot\alpha(z) + l_{\mathcal{A}^{\ast}}(\alpha^{\ast}(b^{\ast}))\alpha(z) + r_{\mathcal{A}^{\ast}}(\alpha^{\ast}(c^{\ast}))\alpha(y))\cr
&& +(\alpha^{\ast}(b^{\ast})\circ\alpha^{\ast}(c^{\ast}) + l_{A}(\alpha(y))\alpha^{\ast}(c^{\ast}) + r_{\mathcal{A}}(\alpha(z))\alpha^{\ast}(b^{\ast}))]\cr
 &&= \langle \alpha(x), \alpha^{\ast}(b^{\ast})\circ\alpha^{\ast}(c^{\ast})\rangle
                                                            + \langle \alpha^{\ast}(c^{\ast}), \alpha(x)\cdot\alpha(y)\rangle + \langle \alpha^{\ast}(b^{\ast}), \alpha(z)\cdot\alpha(x)\rangle \cr
 &&  + \langle \alpha(y)\cdot\alpha(z), \alpha^{\ast}(a^{\ast})\rangle
                     + \langle \alpha^{\ast}(a^{\ast})\circ\alpha^{\ast}(b^{\ast}), \alpha(z)\rangle + \langle \alpha(c^{\ast})\circ\alpha^{\ast}(a^{\ast}), \alpha(y)\rangle.
\end{eqnarray*}
Thus,  $ {B} $ is well $(\alpha\oplus\alpha^{\ast})$-invariant.
 Conversely, set
\begin{eqnarray*}
x \ast a^{\ast} = l_{\mathcal{A}}(x)a^{\ast} + r_{\mathcal{A}^{\ast}}(a^{\ast})x, a^{\ast} \ast x = l_{\mathcal{A}^{\ast}}(a^{\ast})x + r_{\mathcal{A}}(x)a^{\star},
\end{eqnarray*}
for $ x \in \mathcal{A}, a^{\ast} \in \mathcal{A}^{\ast} $.
 Then, $ (\mathcal{A}, \mathcal{A}^{\ast}, R^{\ast}_{\cdot}, L^{\ast}_{\cdot},
 \alpha^{\ast}, R^{\ast}_{\circ},  L^{\ast}_{\circ}, \alpha)$ is a matched pair of
 multiplicative Hom-associative algebras, since the  double construction of the
involutive Hom-Frobenius algebra associated to $ (\mathcal{A}, \cdot, \alpha) $ and $
 (\mathcal{A}^{\ast}, \circ, \alpha^{\ast}) $ produces the equations \eqref{match. pair1}
 - \eqref{match. pair6}.
\qed
\end{proof}
\begin{theorem}\label{homMathched pair's theorem}
Let $ (\mathcal{A}, \cdot, \alpha) $ be an involutive Hom-associative algebra. Suppose that there is an involutive Hom-associative algebra
 structure $ "\circ" $ on its dual space $(\mathcal{A}^{\ast}, \alpha^{\ast})$. Then,  $ (\mathcal{A}, \mathcal{A}^{\ast},
  R^{\ast}_{\cdot}, L^{\ast}_{\cdot}, \alpha^{\ast}, R^{\ast}_{\circ},  L^{\ast}_{\circ}, \alpha)$ is a matched pair of involutive Hom-associative algebras
  if and only if, for any $ x \in \mathcal{A}$ and $ a^{\ast}, b^{\ast} \in \mathcal{A}^{\ast} $,
\small{
\begin{eqnarray} \label{infinitesimal cond.}
R^{\ast}_{\cdot}(\alpha(x))(a^{\ast} \circ b^{\ast}) &=& R^{\ast}_{\cdot}(L^{\ast}_{\circ}(a^{\ast})x)\alpha^{\ast}(b^{\ast}) +
(R^{\ast}_{\cdot}(x)a^{\ast})\circ \alpha^{\ast}(b^{\ast}),
\\
\label{antisymmetric cond.}
 R^{\ast}_{\cdot}(R^{\ast}_{\circ}(a^{\ast})x)\alpha^{\ast}(b^{\ast}) + L^{\ast}_{\cdot}(x)a^{\ast}\circ \alpha^{\ast}(b^{\ast}) &=&
 L^{\ast}_{\cdot}(L^{\ast}_{\circ}(b^{\ast})x)\alpha^{\ast}(a^{\ast}) + \alpha^{\ast}(a^{\ast})\circ (R^{\ast}_{\cdot}(x)b^{\ast}).
\end{eqnarray}
}
\end{theorem}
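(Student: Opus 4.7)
The strategy is to invoke Theorem \ref{theo. of matched pairs} with the specific bimodule maps $l_{\mathcal{A}} := R^{\ast}_{\cdot}$, $r_{\mathcal{A}} := L^{\ast}_{\cdot}$, $\beta := \alpha^{\ast}$ on $\mathcal{B} := \mathcal{A}^{\ast}$, and $l_{\mathcal{B}} := R^{\ast}_{\circ}$, $r_{\mathcal{B}} := L^{\ast}_{\circ}$ on $\mathcal{A}$. By the preceding lemma applied to the regular bimodules $(L_{\cdot}, R_{\cdot}, \alpha, \mathcal{A})$ and $(L_{\circ}, R_{\circ}, \alpha^{\ast}, \mathcal{A}^{\ast})$, the quadruples $(R^{\ast}_{\cdot}, L^{\ast}_{\cdot}, \alpha^{\ast}, \mathcal{A}^{\ast})$ and $(R^{\ast}_{\circ}, L^{\ast}_{\circ}, \alpha, \mathcal{A})$ are bimodules of $\mathcal{A}$ and $\mathcal{A}^{\ast}$ respectively, so what remains is to analyse the six cross-compatibility identities \eqref{match. pair1}--\eqref{match. pair6} after substituting these specific choices and to show they collapse to the two claimed identities.

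First I would substitute and simplify. The substitution turns \eqref{match. pair1} into precisely \eqref{infinitesimal cond.} and turns \eqref{match. pair5} into precisely \eqref{antisymmetric cond.} after trivial rearrangement, so the ``only if'' direction of the equivalence is immediate and half of the ``if'' direction is built in. The task is then to show that, under the chosen substitution, the remaining four identities \eqref{match. pair2}, \eqref{match. pair3}, \eqref{match. pair4}, \eqref{match. pair6} are consequences of \eqref{infinitesimal cond.} and \eqref{antisymmetric cond.}. The plan is to dualise each of them via the pairing $\langle\cdot,\cdot\rangle$: pair both sides with a test element in $\mathcal{A}$ or $\mathcal{A}^{\ast}$, push every dual operator through the pairing using the defining relations $\langle L^{\ast}_{\cdot}(x)u^{\ast},v\rangle=\langle x\cdot v,u^{\ast}\rangle$, $\langle R^{\ast}_{\cdot}(x)u^{\ast},v\rangle=\langle v\cdot x,u^{\ast}\rangle$ and their $\circ$-analogues, absorb surplus $\alpha$'s or $\alpha^{\ast}$'s using involutivity $\alpha^{2}=\id_{\mathcal{A}}$, $(\alpha^{\ast})^{2}=\id_{\mathcal{A}^{\ast}}$, and finally read the resulting scalar identity back as the scalar form of \eqref{infinitesimal cond.} or \eqref{antisymmetric cond.}. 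For example, pairing \eqref{match. pair3} with $b^{\ast}\in\mathcal{A}^{\ast}$ and applying these moves converts it into the scalar identity obtained by pairing \eqref{infinitesimal cond.} with $y\in\mathcal{A}$; in the same way \eqref{match. pair2} and \eqref{match. pair4} reduce to \eqref{infinitesimal cond.} (with roles of $a^{\ast}$ and $b^{\ast}$ possibly interchanged), while \eqref{match. pair6} reduces to \eqref{antisymmetric cond.}.

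The main obstacle is the bookkeeping: each of the four identities lives in one of $\mathcal{A}$ or $\mathcal{A}^{\ast}$, so the pairing arguments used to dualise them introduce asymmetries in which factor carries $\alpha$ versus $\alpha^{\ast}$, and one must track carefully where involutivity has to be invoked to collapse iterated twists such as $\alpha(\alpha(x))$ or $\alpha^{\ast}(\alpha^{\ast}(a^{\ast}))$ back to $x$ or $a^{\ast}$ so that the operator-form identities in $\mathcal{A}$ and $\mathcal{A}^{\ast}$ line up exactly. Once this bookkeeping is carried out, the equivalence is established and Theorem \ref{theo. of matched pairs} produces the matched pair $(\mathcal{A},\mathcal{A}^{\ast},R^{\ast}_{\cdot},L^{\ast}_{\cdot},\alpha^{\ast},R^{\ast}_{\circ},L^{\ast}_{\circ},\alpha)$ precisely when \eqref{infinitesimal cond.} and \eqref{antisymmetric cond.} both hold.
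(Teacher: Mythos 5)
Your proposal is correct and follows essentially the same route as the paper: substitute $l_{\mathcal{A}}=R^{\ast}_{\cdot}$, $r_{\mathcal{A}}=L^{\ast}_{\cdot}$, $l_{\mathcal{A}^{\ast}}=R^{\ast}_{\circ}$, $r_{\mathcal{A}^{\ast}}=L^{\ast}_{\circ}$ so that \eqref{match. pair1} and \eqref{match. pair5} become \eqref{infinitesimal cond.} and \eqref{antisymmetric cond.}, then reduce the remaining four conditions to these two by pairing with test elements, pushing the dual operators through the pairing, and collapsing iterated twists via involutivity --- exactly the chain of equivalences \eqref{match. pair1} $\Leftrightarrow$ \eqref{match. pair2} $\Leftrightarrow$ \eqref{match. pair3} $\Leftrightarrow$ \eqref{match. pair4} and \eqref{match. pair5} $\Leftrightarrow$ \eqref{match. pair6} that the paper works out. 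The only (harmless) difference is that you explicitly re-invoke the dual-bimodule lemma before appealing to Theorem \ref{theo. of matched pairs}, a step the paper leaves implicit.
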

\begin{proof}
Obviously, \eqref{infinitesimal cond.} gives \eqref{match. pair1}, and
 \eqref{antisymmetric cond.} reduces to \eqref{match. pair5}  when $ l_{\mathcal{A}} = R^{\ast}_{\cdot}, r_{\mathcal{A}} = L^{\ast}_{\cdot},\\ l_{{B}} = l_{\mathcal{A}^{\ast}} = R^{\ast}_{\circ}, r_{{B}} = r_{\mathcal{A}^{\ast}} = L^{\ast}_{\circ} $. Now, show that
\begin{eqnarray*}
\eqref{match. pair1} \Longleftrightarrow  \eqref{match. pair2} \Longleftrightarrow \eqref{match. pair3}  \Longleftrightarrow \eqref{match. pair4}  \cr {\mbox{and}}\;
\eqref{match. pair5} \Longleftrightarrow \eqref{match. pair6}.
\end{eqnarray*}
 Suppose  \eqref{infinitesimal cond.}  and \eqref{antisymmetric cond.} are satisfied and show that one has
\begin{eqnarray*}
L^{\ast}_{\cdot}(\alpha(x))(a^{\ast}\circ b^{\ast}) &=& L^{\ast}_{\cdot}(R^{\ast}_{\circ}(b^{\ast})x)\alpha^{\ast}(a^{\ast}) + \alpha^{\ast}(a^{\ast})\circ (L^{\ast}_{\cdot}(x)b^{\ast}) \cr
R^{\ast}_{\circ}(\alpha^{\ast}(a^{\ast}))(x\cdot y) &=& R^{\ast}_{\circ}(L^{\ast}_{\cdot}(x)a^{\ast})\alpha(y) + (R^{\ast}_{\circ}(a)x)\cdot \alpha(y) \cr
L^{\ast}_{\circ}(\alpha^{\ast}(a^{\ast}))(x\cdot y) &=& L^{\ast}_{\circ}(R^{\ast}_{\cdot}(y)a^{\ast})\alpha(x) + \alpha(x)\cdot (L^{\ast}_{\circ}(a^{\ast})y) \cr
R^{\ast}_{\circ}(R^{\ast}_{\cdot}(x)a^{\ast})\alpha(y) &+& (L^{\ast}_{\circ}(a^{\ast})x)\cdot \alpha(y) - L^{\ast}_{\circ}(L_{\cdot}(y)a^{\ast})\alpha(x) - \alpha(x)\cdot (R^{\ast}_{\circ}(a)y) = 0.
\end{eqnarray*}
We have
\begin{eqnarray*}
\langle R^{\ast}_{\cdot}(x)a^{\ast}, y \rangle &=& \langle L^{\ast}_{\cdot}(y)a^{\ast}, x \rangle = \langle    y\cdot x, a^{\ast}\rangle, \cr
\langle R^{\ast}_{\circ}(b^{\ast})x, a^{\ast} \rangle &=& \langle L^{\ast}_{\circ}(a^{\ast})x, b^{\ast} \rangle = \langle a^{\ast}\circ b^{\ast}, x \rangle,
\cr
\alpha^{\ast}(R^{\ast}_{\cdot}(x)a^{\ast}) &=& R^{\ast}_{\cdot}(\alpha(x))\alpha^{\ast}(a^{\ast}),\quad \alpha^{\ast}(L^{\ast}_{\cdot}(x)a^{\ast})= L^{\ast}_{\cdot}(\alpha(x))\alpha^{\ast}(a^{\ast}),
\cr
\alpha(R^{\ast}_{\circ}(a^{\ast})x) &=& R^{\ast}_{\circ}(\alpha^{\ast}(a^{\ast}))\alpha(x),\quad  \alpha(L^{\ast}_{\circ}(a^{\ast})x)= L^{\ast}_{\circ}(\alpha^{\ast}(a^{\ast}))\alpha(x),
\end{eqnarray*}
for all $ x, y \in \mathcal{A}, a^{\ast}, b^{\ast} \in \mathcal{A}^{\ast} $. Set $\alpha(x)= z,\alpha(y)= t,$ $\alpha^{\ast}(a^{\ast})= c^{\ast}$ and $ \alpha^{\ast}(b^{\ast})= d^{\ast}.$
Then
\begin{enumerate}[label=\upshape{(\roman*)}]
\item  the statement \eqref{match. pair1} $ \Longleftrightarrow $ \eqref{match. pair2} follows from
\begin{eqnarray*}
\langle R^{\ast}_{\cdot}(\alpha(x))(a^{\ast}\circ b^{\ast}), y\rangle &=& \langle y\cdot \alpha(x), a^{\ast}\circ b^{\ast} \rangle = \langle (L_{\cdot}(y)\circ\alpha)x, a^{\ast}\circ b^{\ast} \rangle \cr
&=& \langle x, \alpha^{\ast}(L^{\ast}_{\cdot}(y)(a^{\ast}\circ b^{\ast})) \rangle = \langle L^{\ast}_{\cdot}(\alpha(y))\alpha^{\ast}(a^{\ast}\circ b^{\ast}), x \rangle \cr
&=& \langle L^{\ast}_{\cdot}(\alpha(y))(\alpha^{\ast}(a^{\ast})\circ \alpha^{\ast}(b^{\ast})), x \rangle\cr
 &=&\langle L^{\ast}_{\cdot}(\alpha(y))(c^{\ast}\circ d^{\ast}), x \rangle;\cr
\langle R^{\ast}_{\cdot}(L^{\ast}_{\circ}(a^{\ast})x)\alpha(b^{\ast}), y \rangle &=& \langle y\cdot L^{\ast}_{\circ}(a^{\ast})x, \alpha^{\ast}(b^{\ast}) \rangle = \langle L^{\ast}_{\cdot}(y)(\alpha^{\ast}(b^{\ast})), L^{\ast}_{\circ}(a^{\ast})x \rangle \cr
&=& \langle L^{\ast}_{\circ}(a^{\ast})x, L^{\ast}_{\cdot}(y)(\alpha^{\ast}(b^{\ast})) \rangle\cr
&=& \langle a^{\ast}\circ(L^{\ast}_{\cdot}(y)(\alpha^{\ast}(b^{\ast}))), x\rangle \cr
  &=& \langle \alpha^{\ast}(c^{\ast})\circ(L^{\ast}_{\cdot}(y)(d^{\ast})), x\rangle; \cr
\langle (R^{\ast}_{\cdot}(x)a^{\ast})\circ \alpha^{\ast}(b^{\ast}), y\rangle &=& \langle R^{\ast}_{\circ}(\alpha^{\ast}(b^{\ast}))y, R^{\ast}_{\cdot}(x)a^{\ast}\rangle = \langle a^{\ast},  (R^{\ast}_{\circ}(\alpha^{\ast}(b^{\ast}))y)\cdot x \rangle \cr
&=& \langle L^{\ast}_{\cdot}[R^{\ast}_{\circ}(\alpha^{\ast}(b^{\ast}))y]a^{\ast},  x \rangle = \langle L^{\ast}_{\cdot}( R^{\ast}_{\circ}(d^{\ast})y)\alpha^{\ast}(c^{\ast}), x \rangle;
\end{eqnarray*}
\item the statement \eqref{match. pair2} $ \Longleftrightarrow $ \eqref{match. pair3} follows from
\begin{eqnarray*}
\langle L^{\ast}(\alpha(x))(a^{\ast} \circ b^{\ast}), y\rangle
&=& \langle a^{\ast}\circ b^{\ast}, \alpha(x)\cdot y \rangle
=\langle R^{\ast}_{\circ}(b^{\ast})(\alpha(x)\cdot y), a^{\ast}\rangle \cr
&=& \langle R^{\ast}_{\circ}(\alpha^{\ast}(d^{\ast}))(z\cdot y), a^{\ast}\rangle; \cr
\langle \alpha^{\ast}(a^{\ast})\circ(L^{\ast}_{\cdot}(x)b^{\ast}), y\rangle
&=& \langle \alpha^{\ast}(a^{\ast}), R^{\ast}_{\circ}(L^{\ast}_{\cdot}(x)b^{\ast})y\rangle =\langle a^{\ast}, \alpha[R^{\ast}_{\circ}(L^{\ast}_{\cdot}(x)b^{\ast})y]\rangle\cr
&=&\langle a^{\ast}, R^{\ast}_{\circ}[\alpha^{\ast}(L^{\ast}_{\cdot}(x)b^{\ast})]\alpha(y)\rangle\cr
&=&\langle a^{\ast}, R^{\ast}_{\circ}[L^{\ast}_{\cdot}(\alpha(x))\alpha^{\ast}(b^{\ast})]\alpha(y)\rangle\cr
&=&\langle a^{\ast}, R^{\ast}_{\circ}(L^{\ast}_{\cdot}(z)d^{\ast})\alpha(y)\rangle; \cr
\langle L^{\ast}_{\cdot}(R^{\ast}_{\circ}(b^{\ast})x)\alpha^{\ast}(a^{\ast}), y\rangle
&=&\langle (R^{\ast}_{\circ}(b^{\ast})x)\circ y, \alpha^{\ast}(a^{\ast})\rangle =\langle \alpha[(R^{\ast}_{\circ}(b^{\ast})x)\circ y], a^{\ast}\rangle \cr
&=&\langle (R^{\ast}_{\circ}(\alpha^{\ast}(b^{\ast}))\alpha(x))\circ \alpha(y), a^{\ast}\rangle \cr
&=&\langle R^{\ast}_{\circ}(d^{\ast})z\cdot \alpha(y), a^{\ast}\rangle;
\end{eqnarray*}
\item the statement \eqref{match. pair1} $  \Longleftrightarrow $ \eqref{match. pair4} follows from
\begin{eqnarray*}
\langle R^{\ast}(\alpha(x))(a^{\ast}\circ b^{\ast}), y \rangle &=& \langle a^{\ast}\circ b^{\ast}, y\cdot \alpha(x)\rangle =\langle L_{\cdot}(a^{\ast})b^{\ast}, y\cdot z \rangle \cr
 &=& \langle L^{\ast}_{\circ}(a^{\ast})(y\cdot z)\rangle =\langle L^{\ast}_{\circ}(\alpha^{\ast}(c^{\ast}))(y\cdot z)\rangle;
\cr
\langle (R^{\ast}_{\cdot}(x)a^{\ast})\circ\alpha^{\ast}(b^{\ast}), y\rangle &=& \langle \alpha^{\ast}(b^{\ast}), L^{\ast}_{\cdot}(R^{\ast}_{\cdot}(x)a^{\ast})y\rangle =\langle b^{\ast}, \alpha^{\ast}[L^{\ast}_{\cdot}(R^{\ast}_{\cdot}(x)a^{\ast})y]\rangle \cr
&=& \langle b^{\ast}, L^{\ast}_{\cdot}(R^{\ast}_{\cdot}(\alpha(x))\alpha^{\ast}(a^{\ast}))\alpha(y)\rangle\cr &=&\langle b^{\ast}, L^{\ast}_{\cdot}(R^{\ast}_{\cdot}(z)c^{\ast})\alpha(y)\rangle;
\cr
\langle R^{\ast}_{\cdot}(L^{\ast}_{\circ}(a^{\ast})x)\alpha^{\ast}(b^{\ast}), y\rangle &=&\langle y\cdot L^{\ast}_{\circ}(a^{\ast})x, \alpha^{\ast}(b^{\ast})\rangle =\langle \alpha(y)\cdot \alpha(L^{\ast}_{\circ}(a^{\ast})x), b^{\ast}\rangle \cr
&=&\langle \alpha(y)\cdot L^{\ast}_{\circ}(\alpha^{\ast}(a^{\ast}))\alpha(x), b^{\ast}\rangle =\langle \alpha(y)\cdot L^{\ast}_{\circ}(c^{\ast})z, b^{\ast}\rangle;
\end{eqnarray*}
\item the statement \eqref{match. pair5} $  \Longleftrightarrow $ \eqref{match. pair6} follows from
\begin{eqnarray*}
\langle L^{\ast}_{\cdot}(L^{\ast}_{\circ}(b^{\ast})x)\alpha^{\ast}(a^{\ast}), y\rangle &=& \langle (L^{\ast}_{\circ}(b^{\ast})x)\cdot y, \alpha^{\ast}(a^{\ast})\rangle =\langle a^{\ast}, \alpha(L^{\ast}_{\circ}(b^{\ast})x)\cdot \alpha(y) \rangle \cr
&=& \langle a^{\ast}, L^{\ast}_{\circ}(\alpha^{\ast}(b^{\ast}))\alpha(x)\cdot \alpha(y)\rangle =\langle a^{\ast}, L^{\ast}_{\circ}(d^{\ast})z\cdot \alpha(y)\rangle;\cr
\langle \alpha^{\ast}(a^{\ast})\circ (R^{\ast}_{\cdot}(x)b^{\ast}), y\rangle &=& \langle R^{\ast}_{\circ}(R^{\ast}_{\circ}(x)b^{\ast})y, \alpha^{\ast}(a^{\ast})\rangle = \langle \alpha^{\ast}(a^{\ast})\circ (R^{\ast}_{\cdot}(x)b^{\ast}), y\rangle\cr
&=& \langle \alpha[R^{\ast}_{\circ}(R^{\ast}_{\circ}(x)b^{\ast})y], a^{\ast}\rangle\cr
 &=&\langle R^{\ast}_{\circ}[R^{\ast}_{\circ}(\alpha(x))\alpha^{\ast}(b^{\ast})]\alpha(y), a^{\ast}\rangle\cr
&=&\langle R^{\ast}_{\circ}(R^{\ast}_{\cdot}(z)d^{\ast})\alpha(y), a^{\ast}\rangle; \cr
\langle (L^{\ast}_{\cdot}(x)a^{\ast})\circ \alpha^{\ast}(b^{\ast}), y\rangle &=& \langle R^{\ast}_{\circ}(\alpha^{\ast}(b^{\ast}))y, L^{\ast}_{\cdot}(x)a^{\ast}\rangle =\langle x\cdot (R^{\ast}_{\circ}(d^{\ast})y), a^{\ast}\rangle  \cr
&=& \langle \alpha(z)\cdot (R^{\ast}_{\circ}(d^{\ast})y), a^{\ast}\rangle;                                           \cr
\langle R^{\ast}_{\cdot}(R^{\ast}_{\circ}(a^{\ast})x)\alpha^{\ast}(b^{\ast}), y\rangle &=& \langle y\cdot R^{\ast}_{\circ}(a^{\ast})x, \alpha^{\ast}(b^{\ast})\rangle =\langle \alpha^{\ast}(b^{\ast}), L_{\cdot}(y)(R^{\ast}_{\circ}(a^{\ast})x) \rangle \cr
&=& \langle (L^{\ast}_{\cdot}(y)(d^{\ast}), R^{\ast}_{\circ}(a^{\ast})x \rangle = \langle L^{\ast}_{\cdot}(y)d^{\ast}\circ a^{\ast}, x\rangle \cr
&=& \langle L^{\ast}_{\circ}(L^{\ast}_{\cdot}(y)d^{\ast})x, a^{\ast} \rangle = \langle L^{\ast}_{\circ}(L^{\ast}_{\cdot}(y)d^{\ast})\alpha(z), a^{\ast}\rangle
 \end{eqnarray*}
\end{enumerate}
which completes the proof.  \qed
\end{proof}
\subsection{Antisymmetric infinitesimal Hom-bialgebras}
\label{subsec:dblcnstrinvhomfrobal:asyminfhombial}
Let $ \mathcal{A} $ be a multiplicative Hom-associative algebra.
 Let $ \sigma : \mathcal{A}\otimes \mathcal{A} \rightarrow \mathcal{A}\otimes \mathcal{A}  $ be the exchange operator defined as
\begin{eqnarray*}
\sigma(x \otimes y) = y \otimes x,
\end{eqnarray*} for all $ x, y \in \mathcal{A} $.
\begin{proposition}
Let $(\mathcal{A}, \cdot, \alpha)$ be a multiplicative Hom-associative algebra. Then,\\ $(\alpha\otimes L_{\cdot}, R_{\cdot}\otimes\alpha, \alpha\otimes \alpha, \mathcal{A}\otimes\mathcal{A})$ given, for any $x, a, b\in \mathcal{A}:$ by
\begin{eqnarray*}
(\alpha\otimes L)(x)(a\otimes b)  = (\alpha\otimes L(x))(a\otimes b) = \alpha(a)\otimes x\cdot b, \cr
(R_{\cdot}\otimes \alpha)(x)(a\otimes b)  = (R(x)\otimes \alpha)(a\otimes b) = a\cdot x\otimes \alpha(b),
\end{eqnarray*}
is a bimodule of $\mathcal{A}.$
\end{proposition}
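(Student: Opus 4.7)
The plan is to verify directly the five bimodule axioms \eqref{Cond:HomBimod:lpb}--\eqref{Hombimodule:br:eq2} from the definition, applied to an arbitrary generator $v = a\otimes b \in \mathcal{A}\otimes\mathcal{A}$, with $l = \alpha\otimes L_\cdot$, $r = R_\cdot\otimes\alpha$, and $\beta = \alpha\otimes\alpha$. Since all maps are linear and defined on simple tensors, extending to the whole tensor product is automatic, so it suffices to check the identities on such generators. The computation will rely on exactly two ingredients inherited from $(\mathcal{A},\cdot,\alpha)$: the Hom-associativity law $\alpha(u)\cdot(v\cdot w)=(u\cdot v)\cdot\alpha(w)$ and the multiplicativity $\alpha(u\cdot v)=\alpha(u)\cdot\alpha(v)$.

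First I would handle \eqref{Cond:HomBimod:lpb}: compute $l(x\cdot y)\beta(a\otimes b) = \alpha^2(a)\otimes (x\cdot y)\cdot\alpha(b)$ and $l(\alpha(x))l(y)(a\otimes b) = \alpha^2(a)\otimes \alpha(x)\cdot(y\cdot b)$; the two second tensor factors coincide by Hom-associativity. The axiom \eqref{Cond:HomBimod:rpb} is symmetric: both $r(x\cdot y)\beta(a\otimes b)$ and $r(\alpha(y))r(x)(a\otimes b)$ reduce, via Hom-associativity applied in the first tensor slot, to $(a\cdot x)\cdot\alpha(y)\otimes \alpha^2(b)$.

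Next I would dispatch the compatibility \eqref{Cond:HomBimod:lar}. Both sides evaluate on $a\otimes b$ to an element whose first factor is $\alpha(a\cdot y)$ or $\alpha(a)\cdot\alpha(y)$, and whose second factor is $\alpha(x\cdot b)$ or $\alpha(x)\cdot\alpha(b)$; multiplicativity of $\alpha$ equates them. Finally, the two Hom-module conditions \eqref{Hombimodule:bl:eq1} and \eqref{Hombimodule:br:eq2} follow by a single application of multiplicativity in the slot where $\cdot$ appears, since $(\alpha\otimes\alpha)(\alpha(a)\otimes x\cdot b)=\alpha^2(a)\otimes\alpha(x)\cdot\alpha(b)=l(\alpha(x))(\alpha\otimes\alpha)(a\otimes b)$, and likewise for $r$.

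There is no real obstacle here; the argument is a routine bookkeeping of tensor factors. The only point requiring a bit of care is to note which slot uses Hom-associativity (namely the slot carrying the $L_\cdot$ or $R_\cdot$ action) and which slot only uses multiplicativity of $\alpha$. One could alternatively deduce the result from the general fact that the tensor product of two bimodules of a Hom-associative algebra is again a bimodule, by recognizing $(\alpha\otimes L_\cdot,\ R_\cdot\otimes\alpha,\ \alpha\otimes\alpha,\ \mathcal{A}\otimes\mathcal{A})$ as built from the trivial bimodule $(\alpha,\mathcal{A})$ and the regular bimodule $(L_\cdot,R_\cdot,\alpha,\mathcal{A})$, but the direct verification sketched above is short enough to present outright.
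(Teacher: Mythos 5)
Your proof is correct and follows essentially the same route as the paper: a direct check of all five bimodule axioms on simple tensors, using Hom-associativity in the slot carrying $L_\cdot$ or $R_\cdot$ and multiplicativity of $\alpha$ elsewhere (the paper additionally simplifies $\alpha^2$ to the identity, since Section 3 assumes $\alpha$ involutive, but as you observe this is not needed). No gaps.
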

\begin{proof}
Let $x, y, v_{1}, v_{2}\in \mathcal{A}$.
\begin{enumerate}[label=\upshape{(\roman*)}]
\item By formulas for the maps and hom-associativity,
\begin{eqnarray*}
(\alpha\otimes L_{\cdot})(x\cdot y)(\alpha\otimes \alpha)(v_{1}\otimes v_{2})&=& [\alpha\otimes L_{\cdot}(x\cdot y)](\alpha(v_{1})\otimes \alpha(v_{2}))\cr &=& v_{1}\otimes(x\cdot y)\cdot\alpha(v_{2});\cr
(\alpha\otimes L_{\cdot}(\alpha(x)))(\alpha\otimes L_{\cdot}(y))(v_{1}\otimes v_{2})&=& (\alpha\otimes L_{\cdot}(\alpha(x)))(\alpha(v_{1}))\otimes (y\cdot v_{2})\cr &=& v_{1}\otimes\alpha(x)\cdot(y\cdot v_{2})
\end{eqnarray*}
give $(\alpha\otimes L_{\cdot})(x\cdot y)(\alpha\otimes \alpha)(v_{1}\otimes v_{2})= (\alpha\otimes L(\alpha(x)))(\alpha\otimes L(y))(v_{1}\otimes v_{2}).$
\item By formulas for the maps and hom-associativity,
\begin{eqnarray*}
(R_{\cdot}\otimes \alpha)(x\cdot y)(\alpha\otimes \alpha)(v_{1}\otimes v_{2})&=& (R_{\cdot}(x\cdot y)\otimes \alpha)(\alpha(v_{1})\otimes \alpha(v_{2}))\cr &=& \alpha(v_{1})\cdot(x\cdot y)\otimes v_{2}\cr
(R_{\cdot}(\alpha(y))\otimes \alpha)(R_{\cdot}(x)\otimes \alpha)(v_{1}\otimes v_{2})&=& (R_{\cdot}(\alpha(y))((v_{1}\cdot x)\otimes\alpha(v_{2}))\cr &=& (v_{1}\cdot x)\cdot\alpha(y)\otimes v_{2}
\end{eqnarray*}
yield $(R_{\cdot}\otimes \alpha)(x\cdot y)(\alpha\otimes \alpha)(v_{1}\otimes v_{2})= (R_{\cdot}(\alpha(y))\otimes \alpha)(R_{\cdot}(x)\otimes \alpha)(v_{1}\otimes v_{2}).$
\item By formulas for the maps,
\begin{eqnarray*}
(\alpha\otimes L_{\cdot}(\alpha(x)))(R_{\cdot}(y)\otimes\alpha)(v_{1}\otimes v_{2})&=&(\alpha\otimes L_{\cdot}(\alpha(x)))(v_{1}\cdot y\otimes \alpha(v_{2}))\cr &=&\alpha(v_{1})\cdot\alpha(y)\otimes\alpha(x) \cdot \alpha(v_{2});\cr
(R_{\cdot}(\alpha(y))\otimes \alpha)(\alpha\otimes L_{\cdot}(x))(v_{1}\otimes v_{2})&=& (R_{\cdot}(\alpha(y))\otimes \alpha)(\alpha(v_{1})\otimes x\cdot v_{2})\cr &=& \alpha(v_{1})\cdot\alpha(y)\otimes\alpha(x)\cdot\alpha(v_{2})
\end{eqnarray*}
give
$$(\alpha\otimes L_{\cdot}(\alpha(x)))(R_{\cdot}(y)\otimes\alpha)(v_{1}\otimes v_{2})= (R_{\cdot}(\alpha(y))\otimes \alpha)(\alpha\otimes L_{\cdot}(x))(v_{1}\otimes v_{2}).$$
\item By formulas for the maps,
\begin{eqnarray*}
(\alpha\otimes \alpha)(\alpha\otimes L_{\cdot})(v_{1}\otimes v_{2})= (\alpha\otimes \alpha)(\alpha(v_{1})\otimes x\cdot v_{2})= v_{1}\otimes \alpha(x)\cdot \alpha(v_{2});\cr
(\alpha\otimes L_{\cdot}(\alpha(x)))(v_{1}\otimes v_{2})= (\alpha\otimes L_{\cdot}\alpha(x))(\alpha(v_{1})\otimes \alpha(v_{2}))= v_{1}\otimes\alpha(x)\cdot\alpha(v_{2})
\end{eqnarray*}
imply $(\alpha\otimes \alpha)(\alpha\otimes L_{\cdot})(v_{1}\otimes v_{2})= \alpha\otimes L_{\cdot}(\alpha(x)))(v_{1}\otimes v_{2}).$
\item By formulas for the maps,
\begin{eqnarray*}
(\alpha\otimes\alpha)(R_{\cdot}\alpha(x))(v_{1}\otimes v_{2})&=& (\alpha\otimes \alpha)(v_{1}\cdot x\otimes \alpha(v_{2}))\cr &=& \alpha(v_{1})\cdot\alpha(x)\otimes v_{2};\cr
(R_{\cdot}(\alpha(x))\otimes \alpha)(\alpha\otimes \alpha)(v_{1}\otimes v_{2})&=& (R_{\cdot}(\alpha(x))\otimes\alpha)(\alpha(v_{1})\otimes \alpha(v_{2}))\cr &=& \alpha(v_{1})\cdot\alpha(x)\otimes v_{2}
\end{eqnarray*}
yield $(\alpha\otimes\alpha)(R_{\cdot}\alpha(x))(v_{1}\otimes v_{2})= (R_{\cdot}(\alpha(x))\otimes \alpha)(\alpha\otimes \alpha)(v_{1}\otimes v_{2}).$
\end{enumerate}
Hence, the  proof is achieved.
\qed
\end{proof}
\begin{remark}
The quadruple $(L_{\cdot}\otimes\alpha, \alpha\otimes R_{\cdot}, \alpha\otimes \alpha, \mathcal{A}\otimes\mathcal{A})$ is also a bimodule of $\mathcal{A}.$
\end{remark}

\begin{theorem}\label{hombialgebra theorem}
Let $(\mathcal{A},\cdot, \alpha)$ be an involutive Hom-associative algebra.
Suppose there is an involutive Hom-associative algebra structure $ "\circ" $ on its dual space $ \mathcal{A}^{\ast} $
 given by a linear map $ \Delta^{\ast} : \mathcal{A}^{\ast} \otimes \mathcal{A}^{\ast} \rightarrow \mathcal{A}^{\ast} $.
Then, $(\mathcal{A}, \mathcal{A}^{\ast}, R^{\ast}_{\cdot}, L^{\ast}_{\cdot}, \alpha^{\ast}, R^{\ast}_{\circ},  L^{\ast}_{\circ}, \alpha)$ is a matched pair of involutive Hom-associative algebras if and only if $ \Delta: \mathcal{A} \rightarrow \mathcal{A} \otimes \mathcal{A} $ satisfies the
 following conditions:
\begin{eqnarray} \label{infinitesimal identity}
\Delta\circ\alpha(x\cdot y) = (\alpha \otimes L_{\cdot}(x))\bigtriangleup(y) + (R_{\cdot}(y) \otimes \alpha)\bigtriangleup(x),
\\ \label{antisymmetric identity}
(L_{\cdot}(y) \otimes \alpha - \alpha \otimes R_{\cdot}(y))\Delta(x) + \sigma [(L_{\cdot}(x) \otimes \alpha - \alpha \otimes R_{\cdot}(x))\Delta(y)]= 0
\end{eqnarray}
for all $ x, y \in \mathcal{A} $.
\end{theorem}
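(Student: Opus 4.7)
The overall plan is to leverage the previous theorem so that I do not need to verify all six matched pair conditions by hand. By Theorem \ref{homMathched pair's theorem}, the matched pair condition for $(\mathcal{A}, \mathcal{A}^{\ast}, R^{\ast}_{\cdot}, L^{\ast}_{\cdot}, \alpha^{\ast}, R^{\ast}_{\circ}, L^{\ast}_{\circ}, \alpha)$ is equivalent to the two dual-side identities \eqref{infinitesimal cond.} and \eqref{antisymmetric cond.}. My task then reduces to establishing, under the involutivity hypothesis $\alpha^{2}=\id_{\mathcal{A}}$ (which transfers to $(\alpha^{\ast})^{2}=\id_{\mathcal{A}^{\ast}}$), the two logical equivalences $\eqref{infinitesimal cond.} \Longleftrightarrow \eqref{infinitesimal identity}$ and $\eqref{antisymmetric cond.} \Longleftrightarrow \eqref{antisymmetric identity}$.

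The bridge between the two sides is the defining relation of $\Delta$ as the dual of $\circ$, namely $\langle \Delta(z), a^{\ast}\otimes b^{\ast}\rangle = \langle z, a^{\ast}\circ b^{\ast}\rangle$ for all $z\in\mathcal{A}$ and $a^{\ast},b^{\ast}\in\mathcal{A}^{\ast}$. To handle \eqref{infinitesimal identity}, I would pair both sides of \eqref{infinitesimal cond.} with an arbitrary $y\in\mathcal{A}$ and rewrite every occurrence of $R^{\ast}_{\cdot}, L^{\ast}_{\cdot}, L^{\ast}_{\circ}$ through the adjoint formulas listed in the proof of Theorem \ref{homMathched pair's theorem}: the left-hand side becomes $\langle \alpha(x)\cdot y,\,\alpha^{\ast}(a^{\ast})\circ\alpha^{\ast}(b^{\ast})\rangle$, which after using $\alpha^{2}=\id$ and multiplicativity of $\alpha$ identifies with $\langle \Delta\circ\alpha(x\cdot y), a^{\ast}\otimes b^{\ast}\rangle$ once $y$ is relabelled; the two summands on the right become $\langle (R_{\cdot}(y)\otimes\alpha)\Delta(x), a^{\ast}\otimes b^{\ast}\rangle$ and $\langle (\alpha\otimes L_{\cdot}(x))\Delta(y), a^{\ast}\otimes b^{\ast}\rangle$, respectively, by regrouping the $\alpha^{\ast}$ factors across the pairing. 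Since $a^{\ast}, b^{\ast}, y$ are arbitrary, the equivalence with \eqref{infinitesimal identity} follows.

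For \eqref{antisymmetric cond.} $\Longleftrightarrow$ \eqref{antisymmetric identity}, I would carry out the same pairing procedure on each of the four terms. The two terms of the form $(R^{\ast}_{\cdot}(\cdot)\alpha^{\ast})$ and $(\cdot)\circ\alpha^{\ast}(b^{\ast})$ convert, after invoking $\alpha^{2}=\id$, into $\langle (L_{\cdot}(y)\otimes\alpha - \alpha\otimes R_{\cdot}(y))\Delta(x),\, a^{\ast}\otimes b^{\ast}\rangle$, while the remaining pair, involving $\alpha^{\ast}(a^{\ast})\circ$ on the right, converts into $\langle (L_{\cdot}(x)\otimes\alpha - \alpha\otimes R_{\cdot}(x))\Delta(y),\, b^{\ast}\otimes a^{\ast}\rangle$; the swap of the tensor-factor order of $a^{\ast}$ and $b^{\ast}$ is exactly the operator $\sigma$ appearing in \eqref{antisymmetric identity}. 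Equating the two expressions and letting $y, a^{\ast}, b^{\ast}$ be arbitrary yields \eqref{antisymmetric identity}.

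The main obstacle I expect is strictly bookkeeping: one must keep very precise track of where each $\alpha$ or $\alpha^{\ast}$ lands after every use of an adjoint relation, and the involutive hypothesis $\alpha^{2}=\id$ is essential to cancel stray $\alpha^{2}$ factors so that the resulting identities collapse into the clean operator form \eqref{infinitesimal identity}--\eqref{antisymmetric identity}. No new structural idea is needed beyond the identities $\alpha^{\ast}(L^{\ast}_{\cdot}(x)a^{\ast}) = L^{\ast}_{\cdot}(\alpha(x))\alpha^{\ast}(a^{\ast})$, $\alpha(R^{\ast}_{\circ}(a^{\ast})x) = R^{\ast}_{\circ}(\alpha^{\ast}(a^{\ast}))\alpha(x)$ and their analogues, which were already proved in the bimodule lemma above.
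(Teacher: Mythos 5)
Your overall strategy coincides with the paper's in its first step: both reduce the matched-pair condition to the two identities \eqref{infinitesimal cond.} and \eqref{antisymmetric cond.} via Theorem \ref{homMathched pair's theorem}. Where you genuinely diverge is in how the equivalences $\eqref{infinitesimal cond.}\Leftrightarrow\eqref{infinitesimal identity}$ and $\eqref{antisymmetric cond.}\Leftrightarrow\eqref{antisymmetric identity}$ are established: the paper fixes a basis, expands everything in structure constants $c^{k}_{ij}$, $f^{k}_{ij}$, $b^{i}_{q}$, and matches coefficients of $e_{j}\otimes e_{k}$ and $e^{\ast}_{m}$, whereas you work intrinsically with the pairing $\langle\Delta(z),a^{\ast}\otimes b^{\ast}\rangle=\langle z,a^{\ast}\circ b^{\ast}\rangle$ and the adjoint identities. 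Your route is cleaner and less error-prone than index bookkeeping, and it is the standard argument in the untwisted (Bai) setting; the paper's coordinate computation buys nothing except explicitness.

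Two points need care before your sketch becomes a proof. First, a transcription slip: pairing the left side of \eqref{infinitesimal cond.} with $y$ gives $\langle R_{\cdot}(\alpha(x))y,a^{\ast}\circ b^{\ast}\rangle=\langle y\cdot\alpha(x),a^{\ast}\circ b^{\ast}\rangle$, a \emph{right} multiplication, not $\langle\alpha(x)\cdot y,\alpha^{\ast}(a^{\ast})\circ\alpha^{\ast}(b^{\ast})\rangle$ as written. Second, and more substantively, the term-by-term matching is not achieved by independent relabelings in each summand: after dualizing, the three terms acquire $\alpha$'s in different slots (one finds, e.g., $\Delta(\alpha(x))$ in one term where the target identity has $\Delta(x)$), and these can only be reconciled by applying a single global substitution ($x\mapsto\alpha(x)$, $a^{\ast}\mapsto\alpha^{\ast}(a^{\ast})$, etc.) to the whole equation together with the compatibility $\Delta\circ\alpha=(\alpha\otimes\alpha)\circ\Delta$ (the dual of the multiplicativity of $\circ$ with respect to $\alpha^{\ast}$), which lets you transfer an $\alpha$ from one tensor factor of $\Delta$ to the other. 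You never state this compatibility, but it is indispensable; the paper uses it implicitly in the line ``$\Delta\circ\alpha(e_{m}\cdot e_{i})=(\alpha\otimes\alpha)\circ\Delta(e_{m}\cdot e_{i})$'' and in the coefficient manipulations with $b^{i}_{k}b^{k}_{l}=\delta^{i}_{l}$. Once you add that ingredient and carry the relabeling uniformly across all terms, your argument closes.
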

\begin{proof}
Let $ \lbrace e_{1},... , e_{n} \rbrace $ be a basis of $ \mathcal{A},$ and $  \lbrace e^{\ast}_{1},... , e^{\ast}_{n} \rbrace $ be its dual basis. Set $ e_{i} \cdot e_{j} = \sum^{n}_{k=1}c^{k}_{ij}e_{k} $ and $  e^{\ast}_{i} \circ e^{\ast}_{j} = \sum^{n}_{k=1}f^{k}_{ij}e^{\ast}_{k} $. Therefore, we have $ \Delta(e_{k}) = \sum^{n}_{i,j=1}f^{k}_{ij} e_{i} \otimes e_{j}, $ and
\begin{eqnarray*}
R^{\ast}_{\cdot}(e_{i})e^{\ast}_{j} = \sum^{n}_{k=1}c^{j}_{ki}e^{\ast}_{k}, \  L^{\ast}_{\cdot}(e_{i})e^{\ast}_{j} = \sum^{n}_{k=1}c^{j}_{ik}e^{\ast}_{k}, \ \alpha(e_{i})=\sum^{n}_{q=1}b^{i}_{q}e_{q}, \cr
R^{\ast}_{\circ}(e_{i}^{\ast})e_{j} = \sum^{n}_{k=1}f^{j}_{ki}e_{k}, \   L^{\ast}_{\circ}(e_{i}^{\ast})e_{j} = \sum^{n}_{k=1}f^{j}_{ik}e_{k}, \ \alpha^{\ast}(e^{\ast}_{i})=\sum^{n}_{q=1}b^{\ast i}_{q}e^{\ast}_{q}.
\end{eqnarray*}
We have $\langle \alpha^{\ast}(e^{\ast}_{i}), e_{j}\rangle=b^{\ast j}_{i}=\langle e^{\ast}_{i}, \alpha(e_{j})\rangle=b^{i}_{j}$ which implies $b^{\ast j}_{i}= b^{i}_{j}.$ Also, from the identity $\alpha^{2}=\id$ and $\alpha(e_{i})=\sum^{n}_{k=1}b^{i}_{k}e_{k},$ we get $\sum^{n}_{k=1}\sum^{n}_{l=1}b^{i}_{k}b^{k}_{l}e_{l}=\sum^{n}_{l=1}\delta^{i}_{l}e_{l}=e_{i},$ with $b^{i}_{k}b^{k}_{l}=\delta^{i}_{l}.$
Hence, collecting  the coefficient of $ e_{u} \otimes e_{v} $ (for any i, j, k, m) yields
\begin{eqnarray*}
\Delta(\alpha(e_{m})\cdot \alpha(e_{i}))&=& (\alpha \otimes L_{\cdot}(e_{m}))\Delta(e_{i}) + (R_{\cdot}(e_{i}) \otimes \alpha)\Delta(e_{m})\cr
                                 &=& (\alpha\otimes L(e_{m}))( \sum^{n}_{u,v=1}f^{i}_{uv}e_{u}\otimes e_{v}) + (R(e_{i}) \otimes \alpha)(\sum^{n}_{u,v=1}f^{m}_{uv}e_{u}\otimes e_{v}) \cr
                                  &=&  \sum^{n}_{u,v=1}f^{i}_{uv}\alpha(e_{u})\otimes e_{m}\cdot e_{v} + \sum^{n}_{u,v=1}f^{m}_{uv}e_{u}\cdot e_{i}\otimes \alpha(e_{v})\cr
                                  =\sum^{n}_{u,v=1}f^{i}_{uv}(&\displaystyle{\sum^{n}_{j=1}}& b^{u}_{j}e_{j}) \otimes  (\sum^{n}_{k=1}c^{k}_{mv}e_{k}) +\sum^{n}_{u,v=1}f^{m}_{uv}(\sum^{n}_{j=1}c^{j}_{ui}e_{u})\otimes (\sum^{n}_{k=1}b^{v}_{k}e_{k})  \cr
                                  &=& \sum^{n}_{j, k, u, v=1}(f^{i}_{uv}b^{u}_{j}c^{k}_{mv} + f^{m}_{uv}c^{j}_{ui}b^{v}_{k})e_{j}\otimes e_{k};
                                  \cr
\Delta\circ\alpha(e_{m}\cdot e_{i}) &=& \Delta\circ\alpha(\sum^{n}_{l=1}c^{l}_{mi}e_{l})
                                      = \sum^{n}_{l=1}c^{l}_{mi}\Delta(\alpha(e_{l}))\cr
                                      &=& \sum^{n}_{l=1}c^{l}_{mi}\Delta(\sum^{n}_{q=1}b^{l}_{q}e_{q})
                                    = \sum^{n}_{l, q, j, k=1}c^{l}_{mi}b^{l}_{q} f^{q}_{jk}e_{j}\otimes e_{k},\
\end{eqnarray*}
since $\Delta\circ\alpha(e_{m}\cdot e_{i})=(\alpha\otimes\alpha)\circ\Delta(e_{m}\cdot e_{i}).$
Then, $$ \sum^{n}_{l, u, v, j, k=1}c^{l}_{mi}f^{l}_{uv}b^{u}_{j}b^{v}_{k}e_{j}\otimes e_{k}=\sum^{n}_{l, q, j, k=1}c^{l}_{mi}b^{l}_{q} f^{q}_{jk}e_{j}\otimes e_{k}.$$
We obtain the relation
\begin{eqnarray*}
\sum^{n}_{q=1} c^{l}_{mi}b^{l}_{q} f^{q}_{jk} &=& \sum^{n}_{u, v=1}(f^{i}_{uv}b^{u}_{j}c^{k}_{mv} + f^{m}_{uv}c^{j}_{ui}b^{v}_{k})\cr
&\Updownarrow & \cr
\sum^{n}_{u, v=1}c^{l}_{mi}f^{l}_{uv}b^{u}_{j}b^{v}_{k} &=& \sum^{n}_{u, v=1}(f^{i}_{uv}b^{u}_{j}c^{k}_{mv} + f^{m}_{uv}c^{j}_{ui}b^{v}_{k}),
\end{eqnarray*}
and  the identity given by the coefficient of $ e^{m} $ in
\begin{eqnarray*}
R^{\ast}_{\cdot}(\alpha(e_{i}))(e^{\ast}_{j} \circ e^{\ast}_{k}) &=& R^{\ast}_{\cdot}(L^{\ast}_{\circ}(e^{\ast}_{j})e_{i})\alpha^{\ast}(e^{\ast}_{k}) + (R^{\ast}_{\cdot}(e_{i})e^{\ast}_{j})\circ \alpha^{\ast}(e^{\ast}_{k}). \cr
  &=& R^{\ast}_{\cdot}(\sum^{n}_{u=1} f^{i}_{ju} e_{u})\alpha^{\ast}(e^{\ast}_{v}) + (\sum^{n}_{u=1}
  c^{j}_{ui}e^{\ast}_{u})\circ \alpha^{\ast}(e^{\ast}_{k})\cr
 &=& \sum^{n}_{u=1} f^{i}_{ju}R^{\ast}_{\cdot}(e_{u})(\sum^{n}_{v=1}b^{v}_{k}e^{\ast}_{v}) +  \sum^{u}_{u=1}c^{j}_{ui}(e^{\ast}_{u}\circ (\sum^{n}_{v=1}b^{v}_{k}e^{\ast}_{v})) \cr
  &=& \sum^{n}_{u, v=1} f^{i}_{ju}b^{v}_{k}R^{\ast}_{\cdot}(e_{u})e^{\ast}_{v} +  \sum^{n}_{u, v=1}c^{j}_{ui}b^{v}_{k}(e^{\ast}_{u}\circ e^{\ast}_{v}) \cr
 &=& \sum^{n}_{u, v=1} f^{i}_{ju}b^{v}_{k}(\sum^{n}_{m=1} c^{v}_{mu}e^{\ast}_{m}) + \sum^{n}_{u, v, m=1}c^{j}_{ui}b^{v}_{k}f^{m}_{uvp}e^{\ast}_{m}\cr
  &=& \sum^{n}_{u, v, m=1}(f^{i}_{ju}b^{v}_{k}c^{v}_{mu} + c^{j}_{ui}b^{v}_{k}f^{m}_{uv} )e^{\ast}_{m}; \cr
R^{\ast}_{\cdot}(\alpha(e_{i}))(e^{\ast}_{j} \circ e^{\ast}_{k}) &=& R^{\ast}_{\cdot}(\alpha(e_{i}))(\sum^{n}_{l=1}f^{l}_{jk}e^{\ast}_{l})
  = \sum^{n}_{l=1}f^{l}_{jk} R^{\ast}_{\cdot}(\sum^{n}_{q=1}b^{i}_{q}e_{q})(e^{\ast}_{l}) = \cr
  \sum^{n}_{l, q=1}f^{l}_{jv}b^{u}_{q} R^{\ast}_{\cdot}(e_{q})e^{\ast}_{l}
  &=& \sum^{n}_{l, q=1}f^{l}_{jk}b^{i}_{q}(\sum^{n}_{m=1}c^{l}_{mq}e^{\ast}_{m})
  = \sum^{n}_{l, q, m=1}f^{l}_{jk}b^{i}_{q}c^{l}_{mq}e^{\ast}_{m}.
\end{eqnarray*}
Then, we arrive at
\begin{eqnarray*}
\sum^{n}_{q=1}f^{l}_{jk}b^{i}_{q}c^{l}_{mq} &=& \sum^{n}_{u, v=1}(f^{i}_{ju}b^{v}_{k}c^{v}_{mu} + c^{j}_{ui}b^{v}_{k}f^{m}_{uv})
\cr
&\Updownarrow&
\cr
\sum^{n}_{u, v=1}f^{l}_{uv}b^{u}_{j}b^{v}_{k}c^{l}_{mq} &=& \sum^{n}_{u, v=1}(f^{i}_{ju}b^{v}_{k}c^{v}_{mu} + c^{j}_{ui}b^{v}_{k}f^{m}_{uv}).
\end{eqnarray*}
 Thus, taking $c^{l}_{mi}=b^{i}_{q}c^{l}_{mq},$ $b^{l}_{q}f^{q}_{jk}=f^{l}_{jk},$ $f^{i}_{uv}b^{u}_{j}c^{k}_{mv}=f^{i}_{ju}c^{v}_{mu}b^{v}_{k},$
 we obtain that \eqref{infinitesimal identity} corresponds to \eqref{infinitesimal cond.}.
Similarly, we have
\begin{eqnarray*}
&&(L_{\cdot}(e_{i}) \otimes\alpha - \alpha\otimes R_{\cdot}(e_{i}))\Delta(e_{m}) + \sigma [(L_{\cdot}(e_{m})\otimes \alpha
- \alpha\otimes R_{\cdot}(e_{m}))\Delta(e_{i})] = 0 \Leftrightarrow \cr
&&(L_{\cdot}(e_{i})\otimes\alpha - \alpha\otimes R_{\cdot}(e_{i}))(\sum^{n}_{k,l=1}f^{m}_{lk}e_{l}\otimes e_{k}) \cr
&&+ \sigma [(L_{\cdot}(e_{m})\otimes\alpha - \alpha\otimes R_{\cdot}(e_{m}))
(\sum^{n}_{k,l=1}f^{i}_{lk}e_{l}\otimes e_{k})] = 0
\Leftrightarrow
 \cr
&&
\sum^{n}_{k,l=1}f^{m}_{lk}(e_{i}\cdot e_{l}\otimes \alpha(e_{k}) - \alpha(e_{l})\otimes
e_{k}\cdot e_{i})\cr &&
+ \sigma [\sum^{n}_{k,l=1}f^{i}_{kl}(e_{m}\cdot
e_{l}\otimes\alpha(e_{k})
 - \alpha(e_{l})\otimes e_{k}\cdot e_{m})] = 0
 \Leftrightarrow
\cr
&&
\sum^{n}_{k,l=1}f^{m}_{lk}
((\sum^{n}_{j=1}c^{j}_{il}e_{j})\otimes(\sum^{n}_{p=1}b^{k}_{p}e_{p}) -
(\sum^{n}_{p=1}b^{l}_{p}e_{p})\otimes(\sum^{n}_{j=1}c^{j}_{ki}e_{j}))
\cr &&
+ \sigma
[\sum^{n}_{j,l=1}f^{i}_{kl}((\sum^{n}_{j=1}c^{j}_{ml} e_{j})
\otimes(\sum^{n}_{p=1}b^{k}_{p}e_{p})
 - (\sum^{n}_{p=1}b^{l}_{p}e_{p})\otimes(\sum^{n}_{j=1}c^{j}_{km}e_{j}))] = 0
\Leftrightarrow
 \cr
&&
\sum^{n}_{k,l,j, p=1}(f^{m}_{lk}c^{j}_{il}d^{k}_{p}e_{j}\otimes e_{p} -
f^{m}_{lk}c^{j}_{ki}b^{l}_{p}e_{p}\otimes e_{j})\cr && + \sigma[\sum^{n}_{k,l,j, p=1}
(f^{i}_{kl}c^{j}_{ml}b^{k}_{p}e_{j}\otimes e_{p}
-f^{i}_{kl}c^{j}_{km}b^{l}_{p}e_{p}\otimes e_{j})] = 0;
\cr
 && R^{\ast}_{\cdot}(R^{\ast}_{\circ}(e^{\ast}_{j})e_{i})\alpha^{\ast}
(e^{\ast}_{k}) + (L^{\ast}_{\cdot}(e_{i})e^{\ast}_{j})
\circ\alpha^{\ast}(e^{\ast}_{k})\cr &&= L^{\ast}_{\cdot}(L^{\ast}_{\circ}
(e^{\ast}_{k})e_{i})\alpha^{\ast}(e^{\ast}_{j}) + \alpha^{\ast}(e^{\ast}_{j})\circ
(R^{\ast}_{\cdot}(e_{i})e^{\ast}_{k}) \Leftrightarrow \cr
&&R^{\ast}_{\cdot}(\sum^{n}_{l=1}f^{i}_{lj}e_{l})(\sum^{n}_{p=1}d^{k}_{p}e^{\ast}_{p}) +
(\sum^{n}_{l=1}c^{j}_{il} e^{\ast}_{l})\circ
(\sum^{n}_{p=1}d^{k}_{p}e^{\ast}_{p})\cr &&= L^{\ast}_{\cdot}(\sum^{n}_{l=1}f^{i}_{kl}e_{l})
(\sum^{n}_{q=1}d^{j}_{q}e^{\ast}_{q}) +
(\sum^{n}_{q=1}d^{j}_{q}e^{\ast}_{q})\circ(\sum^{n}_{l=1}c^{k}_{li}e^{\ast}_{l})
\Leftrightarrow \cr
&&\sum^{n}_{l, p=1}f^{i}_{lj}d^{k}_{p}R^{\ast}_{\cdot}(e_{l})e^{\ast}_{p} +  \sum^{n}_{l,
p=1}c^{j}_{il}d^{k}_{p}e^{\ast}_{l}\circ
e^{\ast}_{p}= \sum^{n}_{l, q=1}f^{i}_{kl}d^{j}_{q}L^{\ast}_{\cdot}(e_{l})e^{\ast}_{q} + \sum^{n}_{q, l=1}d^{j}_{q}c^{k}_{li}
e^{\ast}_{q}\circ e^{\ast}_{l} \Leftrightarrow \cr
&&\sum^{n}_{l, p=1}f^{i}_{lj}d^{k}_{p}(\sum^{n}_{m=1}c^{p}_{ml}e^{\ast}_{m}) +  \sum^{n}_{l, p, m=1}c^{j}_{il}d^{k}_{p}f^{m}_{lp}e^{\ast}_{m}= \sum^{n}_{l, q, m=1}f^{i}_{kl}d^{j}_{q}c^{q}_{lm}e^{\ast}_{m} + \sum^{n}_{q, l, m=1}d^{j}_{q}c^{k}_{li}f^{m}_{ql}
e^{\ast}_{m}  \cr
&&\Leftrightarrow \sum^{n}_{l,m, p=1}(f^{i}_{lj}d^{k}_{p}c^{p}_{ml} + f^{m}_{lp}d^{k}_{p}c^{j}_{il})e^{\ast}_{m} =   \sum^{n}_{l,m, q=1}(f^{i}_{kl}d^{j}_{q}c^{q}_{lm} + f^{m}_{ql}d^{j}_{q}c^{k}_{li})e^{\ast}_{m}.
\end{eqnarray*}
Thus, we conclude that
\eqref{antisymmetric identity} corresponds to  \eqref{antisymmetric cond.}.
\qed
\end{proof}
\begin{definition}
Let $(\mathcal{A}, \cdot, \alpha)$ be an involutive Hom-associative algebra.
An \textbf{antisymmetric infinitesimal Hom-bialgebra} structure on $\mathcal{A}$ is a linear map $ \Delta: \mathcal{A} \rightarrow \mathcal{A} \otimes \mathcal{A} $ such that
\begin{enumerate}[label=\upshape{(\roman*)}]
\item $  \Delta^{\ast}: \mathcal{A}^{\ast}\otimes \mathcal{A}^{\ast}  \rightarrow
\mathcal{A}^{\ast}$ defines an involutive Hom-associative algebra structure on $ \mathcal{A}^{\ast} $;
\item $\Delta $ satisfies \eqref{infinitesimal identity} and \eqref{antisymmetric identity}.
\end{enumerate}
We denote such an antisymmetric infinitesimal Hom-bialgebra by $ (\mathcal{A}, \bigtriangleup, \alpha)$ or $(\mathcal{A}, \mathcal{A}^{\ast}, \alpha, \alpha^{\ast})$.
\end{definition}

\begin{corollary}
Let $(\mathcal{A}, \cdot, \alpha)$ and $(\mathcal{A}^{\ast}, \circ, \alpha^{\ast})$ be two involutive associative algebras. Then, the following conditions are equivalent:
\begin{enumerate}[label=\upshape{(\roman*)}]
\item There is a double construction of an involutive Hom-Frobenius algebra associated to $(\mathcal{A}, \cdot, \alpha) $ and $(\mathcal{A}^{\ast}, \circ, \alpha^{\ast})$;
\item $(\mathcal{A}, \mathcal{A}^{\ast}, R^{\ast}_{\cdot}, L^{\ast}_{\cdot}, \alpha^{\ast} R^{\ast}_{\circ},  L^{\ast}_{\circ}, \alpha) $ is a matched pair of involutive associative algebras;
\item $(\mathcal{A}, \mathcal{A}^{\ast}, \alpha, \alpha^{\ast})$ is an antisymmetric infinitesimal Hom-bialgebra.
\end{enumerate}
\end{corollary}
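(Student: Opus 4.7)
The plan is to assemble this corollary directly from the three main results already proved in the subsection, rather than reprove anything from scratch; the statement is essentially a repackaging theorem. First, I would invoke Theorem~\ref{Frobenius theorem} to obtain (i) $\Longleftrightarrow$ (ii) immediately: that theorem asserts exactly that a double construction of an involutive Hom-Frobenius algebra associated to $(\mathcal{A}, \cdot, \alpha)$ and $(\mathcal{A}^{\ast}, \circ, \alpha^{\ast})$ exists if and only if the octuple $(\mathcal{A}, \mathcal{A}^{\ast}, R^{\ast}_{\cdot}, L^{\ast}_{\cdot}, \alpha^{\ast}, R^{\ast}_{\circ}, L^{\ast}_{\circ}, \alpha)$ is a matched pair of involutive Hom-associative algebras, which is verbatim condition (ii).

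Next, to obtain (ii) $\Longleftrightarrow$ (iii), I would chain Theorem~\ref{homMathched pair's theorem} with Theorem~\ref{hombialgebra theorem}. The former reduces the full list of matched-pair axioms \eqref{match. pair1}--\eqref{match. pair6} down to the two essential identities \eqref{infinitesimal cond.} and \eqref{antisymmetric cond.}; the latter then re-expresses those two identities, via the duality $\Delta = \circ^{\ast}$, as \eqref{infinitesimal identity} and \eqref{antisymmetric identity} on the comultiplication $\Delta : \mathcal{A} \to \mathcal{A} \otimes \mathcal{A}$. The common standing hypothesis of both theorems---that $\mathcal{A}^{\ast}$ carries an involutive Hom-associative algebra structure through $\circ$, equivalently that $\Delta^{\ast}$ defines one---is precisely clause (i) of the definition of antisymmetric infinitesimal Hom-bialgebra, so the matched-pair statement of (ii) translates directly into the bialgebra statement of (iii).

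No genuinely new computation is required: every implication is a rephrasing of a previously proved equivalence. Consequently, I expect the only obstacle to be notational and bookkeeping in nature, namely verifying that the bimodule actions appearing in (ii) really are the coadjoint actions arising from the products $\cdot$ and $\circ$ under the natural dual pairing, so that the data $(\mathcal{A}, \mathcal{A}^{\ast}, \alpha, \alpha^{\ast})$ in (iii), the eight-tuple in (ii), and the Frobenius triple $(\mathcal{A}, \alpha, B)$ in (i) all encode one and the same underlying algebraic structure. Once this identification is made explicit, the three equivalences compose into the desired chain (i) $\Longleftrightarrow$ (ii) $\Longleftrightarrow$ (iii).
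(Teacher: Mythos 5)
Your proposal is correct and follows exactly the paper's route: the paper's own proof consists of the single line ``From Theorems \ref{Frobenius theorem} and \ref{hombialgebra theorem}, we have the equivalences,'' and your assembly of (i) $\Leftrightarrow$ (ii) from Theorem \ref{Frobenius theorem} and of (ii) $\Leftrightarrow$ (iii) from Theorem \ref{hombialgebra theorem} (with Theorem \ref{homMathched pair's theorem} as the intermediate reduction the paper implicitly relies on) is the same argument, just spelled out more carefully.
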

\begin{proof}
From  Theorems \ref{Frobenius theorem} and \ref{hombialgebra theorem}, we have the equivalences.
\qed
\end{proof}
\section{Double constructions of involutive biHom-Frobenius algebras}
\label{sec:dblcnstrinvbihomfrobal}
\subsection{Bimodule and matched pair of biHom-associative algebras}
\label{subsec:dblcnstrinvbihomfrobal:bimodmatchedpbihomassal}
\begin{definition}[\cite{HnkHndSilvdcbihomfrobalg:GrMakMenPan:Bihom1}]
A biHom-associative algebra is a quadruple $(\mathcal{A}, \cdot, \alpha, \beta)$ consisting of a linear space $\mathcal{A}$, $\mathcal{K}$-bilinear map $\cdot$: $\mathcal{A}\otimes\mathcal{A}\rightarrow \mathcal{A}$, linear maps $\alpha, \beta: \mathcal{A}\rightarrow\mathcal{A}$ satisfying, for all $x, y, z\in \mathcal{A}$, the following conditions:
\begin{eqnarray*}
&&\alpha\circ\beta= \beta\circ\alpha,  \mbox { (commutativity)};\cr
&&\alpha(x\cdot y)=\alpha(x)\cdot\alpha(y),\ \beta(x\cdot y)=\beta(x)\cdot\beta(y), \mbox { (multiplicativity)};\cr
&&\alpha(x)\cdot(y\cdot z)=(x\cdot y)\cdot\beta(z), \mbox{ (biHom-associativity)}.
\end{eqnarray*}
\end{definition}
\begin{remark}
If $\alpha=\beta,$ $(\mathcal{A}, \cdot, \alpha, \alpha)$  is a Hom-associative algebra.
\end{remark}
\begin{definition}
A biHom-module is a triple $(M, \alpha, \beta),$ where $M$ is a $\mathcal{K}$-vector space, and $\alpha, \beta: M\rightarrow M$ are two  linear maps.
\end{definition}
\begin{definition}[\cite{HnkHndSilvdcbihomfrobalg:GrMakMenPan:Bihom1}]
 Let $(A, \mu_{\mathcal{A}}, \alpha_{\mathcal{A}}, \beta_{\mathcal{A}})$ be a biHom-associative algebra. A left $\mathcal{A}$-module is a triple $(M, \alpha_{M} , \beta_{M})$, where $M$ is a linear space, $\alpha_{M}$ , $\beta_{M}$ : $M \rightarrow M$ are linear maps, with, in addition,  another linear map: $\mathcal{A}\otimes M \rightarrow M , a\otimes m  \mapsto a\cdot m,$ such that, for all $a, a'\in \mathcal{A}, m \in M:$
 \begin{eqnarray*}
&&\alpha_{M}\circ\beta_{M}= \beta_{M}\circ\alpha_{M},\
\alpha_{M}(a\cdot m)= \alpha_{\mathcal{A}}(a)\cdot \alpha_{M}(m),\cr
&&\beta _{M}(a\cdot m)= \beta_{\mathcal{A}}(a)\cdot \beta_{M}(m),\
\alpha_{\mathcal{A}}(a)\cdot(a'\cdot m)= (aa')\cdot\beta_{M}(m).
\end{eqnarray*}
\end{definition}
Let us give now the definition of bimodule of a biHom-associative algebra.

\begin{definition}
Let $(\mathcal{A}, \cdot, \alpha_{1}, \alpha_{2})$ be a biHom-associative algebra, and let $(V, \beta_{1}, \beta_{2})$ be a biHom-module. Let $ l, r: \mathcal{A} \rightarrow gl(V) $ be two linear maps. The quintuple $(l, r, \beta_{1}, \beta_{2}, V)$ is called a bimodule of $\mathcal{A}$ if
\begin{eqnarray}
\label{Cond.biHom-bimod}
 l(x\cdot y)\beta_{1}(v)&=& l(\alpha_{2}(x))l(y)v,\quad r(x\cdot y)\beta_{2}(v)= r(\alpha_{1}(y))r(x)v, \\
 l(\alpha_{2}(x))r(y)v &=& r(\alpha_{1}(y))l(x)v, \\
\label{biHom-bimodule eq1}
\beta_{1}(l(x)v)&=& l(\alpha_{1}(x))\beta_{1}(v),\quad \beta_{1}(r(x)v)= r(\alpha_{1}(x))\beta_{1}(v),
\\
\label{biHom-bimodule eq2}
\beta_{2}(l(x)v) &=& l(\alpha_{2}(x))\beta_{2}(v),\quad \beta_{2}(r(x)v)= r(\alpha_{2}(x))\beta_{2}(v)
\end{eqnarray}
for all $ x, y \in  \mathcal{A}, v \in V $.
\end{definition}
\begin{proposition}
Let $(l, r, \beta_{1}, \beta_{2}, V)$ be a bimodule of a biHom-associative algebra $(\mathcal{A}, \cdot, \alpha_{1}, \alpha_{2})$. Then, the direct sum $\mathcal{A} \oplus V$ of vector spaces is turned into a biHom-associative algebra  by defining multiplication in $\mathcal{A} \oplus V $ by
\begin{eqnarray*}
&&(x_{1} + v_{1}) \ast (x_{2} + v_{2})=x_{1} \cdot x_{2} + (l(x_{1})v_{2} + r(x_{2})v_{1}),\cr
&&(\alpha_{1}\oplus\beta_{1})(x_{1} + v_{1})=\alpha_{1}(x_{1}) + \beta_{1}(v_{1}), (\alpha_{2}\oplus\beta_{2})(x_{1} + v_{1})=\alpha_{2}(x_{1}) + \beta_{2}(v_{1}),
\end{eqnarray*}
for all $ x_{1}, x_{2} \in  \mathcal{A}, v_{1}, v_{2} \in V$.
\end{proposition}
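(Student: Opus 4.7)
The plan is to verify that $(\mathcal{A} \oplus V, \ast, \alpha_1 \oplus \beta_1, \alpha_2 \oplus \beta_2)$ satisfies the three defining axioms of a biHom-associative algebra: commutativity of the two twisting maps, multiplicativity of each with respect to $\ast$, and biHom-associativity. The strategy mirrors the Hom-associative case treated earlier in the excerpt, but with each relevant identity split according to which of $\alpha_1, \alpha_2$ (on $\mathcal{A}$) and $\beta_1, \beta_2$ (on $V$) is involved.

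First, I would check commutativity. Since $(\alpha_1 \oplus \beta_1) \circ (\alpha_2 \oplus \beta_2)$ acts componentwise, the $\mathcal{A}$-part equals $(\alpha_2 \oplus \beta_2) \circ (\alpha_1 \oplus \beta_1)$ by $\alpha_1 \alpha_2 = \alpha_2 \alpha_1$ on $\mathcal{A}$, and the $V$-part by the analogous commutation $\beta_1 \beta_2 = \beta_2 \beta_1$ carried by the biHom-module $(V, \beta_1, \beta_2)$. For multiplicativity of $\alpha_i \oplus \beta_i$ (for $i=1,2$), I would expand
\[
(\alpha_i \oplus \beta_i)\bigl((x_1+v_1)\ast(x_2+v_2)\bigr) = \alpha_i(x_1 \cdot x_2) + \beta_i(l(x_1)v_2 + r(x_2)v_1)
\]
and compare it to $(\alpha_i \oplus \beta_i)(x_1+v_1) \ast (\alpha_i \oplus \beta_i)(x_2+v_2)$. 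The $\mathcal{A}$-pieces match by multiplicativity of $\alpha_i$; the $V$-pieces reduce to $\beta_i(l(x)v) = l(\alpha_i(x))\beta_i(v)$ and $\beta_i(r(x)v) = r(\alpha_i(x))\beta_i(v)$, which are precisely \eqref{biHom-bimodule eq1} for $i=1$ and \eqref{biHom-bimodule eq2} for $i=2$.

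The core of the proof is biHom-associativity, and I would treat it in the same expansion style as the Hom-associative proposition above. Writing out
\[
(\alpha_1 \oplus \beta_1)(x_1+v_1) \ast \bigl((x_2+v_2)\ast(x_3+v_3)\bigr)
\]
and
\[
\bigl((x_1+v_1)\ast(x_2+v_2)\bigr) \ast (\alpha_2 \oplus \beta_2)(x_3+v_3),
\]
the $\mathcal{A}$-components reduce to biHom-associativity of $(\mathcal{A}, \cdot, \alpha_1, \alpha_2)$, while the $V$-components split into three groups, one linear in each of $v_1, v_2, v_3$. Setting two of the three $v_j$'s to zero (and using linearity of $l, r$) shows that these three groups must match separately; they then correspond precisely to the three bimodule identities in \eqref{Cond.biHom-bimod}, with the $v_3$-group giving the $l(x\cdot y)\beta(v)=l(\alpha(x))l(y)v$ type relation, the $v_2$-group giving the mixed $l$-$r$ compatibility, and the $v_1$-group giving the $r$-analogue.

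The main obstacle is purely bookkeeping: keeping track of which index ($1$ or $2$) attaches to each twisting map on each side of each intermediate expression, since the asymmetry between $\alpha_1$ on the outside-left and $\alpha_2$ on the outside-right of biHom-associativity propagates through the bimodule identities in a non-obvious way. Once this matching is done carefully, no further input beyond \eqref{Cond.biHom-bimod}--\eqref{biHom-bimodule eq2} and the biHom-algebra axioms of $\mathcal{A}$ is required, and the proof amounts to a direct verification parallel to the one given for the Hom case.
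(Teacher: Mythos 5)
Your overall strategy --- expand both sides of the biHom-associativity identity on $\mathcal{A}\oplus V$, separate the $\mathcal{A}$-component from the three $V$-components linear in $v_{1},v_{2},v_{3}$ (isolating them by setting the other $v_{j}$'s to zero), and match each group against one of the bimodule axioms --- is exactly the approach of the paper's (very terse) proof, and your treatment of multiplicativity via \eqref{biHom-bimodule eq1} and \eqref{biHom-bimodule eq2} is correct. A minor caveat on commutativity: the paper's definition of a biHom-module does not actually require $\beta_{1}\circ\beta_{2}=\beta_{2}\circ\beta_{1}$, so that hypothesis has to be added or read into the definition before the two structure maps on $\mathcal{A}\oplus V$ can be said to commute.

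There is, however, a concrete index problem in the core step. You set up the identity as $(\alpha_{1}\oplus\beta_{1})(X)\ast(Y\ast Z)=(X\ast Y)\ast(\alpha_{2}\oplus\beta_{2})(Z)$, the literal reading of biHom-associativity for the claimed structure $(\mathcal{A}\oplus V,\ast,\alpha_{1}\oplus\beta_{1},\alpha_{2}\oplus\beta_{2})$. Expanding, the $v_{3}$-group then requires $l(x_{1}\cdot x_{2})\beta_{2}(v_{3})=l(\alpha_{1}(x_{1}))l(x_{2})v_{3}$, the $v_{1}$-group requires $r(x_{2}\cdot x_{3})\beta_{1}(v_{1})=r(\alpha_{2}(x_{3}))r(x_{2})v_{1}$, and the $v_{2}$-group requires $l(\alpha_{1}(x_{1}))r(x_{3})v_{2}=r(\alpha_{2}(x_{3}))l(x_{1})v_{2}$. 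These are the stated axioms \eqref{Cond.biHom-bimod} with the indices $1$ and $2$ interchanged, not the axioms themselves, so your claim that the three groups ``correspond precisely'' to \eqref{Cond.biHom-bimod} fails as written. The paper's proof avoids this by writing the identity the other way around, namely $[(x_{1}+v_{1})\ast(x_{2}+v_{2})]\ast(\alpha_{1}(x_{3})+\beta_{1}(v_{3}))=(\alpha_{2}(x_{1})+\beta_{2}(v_{1}))\ast[(x_{2}+v_{2})\ast(x_{3}+v_{3})]$; under that convention the three $V$-groups do reduce exactly to \eqref{Cond.biHom-bimod}, and the $\mathcal{A}$-component becomes $(x_{1}\cdot x_{2})\cdot\alpha_{1}(x_{3})=\alpha_{2}(x_{1})\cdot(x_{2}\cdot x_{3})$, i.e.\ biHom-associativity with the roles of $\alpha_{1}$ and $\alpha_{2}$ swapped. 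So either you adopt the paper's swapped convention (and accept that what is verified is biHom-associativity of $(\mathcal{A}\oplus V,\ast,\alpha_{2}\oplus\beta_{2},\alpha_{1}\oplus\beta_{1})$), or the bimodule axioms must be re-indexed; with your assignment as written the terms do not cancel against the stated hypotheses. This is precisely the bookkeeping trap you flagged as the main obstacle, and the proposal as it stands falls into it.
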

\begin{proof}
Let $v_{1}, v_{2}, v_{3}\in V$ and $x_{1}, x_{2},x_{3}\in \mathcal{A}.$ Setting and computing
\begin{eqnarray*}\label{bicondit. du Bimod.}
[(x_{1} + v_{1}) \ast (x_{2} + v_{2})]\ast(\alpha_{1}(x_{3})+ \beta_{1}(v_{3}))=\cr(\alpha_{2}(x_{1})+ \beta_{2}(v_{1}))\ast[(x_{2} + v_{2}) \ast (x_{3} + v_{3})],
\end{eqnarray*}
and similarly for the other relations,
give the required conditions.
\qed
\end{proof}
We denote such a biHom-associative algebra by $(\mathcal{A} \oplus V, \ast, \alpha_{1} + \beta_{1}, \alpha_{2} + \beta_{2}),$
or $\mathcal{A} \times_{l, r, \alpha_{1}, \alpha_{2}, \beta_{1}, \beta_{2}} V.$

\begin{example}
Let $(\mathcal{A}, \cdot, \alpha, \beta)$ be a multiplicative biHom-associative algebra.
Then,\\ $ (L_{\cdot}, 0, \alpha, \beta), (0, R_{\cdot}, \alpha, \beta) $ and $ (L_{\cdot}, R_{\cdot}, \alpha, \beta) $ are bimodules of $(\mathcal{A}, \cdot, \alpha, \beta)$.
\end{example}
\begin{proposition}
Let $(l, r, \beta_{1}, \beta_{2}, V)$ be bimodule of a multiplicative biHom-associative algebra  $(\mathcal{A}, \cdot, \alpha_{1}, \alpha_{2})$. Then, $(l\circ\alpha^{n}_{1}, r\circ\alpha^{n}_{2}, \beta_{1}, \beta_{2}, V)$ is a bimodule of $\mathcal{A}$ for any finite integer $n.$
\end{proposition}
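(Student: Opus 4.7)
The plan is to verify, one by one, each of the defining bimodule axioms \eqref{Cond.biHom-bimod}--\eqref{biHom-bimodule eq2} for the candidate quintuple $(l\circ\alpha_1^n,\, r\circ\alpha_2^n,\, \beta_1,\, \beta_2,\, V)$, reducing each to the analogous axiom already known to hold for $(l,r,\beta_1,\beta_2,V)$. The three ingredients that will do all the work are: (i) the multiplicativity of $\alpha_1$ and $\alpha_2$, i.e.\ $\alpha_i^n(x\cdot y)=\alpha_i^n(x)\cdot\alpha_i^n(y)$, obtained by iterating the given multiplicativity; (ii) the commutation $\alpha_1\circ\alpha_2=\alpha_2\circ\alpha_1$, and hence $\alpha_j\circ\alpha_i^n=\alpha_i^n\circ\alpha_j$; and (iii) the bimodule axioms for the original $(l,r,\beta_1,\beta_2,V)$.

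First I would handle the two product-compatibility identities. For the left one, I push $\alpha_1^n$ inside the product and apply \eqref{Cond.biHom-bimod}:
\begin{align*}
(l\circ\alpha_1^n)(x\cdot y)\beta_1(v)
&= l\bigl(\alpha_1^n(x)\cdot\alpha_1^n(y)\bigr)\beta_1(v)
 = l\bigl(\alpha_2(\alpha_1^n(x))\bigr)\,l(\alpha_1^n(y))v \\
&= l\bigl(\alpha_1^n(\alpha_2(x))\bigr)\,l(\alpha_1^n(y))v
 = (l\circ\alpha_1^n)(\alpha_2(x))\,(l\circ\alpha_1^n)(y)\,v,
\end{align*}
where the step marked by commutation uses $\alpha_2\circ\alpha_1^n=\alpha_1^n\circ\alpha_2$. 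The right-product identity is symmetric, using $\alpha_1\circ\alpha_2^n=\alpha_2^n\circ\alpha_1$. Next, the mixed identity $l(\alpha_2(x))r(y)v=r(\alpha_1(y))l(x)v$ is transported the same way: replace $x,y$ by $\alpha_1^n(x),\alpha_2^n(y)$ and move the outer $\alpha_i$ through $\alpha_j^n$ by commutation.

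Finally, for the two Hom-module conditions \eqref{biHom-bimodule eq1}--\eqref{biHom-bimodule eq2}, a typical step is
\[
\beta_1\bigl((l\circ\alpha_1^n)(x)v\bigr)=\beta_1\bigl(l(\alpha_1^n(x))v\bigr)=l(\alpha_1(\alpha_1^n(x)))\beta_1(v)=(l\circ\alpha_1^n)(\alpha_1(x))\beta_1(v),
\]
and analogously for the $\beta_2$-versions and for $r\circ\alpha_2^n$.

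I do not anticipate any genuine obstacle: the verification is purely mechanical. The only point requiring a small caveat is the phrase \emph{any finite integer $n$}: the calculations above work verbatim for $n\ge 0$, and extend to negative $n$ only under the assumption that $\alpha_1$ and $\alpha_2$ are invertible (in which case $\alpha_i^{-1}$ is still multiplicative and still commutes with $\alpha_j$, so every step is preserved). I would mention this once and then, following the style of the analogous Hom-associative proposition proved earlier, write out only the first identity in detail and remark that the others follow by the same pattern.
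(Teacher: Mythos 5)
Your proposal is correct and follows essentially the same route as the paper: the paper writes out exactly the computation in your first display (iterated multiplicativity, the original bimodule axiom, then commuting $\alpha_2$ past $\alpha_1^n$) and dismisses the remaining identities with "similarly." Your added caveat about negative $n$ requiring invertibility is a reasonable observation the paper does not make, but it does not change the argument.
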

\begin{proof}
We have
\begin{eqnarray*}
(l\circ\alpha^{n}_{1})(x\cdot y)\beta_{1}(v)&=& l(\alpha^{n}_{1}(x)\cdot\alpha^{n}_{1}(y))\beta_{1}(v)= l(\alpha_{2}(\alpha^{n}_{1}(x)))l(\alpha^{n}_{1}(y))v\cr
&=& l(\alpha^{n}_{1}(\alpha_{2}(x)))l(\alpha^{n}_{1}(y))v= (l\circ\alpha^{n}_{1})(\alpha_{2}(x))(l\circ\alpha^{n}_{1})(y)v.
\end{eqnarray*}
Similarly, the other relations are established.
\qed
\end{proof}
\begin{example}
Let $(\mathcal{A}, \cdot, \alpha_{1}, \alpha_{2})$ be a multiplicative biHom-associative algebra. Then, $(L_{\cdot}\circ\alpha^{n}_{1}, R_{\cdot}\circ\alpha^{n}_{2}, \alpha_{1}, \alpha_{2}, \mathcal{A})$ is a bimodule of $\mathcal{A}$ for any finite integer $n.$
\end{example}

\begin{theorem}
Let $(\mathcal{A}, \cdot, \alpha_{1}, \alpha_{2}) $ and $(\mathcal{B}, \circ, \beta_{1}, \beta_{2})$ be two biHom-associative
algebras. Suppose there exist linear maps $l_{ \mathcal{A}}, r_{ \mathcal{A}} : \mathcal{A}
\rightarrow gl(\mathcal{B}),$ and $l_{ \mathcal{B}}, r_{ \mathcal{B}} :  \mathcal{B} \rightarrow
gl( \mathcal{A}) $ such that $ (l_{ \mathcal{A}}, r_{ \mathcal{A}},  \beta_{1}, \beta_{2}, \mathcal{B})$ is a bimodule of $\mathcal{A}, $ and $ (l_{ \mathcal{B}}, r_{ \mathcal{B}}, \alpha_{1}, \alpha_{2}, \mathcal{A})$ is a bimodule of $\mathcal{B},$ satisfying, for any $ x, y \in  \mathcal{A}, a,b \in  \mathcal{B}$ the following conditions:
\begin{eqnarray} \label{bimatch. pair1}
l_{ \mathcal{A}}(\alpha_{2}(x))(a\circ b) &=& l_{ \mathcal{A}}(r_{ \mathcal{B}}(a)x)\beta_{1}(b) + (l_{ \mathcal{A}}(x)a) \circ \beta_{1}(b),
\\
\label{bimatch. pair2}
r_{ \mathcal{A}}(\alpha_{1}(x))(a\circ b) &=& r_{ \mathcal{A}}(l_{ \mathcal{B}}(b)x)\beta_{2}(a) + \beta_{2}(a) \circ(r_{ \mathcal{A}}(x)b),
\\
\label{bimatch. pair3}
l_{ \mathcal{B}}(\beta_{2}(a))(x\cdot y)&=& l_{\mathcal{B}}(r_{ \mathcal{A}}(x)a)\alpha_{1}(y) + (l_{ \mathcal{B}}(a)x) \cdot \alpha_{1}(y),
\\
\label{bimatch. pair4}
r_{\mathcal{B}}(\beta_{1}(a))(x\cdot y) &=& r_{ \mathcal{B}}(l_{\mathcal{A}}(y)a)\alpha_{2}(x) + \alpha_{2}(x)\cdot(r_{ \mathcal{B}}(a)y),
\\
\label{bimatch. pair5}
l_{ \mathcal{A}}(l_{\mathcal{B}}(a)x)\beta_{1}(b) &+& (r_{ \mathcal{A}}(x)a)\circ \beta_{1}(b)- \nonumber \\
&& r_{ \mathcal{A}}(r_{ \mathcal{B}}(b)x)\beta_{2}(a) - \beta_{2}(a)\circ(l_{ \mathcal{A}}(x)b) = 0,
\\
\label{bimatch. pair6}
l_{ \mathcal{B}}(l_{ \mathcal{A}}(x)a)\alpha_{1}(y) &+& (r_{ \mathcal{B}}(a)x)\cdot\alpha_{1}(y)- \nonumber \\
&& r_{ \mathcal{B}}(r_{ \mathcal{A}}(y)a)\alpha_{2}(x)- \alpha_{2}(x)\cdot (l_{ \mathcal{B}}(a)y) = 0.
\end{eqnarray}
 Then, there is a biHom-associative algebra
 structure on the direct sum $\mathcal{A}\oplus\mathcal{B}$ of
the underlying vector spaces of $\mathcal{A}$ and $\mathcal{B}$ given by
\begin{equation}\label{bimatch. pair product}
\begin{array}{lll}
(x + a) \ast (y + b) &=& (x\cdot y + l_{ \mathcal{B}}(a)y + r_{ \mathcal{B}}(b)x) + (a\circ b +  l_{ \mathcal{A}}(x)b +  r_{ \mathcal{A}}(y)a)\cr
(\alpha_{1}\oplus\beta_{1})(x + a) &=& \alpha_{1}(x) + \beta_{1}(a), (\alpha_{2}\oplus\beta_{2})(x + a)=\alpha_{2}(x) + \beta_{2}(a)
\end{array}
\end{equation}
for all $ x, y \in  \mathcal{A}, a,b \in  \mathcal{B}$.
\end{theorem}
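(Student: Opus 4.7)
The plan is to verify directly that the quadruple $(\mathcal{A}\oplus\mathcal{B},\ast,\alpha_{1}\oplus\beta_{1},\alpha_{2}\oplus\beta_{2})$ defined by \eqref{bimatch. pair product} satisfies the three defining axioms of a biHom-associative algebra: commutativity of the two twisting maps, their multiplicativity with respect to $\ast$, and the biHom-associativity of $\ast$. This closely parallels the proof of Theorem \ref{theo. of matched pairs}, so I would organize the verification to reuse the same bookkeeping idea: expand both sides of the relevant identity, project onto the $\mathcal{A}$-component and the $\mathcal{B}$-component, and then isolate monomials by specifying variables to zero, using that linear maps vanish on zero.

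First I would dispose of the two easy axioms. Commutativity $(\alpha_{1}\oplus\beta_{1})(\alpha_{2}\oplus\beta_{2})=(\alpha_{2}\oplus\beta_{2})(\alpha_{1}\oplus\beta_{1})$ follows factorwise from $\alpha_{1}\alpha_{2}=\alpha_{2}\alpha_{1}$ in $\mathcal{A}$ and $\beta_{1}\beta_{2}=\beta_{2}\beta_{1}$ in $\mathcal{B}$. For multiplicativity of $\alpha_{i}\oplus\beta_{i}$ with respect to $\ast$, expanding both sides reduces, in the $\mathcal{A}$-component, to $\alpha_{i}(x\cdot y)=\alpha_{i}(x)\cdot\alpha_{i}(y)$ together with the two intertwining identities $\beta_{i}(l_{\mathcal{B}}(a)y)=l_{\mathcal{B}}(\beta_{i}(a))\alpha_{i}(y)$ and $\beta_{i}(r_{\mathcal{B}}(b)x)=r_{\mathcal{B}}(\beta_{i}(b))\alpha_{i}(x)$, which are precisely \eqref{biHom-bimodule eq1}--\eqref{biHom-bimodule eq2} applied to the bimodule $(l_{\mathcal{B}},r_{\mathcal{B}},\alpha_{1},\alpha_{2},\mathcal{A})$; the $\mathcal{B}$-component is symmetric, using the corresponding identities for $(l_{\mathcal{A}},r_{\mathcal{A}},\beta_{1},\beta_{2},\mathcal{B})$.

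The substantive step is biHom-associativity. Following the pattern used after \eqref{condit. du Bimod. 2}, I would write out
\[
((x+a)\ast(y+b))\ast(\alpha_{1}\oplus\beta_{1})(z+c)=(\alpha_{2}\oplus\beta_{2})(x+a)\ast((y+b)\ast(z+c))
\]
for arbitrary $x,y,z\in\mathcal{A}$ and $a,b,c\in\mathcal{B}$, and split the resulting identity according to whether each free variable is set to zero in $\mathcal{A}$ or $\mathcal{B}$. The purely $\mathcal{A}$-valued part of the $\mathcal{A}$-component reduces to biHom-associativity of $(\mathcal{A},\cdot,\alpha_{1},\alpha_{2})$, and dually for $\mathcal{B}$. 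The mixed contributions fall into three structural families: those involving one variable from $\mathcal{B}$ and two from $\mathcal{A}$ (or vice versa), which reduce to the bimodule axioms \eqref{Cond.biHom-bimod} after using \eqref{biHom-bimodule eq1}--\eqref{biHom-bimodule eq2} to move $\beta_{1},\beta_{2}$ past $l_{\mathcal{A}},r_{\mathcal{A}}$ and $\alpha_{1},\alpha_{2}$ past $l_{\mathcal{B}},r_{\mathcal{B}}$; and those involving two variables from one algebra and one from the other, which upon the same intertwining rewriting line up exactly with the six compatibility conditions \eqref{bimatch. pair1}--\eqref{bimatch. pair6}. Conversely, setting all but one free variable to zero on each side isolates each identity individually, so the axioms are not only sufficient but necessary for $\ast$ to be biHom-associative.

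The main obstacle is purely organizational: unlike in Theorem \ref{theo. of matched pairs}, here each monomial may need to be rewritten via two different intertwining relations (one for $\beta_{1}$ and one for $\beta_{2}$, or one for $\alpha_{1}$ and one for $\alpha_{2}$) before it matches the form in which \eqref{bimatch. pair1}--\eqref{bimatch. pair6} are stated. I would therefore prepare a short preliminary lemma listing the four identities
\[
\beta_{i}(l_{\mathcal{A}}(x)b)=l_{\mathcal{A}}(\alpha_{i}(x))\beta_{i}(b),\qquad \alpha_{i}(l_{\mathcal{B}}(a)x)=l_{\mathcal{B}}(\beta_{i}(a))\alpha_{i}(x)\quad(i=1,2),
\]
together with the analogous relations for $r_{\mathcal{A}},r_{\mathcal{B}}$, and then carry out the case analysis in a single computation, so that the eight monomial types on each side are matched transparently. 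No step beyond this reduction should require new ideas.
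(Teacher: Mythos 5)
Your proposal is correct and follows essentially the same route as the paper's proof: expand $((x+a)\ast(y+b))\ast(\alpha_{1}\oplus\beta_{1})(z+c)=(\alpha_{2}\oplus\beta_{2})(x+a)\ast((y+b)\ast(z+c))$, match the resulting monomials against biHom-associativity of the two factors, the bimodule axioms and conditions \eqref{bimatch. pair1}--\eqref{bimatch. pair6}, and invoke the intertwining relations $\beta_{i}(l_{\mathcal{A}}(x)a)=l_{\mathcal{A}}(\alpha_{i}(x))\beta_{i}(a)$, $\alpha_{i}(l_{\mathcal{B}}(a)x)=l_{\mathcal{B}}(\beta_{i}(a))\alpha_{i}(x)$ (and their $r$-analogues) where needed. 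Your version is in fact slightly more complete than the paper's, since you explicitly verify commutativity and multiplicativity of $\alpha_{i}\oplus\beta_{i}$, which the paper only implicitly covers by listing those relations.
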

\begin{proof}
Let $v_{1}, v_{2}, v_{3}\in V$ and $x_{1}, x_{2},x_{3}\in \mathcal{A}.$ Setting and computing
\begin{eqnarray*}
[(x_{1} + v_{1}) \ast (x_{2} + v_{2})]\ast(\alpha_{1}(x_{3})+ \beta_{1}(v_{3}))=\cr(\alpha_{2}(x_{1})+ \beta_{2}(v_{1}))\ast[(x_{2} + v_{2}) \ast (x_{3} + v_{3})],
\end{eqnarray*}
we obtain \eqref{bimatch. pair1}-\eqref{bimatch. pair6}. Then, using the following relations:
\begin{eqnarray*}
\beta_{1}(l_{\mathcal{A}}(x)a)&=& l_{\mathcal{A}}(\alpha_{1}(x))\beta_{1}(a),\ \beta_{1}(r_{\mathcal{A}}(x)a)= r_{\mathcal{A}}(\alpha_{1}(x))\beta_{1}(a),
\\
\beta_{2}(l_{\mathcal{A}}(x)a)&=& l_{\mathcal{A}}(\alpha_{2}(x))\beta_{2}(a),\ \beta_{2}(r_{\mathcal{A}}(x)a)= r_{\mathcal{A}}(\alpha_{2}(x))\beta_{2}(a),
\\
\alpha_{1}(l_{\mathcal{B}}(a)x)&=& l_{\mathcal{B}}(\beta_{1}(a))\alpha_{1}(x),\ \alpha_{1}(r_{\mathcal{B}}(a)x)= r_{\mathcal{B}}(\beta_{1}(a))\alpha_{1}(x),
\\
\alpha_{2}(l_{\mathcal{B}}(a)x)&=& l_{\mathcal{B}}(\beta_{2}(a))\alpha_{2}(x),\ \alpha_{2}(r_{\mathcal{B}}(a)x)= r_{\mathcal{B}}(\beta_{2}(a))\alpha_{2}(x),
\end{eqnarray*}
we show that $\ast$ is a biHom-associative algebra structure.
\qed
\end{proof}
We denote this biHom-associative algebra by
$(\mathcal{A}\bowtie \mathcal{B}, \ast, \alpha_{1} + \beta_{1}, \alpha_{2} + \beta_{2})$ or $  \mathcal{A} \bowtie^{l_{ \mathcal{A}}, r_{ \mathcal{A}}, \beta_{1}, \beta_{2}}_{l_{ \mathcal{B}}, r_{ \mathcal{B}}, \alpha_{1}, \alpha_{2}}  \mathcal{B}.$

\begin{definition}
Let $ (\mathcal{A},\cdot, \alpha_{1}, \alpha_{2})$ and $(\mathcal{B}, \circ, \beta_{1}, \beta_{2})$ be two biHom-associative
algebras. Suppose  there exist linear maps
 $ l_{ \mathcal{A}}, r_{ \mathcal{A}} :  \mathcal{A} \rightarrow gl( \mathcal{B}),$ and
$ l_{ \mathcal{B}}, r_{ \mathcal{B}} :  \mathcal{B} \rightarrow gl( \mathcal{A}) $ such that
$ (l_{ \mathcal{A}}, r_{ \mathcal{A}}, \beta_{1}, \beta_{2}) $ is a bimodule of $\mathcal{A},$ and $ (l_{ \mathcal{B}}, r_{ \mathcal{B}}, \alpha_{1}, \alpha_{2}) $
is a bimodule of $  \mathcal{B} $. Then,
 $ ( \mathcal{A},  \mathcal{B}, l_{ \mathcal{A}}, r_{ \mathcal{A}}, \beta_{1}, \beta_{2}, l_{ \mathcal{B}}, r_{ \mathcal{B}}, \alpha_{1}, \alpha_{2}) $
 is called a \textbf{matched pair of biHom-associative algebras},  if the conditions \eqref{bimatch. pair1} - \eqref{bimatch. pair6} are satisfied.
\end{definition}
\subsection{Double constructions of involutive biHom-Frobenius algebras}
\label{sec:dblcnstrinvbihomfrobal:invbihomfrobal}
Now, we consider the multiplicative biHom-associative algebra $(\mathcal{A}, \cdot, \alpha_{1}, \alpha_{2})$
such that $\alpha_{1}\circ\alpha_{2}=\alpha_{2}\circ\alpha_{1}=\id_{\mathcal{A}}$, i.e, $\alpha^{-1}_{1}=
\alpha_{2},$ and $\alpha^{2}_{1}=\alpha^{2}_{2}=\id_{A}$.
\begin{lemma}
Let $(l, r, \beta_{1}, \beta_{2}, V)$ be a bimodule of $(\mathcal{A}, \cdot,
\alpha_{1}, \alpha_{2})$. Then
\begin{enumerate}[label=\upshape{(\roman*)}]
\item $(r^{\ast}, l^{\ast}, \beta^{\ast}_{2}, \beta^{\ast}_{1}, V^{\ast})$ is a bimodule of $(\mathcal{A}, \cdot, \alpha_{1}, \alpha_{2});$
\item $(r^{\ast}, 0, \beta^{\ast}_{2}, \beta^{\ast}_{1}, V^{\ast})$ and $ (0, l^{\ast}, \beta^{\ast}_{2}, \beta^{\ast}_{1}, V^{\ast})$ are also bimodules of $\mathcal{A}$.
\end{enumerate}
\end{lemma}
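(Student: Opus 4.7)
The plan is to mimic, in the biHom setting, the dualisation argument used in the Hom-associative lemma recorded earlier in the paper. The key structural observation is that transposition reverses composition, so $l$ pairs naturally with $r^{\ast}$ and vice-versa; simultaneously, on passing to the dual, the twist $\beta_{2}$ (which appeared only with $r$) ends up paired with the dualised operator $r^{\ast}$ that now plays the role of the \emph{left} action in the new bimodule. This explains the index pattern $(r^{\ast},l^{\ast},\beta_{2}^{\ast},\beta_{1}^{\ast})$ in the statement. The involutivity hypothesis $\alpha_{1}\alpha_{2}=\alpha_{2}\alpha_{1}=\id_{\mathcal{A}}$ (hence $\alpha_{1}^{2}=\alpha_{2}^{2}=\id_{\mathcal{A}}$) will be used exactly to cancel the extra $\alpha_{i}^{2}$ factors produced when reindexing the original axioms.

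For part (i), I would verify the seven axioms \eqref{Cond.biHom-bimod}--\eqref{biHom-bimodule eq2} for the quintuple $(r^{\ast},l^{\ast},\beta_{2}^{\ast},\beta_{1}^{\ast},V^{\ast})$ one at a time. First I pair $r^{\ast}(x\cdot y)\beta_{2}^{\ast}(u^{\ast})$ with an arbitrary $v\in V$, shift all operators to the $V$-side, use $\beta_{2}(r(x)v)=r(\alpha_{2}(x))\beta_{2}(v)$ to pull $\beta_{2}$ inside, apply multiplicativity of $\alpha_{2}$, then invoke $r(X\cdot Y)\beta_{2}(w)=r(\alpha_{1}(Y))r(X)w$ with $X=\alpha_{2}(x)$, $Y=\alpha_{2}(y)$; the $\alpha_{1}\alpha_{2}=\id$ relation collapses the result to $\langle r(y)r(\alpha_{2}(x))v,u^{\ast}\rangle$, which transposes as $r^{\ast}(\alpha_{2}(x))r^{\ast}(y)u^{\ast}$. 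The dual equation for $l^{\ast}(x\cdot y)\beta_{1}^{\ast}(u^{\ast})$ is entirely symmetric, swapping the roles of $(\alpha_{1},\beta_{1})$ and $(\alpha_{2},\beta_{2})$.

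Next I would handle the mixed axiom $r^{\ast}(\alpha_{2}(x))l^{\ast}(y)u^{\ast}=l^{\ast}(\alpha_{1}(y))r^{\ast}(x)u^{\ast}$. Pairing with $v$ reduces this, after transposition, to $l(y)r(\alpha_{2}(x))v=r(x)l(\alpha_{1}(y))v$; this is obtained from the original mixed axiom $l(\alpha_{2}(X))r(Y)v=r(\alpha_{1}(Y))l(X)v$ by setting $X=\alpha_{1}(y)$ and $Y=\alpha_{2}(x)$ and applying $\alpha_{i}\alpha_{j}=\id$. Finally, the four twist-compatibility identities $\beta_{2}^{\ast}r^{\ast}=r^{\ast}\alpha_{1}\beta_{2}^{\ast}$, $\beta_{2}^{\ast}l^{\ast}=l^{\ast}\alpha_{1}\beta_{2}^{\ast}$, $\beta_{1}^{\ast}r^{\ast}=r^{\ast}\alpha_{2}\beta_{1}^{\ast}$, $\beta_{1}^{\ast}l^{\ast}=l^{\ast}\alpha_{2}\beta_{1}^{\ast}$ follow from the identity $r(x)\beta_{2}(v)=\beta_{2}(r(\alpha_{1}(x))v)$ (and its three analogues), which itself is obtained by substituting $x\mapsto\alpha_{1}(x)$ in $\beta_{2}(r(x)v)=r(\alpha_{2}(x))\beta_{2}(v)$ and using involutivity.

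Part (ii) will follow for free: setting $l$ or $r$ to zero makes the axioms involving that component tautological, while the surviving axioms are either direct restrictions of those in (i) or trivially satisfied. The genuine work is therefore concentrated in part (i), and the only real obstacle is bookkeeping: one must keep careful track of the three simultaneous swaps $l\leftrightarrow r$, $\beta_{1}\leftrightarrow\beta_{2}$, and $\alpha_{1}\leftrightarrow\alpha_{2}$ (the last enforced by the involutivity identities) when transferring each axiom to its dual. Once these substitutions are organised, every identity reduces to a one-line transposition computation.
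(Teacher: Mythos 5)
Your proof is correct and follows essentially the same route as the paper's: dualise each bimodule axiom by pairing against $v\in V$, transpose the operators across the pairing, invoke multiplicativity together with $\alpha_{1}\circ\alpha_{2}=\alpha_{2}\circ\alpha_{1}=\id_{\mathcal{A}}$ to collapse the surplus twists, and note that part (ii) is immediate once one component is set to zero. The bookkeeping of the three simultaneous swaps $l\leftrightarrow r$, $\beta_{1}\leftrightarrow\beta_{2}$, $\alpha_{1}\leftrightarrow\alpha_{2}$ that you describe is exactly what the paper's displayed computations carry out.
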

\begin{proof} (i) Let $(l, r, \beta_{1}, \beta_{2}, V)$ be a bimodule of an involutive biHom-associative algebra $  (\mathcal{A}, \cdot, \alpha_{1}, \alpha_{2})$. Show that
 $ (r^{\ast}, l^{\ast}, \beta^{\ast}_{2}, \beta^{\ast}_{1}, V^{\ast}) $ is a bimodule of $\mathcal{A}$. Let $ x, y \in  \mathcal{A}, u^{\ast} \in V^{\ast}, v \in V.$
 Then
\begin{enumerate}
\item[(i-1)] the following computation
\begin{multline*}
 \langle r^{\ast}(x\cdot y)\beta^{\ast}_{2}(u^{\ast}), v \rangle   = \langle \beta_{2}(r(x\cdot y)v), u^{\ast} \rangle
  =\langle r(\alpha_{2}(x\cdot y))\beta_{2}(v), u^{\ast} \rangle
                                   \cr = \langle r(\alpha_{2}(x)\cdot \alpha_{2}(y))\beta_{2}(v), u^{\ast}\rangle
                                   =\langle r[\alpha_{1}(\alpha_{2}(y))]r(\alpha_{2}(x))v, u^{\ast} \rangle
                                        \cr = \langle (r(y)r(\alpha_{2}(x)))^{\ast}u^{\ast},v \rangle
                                        = \langle r^{\ast}(\alpha_{2}(x))r^{\ast}(y)u^{\ast}, v \rangle;
\end{multline*}
 leads to  $ r^{\ast}(x\cdot y)\beta^{\ast}_{2}(u^{\ast})= r^{\ast}(\alpha_{2}(x))r^{\ast}(y)u^{\ast} $;
\item[(i-2)] the following computation
\begin{eqnarray*}
\langle l^{\ast}(x\cdot y)\beta_{1}^{\ast}(u^{\ast}), v\rangle   &=& \langle \beta_{1}(l(x\cdot y)(v)), u^{\ast} \rangle =\langle l(\alpha_{1}(x\cdot y))\beta_{1}(v), u^{\ast}\rangle
                                   \cr &=& \langle l(\alpha_{1}(x)\cdot \alpha_{1}(y))\beta_{1}(v), u^{\ast} \rangle\cr
                             &=& \langle l[\alpha_{2}(\alpha_{1}(x))]l(\alpha_{1}(y))\beta(v), u^{\ast} \rangle
                                        \cr &=& \langle(l(x)l(\alpha_{1}(y)))^{\ast}u^{\ast},v \rangle
                                        = \langle l^{\ast}(\alpha_{1}(y))l^{\ast}(x)u^{\ast}, v \rangle
\end{eqnarray*}
 gives  $ l^{\ast}(x\cdot y)\beta^{\ast}_{1}(u^{\ast})= l^{\ast}(\alpha_{1}(y))l^{\ast}(x)u^{\ast} $;
\item[(i-3)] the following computation
\begin{multline*}
\langle r^{\ast}(\alpha_{2}(x))l^{\ast}(y)u^{\ast}, v\rangle  =                                           
\langle l(y)r(\alpha_{2}(x))v, u^{\ast}\rangle \cr 
=\langle (l\circ\alpha_{1})(\alpha_{2}(y))(r\circ\alpha_{2})(x))v, u^{\ast} \rangle
= \langle (r\circ\alpha_{2})(\alpha_{1}(x))(l\circ\alpha_{1})(y)v, u^{\ast}\rangle \cr
=\langle r(x)l(\alpha_{1}(y))v, u^{\ast}\rangle 
= \langle l^{\ast}(\alpha_{1}(y))r^{\ast}(x)u^{\ast},v \rangle
\end{multline*}
 yields    $r^{\ast}(\alpha_{2}(x))l^{\ast}(y)u^{\ast}= l^{\ast}(\alpha_{1}(y))r^{\ast}(x)u^{\ast}$.

Furthermore,
\begin{eqnarray*}
 \langle \beta^{\ast}_{2}(r^{\ast}(x))u^{\ast}, v\rangle &=&\langle r(x)(\beta_{2}(v)), u^{\ast}\rangle =\langle (r\circ\alpha_{1})(\alpha_{2}(x))(\beta_{2}(v)), u^{\ast}\rangle\cr
 &=&\langle \beta_{2}(r(\alpha_{1}(x)))v, u^{\ast}\rangle =\langle r^{\ast}(\alpha_{1}(x))\beta^{\ast}_{2}(u^{\ast}), v\rangle.
 \end{eqnarray*}
 Hence, $\beta^{\ast}_{2}(r^{\ast}(x))u^{\ast}= r^{\ast}(\alpha_{1}(x))\beta^{\ast}_{2}(u^{\ast}).$ By analogy, we establish the other conditions.
Hence,  $(r^{\ast}, l^{\ast}, \beta^{\ast}_{2}, \beta^{\ast}_{1}, V^{\ast})$ is a bimodule of $  \mathcal{A} $.
\end{enumerate}
(ii) Similarly, one can show  that $(r^{\ast}, 0, \beta^{\ast}_{2}, \beta^{\ast}_{1}, V^{\ast})$ and $ (0, l^{\ast}, \beta^{\ast}_{2}, \beta^{\ast}_{1}, V^{\ast}) $
 are  bimodules of $  \mathcal{A} $ as well.
 \qed
\end{proof}
 \begin{definition}
Let $(\mathcal{A}, \cdot, \alpha, \beta)$ be a biHom-associative algebra, and $B: \mathcal{A}\times\mathcal{A}\rightarrow K$ be a bilinear form on $\mathcal{A}.$
$B$ is said $\alpha\beta$-invariant if
\begin{eqnarray}
B(\beta(x)\cdot\alpha(y), \alpha(z))= B(\alpha(x), \beta(y)\cdot\alpha(z)).
\end{eqnarray}
\end{definition}
\begin{definition}
 A biHom-Frobenius algebra is a biHom-associative algebra with a non-degenerate invariant bilinear form.
\end{definition}
\begin{definition}
We call $(\mathcal{A}, \alpha, \beta, B)$ a \textbf{double construction of an involutive biHom-Frobenius algebra} associated to
$(\mathcal{A}_1, \alpha_{1})$ and $({\mathcal A}_1^*, \alpha^{\ast}_{1})$ if it satisfies the conditions:
\begin{enumerate}[label=\upshape{\arabic*)}]
\item $ \mathcal{A} = \mathcal{A}_{1}
\oplus \mathcal{A}^{\ast}_{1} $ as the direct sum of vector
spaces;
\item $(\mathcal{A}_1, \alpha_{1}, \alpha_{2})$ and $({\mathcal A}_1^*, \alpha^{\ast}_{1}, \alpha^{\ast}_{2})$ are biHom-associative subalgebras of $
(\mathcal{A}, \alpha)$ with $\alpha=\alpha_{1}\oplus\alpha^{\ast}_{1}$ and $\beta=\alpha_{2}\oplus\alpha^{\ast}_{2}$;
\item $B$ is the natural non-degerenate $(\alpha_{1}\oplus\alpha^{\ast}_{1})(\alpha_{2}\oplus\alpha^{\ast}_{2})$-invariant symmetric
bilinear form on $ \mathcal{A}_{1} \oplus \mathcal{A}^{\ast}_{1} $
given by
\begin{eqnarray} \label{biquadratic form}
\begin{array}{rcl}
 B(x + a^{\ast}, y + b^{\ast}) &=& \langle x, b^{\ast} \rangle +  \langle a^{\ast}, y \rangle,\cr B((\alpha_{1} + \alpha^{\ast}_{1})(x + a^{\ast}), y + b^{\ast})&=&  B(x + a^{\ast}, (\alpha_{1} + \alpha^{\ast}_{1})(y + b^{\ast})), \cr B((\alpha_{2} + \alpha^{\ast}_{2})(x + a^{\ast}), y + b^{\ast})&=&  B(x + a^{\ast}, (\alpha_{2} + \alpha^{\ast}_{2})(y + b^{\ast}))
\end{array}
\end{eqnarray}
for all  $x, y \in \mathcal{A}_{1}, a^{\ast}, b^{\ast} \in \mathcal{A}^{\ast}_{1},$
where $ \langle  , \rangle $ is the natural pair between the vector space $ \mathcal{A}_{1} $ and its dual space  $ \mathcal{A}^{\ast}_{1} $.
\end{enumerate}
\end{definition}

Let $(\mathcal{A}, \cdot, \alpha_{1}, \alpha_{2})$ be an involutive biHom-associative algebra. Suppose  there also exists an
involutive biHom-associative algebra structure $"\circ"$ on its dual space $\mathcal{A}^{\ast}.$ We
construct an involutive biHom-associative algebra structure on the direct sum $\mathcal{A}\oplus\mathcal{A}^{\ast}$
of the underlying vector spaces of $\mathcal{A}$ and $\mathcal{A}^{\ast}$ such that $(\mathcal{A}, \cdot, \alpha_{1}, \alpha_{2})$ and
$(\mathcal{A}^{\ast}, \circ, \alpha^{\ast}_{1}, \alpha^{\ast}_{1})$ are biHom-subalgebras, and the non-degenerate invariant symmetric bilinear form on
$\mathcal{A}\oplus\mathcal{A}^{\ast}$ is given by \eqref{biquadratic form}. Hence, $(\mathcal{A}\oplus\mathcal{A}^{\ast}, \alpha_{1}\oplus\alpha^{\ast}_{1}, \alpha_{2}\oplus\alpha^{\ast}_{2}, B)$ is a symmetric multiplicative biHom-associative algebra. Such a construction is called a double construction of an involutive biHom-Frobenius algebra
associated to $(\mathcal{A}, \cdot, \alpha_{1}, \alpha_{2})$ and $(\mathcal{A}^{\ast}, \circ,
\alpha^{\ast}_{1}, \alpha^{\ast}_{2})$.

\begin{theorem}\label{biFrobenius theorem}
Let $(\mathcal{A}, \cdot, \alpha_{1}, \alpha_{2}) $ be an involutive biHom-associative algebra. Suppose  there is an involutive biHom-associative algebra structure $ " \circ " $ on its
dual space $ \mathcal{A}^{\ast} $. Then, there is a double construction of an involutive symmetric  biHom-associative algebra associated to $ (\mathcal{A}, \cdot, \alpha_{1}, \alpha_{2})$
and $(\mathcal{A}^{\ast}, \circ, \alpha^{\ast}_{1}, \alpha^{\ast}_{2}) $ if and only if
$(\mathcal{A}, \mathcal{A}^{\ast}, R^{\ast}_{\cdot}, L^{\ast}_{\cdot}, \alpha^{\ast}_{2},
\alpha^{\ast}_{1}, R^{\ast}_{\circ},  L^{\ast}_{\circ}, \alpha_{2}, \alpha_{1})$
is a matched pair of involutive biHom-associative algebras.
\end{theorem}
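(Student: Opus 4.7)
The plan is to mimic the proof of Theorem \ref{Frobenius theorem} in the biHom setting, using the biHom matched pair construction established earlier in Section \ref{subsec:dblcnstrinvbihomfrobal:bimodmatchedpbihomassal} as the main engine. The same bilinear form / matched pair dictionary carries over, with the key change being that the dualized bimodule on $\mathcal{A}^{\ast}$ carries the twists $(\alpha^{\ast}_{2}, \alpha^{\ast}_{1})$ in swapped positions (as shown in the preceding lemma), so the candidate matched pair data are necessarily $(\mathcal{A}, \mathcal{A}^{\ast}, R^{\ast}_{\cdot}, L^{\ast}_{\cdot}, \alpha^{\ast}_{2}, \alpha^{\ast}_{1}, R^{\ast}_{\circ}, L^{\ast}_{\circ}, \alpha_{2}, \alpha_{1})$.

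For the forward direction, I would start from the matched pair and invoke the biHom matched pair theorem to obtain a biHom-associative algebra $(\mathcal{A} \bowtie \mathcal{A}^{\ast}, \ast, \alpha_{1} \oplus \alpha^{\ast}_{1}, \alpha_{2} \oplus \alpha^{\ast}_{2})$ with product given by \eqref{bimatch. pair product} specialized to $l_{\mathcal{A}} = R^{\ast}_{\cdot}$, $r_{\mathcal{A}} = L^{\ast}_{\cdot}$, $l_{\mathcal{B}} = R^{\ast}_{\circ}$, $r_{\mathcal{B}} = L^{\ast}_{\circ}$. It then remains to verify that the symmetric bilinear form $B$ defined in \eqref{biquadratic form} is simultaneously $(\alpha_{1} \oplus \alpha^{\ast}_{1})$- and $(\alpha_{2} \oplus \alpha^{\ast}_{2})$-invariant. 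Mimicking the Hom case, I would expand both sides of the invariance identity $B((\alpha_{j} \oplus \alpha^{\ast}_{j})(x + a^{\ast}) \ast (\alpha_{j} \oplus \alpha^{\ast}_{j})(y + b^{\ast}),\, \alpha_{j}(z) + \alpha^{\ast}_{j}(c^{\ast})) = B(\alpha_{j}(x) + \alpha^{\ast}_{j}(a^{\ast}),\, (\alpha_{j} \oplus \alpha^{\ast}_{j})(y + b^{\ast}) \ast (\alpha_{j} \oplus \alpha^{\ast}_{j})(z + c^{\ast}))$ for $j = 1, 2$, use the defining pairings $\langle R^{\ast}_{\cdot}(x) a^{\ast}, y \rangle = \langle a^{\ast}, y \cdot x \rangle$ and the three analogous identities, and observe that the six scalar terms produced on each side pair off, provided the involutivity $\alpha_{1}\alpha_{2} = \alpha_{2}\alpha_{1} = \id_{\mathcal{A}}$ is applied to absorb the excess twists. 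This is exactly the Hom-case computation performed in Theorem \ref{Frobenius theorem}, run twice.

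For the converse, I would start from a double construction $(\mathcal{A} \oplus \mathcal{A}^{\ast}, \alpha_{1}\oplus\alpha^{\ast}_{1}, \alpha_{2}\oplus\alpha^{\ast}_{2}, B)$ and define, for $x \in \mathcal{A}$ and $a^{\ast} \in \mathcal{A}^{\ast}$,
\begin{eqnarray*}
x \ast a^{\ast} = l_{\mathcal{A}}(x) a^{\ast} + r_{\mathcal{A}^{\ast}}(a^{\ast}) x, \qquad a^{\ast} \ast x = l_{\mathcal{A}^{\ast}}(a^{\ast}) x + r_{\mathcal{A}}(x) a^{\ast}.
\end{eqnarray*}
The $(\alpha_{i} \oplus \alpha^{\ast}_{i})$-invariance of $B$ pins down these four actions as $l_{\mathcal{A}} = R^{\ast}_{\cdot}$, $r_{\mathcal{A}} = L^{\ast}_{\cdot}$, $l_{\mathcal{A}^{\ast}} = R^{\ast}_{\circ}$, $r_{\mathcal{A}^{\ast}} = L^{\ast}_{\circ}$, exactly as in the Hom case. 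Expanding biHom-associativity of $\ast$ on all triples mixing elements of $\mathcal{A}$ and $\mathcal{A}^{\ast}$ then reproduces the six compatibility conditions \eqref{bimatch. pair1}--\eqref{bimatch. pair6}, so the quoted matched pair structure is obtained.

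The principal difficulty is the index bookkeeping. Whereas a single twist $\alpha$ permeates the Hom-case argument, here two distinct twists $\alpha_{1}$ and $\alpha_{2}$ enter with their duals interchanged, and each of the six matched pair identities distributes them asymmetrically across its arguments. The involutivity $\alpha_{1}\alpha_{2} = \id_{\mathcal{A}}$ must be applied at precisely the right places to convert compositions like $\alpha_{1}^{2}$ or $\alpha_{2}\alpha_{1}$ into the identity so that the symmetric terms on the two sides of the invariance equation collapse onto one another. No new conceptual step is needed beyond that which appeared in the proof of Theorem \ref{Frobenius theorem}; the challenge is purely combinatorial care in tracking which of $\alpha_{1}, \alpha_{2}, \alpha^{\ast}_{1}, \alpha^{\ast}_{2}$ is where.
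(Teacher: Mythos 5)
Your overall skeleton coincides with the paper's: the forward direction invokes the biHom matched-pair theorem with $l_{\mathcal{A}}=R^{\ast}_{\cdot}$, $r_{\mathcal{A}}=L^{\ast}_{\cdot}$, $l_{\mathcal{A}^{\ast}}=R^{\ast}_{\circ}$, $r_{\mathcal{A}^{\ast}}=L^{\ast}_{\circ}$ and then verifies invariance of $B$; the converse defines the cross-actions and reads off \eqref{bimatch. pair1}--\eqref{bimatch. pair6} from biHom-associativity, exactly as the paper does (the paper itself delegates most of this to ``similar to Theorem \ref{Frobenius theorem}'' and only writes out the invariance computation).

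The one genuine gap is in the invariance step. You propose to check, for $j=1,2$ separately, the uniform identity
$B\bigl((\alpha_{j}\oplus\alpha^{\ast}_{j})(u)\ast(\alpha_{j}\oplus\alpha^{\ast}_{j})(v),(\alpha_{j}\oplus\alpha^{\ast}_{j})(w)\bigr)=B\bigl((\alpha_{j}\oplus\alpha^{\ast}_{j})(u),(\alpha_{j}\oplus\alpha^{\ast}_{j})(v)\ast(\alpha_{j}\oplus\alpha^{\ast}_{j})(w)\bigr)$,
i.e.\ the Hom-type invariance run twice. But the paper's definition of $\alpha\beta$-invariance for a biHom algebra is the \emph{mixed} condition $B(\beta(x)\cdot\alpha(y),\alpha(z))=B(\alpha(x),\beta(y)\cdot\alpha(z))$, and what must be verified here is
$B\bigl((\alpha_{2}\oplus\alpha^{\ast}_{2})(u)\ast(\alpha_{1}\oplus\alpha^{\ast}_{1})(v),(\alpha_{1}\oplus\alpha^{\ast}_{1})(w)\bigr)=B\bigl((\alpha_{1}\oplus\alpha^{\ast}_{1})(u),(\alpha_{2}\oplus\alpha^{\ast}_{2})(v)\ast(\alpha_{1}\oplus\alpha^{\ast}_{1})(w)\bigr)$,
which is exactly the computation the paper carries out before collapsing everything with $\alpha_{1}\alpha_{2}=\id$. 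These are not the same condition: the biHom-associativity $\alpha(x)(yz)=(xy)\beta(z)$ intertwines the two twists, so the natural Frobenius compatibility must distribute $\alpha_{1}$ and $\alpha_{2}$ asymmetrically across the three arguments, and your uniform identity for a single $j$ is neither what the definition of the double construction requires nor something the six-term cancellation is set up to deliver. The fix is purely to replace the identity being checked; the six-term pairing argument you describe then goes through as in the paper.
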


\begin{proof}
By a similar proof as for  Theorem \ref{Frobenius theorem}, we obtain the results. Let us show that $B$ is well $(\alpha_{1}\oplus\alpha^{\ast}_{1})(\alpha_{2}\oplus\alpha^{\ast}_{2})$-invariant. Let $x, y, z\in\mathcal{A}$ and $a^{\ast}, b^{\ast}, c^{\ast}\in\mathcal{A}^{\ast}.$ We have
\begin{eqnarray*}
&&\mathcal{B}[(\alpha_{2}(x) + \alpha^{\ast}_{2}(a^{\ast}))\ast (\alpha_{1}(y) + \alpha^{\ast}_{1}(b^{\ast})), (\alpha_{1}(z) + \alpha^{\ast}_{1}(c^{\ast}))]\cr
 && \quad = \langle \alpha_{2}(x)\cdot\alpha_{1}(y), \alpha^{\ast}_{1}(c^{\ast})\rangle +
                                                     \langle \alpha^{\ast}_{1}(c^{\ast}) \circ \alpha^{\ast}_{2}(a^{\ast}), \alpha_{1}(y)\rangle + \langle \alpha^{\ast}_{1}(b^{\ast})\circ\alpha^{\ast}_{1}(c^{\ast}), \alpha_{2}(x)\rangle \cr
&& \quad + \langle\alpha^{\ast}_{2}(a^{\ast})\circ\alpha^{\ast}_{1}(b^{\ast}), \alpha_{1}(z)\rangle + \langle \alpha_{1}(z)\cdot\alpha_{2}(x), \alpha^{\ast}_{1}(b^{\ast})\rangle +
                                                              \langle \alpha_{1}(y)\cdot\alpha_{1}(z), \alpha^{\ast}_{2}(a^{\ast})\rangle;
\\
&&\mathcal{B}[\alpha_{1}(x) + \alpha^{\ast}_{1}(a^{\ast}), (\alpha_{2}(y) + \alpha^{\ast}_{2}(b^{\ast}))\ast(\alpha_{1}(z) + \alpha^{\ast}_{1}(c^{\ast}))]\cr
 && \quad = \langle \alpha_{1}(x), \alpha^{\ast}_{2}(b^{\ast})\circ\alpha^{\ast}_{1}(c^{\ast})\rangle
                                                            + \langle \alpha^{\ast}_{1}(c^{\ast}), \alpha_{1}(x)\cdot\alpha_{2}(y)\rangle + \langle \alpha^{\ast}_{2}(b^{\ast}), \alpha_{1}(z)\cdot\alpha_{1}(x)\rangle \cr
 &&  \quad + \langle \alpha_{2}(y)\cdot\alpha_{1}(z), \alpha^{\ast}_{1}(a^{\ast})\rangle
                     + \langle \alpha^{\ast}_{1}(a^{\ast})\circ\alpha^{\ast}_{2}(b^{\ast}), \alpha_{1}(z)\rangle + \langle \alpha_{1}(c^{\ast})\circ\alpha^{\ast}_{1}(a^{\ast}), \alpha_{2}(y)\rangle.
\end{eqnarray*}
Using $\alpha^{2}_{1}=\alpha^{2}_{2}=\id_{\mathcal{A}},$ $\alpha^{\ast 2}_{1}=\alpha^{\ast 2}_{2}=\id_{\mathcal{A}^{\ast}}, \alpha_{1}=\alpha^{-1}_{2}$ and $\alpha^{\ast}_{1}=\alpha^{\ast -1}_{2},$ we obtain
\begin{eqnarray*}
&&\mathcal{B}[(\alpha_{2}(x) + \alpha^{\ast}_{2}(a^{\ast}))\ast (\alpha_{1}(y) + \alpha^{\ast}_{1}(b^{\ast})), (\alpha_{1}(z) + \alpha^{\ast}_{1}(c^{\ast}))]\cr
&&=\mathcal{B}[\alpha_{1}(x) + \alpha^{\ast}_{1}(a^{\ast}), (\alpha_{2}(y) + \alpha^{\ast}_{2}(b^{\ast}))\ast(\alpha_{1}(z) + \alpha^{\ast}_{1}(c^{\ast}))]\cr
&&= \langle x, b^{\ast}\circ c^{\ast}\rangle + \langle c^{\ast}, x\cdot y\rangle + \langle b^{\ast}, z\cdot x\rangle + \langle y\cdot z, a^{\ast}\rangle + \langle a^{\ast}\circ b^{\ast}, z\rangle + \langle c^{\ast}\circ a^{\ast}, y\rangle.
\end{eqnarray*}
This completes the proof. \qed
\end{proof}
\begin{theorem}
Let $ (\mathcal{A}, \cdot, \alpha_{1}, \alpha_{2})$ be an involutive biHom-associative algebra. Suppose  there exists an involutive biHom-associative algebra
 structure $ "\circ" $ on its dual space $(\mathcal{A}^{\ast}, \alpha^{\ast}_{1}, \alpha^{\ast}_{2})$. Then, $ (\mathcal{A}, \mathcal{A}^{\ast},
  R^{\ast}_{\cdot}, L^{\ast}_{\cdot}, \alpha^{\ast}_{2}, \alpha^{\ast}_{1}, R^{\ast}_{\circ},  L^{\ast}_{\circ}, \alpha_{2}, \alpha_{1})$ is a matched pair of involutive biHom-associative algebras
  if and only if, for any $ x \in \mathcal{A}$ and $ a^{\ast}, b^{\ast} \in \mathcal{A}^{\ast} $,
\begin{eqnarray} \label{infinitesimal bicond.}
R^{\ast}_{\cdot}(\alpha_{2}(x))(a^{\ast} \circ b^{\ast}) = R^{\ast}_{\cdot}(L^{\ast}_{\circ}(a^{\ast})x)\alpha^{\ast}_{2}(b^{\ast}) +
(R^{\ast}_{\cdot}(x)a^{\ast})\circ \alpha^{\ast}_{2}(b^{\ast}), \\
\begin{array}{ll}
\label{antisymmetric bicond.}
 R^{\ast}_{\cdot}(R^{\ast}_{\circ}(a^{\ast})x)\alpha^{\ast}_{2}(b^{\ast}) & + L^{\ast}_{\cdot}(x)a^{\ast}\circ \alpha^{\ast}_{2}(b^{\ast})= \\
 & L^{\ast}_{\cdot}(L^{\ast}_{\circ}(b^{\ast})x)\alpha^{\ast}_{1}(a^{\ast}) + \alpha^{\ast}_{1}(a^{\ast})\circ (R^{\ast}_{\cdot}(x)b^{\ast}).
\end{array}
\end{eqnarray}
\end{theorem}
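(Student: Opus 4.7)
The plan is to imitate the proof of Theorem~\ref{homMathched pair's theorem} but carry out the bookkeeping with two commuting involutive twistings $\alpha_{1}, \alpha_{2}$ satisfying $\alpha_{1}\alpha_{2}=\alpha_{2}\alpha_{1}=\id_{\mathcal{A}}$ (so $\alpha_{1}=\alpha_{2}^{-1}$ and $\alpha_{1}^{2}=\alpha_{2}^{2}=\id$), together with the dual relations $\alpha_{i}^{\ast\,2}=\id_{\mathcal{A}^{\ast}}$ and $\alpha_{1}^{\ast}=(\alpha_{2}^{\ast})^{-1}$. First, I would observe that with the substitution
\[
l_{\mathcal{A}}=R^{\ast}_{\cdot},\ r_{\mathcal{A}}=L^{\ast}_{\cdot},\ \beta_{1}=\alpha^{\ast}_{2},\ \beta_{2}=\alpha^{\ast}_{1},\ l_{\mathcal{B}}=R^{\ast}_{\circ},\ r_{\mathcal{B}}=L^{\ast}_{\circ},
\]
condition~\eqref{infinitesimal bicond.} is literally condition~\eqref{bimatch. pair1} of the biHom matched-pair definition, and condition~\eqref{antisymmetric bicond.} is literally~\eqref{bimatch. pair5}. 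So proving the theorem reduces to establishing the chains
\[
\eqref{bimatch. pair1}\Leftrightarrow\eqref{bimatch. pair2}\Leftrightarrow\eqref{bimatch. pair3}\Leftrightarrow\eqref{bimatch. pair4},\qquad \eqref{bimatch. pair5}\Leftrightarrow\eqref{bimatch. pair6},
\]
under the involutive hypothesis.

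Next, I would set up the basic identities to be used repeatedly, namely the pairings
\[
\langle R^{\ast}_{\cdot}(x)a^{\ast},y\rangle=\langle L^{\ast}_{\cdot}(y)a^{\ast},x\rangle=\langle y\cdot x,a^{\ast}\rangle,\quad \langle R^{\ast}_{\circ}(b^{\ast})x,a^{\ast}\rangle=\langle L^{\ast}_{\circ}(a^{\ast})x,b^{\ast}\rangle=\langle a^{\ast}\circ b^{\ast},x\rangle,
\]
and the commutation rules
\[
\alpha^{\ast}_{i}(R^{\ast}_{\cdot}(x)a^{\ast})=R^{\ast}_{\cdot}(\alpha_{i}(x))\alpha^{\ast}_{i}(a^{\ast}),\qquad \alpha_{i}(R^{\ast}_{\circ}(a^{\ast})x)=R^{\ast}_{\circ}(\alpha^{\ast}_{i}(a^{\ast}))\alpha_{i}(x),
\]
and their $L$-analogues, all of which follow from the biHom-bimodule axioms established in the previous subsection. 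For each of the three equivalences to be proved, the strategy is to pair the candidate identity with an arbitrary $y\in\mathcal{A}$ (or $a^{\ast}\in\mathcal{A}^{\ast}$), transport every occurrence of $L^{\ast}_{\cdot}, R^{\ast}_{\cdot}, L^{\ast}_{\circ}, R^{\ast}_{\circ}$ back to a pairing between a product in $\mathcal{A}$ and an element of $\mathcal{A}^{\ast}$, and repeatedly insert $\alpha_{1}\alpha_{2}=\id$ or $\alpha_{i}^{2}=\id$ to absorb the extraneous twists. Setting $z=\alpha_{2}(x)$, $t=\alpha_{1}(y)$, $c^{\ast}=\alpha^{\ast}_{2}(a^{\ast})$, $d^{\ast}=\alpha^{\ast}_{1}(b^{\ast})$ (rather than $z=\alpha(x)$ etc.\ as in Theorem~\ref{homMathched pair's theorem}) is the right normalization, because under it each term in \eqref{bimatch. pair1}--\eqref{bimatch. pair4} rewrites into the same scalar pairing up to a relabelling of the dummy variables; likewise each of the four summands in \eqref{bimatch. pair5} matches one in \eqref{bimatch. pair6} with signs paired off.

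Finally, having verified both chains of equivalences, the theorem follows: the specific bimodule substitution makes \eqref{bimatch. pair1}, \eqref{bimatch. pair5} coincide with \eqref{infinitesimal bicond.}, \eqref{antisymmetric bicond.}, and the remaining four conditions are redundant under involutivity. The main obstacle is not the conceptual structure of the argument—it mirrors the Hom case—but the careful tracking of which of $\alpha_{1}, \alpha_{2}$ and $\alpha_{1}^{\ast}, \alpha_{2}^{\ast}$ appears at each stage of each calculation; a small misalignment (for example using $\alpha_{1}^{2}=\id$ when one actually needs $\alpha_{1}\alpha_{2}=\id$) will break the cancellation, so the bulk of the work is a patient bookkeeping of twists across the six dualization identities, exactly as was done in the Hom-associative case.
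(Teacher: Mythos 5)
Your overall strategy is the same as the paper's: the paper also proves this theorem by repeating the argument of Theorem \ref{homMathched pair's theorem}, identifying \eqref{infinitesimal bicond.} and \eqref{antisymmetric bicond.} with \eqref{bimatch. pair1} and \eqref{bimatch. pair5} under the substitution $l_{\mathcal{A}}=R^{\ast}_{\cdot}$, $r_{\mathcal{A}}=L^{\ast}_{\cdot}$, $l_{\mathcal{B}}=R^{\ast}_{\circ}$, $r_{\mathcal{B}}=L^{\ast}_{\circ}$, and then establishing the chains $\eqref{bimatch. pair1}\Leftrightarrow\eqref{bimatch. pair2}\Leftrightarrow\eqref{bimatch. pair3}\Leftrightarrow\eqref{bimatch. pair4}$ and $\eqref{bimatch. pair5}\Leftrightarrow\eqref{bimatch. pair6}$ by dualization.

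There is, however, one concrete point at which your plan as written would derail: the commutation rules. You state the index-preserving versions $\alpha^{\ast}_{i}(R^{\ast}_{\cdot}(x)a^{\ast})=R^{\ast}_{\cdot}(\alpha_{i}(x))\alpha^{\ast}_{i}(a^{\ast})$ and $\alpha_{i}(R^{\ast}_{\circ}(a^{\ast})x)=R^{\ast}_{\circ}(\alpha^{\ast}_{i}(a^{\ast}))\alpha_{i}(x)$. What actually follows from the dual-bimodule lemma of Section \ref{sec:dblcnstrinvbihomfrobal} together with $\alpha_{1}\alpha_{2}=\id$ are the index-swapping versions, and these are exactly the relations the paper's proof lists: $\alpha^{\ast}_{2}(R^{\ast}_{\cdot}(x)a^{\ast})= R^{\ast}_{\cdot}(\alpha_{1}(x))\alpha^{\ast}_{2}(a^{\ast})$, $\alpha_{2}(R^{\ast}_{\circ}(a^{\ast})x)= R^{\ast}_{\circ}(\alpha^{\ast}_{1}(a^{\ast}))\alpha_{2}(x)$, and their companions. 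Indeed $\langle\alpha^{\ast}_{2}(R^{\ast}_{\cdot}(x)a^{\ast}),y\rangle=\langle \alpha_{2}(y)\cdot x,a^{\ast}\rangle$, and to pull $\alpha_{2}$ out of the product one must write $x=\alpha_{2}(\alpha_{1}(x))$, which produces $R^{\ast}_{\cdot}(\alpha_{1}(x))$, not $R^{\ast}_{\cdot}(\alpha_{2}(x))$. Your uncrossed versions are only recovered by additionally invoking $\alpha_{1}^{2}=\alpha_{2}^{2}=\id$, which combined with $\alpha_{1}\alpha_{2}=\id$ forces $\alpha_{1}=\alpha_{2}$ and collapses the biHom structure back to the Hom case, defeating the purpose of the theorem. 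This is precisely the ``small misalignment'' you warn against in your last sentence; with the crossed relations in place (and your normalization $z=\alpha_{2}(x)$, $c^{\ast}=\alpha^{\ast}_{2}(a^{\ast})$, etc.\ adjusted accordingly) the rest of your plan goes through as in the paper.
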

\begin{proof}
By a similar proof as for  Theorem \ref{homMathched pair's theorem}, and using the following valid relations
\begin{eqnarray*}
&&\alpha^{\ast}_{2}(R^{\ast}_{\cdot}(x)a^{\ast})= R^{\ast}_{\cdot}(\alpha_{1}(x))\alpha^{\ast}_{2}(a^{\ast}),\ \alpha^{\ast}_{2}(L^{\ast}_{\cdot}(x)a^{\ast})= L^{\ast}_{\cdot}(\alpha_{1}(x))\alpha^{\ast}_{2}(a^{\ast})\cr
&&\alpha^{\ast}_{1}(R^{\ast}_{\cdot}(x)a^{\ast})= R^{\ast}_{\cdot}(\alpha_{2}(x))\alpha^{\ast}_{1}(a^{\ast}),\ \alpha^{\ast}_{1}(L^{\ast}_{\cdot}(x)a^{\ast})= L^{\ast}_{\cdot}(\alpha_{2}(x))\alpha^{\ast}_{1}(a^{\ast})\cr
&&\alpha_{2}(R^{\ast}_{\circ}(a^{\ast})x)= R^{\ast}_{\circ}(\alpha^{\ast}_{1}(a^{\ast}))\alpha_{2}(x),\  \alpha_{2}(L^{\ast}_{\circ}(a^{\ast})x)= L^{\ast}_{\circ}(\alpha^{\ast}_{1}(a^{\ast}))\alpha_{2}(x)\cr
&&\alpha_{1}(R^{\ast}_{\circ}(a^{\ast})x)= R^{\ast}_{\circ}(\alpha^{\ast}_{2}(a^{\ast}))\alpha_{1}(x),\  \alpha_{1}(L^{\ast}_{\circ}(a^{\ast})x)= L^{\ast}_{\circ}(\alpha^{\ast}_{2}(a^{\ast}))\alpha_{1}(x),
\end{eqnarray*}
the equivalences
\begin{equation*}
\eqref{bimatch. pair1} \Longleftrightarrow  \eqref{bimatch. pair2} \Longleftrightarrow \eqref{bimatch. pair3}  \Longleftrightarrow \eqref{bimatch. pair4}
 {\mbox{\quad and \quad }}\;
\eqref{bimatch. pair5} \Longleftrightarrow \eqref{bimatch. pair6}.
\end{equation*}
are obtained. \qed
\end{proof}
\begin{theorem}\label{bihombialgebra theorem}
Let $(\mathcal{A},\cdot, \alpha_{1}, \alpha_{2})$ be an involutive biHom-associative algebra.
Suppose there is an involutive biHom-associative algebra structure $ "\circ" $ on its dual space $ \mathcal{A}^{\ast} $
 given by a linear map $ \Delta^{\ast} : \mathcal{A}^{\ast} \otimes \mathcal{A}^{\ast} \rightarrow \mathcal{A}^{\ast} $.
Then, \begin{eqnarray*}(\mathcal{A}, \mathcal{A}^{\ast}, R^{\ast}_{\cdot}, L^{\ast}_{\cdot}, \alpha^{\ast}_{2}, \alpha^{\ast}_{1}, R^{\ast}_{\circ},  L^{\ast}_{\circ}, \alpha_{2}, \alpha_{1})\end{eqnarray*} is a matched pair of involutive
 biHom-associative algebras if and only if $ \Delta: \mathcal{A} \rightarrow \mathcal{A} \otimes \mathcal{A} $ satisfies the
 following two conditions:
\begin{eqnarray} \label{infinitesimal bi-identity}
\Delta\circ\alpha_{2}(x\cdot y) = (\alpha_{2}\otimes L_{\cdot}(x))\bigtriangleup(y) + (R_{\cdot}(y)\otimes\alpha_{2})\bigtriangleup(x),
\\
\label{antisymmetric bi-identity}
(L_{\cdot}(y)\otimes\alpha_{2} - \alpha_{1}\otimes R_{\cdot}(y))\Delta(x) + \sigma [(L_{\cdot}(x)\otimes\alpha_{2} - \alpha_{1}\otimes R_{\cdot}(x))\Delta(y)]= 0
\end{eqnarray}
for all $ x, y \in \mathcal{A} $.
\end{theorem}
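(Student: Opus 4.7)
The plan is to follow the pattern of the proof of Theorem \ref{hombialgebra theorem}, reducing the biHom statement to the matched-pair criterion already established in the preceding theorem. More precisely, by that preceding theorem, $(\mathcal{A}, \mathcal{A}^{\ast}, R^{\ast}_{\cdot}, L^{\ast}_{\cdot}, \alpha^{\ast}_{2}, \alpha^{\ast}_{1}, R^{\ast}_{\circ}, L^{\ast}_{\circ}, \alpha_{2}, \alpha_{1})$ is a matched pair of involutive biHom-associative algebras if and only if \eqref{infinitesimal bicond.} and \eqref{antisymmetric bicond.} hold. So it suffices to prove \eqref{infinitesimal bi-identity} $\Leftrightarrow$ \eqref{infinitesimal bicond.} and \eqref{antisymmetric bi-identity} $\Leftrightarrow$ \eqref{antisymmetric bicond.}.

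First I would fix a basis $\lbrace e_{1},\ldots,e_{n}\rbrace$ of $\mathcal{A}$ with dual basis $\lbrace e^{\ast}_{1},\ldots,e^{\ast}_{n}\rbrace$ of $\mathcal{A}^{\ast}$, and record structure constants $e_{i}\cdot e_{j}=\sum_{k}c^{k}_{ij}e_{k}$, $e^{\ast}_{i}\circ e^{\ast}_{j}=\sum_{k}f^{k}_{ij}e^{\ast}_{k}$, $\alpha_{1}(e_{i})=\sum_{q}a^{i}_{q}e_{q}$, $\alpha_{2}(e_{i})=\sum_{q}b^{i}_{q}e_{q}$. The involutivity $\alpha_{1}\alpha_{2}=\id_{\mathcal{A}}$ yields $\sum_{k}a^{i}_{k}b^{k}_{l}=\delta^{i}_{l}$, and $\langle \alpha^{\ast}_{s}(e^{\ast}_{i}),e_{j}\rangle=\langle e^{\ast}_{i},\alpha_{s}(e_{j})\rangle$ gives the matrix entries of the dual twists. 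The dual linear maps unwind as $R^{\ast}_{\cdot}(e_{i})e^{\ast}_{j}=\sum_{k}c^{j}_{ki}e^{\ast}_{k}$, $L^{\ast}_{\cdot}(e_{i})e^{\ast}_{j}=\sum_{k}c^{j}_{ik}e^{\ast}_{k}$, $R^{\ast}_{\circ}(e^{\ast}_{i})e_{j}=\sum_{k}f^{j}_{ki}e_{k}$, $L^{\ast}_{\circ}(e^{\ast}_{i})e_{j}=\sum_{k}f^{j}_{ik}e_{k}$, and the coproduct dual to $\circ$ is $\Delta(e_{k})=\sum_{i,j}f^{k}_{ij}e_{i}\otimes e_{j}$.

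Next I would expand both sides of \eqref{infinitesimal bi-identity} on a pair $(e_{m},e_{i})$ and collect the coefficient of $e_{j}\otimes e_{k}$, obtaining a polynomial identity in $c,f,a,b$. I would then expand \eqref{infinitesimal bicond.} on $(e_{i},e^{\ast}_{j},e^{\ast}_{k})$ and extract the coefficient of $e^{\ast}_{m}$; after replacing $b^{\ast j}_{i}$ by $a^{i}_{j}$ (and $a^{\ast j}_{i}$ by $b^{i}_{j}$) and using the involutivity $\sum_{k}a^{i}_{k}b^{k}_{l}=\delta^{i}_{l}$, this reduces to the same polynomial identity. Running the same routine with the $\sigma$-twist in \eqref{antisymmetric bi-identity} and the swap of $L^{\ast}_{\circ}, R^{\ast}_{\circ}$ in \eqref{antisymmetric bicond.} establishes the second equivalence. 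Combining both equivalences with the preceding theorem concludes the proof.

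The main obstacle is purely bookkeeping: with two independent twists $\alpha_{1},\alpha_{2}$ on $\mathcal{A}$ and their duals $\alpha^{\ast}_{1},\alpha^{\ast}_{2}$ on $\mathcal{A}^{\ast}$, each tensor slot and each dual pairing must be matched carefully, and the asymmetric pattern ($\alpha_{2}$ on the right factor, $\alpha_{1}$ on the left, with the opposite pattern on $\mathcal{A}^{\ast}$) has to be respected throughout. The technical lubricant is the set of intertwining identities $\alpha^{\ast}_{2}(R^{\ast}_{\cdot}(x)a^{\ast})=R^{\ast}_{\cdot}(\alpha_{1}(x))\alpha^{\ast}_{2}(a^{\ast})$, $\alpha^{\ast}_{1}(L^{\ast}_{\cdot}(x)a^{\ast})=L^{\ast}_{\cdot}(\alpha_{2}(x))\alpha^{\ast}_{1}(a^{\ast})$, and their $\mathcal{A}$-side counterparts, all of which were already verified in the bimodule lemma for the involutive biHom case; once these are in place, the verification is essentially the $\alpha$-only computation of Theorem \ref{hombialgebra theorem} with the systematic substitution $\alpha\mapsto\alpha_{2}$ on the right tensor factor and $\alpha\mapsto\alpha_{1}$ on the left.
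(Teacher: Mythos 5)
Your proposal is correct and follows essentially the same route as the paper: the paper's own proof of this theorem consists of the single remark that it is similar to the proof of Theorem \ref{hombialgebra theorem}, i.e.\ precisely the basis/structure-constant expansion reducing \eqref{infinitesimal bi-identity} and \eqref{antisymmetric bi-identity} to the dual conditions \eqref{infinitesimal bicond.} and \eqref{antisymmetric bicond.} of the preceding matched-pair theorem. You simply spell out the bookkeeping (the two twists $\alpha_{1},\alpha_{2}$ and their duals, and the intertwining identities) that the paper leaves implicit.
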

\begin{proof}
This proof is simillar to  that of Theorem \ref{hombialgebra theorem}.
\qed
\end{proof}
\begin{definition}
Let $(\mathcal{A}, \cdot, \alpha_{1}, \alpha_{2})$ be an involutive biHom-associative algebra. An \textbf{antisymmetric infinitesimal biHom-bialgebra} structure on
 $\mathcal{A}$ is a linear map $ \Delta: \mathcal{A} \rightarrow \mathcal{A} \otimes \mathcal{A} $
  such that
\begin{enumerate}[label=\upshape{(\alph*)}]
\item $  \Delta^{\ast}: \mathcal{A}^{\ast}\otimes \mathcal{A}^{\ast}  \rightarrow
\mathcal{A}^{\ast}$ defines an involutive biHom-associative algebra structure on $ \mathcal{A}^{\ast} $;
\item $\Delta $ satisfies \eqref{infinitesimal bi-identity} and \eqref{antisymmetric bi-identity}.
\end{enumerate}
We denote it by $ (\mathcal{A}, \bigtriangleup, \alpha_{1}, \alpha_{2})$ or $(\mathcal{A}, \mathcal{A}^{\ast}, \alpha_{1}, \alpha_{2}, \alpha^{\ast}_{1}, \alpha_{2}, \alpha^{\ast}_{1})$.
\end{definition}

\begin{corollary}
Let $(\mathcal{A}, \cdot, \alpha_{1}, \alpha_{2})$ and $(\mathcal{A}^{\ast}, \circ, \alpha^{\ast}_{1}, \alpha^{\ast}_{2})$ be two biHom-associative algebras. Then, the following conditions are equivalent:
\begin{enumerate}[label=\upshape{\arabic*)}]
\item There is a double construction of an involutive biHom-Frobenius algebra associated to $(\mathcal{A}, \cdot, \alpha_{1}, \alpha_{2})$ and $(\mathcal{A}^{\ast}, \circ, \alpha^{\ast}_{1}, \alpha^{\ast}_{2})$;
\item
$(\mathcal{A}, \mathcal{A}^{\ast}, R^{\ast}_{\cdot}, L^{\ast}_{\cdot}, \alpha^{\ast}_{2},
\alpha^{\ast}_{1}, R^{\ast}_{\circ},  L^{\ast}_{\circ}, \alpha_{2}, \alpha_{1}) $ is a
matched pair of multiplicative biHom-associative algebras;
\item $(\mathcal{A}, \mathcal{A}^{\ast}, \alpha_{1}, \alpha_{2}, \alpha^{\ast}_{1},
\alpha^{\ast}_{2})$ is an antisymmetric infinitesimal biHom-bialgebra.
\end{enumerate}
\end{corollary}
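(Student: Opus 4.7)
The plan is to observe that this corollary is a three-way equivalence whose two constituent two-way equivalences have already been established in this subsection, so the proof reduces to chaining them. Concretely, the implication (1)$\Longleftrightarrow$(2) is exactly the content of Theorem \ref{biFrobenius theorem}, which characterizes when a double construction of an involutive biHom-Frobenius algebra associated to $(\mathcal{A}, \cdot, \alpha_{1}, \alpha_{2})$ and $(\mathcal{A}^{\ast}, \circ, \alpha^{\ast}_{1}, \alpha^{\ast}_{2})$ exists in terms of the matched pair $(\mathcal{A}, \mathcal{A}^{\ast}, R^{\ast}_{\cdot}, L^{\ast}_{\cdot}, \alpha^{\ast}_{2}, \alpha^{\ast}_{1}, R^{\ast}_{\circ}, L^{\ast}_{\circ}, \alpha_{2}, \alpha_{1})$.

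The second implication (2)$\Longleftrightarrow$(3) is the content of Theorem \ref{bihombialgebra theorem} together with the definition of an antisymmetric infinitesimal biHom-bialgebra: the matched pair conditions reduce to the two compatibility identities \eqref{infinitesimal bi-identity} and \eqref{antisymmetric bi-identity}, which is precisely what is required of $\Delta$ (dualized from the multiplication $\circ$ on $\mathcal{A}^{\ast}$) in order for the structure to be an antisymmetric infinitesimal biHom-bialgebra in the sense of the preceding definition.

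Hence the plan is simply to invoke these two theorems and state: by Theorem \ref{biFrobenius theorem} we have (1)$\Longleftrightarrow$(2), and by Theorem \ref{bihombialgebra theorem} we have (2)$\Longleftrightarrow$(3), so the three conditions are equivalent. No further calculation is needed. There is no genuine obstacle, since the detailed verifications (invariance of $B$, the equivalences among the six matched-pair axioms, and the translation between matched-pair compatibility and the bialgebra identities \eqref{infinitesimal bi-identity}–\eqref{antisymmetric bi-identity}) have already been carried out in the proofs of the cited theorems; the only mild point to keep in mind is that the twist labels on the bimodule side ($\alpha^{\ast}_{2}, \alpha^{\ast}_{1}$ for the $\mathcal{A}$-bimodule on $\mathcal{A}^{\ast}$, and $\alpha_{2}, \alpha_{1}$ for the $\mathcal{A}^{\ast}$-bimodule on $\mathcal{A}$) are swapped relative to the twists on the algebras themselves, as dictated by the dualization lemma for bimodules proved earlier in this subsection.
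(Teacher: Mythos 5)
Your proposal matches the paper's proof exactly: the paper likewise disposes of the corollary in one line by citing Theorem \ref{biFrobenius theorem} for the equivalence of (1) and (2) and Theorem \ref{bihombialgebra theorem} for the equivalence of (2) and (3). Your additional remark about the swapped twist labels on the dual bimodules is a correct and harmless elaboration of what those theorems already encode.
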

\begin{proof}
From  Theorems \ref{biFrobenius theorem} and \ref{bihombialgebra theorem}, we have the equivalences.
\qed
\end{proof}
\section{Double constructions of involutive symplectic Hom-associative algebras}
\label{sec:dblcnstrinvsymhomassal}
\subsection{Hom-dendriform algebras}
\label{subsec:dblcnstrinvsymhomassal:homdendal}
\begin{definition}
A Hom-dendriform algebra is a quadruple $(\mathcal{A}, \prec, \succ, \alpha)$ consisting of a vector space $\mathcal{A}$ on which the operations $\prec, \succ: \mathcal{A}\otimes \mathcal{A}\rightarrow \mathcal{A},$ and $\alpha: \mathcal{A}\rightarrow \mathcal{A}$ are linear maps satisfying
\begin{eqnarray*}
(x \prec y)\prec \alpha(z) &=& \alpha(x)\prec (y \ast z),\cr
(x \succ y) \prec\alpha(z) &=& \alpha(x) \succ (y \prec z),\cr
\alpha(x)\succ (y \succ z ) &=& ( x \ast y) \succ \alpha(z),
\end{eqnarray*}
where
\begin{eqnarray}\label{associative-dendriform}
x \ast y = x \prec y + x \succ y.
\end{eqnarray}
\end{definition}
\begin{definition}
Let $(\mathcal{A}, \prec, \succ, \alpha)$ and $(\mathcal{A}', \prec',  \succ', \alpha')$ be two Hom-dendriform algebras. A linear map $f: \mathcal{A}\rightarrow \mathcal{A}'$ is a Hom-dendriform algebra morphism if
\begin{eqnarray*}
\prec'\circ(f\otimes f)= f\circ\prec,\ \succ'\circ(f\otimes f)= f\circ\succ \mbox{ and } f\circ\alpha= \alpha'\circ f.
\end{eqnarray*}
\end{definition}
\begin{proposition}
Let $(\mathcal{A}, \prec, \succ, \alpha)$ be a Hom-dendriform algebra. Then, $(\mathcal{A}, \ast, \alpha)$ is a Hom-associative algebra.
\end{proposition}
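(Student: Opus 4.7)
The plan is a direct verification of the Hom-associativity identity $\alpha(x)\ast (y\ast z) = (x\ast y)\ast \alpha(z)$ by expanding $\ast$ into its two constituent operations $\prec$ and $\succ$ and then collapsing the resulting four-term sums via the three Hom-dendriform axioms.

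First I would expand the left-hand side. Using the bilinearity built into $\ast = \prec + \succ$, the expression $\alpha(x)\ast(y\ast z)$ breaks into $\alpha(x)\prec(y\ast z) + \alpha(x)\succ(y\ast z)$, and then further into the four terms
\begin{eqnarray*}
\alpha(x)\prec(y\prec z) + \alpha(x)\prec(y\succ z) + \alpha(x)\succ(y\prec z) + \alpha(x)\succ(y\succ z).
\end{eqnarray*}
The first two terms recombine as $\alpha(x)\prec(y\ast z)$, which by the first Hom-dendriform axiom equals $(x\prec y)\prec\alpha(z)$. The third term, by the second axiom, equals $(x\succ y)\prec\alpha(z)$. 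The fourth term, by the third axiom, equals $(x\ast y)\succ\alpha(z)$, and splitting $\ast$ again gives $(x\prec y)\succ\alpha(z) + (x\succ y)\succ\alpha(z)$.

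Collecting everything yields
\begin{eqnarray*}
\alpha(x)\ast(y\ast z) &=& (x\prec y)\prec\alpha(z) + (x\succ y)\prec\alpha(z) \\
&& {}+ (x\prec y)\succ\alpha(z) + (x\succ y)\succ\alpha(z),
\end{eqnarray*}
which by bilinearity factors as $(x\ast y)\prec\alpha(z) + (x\ast y)\succ\alpha(z) = (x\ast y)\ast\alpha(z)$, finishing the proof.

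There is essentially no obstacle: the three Hom-dendriform axioms are designed exactly so that the four cross-terms in the expansion of $\alpha(x)\ast(y\ast z)$ reassemble into the four cross-terms of $(x\ast y)\ast\alpha(z)$. The only care needed is bookkeeping to make sure the first axiom is applied to the pair of $\prec$-terms (which combine on the inside), while the third axiom is applied to the pair of $\succ$-terms (which combine on the outside), with the mixed term handled by the middle axiom.
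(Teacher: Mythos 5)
Your proof is correct and is essentially the paper's own argument run in the opposite direction: the paper expands $(x\ast y)\ast\alpha(z)$ into four terms and collapses them via the three axioms to reach $\alpha(x)\ast(y\ast z)$, while you expand the other side first. The bookkeeping of which axiom handles which pair of cross-terms matches exactly.
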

\begin{proof}
For all $x, y, z\in\mathcal{A}$,
\begin{eqnarray*}
(x\ast y)\ast\alpha(z)
&=& (x\prec y)\prec\alpha(z) + (x\prec y)\succ\alpha(z) + (x\succ y)\prec\alpha(z) + (x\succ y)\succ\alpha(z)\cr
&=& (x\prec y)\prec\alpha(z) + (x\succ y)\prec\alpha(z) + (x\ast y)\succ\alpha(z)\cr
&=& \alpha(x)\prec(y\ast z) + \alpha(x)\succ(y\prec z) + \alpha(x)\succ(y\succ z)\cr
&=& \alpha(x)\prec(y\ast z) + \alpha(x)\succ(y\ast z)
= \alpha(x)\ast(y\ast z),
\end{eqnarray*}
which completes the proof.
\qed
\end{proof}
 We call $ (\mathcal{A}, \ast, \alpha)$ the associated Hom-associative algebra of $ (\mathcal{A}, \prec, \succ, \alpha),$ and $ (\mathcal{A}, \succ, \prec, \alpha)$ is called a compatible Hom-dendriform algebra structure on the Hom-associative algebra $ (\mathcal{A}, \ast, \alpha) $.

Let $ (\mathcal{A}, \prec, \succ, \alpha) $ be a Hom-dendriform algebra. For any $ x \in \mathcal{A} $,  let $ L_{\succ}(x),  R_{\succ}(x) $ and $ L_{\prec}(x),$ $ R_{\prec}(x) $ denote the left and right multiplication operators of $(\mathcal{A}, \prec)$ and $(\mathcal{A}, \succ)$, respectively, i. e.,
\begin{eqnarray*}
L_{\succ}(x) y = x \succ y, R_{\succ}(x) y = y \succ x, L_{\prec}(x) y = x \prec y,
L_{\prec}(x) y = y \prec x,
\end{eqnarray*}
for all $ x, y \in \mathcal{A} $. Moreover, let $ L_{\succ}, R_{\succ}, L_{\prec}, R_{\prec} : \mathcal{A} \rightarrow gl(\mathcal{A}) $ be four linear maps with $ x \mapsto L_{\succ}(x),  x \ \mapsto R_{\succ}(x),  x  \mapsto L_{\prec}(x), $ and $  x  \mapsto R_{\prec}(x)  $, respectively.
\begin{proposition}
The quadruple  $ (L_{\succ}, R_{\prec}, \alpha, \mathcal{A}) $ is a bimodule of the associated Hom-associative algebra $(\mathcal{A}, \ast, \alpha)$.
\end{proposition}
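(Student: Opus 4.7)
The plan is to verify the five bimodule axioms \eqref{Cond:HomBimod:lpb}–\eqref{Hombimodule:br:eq2} directly, interpreting them in the language of the two dendriform products. With $l = L_{\succ}$, $r = R_{\prec}$, $\beta = \alpha$, $V = \mathcal{A}$, and the multiplication replaced by $\ast$ as in \eqref{associative-dendriform}, each axiom becomes a statement one can match to a Hom-dendriform defining identity after translating the operator notation back to products.

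Concretely, I would unpack condition \eqref{Cond:HomBimod:lpb} as
$(x \ast y) \succ \alpha(v) = \alpha(x) \succ (y \succ v)$,
which is exactly the third axiom of a Hom-dendriform algebra. Condition \eqref{Cond:HomBimod:rpb} reads
$\alpha(v) \prec (x \ast y) = (v \prec x) \prec \alpha(y)$,
which is the first Hom-dendriform axiom after renaming variables. Condition \eqref{Cond:HomBimod:lar} reads
$\alpha(x) \succ (v \prec y) = (x \succ v) \prec \alpha(y)$,
which is the second Hom-dendriform axiom. Hence the three core compatibility relations of the bimodule follow from the three defining identities of the Hom-dendriform structure in a one-to-one fashion, with no computation needed beyond the renaming of dummy variables.

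The two remaining conditions \eqref{Hombimodule:bl:eq1} and \eqref{Hombimodule:br:eq2} translate to
$\alpha(x \succ v) = \alpha(x) \succ \alpha(v)$ and $\alpha(v \prec x) = \alpha(v) \prec \alpha(x)$,
which are simply the multiplicativity of $\alpha$ with respect to $\succ$ and $\prec$. The main (and really only) obstacle is that the definition of Hom-dendriform algebra given in the excerpt does not explicitly spell out the multiplicativity of $\alpha$ with respect to $\prec$ and $\succ$; I would therefore either assume (as is standard in this context) that the Hom-dendriform algebra is multiplicative, or note that since the proposition concerns the associated Hom-associative algebra $(\mathcal{A}, \ast, \alpha)$, the multiplicativity of $\alpha$ with respect to $\ast$ together with the splitting $\ast = \prec + \succ$ is to be read as the componentwise multiplicativity. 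Once this is granted, the verification of \eqref{Hombimodule:bl:eq1} and \eqref{Hombimodule:br:eq2} is immediate, completing the proof.
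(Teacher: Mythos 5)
Your proof is correct and follows essentially the same route as the paper's: the paper likewise unpacks the five bimodule conditions into the three Hom-dendriform defining identities (for $L_{\succ}(x\ast y)\alpha(v)$, $R_{\prec}(x\ast y)\alpha(v)$ and the mixed relation) plus the two identities $\alpha(x\succ v)=\alpha(x)\succ\alpha(v)$ and $\alpha(v\prec x)=\alpha(v)\prec\alpha(x)$. Your observation that the stated definition of Hom-dendriform algebra does not include multiplicativity of $\alpha$ with respect to $\prec$ and $\succ$ is apt --- the paper's proof invokes exactly those identities without comment, so the implicit multiplicativity assumption you make explicit is indeed being used there as well.
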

\begin{proof}
For all $x, y, v\in\mathcal{A},$
\begin{eqnarray*}
L_{\succ}(x\ast y)\alpha(v) &=& (x\ast y)\succ\alpha(v)=\alpha(x)\succ(y\succ v)=L_{\succ}(\alpha(x))L_{\succ}(y)v ,\\
R_{\prec}(x\ast y)\alpha(v) &=& \alpha(v)\prec(x\ast y)= (v\prec x)\prec\alpha(y)=R_{\prec}(\alpha(y))R_{\prec}(x)v , \\
L_{\succ}(\alpha(x))R_{\prec}(y)v &=& \alpha(x)\succ(v\prec y)=(x\succ v)\prec\alpha(y)= R_{\prec}(\alpha(y))L_{\succ}(x)v ,  \\
\alpha(L_{\succ}(x)v) &=& \alpha(x\succ v)= \alpha(x)\succ\alpha(v)=L_{\succ}(\alpha(x))\alpha(v) , \\
\alpha(R_{\prec}(x)v)&=& \alpha(v\prec x)= \alpha(v)\prec\alpha(x)=R_{\prec}(\alpha(x))\alpha(v),
\end{eqnarray*}
which completes the proof.
\qed
\end{proof}
\subsection{$ \mathcal{O} $-operators and Hom-dendriform algebras}
\label{subsec:dblcnstrinvsymhomassal:oopshomdendal}
\begin{definition}
Let $(\mathcal{A}, \cdot, \alpha)$ be a Hom-associative algebra, and $(l, r, \beta, V)$ be a bimodule. Then, a  linear map $ T : V \rightarrow \mathcal{A} $
is called an \textbf{$ \mathcal{O} $-operator associated} to $(l, r, \beta, V)$,  if $ T $ satisfies
\begin{eqnarray*}
\alpha T= T\beta \mbox{ and } T(u)\cdot T(v) = T(l(T(u))v + r(T(v))u) \mbox { for all } u, v \in V.
\end{eqnarray*}
\end{definition}

\begin{example}
Let $(\mathcal{A}, \cdot, \alpha)$ be a multiplicative Hom-associative algebra. Then,
 the identity map $\id$ is an $ \mathcal{O} $-operator associated to the bimodule $(L,0, \alpha)$ or $(0,R, \alpha)$.
\end{example}

\begin{example}
Let $(\mathcal{A}, \cdot, \alpha)$ be a multiplicative Hom-associative algebra.
A linear map $ f : \mathcal{A} \rightarrow \mathcal{A} $ is called a \textbf{Rota-Baxter operator} on $ \mathcal{A} $ of weight zero if $ f $
satisfies
\begin{eqnarray*}
f(x)\cdot f(y) = f(f(x)\cdot y + x\cdot f(y)) \mbox { for all } x, y \in \mathcal{A}.
\end{eqnarray*}
In fact, a Rota-Baxter operator on $ \mathcal{A} $ is just an $ \mathcal{O} $-operator associated to the bimodule $(L, R, \alpha)$.
\end{example}
\begin{theorem}
Let $(\mathcal{A}, \cdot, \alpha)$ be a Hom-associative algebra, and $(l, r, \beta, V) $ be a bimodule.
Let $ T : V \rightarrow \mathcal{A} $ be an $ \mathcal{O} $-operator associated to $(l, r, \beta, V)$. Then, there exists a Hom-dendriform
algebra structure on $ V $ given by
\begin{eqnarray*}
 u \succ v = l(T(u))v ,  u \prec v = r(T(v))u
\end{eqnarray*}
for all $u, v \in V$. So, there is an associated Hom-associative algebra structure on $ V $ given by the equation \eqref{associative-dendriform},  and $ T $
is a homomorphism of Hom-associative algebras. Moreover, $ T(V) = \lbrace { T(v) \setminus v \in V }  \rbrace  \subset \mathcal{A} $ is a Hom-associative
subalgebra of $ \mathcal{A}, $ and there is an induced Hom-dendriform algebra structure on $ T(V) $ given by
\begin{eqnarray}
T(u) \succ T(v) = T(u \succ v), T(u) \prec T(v) = T(u \prec v)
\end{eqnarray}
for all $ u, v \in V $. Its corresponding associated Hom-associative algebra structure on $ T(V) $ given by the equation \eqref{associative-dendriform} is
just the Hom-associative subalgebra structure of $ \mathcal{A}, $ and $ T $ is a homomorphism of Hom-dendriform algebras.
\end{theorem}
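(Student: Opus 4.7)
The plan is to verify the three Hom-dendriform axioms on $V$ directly from the definitions $u\succ v=l(T(u))v$ and $u\prec v=r(T(v))u$, reducing each axiom to one of the bimodule identities plus a single appeal to the $\mathcal{O}$-operator condition. The remaining assertions then follow formally from the homomorphism property of $T$.

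First I would record the two identities used throughout. The $\mathcal{O}$-operator condition together with $\ast=\succ+\prec$ yields
\[
T(u)\cdot T(v)=T\bigl(l(T(u))v+r(T(v))u\bigr)=T(u\ast v),
\]
and the compatibility $\alpha\circ T=T\circ\beta$ combined with \eqref{Hombimodule:bl:eq1}--\eqref{Hombimodule:br:eq2} gives
\[
\beta(u\succ v)=\beta(l(T(u))v)=l(\alpha T(u))\beta(v)=l(T(\beta(u)))\beta(v)=\beta(u)\succ\beta(v),
\]
and the analogous formula for $\prec$, so $\beta$ is multiplicative for both operations.

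Next I would verify the three axioms. For the left one,
$(u\prec v)\prec\beta(w)=r(T(\beta(w)))r(T(v))u=r(\alpha T(w))r(T(v))u$
becomes $r(T(v)\cdot T(w))\beta(u)$ by \eqref{Cond:HomBimod:rpb}, and then $r(T(v\ast w))\beta(u)=\beta(u)\prec(v\ast w)$ by the first key identity. The right axiom is dual: $\beta(u)\succ(v\succ w)=l(\alpha T(u))l(T(v))w$ becomes $l(T(u)\cdot T(v))\beta(w)$ by \eqref{Cond:HomBimod:lpb}, and then $l(T(u\ast v))\beta(w)=(u\ast v)\succ\beta(w)$. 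For the middle axiom the identity \eqref{Cond:HomBimod:lar} alone suffices:
\[
(u\succ v)\prec\beta(w)=r(\alpha T(w))l(T(u))v=l(\alpha T(u))r(T(w))v=\beta(u)\succ(v\prec w),
\]
without invoking the $\mathcal{O}$-operator condition. Hence $(V,\prec,\succ,\beta)$ is a Hom-dendriform algebra, and by the previous proposition its associated Hom-associative structure is $(V,\ast,\beta)$.

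The remaining claims are essentially bookkeeping. The identity $T(u\ast v)=T(u)\cdot T(v)$ together with $\alpha T=T\beta$ is precisely the statement that $T$ is a morphism of Hom-associative algebras, so $T(V)$ is closed under $\cdot$ and stable under $\alpha$, making it a Hom-subalgebra of $\mathcal{A}$. Transporting $\succ,\prec$ via $T$ by $T(u)\succ T(v):=T(u\succ v)$ and $T(u)\prec T(v):=T(u\prec v)$ gives a Hom-dendriform structure on $T(V)$ whose associated product is $T(u)\succ T(v)+T(u)\prec T(v)=T(u\ast v)=T(u)\cdot T(v)$, i.e.\ the restricted Hom-associative product of $\mathcal{A}$; the three axioms on $T(V)$ are obtained by applying $T$ to those already established on $V$. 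The principal obstacle is the central computation: keeping track of which of the three bimodule identities applies in each axiom, and observing that the middle axiom uses only \eqref{Cond:HomBimod:lar} while the other two each require exactly one collapsing step via $T(u)\cdot T(v)=T(u\ast v)$. A secondary subtlety, cleanest under the assumption that $T$ is injective, is the well-definedness of the transported operations on $T(V)$, which otherwise needs a small consistency check that the formulas above descend through $T$.
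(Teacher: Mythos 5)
Your proof is correct and takes essentially the same route as the paper: a direct verification of the three Hom-dendriform axioms on $V$ using the bimodule identities \eqref{Cond:HomBimod:lpb}--\eqref{Cond:HomBimod:lar} together with $\alpha T = T\beta$ and the collapsing identity $T(u)\cdot T(v)=T(u\ast v)$. In fact you are more complete than the paper, which writes out only the middle axiom and leaves the other two axioms and all the remaining claims (homomorphism, subalgebra, induced structure on $T(V)$) unverified; your observation that the transported operations on $T(V)$ need a well-definedness check when $T$ is not injective is a genuine subtlety the paper passes over in silence.
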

\begin{proof}
For any $x, y, z\in V,$ we have
\begin{eqnarray*}
&&(x\succ y)\prec\beta(z) - \beta(x)\succ(y\prec z)\cr
&&\quad =l(T(x)y)\prec\beta(z)-\beta(x)\succ r(T(z)y)\cr
&&\quad =r(T\beta(z))l(T(x))y-l(T\beta(x)y)r(T(z)y)\cr
&&\quad =r(\alpha(T(z)))l(T(x))y-l(\alpha(T(x)))r(T(z))y = 0.
\end{eqnarray*}
The two other axioms are  similarly checked.
\qed
\end{proof}
\begin{corollary}\label{dendriform-invertible operator}
Let $(\mathcal{A}, \ast, \alpha)$ be a multiplicative Hom-associative algebra. Then, there is a compatible multiplicative Hom-dendriform algebra structure on $ \mathcal{A} $
if and only if there exists an invertible $ \mathcal{O} $-operator of $ (\mathcal{A}, \ast, \alpha)$.
\end{corollary}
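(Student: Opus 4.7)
The plan is to reduce both directions to the preceding theorem by choosing the bimodule and $\mathcal{O}$-operator very concretely. The forward direction $(\Rightarrow)$ will use the identity map as the $\mathcal{O}$-operator (with the canonical bimodule attached to the dendriform structure), while the converse $(\Leftarrow)$ will transport the abstract dendriform structure obtained from the theorem along the isomorphism $T$.

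For $(\Rightarrow)$, assume $(\mathcal{A}, \prec, \succ, \alpha)$ is a compatible multiplicative Hom-dendriform structure, so $x \ast y = x \succ y + x \prec y$. The earlier proposition gives that $(L_{\succ}, R_{\prec}, \alpha, \mathcal{A})$ is a bimodule of $(\mathcal{A}, \ast, \alpha)$. I then claim that $T = \id_{\mathcal{A}}$ is an invertible $\mathcal{O}$-operator associated to this bimodule: the intertwining $\alpha T = T \alpha$ is trivial, and the $\mathcal{O}$-operator identity
\begin{eqnarray*}
T(u) \ast T(v) = T(L_{\succ}(T(u))v + R_{\prec}(T(v))u)
\end{eqnarray*}
collapses to $u \ast v = u \succ v + u \prec v$, which is exactly compatibility.

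For $(\Leftarrow)$, suppose $T : V \to \mathcal{A}$ is an invertible $\mathcal{O}$-operator associated to some bimodule $(l, r, \beta, V)$. By the previous theorem, $V$ carries a Hom-dendriform structure with $u \succ_V v = l(T(u))v$ and $u \prec_V v = r(T(v))u$, and there is an induced Hom-dendriform structure on $T(V) = \mathcal{A}$ given by $T(u) \succ T(v) := T(u \succ_V v)$ and $T(u) \prec T(v) := T(u \prec_V v)$. Compatibility follows from the $\mathcal{O}$-operator condition: for every $x = T(u)$, $y = T(v)$ in $\mathcal{A}$,
\begin{eqnarray*}
x \succ y + x \prec y = T\bigl(l(T(u))v + r(T(v))u\bigr) = T(u)\ast T(v) = x \ast y.
\end{eqnarray*}

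The step that requires a short verification is the multiplicativity of $\alpha$ with respect to the transported products. Writing $x = T(u)$, $y = T(v)$, one computes
\begin{eqnarray*}
\alpha(x \succ y) = \alpha T(l(T(u))v) = T\beta(l(T(u))v) = T(l(\alpha T(u))\beta(v)) = T(l(T\beta(u))\beta(v)),
\end{eqnarray*}
using $\alpha T = T\beta$ and $\beta \circ l(x) = l(\alpha(x)) \circ \beta$; this last expression equals $T(\beta(u)) \succ T(\beta(v)) = \alpha(x) \succ \alpha(y)$. An identical argument handles $\prec$. I expect the only delicate point is to keep track of which Hom-module conditions from \eqref{Hombimodule:bl:eq1}--\eqref{Hombimodule:br:eq2} are used at each step; apart from this bookkeeping, the corollary follows directly from the preceding theorem together with the invertibility of $T$.
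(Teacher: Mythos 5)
Your proposal is correct and follows essentially the same route as the paper: the forward direction takes $T=\id$ as the invertible $\mathcal{O}$-operator for the bimodule $(L_{\succ},R_{\prec},\alpha)$, and the converse transports the dendriform structure from $V$ to $\mathcal{A}=T(V)$ along $T$, yielding exactly the paper's formulas $x\succ y=T(l(x)T^{-1}(y))$ and $x\prec y=T(r(y)T^{-1}(x))$. Your explicit verifications of compatibility and of the multiplicativity of $\alpha$ via \eqref{Hombimodule:bl:eq1}--\eqref{Hombimodule:br:eq2} are details the paper leaves implicit, and they check out.
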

\begin{proof}
If $ T $ is an invertible $ \mathcal{O}- $operator associated to a bimodule $(l, r, \beta, V)$, then,
the compatible multiplicative Hom-dendriform algebra structure on $ \mathcal{A} $ is given by
\begin{eqnarray*}
x \succ y = T(l(x)T^{-1}(y)), x \prec y = T(r(y)T^{-1}(x)) \mbox { for all } x, y \in \mathcal{A}.
\end{eqnarray*}
Conversely, let $(\mathcal{A}, \succ, \prec, \alpha)$ be a Hom-dendriform algebra, and $(\mathcal{A}, \ast, \alpha)$ be
 the associated multiplicative Hom-associative algebra. Then, the identity map $\id$ is an $ \mathcal{O}- $operator
associated to the bimodule $(L_{\succ}, R_{\prec}, \alpha)$ of $(\mathcal{A}, \ast, \alpha)$.
\qed
\end{proof}
\subsection{Bimodules and matched pairs of Hom-dendriform algebras}
\label{subsec:dblcnstrinvsymhomassal:bimodmatchedphomdendal}
\begin{definition}
Let $(\mathcal{A}, \succ, \prec, \alpha)$ be a Hom-dendriform algebra, and $ V $ be a vector space.
Let $l_{\succ}, r_{\succ}, l_{\prec}, r_{\prec} : \mathcal{A} \rightarrow gl(V) $ and $\beta: V \rightarrow V$ be  linear maps. Then, the sextuple
( $ l_{\succ}, r_{\succ}, l_{\prec}, r_{\prec}, \beta, V$) is called a \textbf{bimodule} of $ \mathcal{A} $
 if the following equations hold, for any $ x, y \in \mathcal{A} $ and $v\in V$:
\begin{eqnarray*}
&&l_{\prec}(x \prec y)\beta(v)= l_{\prec}(\alpha(x))l_{\ast}(y)v, r_{\prec}(\alpha(x))l_{\prec}(y)v=l_{\prec}(\alpha(y))r_{\ast}(x)v,\cr
&&r_{\prec}(\alpha(y))r_{\prec}(y)v= r_{\prec}(x\ast y)\beta(v), l_{\prec}(x \succ y)\beta(v) = l_{\succ}(\alpha(x))l_{\prec}(y)v,\cr
&& r_{\prec}(\alpha(x))l_{\succ}(y)v= l_{\succ}(\alpha(y))r_{\prec}(x)v, r_{\prec}(\alpha(x))r_{\succ}(y)v = r_{\succ}(y\prec x)\beta(v),
\cr
&&l_{\succ}(x\ast y)\beta(v)= l_{\succ}(\alpha(x))l_{\succ}(y)v, r_{\succ}(\alpha(x))l_{\ast}(y)v= l_{\succ}(\alpha(y))r_{\succ}(x)v,\cr
&&r_{\succ}(\alpha(x))r_{\ast}(y)v= r_{\succ}(y \succ x)\beta(v),\cr
&&\beta(l_{\succ}(x)v)=l_{\succ}(\alpha(x))\beta(v), \beta(l_{\prec}(x)v)=l_{\prec}(\alpha(x))\beta(v),\cr
&&\beta(r_{\succ}(x)v)=r_{\succ}(\alpha(x))\beta(v), \beta(r_{\prec}(x)v)=r_{\prec}(\alpha(x))\beta(v),
\end{eqnarray*}

where $ x \ast y = x \succ y + x \prec y, \;l_{\ast} = l_{\succ} + l_{\prec},\; r_{\ast} = r_{\succ} + r_{\prec} $.
\end{definition}
\begin{proposition}
Let $(l_{\succ}, r_{\succ}, l_{\prec}, r_{\prec}, \beta, V)$ be a bimodule of a Hom-dendriform algebra $(\mathcal{A},\succ, \prec, \alpha)$. Then, there exists a Hom-dendriform algebra structure on the direct sum $\mathcal{A}\oplus V $ of the underlying vector spaces of $ \mathcal{A}$ and $ V $ given by
\begin{eqnarray*}
(x + u) \succ (y + v) &=& x \succ y + l_{\succ}(x)v + r_{\succ}(y)u, \cr
(x + u) \prec (y + v) &=& x \prec y + l_{\prec}(x)v + r_{\prec}(y)u,\cr
(\alpha\oplus\beta)(x + u)&=&\alpha(x) + \beta(u)
\end{eqnarray*}
for all $ x, y \in \mathcal{A}, u, v \in V $.
\end{proposition}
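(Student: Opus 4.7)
The plan is to verify the three Hom-dendriform axioms for $(\mathcal{A}\oplus V, \succ, \prec, \alpha\oplus\beta)$ directly, by taking arbitrary elements $a_i = x_i + u_i \in \mathcal{A}\oplus V$ ($i=1,2,3$) and substituting into each of the axioms
\begin{eqnarray*}
(a_1 \prec a_2)\prec (\alpha\oplus\beta)(a_3) &=& (\alpha\oplus\beta)(a_1)\prec (a_2 \ast a_3), \\
(a_1 \succ a_2)\prec (\alpha\oplus\beta)(a_3) &=& (\alpha\oplus\beta)(a_1)\succ (a_2 \prec a_3), \\
(\alpha\oplus\beta)(a_1)\succ (a_2 \succ a_3) &=& (a_1 \ast a_2)\succ (\alpha\oplus\beta)(a_3),
\end{eqnarray*}
following exactly the same template used earlier in the proof that a bimodule of a Hom-associative algebra produces a semidirect product Hom-associative structure.

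Upon expanding, each side of each axiom splits canonically into an $\mathcal{A}$-component and a $V$-component. The $\mathcal{A}$-component equality in each of the three cases is precisely the corresponding Hom-dendriform identity in $(\mathcal{A},\succ,\prec,\alpha)$, and therefore holds by hypothesis. For the $V$-component, I collect terms according to which of $u_1, u_2, u_3$ they involve; since the operations are bilinear, every term is linear in exactly one $u_i$, so matching the $V$-parts reduces to three identities of linear maps on $V$, one for each choice of index. Whenever a term of the form $\beta(l_\succ(x)u)$, $\beta(r_\succ(x)u)$, $\beta(l_\prec(x)u)$ or $\beta(r_\prec(x)u)$ appears, I rewrite it as $l_\succ(\alpha(x))\beta(u)$ etc., using the last four relations in the bimodule definition, so that all resulting identities are of the normalized form that matches the stated bimodule axioms.

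Carrying this out, the axiom for $(a_1\prec a_2)\prec(\alpha\oplus\beta)(a_3)=(\alpha\oplus\beta)(a_1)\prec(a_2\ast a_3)$ yields the three identities $l_\prec(x_1\prec x_2)\beta(u_3)=l_\prec(\alpha(x_1))l_\ast(x_2)u_3$, $r_\prec(\alpha(x_3))l_\prec(x_1)u_2=l_\prec(\alpha(x_1))r_\ast(x_2)u_2$, and $r_\prec(\alpha(x_3))r_\prec(x_2)u_1=r_\prec(x_2\ast x_3)\beta(u_1)$; the axiom for $(a_1\succ a_2)\prec(\alpha\oplus\beta)(a_3)=(\alpha\oplus\beta)(a_1)\succ(a_2\prec a_3)$ yields the three involving the mixed $\succ,\prec$ actions, namely $l_\prec(x_1\succ x_2)\beta(u_3)=l_\succ(\alpha(x_1))l_\prec(x_2)u_3$, $r_\prec(\alpha(x_3))l_\succ(x_1)u_2=l_\succ(\alpha(x_1))r_\prec(x_2)u_2$ and $r_\prec(\alpha(x_3))r_\succ(x_2)u_1=r_\succ(x_2\prec x_3)\beta(u_1)$; and the third axiom similarly produces the three identities with $l_\succ,r_\succ,l_\ast,r_\ast$. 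Each of these is exactly one of the defining bimodule equations. Finally, $(\alpha\oplus\beta)$ is multiplicative with respect to both $\succ$ and $\prec$ on $\mathcal{A}\oplus V$ by the assumed multiplicativity of $\alpha$ and by the relations $\beta(l_*(x)u)=l_*(\alpha(x))\beta(u)$, $\beta(r_*(x)u)=r_*(\alpha(x))\beta(u)$.

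The main obstacle is not conceptual but combinatorial: one must be careful to account for each of the twelve bimodule identities exactly once across the three Hom-dendriform axioms, and to apply the $\beta$-equivariance relations uniformly so that both sides of every $V$-component equation are in the same normalized form. Apart from this bookkeeping, no new idea is required beyond the scheme already used for Hom-associative bimodules.
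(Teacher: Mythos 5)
Your proof is correct and is precisely the ``straightforward calculation'' that the paper invokes without writing out: expand the three Hom-dendriform axioms on $\mathcal{A}\oplus V$, match the $\mathcal{A}$-components to the Hom-dendriform identities of $\mathcal{A}$, and match the $V$-components, sorted by which $u_i$ they contain, to the structural bimodule identities. Only note the indices in your two middle displayed identities, where $r_{\ast}(x_2)$ and $r_{\prec}(x_2)$ should read $r_{\ast}(x_3)$ and $r_{\prec}(x_3)$; this is a transcription slip, not a gap in the argument.
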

\begin{proof}
By a straightforward calculation, we obtain the  result. \qed
\end{proof}
We denote this algebra by $ \mathcal{A} \times_{l_{\succ},r_{\succ}, l_{\prec}, r_{\prec}, \alpha, \beta} V$.
\begin{proposition}
Let ($l_{\succ}, r_{\succ}, l_{\prec}, r_{\prec}, \beta, V$) be a bimodule of a Hom-dendriform algebra $(\mathcal{A},
\succ, \prec, \alpha)$. Let $(\mathcal{A}, \ast, \alpha)$ be the associated Hom-associative algebra. Then the following statements hold. 
\begin{enumerate}[label=\upshape{\arabic*)}]
\item $ (l_{\succ}, r_{\prec}, \beta, V) $ and $ (l_{\succ} + l_{\prec}, r_{\succ} + r_{\prec}, \beta, V)$ are bimodules of $ (\mathcal{A}, \ast, \beta). $
\item For any bimodule $(l, r, \beta, V)$ of $(\mathcal{A}, \ast, \alpha)$, 
$(l, 0, 0, r, \beta, V)$ is a bimodule \\ of $(\mathcal{A}, \succ, \prec, \alpha). $
\item $(l_{\succ} + l_{\prec}, 0, 0,  r_{\succ} + r_{\prec}, \beta, V)$ and $(l_{\succ}, 0, 0, r_{\prec}, \beta, V)$ are bimodules \\
 of $(\mathcal{A}, \succ, \prec, \alpha).$
\item  The dendriform algebras $ \mathcal{A} \times_{l_{\succ}, r_{\succ}, l_{\prec}, r_{\prec}, \alpha, \beta} V $ and  $ \mathcal{A} \times_{l_{\succ} +  l_{\prec} , 0, 0, r_{\succ} + r_{\prec}, \alpha, \beta} V $ have the same associated
 Hom-associative algebra $\mathcal{A} \times_{l_{\succ} +  l_{\prec}, r_{\succ} + r_{\prec}, \alpha, \beta} V.$
 \end{enumerate}
\end{proposition}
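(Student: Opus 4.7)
The plan is to translate each of the four claims into a direct verification that the stated operators satisfy the relevant bimodule axioms, leveraging the defining axioms of a Hom-dendriform bimodule $(l_{\succ}, r_{\succ}, l_{\prec}, r_{\prec}, \beta, V)$ that are bundled in the definition preceding the statement. No new constructions are needed; everything follows by rearranging those nine dendriform identities together with the compatibility relations $\beta \circ l_{\diamond}(x) = l_{\diamond}(\alpha(x)) \circ \beta$ and $\beta \circ r_{\diamond}(x) = r_{\diamond}(\alpha(x)) \circ \beta$ for $\diamond \in \{\succ, \prec\}$.

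For claim~(1), I would fix $x, y \in \mathcal{A}$, $v \in V$ and check the three Hom-associative bimodule identities \eqref{Cond:HomBimod:lpb}--\eqref{Cond:HomBimod:lar} for $(l_{\succ}, r_{\prec}, \beta, V)$ acting on $(\mathcal{A}, \ast, \alpha)$. The left-multiplication axiom $l_{\succ}(x \ast y)\beta(v) = l_{\succ}(\alpha(x))l_{\succ}(y)v$ is one of the dendriform bimodule axioms verbatim; the right axiom $r_{\prec}(x \ast y)\beta(v) = r_{\prec}(\alpha(y))r_{\prec}(x)v$ is also already present in the list, and the cross-axiom $l_{\succ}(\alpha(x))r_{\prec}(y)v = r_{\prec}(\alpha(y))l_{\succ}(x)v$ follows from the dendriform relation $r_{\prec}(\alpha(x))l_{\succ}(y)v = l_{\succ}(\alpha(y))r_{\prec}(x)v$ after renaming $x \leftrightarrow y$. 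For $(l_{\succ}+l_{\prec}, r_{\succ}+r_{\prec}, \beta, V)$ the same three identities are obtained by summing the dendriform axioms pairwise: the three $l_{\succ}, l_{\prec}$-axioms add up to the left axiom for $l_{\ast}$; the three $r_{\succ}, r_{\prec}$-axioms to the right axiom for $r_{\ast}$; and the three cross-axioms collapse to $l_{\ast}(\alpha(x))r_{\ast}(y)v = r_{\ast}(\alpha(y))l_{\ast}(x)v$. The $\beta$-compatibility axioms follow immediately from the corresponding ones in the dendriform bimodule.

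Claim~(2) is the converse packaging: given a Hom-associative bimodule $(l, r, \beta, V)$ of $(\mathcal{A}, \ast, \alpha)$, substitute $l_{\succ} = l,\ r_{\prec} = r,\ l_{\prec} = r_{\succ} = 0$ into the dendriform bimodule axioms. The axioms involving only $l_{\succ}$ or only $r_{\prec}$ reduce to the left, right, and cross Hom-associative identities; every axiom that contains $l_{\prec}$ or $r_{\succ}$ becomes $0 = 0$. Claim~(3) is then obtained by feeding the two bimodules produced in claim~(1) into claim~(2).

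For claim~(4), I compare the associative products $\ast_1$ and $\ast_2$ on $\mathcal{A}\oplus V$ coming from the two dendriform structures via \eqref{associative-dendriform}. In $\mathcal{A}\times_{l_\succ, r_\succ, l_\prec, r_\prec,\alpha,\beta} V$ one reads
\[
(x+u) \ast_1 (y+v) = x\ast y + (l_{\succ}+l_{\prec})(x)v + (r_{\succ}+r_{\prec})(y)u,
\]
while $\mathcal{A}\times_{l_\succ + l_\prec, 0, 0, r_\succ + r_\prec,\alpha,\beta} V$ yields
\[
(x+u) \ast_2 (y+v) = x\ast y + (l_{\succ}+l_{\prec})(x)v + (r_{\succ}+r_{\prec})(y)u,
\]
and both coincide with the product on $\mathcal{A}\times_{l_\succ + l_\prec, r_\succ + r_\prec,\alpha,\beta} V$ from the bimodule of claim~(1). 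The twisting map is $\alpha \oplus \beta$ in each case, so the three Hom-associative algebras agree.

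The computations are entirely mechanical; the only mild obstacle is bookkeeping — keeping track of which of the nine dendriform identities matches which Hom-associative identity after summation and after the cross-axiom symmetrization $x \leftrightarrow y$. Organizing the verification as a table indexed by the three Hom-associative axioms avoids duplication.
\qed
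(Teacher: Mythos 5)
Your proposal is correct and matches the paper's approach: the paper simply states that the proposition ``results from a direct computation,'' and your verification carries out exactly that computation, correctly matching the nine Hom-dendriform bimodule identities (plus the $\beta$-compatibility conditions) to the Hom-associative bimodule axioms via summation and the $x\leftrightarrow y$ relabelling in the cross-axiom. Your detailed bookkeeping for claims (1)--(4), including the observation that (3) follows by composing (1) with (2) and that the two induced products on $\mathcal{A}\oplus V$ in (4) literally coincide, supplies the details the paper omits.
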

\begin{proof}
It results from a direct computation.
\qed
\end{proof}
\begin{theorem}
Let $(\mathcal{A}, \succ_{\mathcal{A}}, \prec_{\mathcal{A}}, \alpha)$ and $(\mathcal{B}, \succ_{\mathcal{B}}, \prec_{\mathcal{B}}, \beta)$
 be two Hom-dendriform algebras. Suppose  there are linear maps
$ l_{\succ_{\mathcal{A}}},   r_{\succ_{\mathcal{A}}},  l_{\prec_{\mathcal{A}}},  r_{\prec_{\mathcal{A}}} : \mathcal{A} \rightarrow gl(\mathcal{B}),$
and $ l_{\succ_{\mathcal{B}}},   r_{\succ_{\mathcal{B}}},  l_{\prec_{\mathcal{B}}},  r_{\prec_{\mathcal{B}}} : \mathcal{B} \rightarrow gl(\mathcal{A})$
such that ($ l_{\succ_{\mathcal{A}}},   r_{\succ_{\mathcal{A}}},  l_{\prec_{\mathcal{A}}},  r_{\prec_{\mathcal{A}}}, \beta, \mathcal{B}$) is a bimodule of $\mathcal{A},$ and  ($l_{\succ_{\mathcal{B}}},   r_{\succ_{\mathcal{B}}},  l_{\prec_{\mathcal{B}}},  r_{\prec_{\mathcal{B}}}, \alpha, \mathcal{A})$ ) is a bimodule  of $\mathcal{B},$
 satisfying the following  relations:
\begin{eqnarray} \label{eq35}
r_{\prec_{\mathcal{A}}}(\alpha(x))(a \prec_{\mathcal{B}} b) = \beta(a)\prec_{\mathcal{B}}( r_{\mathcal{A}}(x)b) + r_{\prec_{\mathcal{A}}}(l_{\mathcal{B}}(x)\beta(a)),
\\
\label{eq36}
\begin{array}{ll}
l_{\prec_{\mathcal{A}}}(l_{\prec_{\mathcal{B}}}(x))\beta(b) &+ (r_{\prec_{\mathcal{A}}}(x)a) \prec_{\mathcal{B}}\beta(b)= \cr
& \beta(a) \prec_{\mathcal{B}} (l_{\prec_{\mathcal{A}}}(x)b) + r_{\prec_{\mathcal{A}}}(r_{\prec_{\mathcal{B}}}(b)x)\beta(a),
\end{array}
\\
\label{eq37}
l_{\prec_{\mathcal{A}}}(\alpha(x))(a \ast_{\mathcal{B}} b) = (l_{\prec_{\mathcal{A}}}(x)a) \ast_{\mathcal{B}} \beta(b) +
 l_{\prec_{\mathcal{A}}}(r_{\prec_{\mathcal{A}}}(a)x)\beta(b),
\\
\label{eq38}
r_{\prec_{\mathcal{A}}}(\alpha(x))(a \succ_{\mathcal{B}} b) = r_{\succ_{\mathcal{A}}}(l_{\prec_{\mathcal{B}}}(b)x)\beta(a) +
\beta(a)\succ_{\mathcal{B}} (r_{\prec_{\mathcal{A}}}(x)b),
\\
\label{eq39}
\begin{array}{ll}
l_{\prec_{\mathcal{A}}}(l_{\succ_{\mathcal{B}}}(a)x)\beta(b) &+ (r_{\succ_{\mathcal{A}}}(x)a) \prec_{\mathcal{B}}\beta(b)= \cr
& \beta(a)\succ_{B} (l_{\prec_{\mathcal{A}}}(x)b) + r_{\succ_{\mathcal{A}}}(r_{\prec_{\mathcal{B}}}(b)x)\beta(a)
\end{array}
\\
\label{eq40}
l_{\succ_{\mathcal{A}}}(\alpha(x))(a \prec_{\mathcal{B}} b) = ( l_{\succ_{\mathcal{A}}}(x)a) \prec_{\mathcal{B}}\beta(b) +
 l_{\prec_{\mathcal{A}}}(r_{\succ_{\mathcal{B}}}(a)x)\beta(b),
\\
\label{eq41}
r_{\succ_{\mathcal{A}}}(\alpha(x))(a \ast_{\mathcal{B}} b)= \beta(a)\succ_{\mathcal{B}} (r_{\succ_{\mathcal{A}}}(x)b) +
 r_{\succ_{\mathcal{A}}}(l_{\succ_{\mathcal{B}}}(b)x)\beta(a),
\\
\label{eq42}
\begin{array}{ll}
\beta(a)\succ_{\mathcal{B}} (l_{\succ_{\mathcal{A}}}(x)b) &+ r_{\succ_{\mathcal{A}}}(r_{\succ_{\mathcal{B}}}(b)x)\beta(a) =\cr
& l_{\succ_{\mathcal{A}}}(l_{\mathcal{B}}(a)x)\beta(b) + (r_{\mathcal{A}}(x)a) \succ_{\mathcal{B}}\beta(b),
\end{array}
\\
\label{eq43}
l_{\succ_{\mathcal{A}}}(\alpha(x))(a \succ_{\mathcal{B}} b) = (l_{\mathcal{A}}(x)a) \succ_{\mathcal{B}}\beta(b) + l_{\succ_{\mathcal{A}}}(r_{\mathcal{B}}(a)x)\beta(b),
\\
\label{eq44}
r_{\prec_{\mathcal{B}}}(\beta(a))(x \prec_{\mathcal{A}} y) = \alpha(x)\prec_{\mathcal{A}} (r_{\mathcal{B}}(a)y) + r_{\prec_{\mathcal{B}}}(l_{\mathcal{A}}(y)a)\alpha(x),
\\
\label{eq45}
\begin{array}{ll}
l_{\prec_{B}}(l_{\prec_{\mathcal{A}}}(x)a)\alpha(y) &+ (r_{\prec_{\mathcal{B}}}(a)x) \prec_{\mathcal{A}}\alpha(y)= \cr
& \alpha(x)\prec_{\mathcal{A}} (l_{\mathcal{B}}(a)y) + r_{\prec_{\mathcal{B}}}(r_{\mathcal{A}}(y)a)\alpha(x),
\end{array} \\
\label{eq46}
l_{\prec_{\mathcal{B}}}(\beta(a))(x \ast_{\mathcal{A}} y) = (l_{\prec_{\mathcal{B}}}(a)x) \prec_{\mathcal{A}}\alpha(y) +
l_{\prec_{\mathcal{B}}}(r_{\prec_{\mathcal{A}}}(x)a)\alpha(y),
\\
\label{eq47}
r_{\prec_{\mathcal{B}}}(\beta(a))(x \succ_{\mathcal{A}} y) = r_{\succ_{\mathcal{B}}}(l_{\prec_{\mathcal{B}}}(y)a)\alpha(x) +
 \alpha(x)\succ_{\mathcal{A}} (r_{\prec_{\mathcal{B}}}(a)y),
\\
\label{eq48}
\begin{array}{ll}
l_{\prec_{\mathcal{B}}}(l_{\succ_{\mathcal{A}}}(x)a)\alpha(y) &+ (r_{\succ_{\mathcal{B}}}(a)x) \prec_{\mathcal{A}}\alpha(y)=\cr
 & \alpha(x)\succ_{\mathcal{A}} (l_{\prec_{\mathcal{B}}}(a)y) + r_{\succ_{\mathcal{B}}}(r_{\prec_{\mathcal{A}}}(y)a)\alpha(x),
\end{array}
\\
\label{eq49}
l_{\succ_{\mathcal{B}}}(\beta(a))(x \prec_{\mathcal{A}} y) = (l_{\succ_{\mathcal{B}}}(a)x) \prec_{\mathcal{A}}\alpha(y) +
l_{\prec_{\mathcal{B}}}(r_{\succ_{\mathcal{A}}}(x)a)\alpha(y),
\\
\label{eq50}
 r_{\succ_{\mathcal{B}}}(\beta(a))(x \ast_{\mathcal{A}} y)= \alpha(x)\succ_{\mathcal{A}} (r_{\succ_{\mathcal{B}}}(a)y) +
r_{\succ_{\mathcal{B}}}(l_{\succ_{\mathcal{A}}}(y)a)\alpha(x),
 \\
\label{eq51}
\begin{array}{ll}
 \alpha(x)\succ_{\mathcal{A}} (l_{\succ_{\mathcal{B}}}(a)y) &+ r_{\succ_{\mathcal{B}}}(r_{\succ_{\mathcal{A}}}(y)a)\alpha(x)=\cr
& l_{\succ_{\mathcal{B}}}(l_{\mathcal{A}}(x)a)\alpha(y) + (r_{B}(a)x) \succ_{\mathcal{A}} \alpha(y),
\end{array}
\\
\label{eq52}
l_{\succ_{\mathcal{B}}}(\beta(a))(x \succ_{\mathcal{A}} y) = (l_{\mathcal{B}}(a)x) \succ_{\mathcal{A}}\alpha(y) +
l_{\succ_{\mathcal{B}}}(r_{\mathcal{A}}(x)a)\alpha(y)
\end{eqnarray}
for any $ x, y \in \mathcal{A}, a, b \in \mathcal{B} $ and $ l_{\mathcal{A}} = l_{\succ_{\mathcal{A}}} +
 l_{\prec_{\mathcal{A}}}, r_{\mathcal{A}} =  r_{\succ_{\mathcal{A}}} +  r_{\prec_{\mathcal{A}}}, l_{\mathcal{B}} =
 l_{\succ_{\mathcal{B}}} +  l_{\prec_{\mathcal{B}}} , r_{\mathcal{B}} =  r_{\succ_{\mathcal{B}}} +  r_{\prec_{\mathcal{B}}} $.
 Then, there is a Hom-dendriform algebra structure on the direct sum $ \mathcal{A} \oplus \mathcal{B} $ of the underlying vector spaces of
 $ \mathcal{A} $ and $ \mathcal{B} $ given by
\begin{eqnarray*}
(x + a) \succ ( y + b )&=&(x \succ_{\mathcal{A}} y + r_{\succ_{\mathcal{B}}}(b)x + l_{\succ_{\mathcal{B}}}(a)y) + \cr &&
 (l_{\succ_{\mathcal{A}}}(x)b + r_{\succ_{\mathcal{A}}}(y)a + a \succ_{\mathcal{B}} b), \cr
(x + a) \prec ( y + b )&=&(x \prec_{\mathcal{A}} y + r_{\prec_{\mathcal{B}}}(b)x + l_{\prec_{\mathcal{B}}}(a)y) + \cr &&
(l_{\prec_{\mathcal{A}}}(x)b + r_{\prec_{\mathcal{A}}}(y)a + a \prec_{\mathcal{B}} b),\cr
(\alpha\oplus\beta)(x + a)&=&\alpha(x) + \beta(a)
\end{eqnarray*}
for any $ x, y \in \mathcal{A}, a, b \in \mathcal{B} $.
\end{theorem}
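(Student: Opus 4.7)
The plan is to verify the three Hom-dendriform axioms directly on $\mathcal{A}\oplus\mathcal{B}$ with the proposed $\succ$, $\prec$, and twist $\alpha\oplus\beta$. For generic elements $X=x+a$, $Y=y+b$, $Z=z+c$ with $x,y,z\in\mathcal{A}$ and $a,b,c\in\mathcal{B}$, I will substitute the defining formulas into both sides of each axiom and compare the resulting $\mathcal{A}$-components and $\mathcal{B}$-components separately, using the fact that $\mathcal{A}\cap\mathcal{B}=0$.

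For each of the three Hom-dendriform axioms, the pure $(x,y,z)$ contribution collapses to the corresponding axiom of $(\mathcal{A},\succ_{\mathcal{A}},\prec_{\mathcal{A}},\alpha)$, and the pure $(a,b,c)$ contribution to the corresponding axiom of $(\mathcal{B},\succ_{\mathcal{B}},\prec_{\mathcal{B}},\beta)$, both of which hold by hypothesis and cancel out. The content then lives in the six mixed patterns (those with exactly one or exactly two $\mathcal{B}$-entries among the three slots). Throughout the expansion I will invoke the equivariance relations from the bimodule structure, e.g.\ $\alpha(l_{\mathcal{B}}(a)x)=l_{\mathcal{B}}(\beta(a))\alpha(x)$ and the analogous identities for $r_{\mathcal{B}}$, $l_{\mathcal{A}}$, $r_{\mathcal{A}}$, in order to push $\alpha\oplus\beta$ inside compositions; I will also use the internal bimodule axioms of each sextuple whenever two operators of the same algebra meet in the same term.

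After this reduction, the mixed-pattern identities produced by the three axioms match, one-to-one, the eighteen compatibility relations \eqref{eq35}--\eqref{eq52}: the three identities generated by the $\prec$-axiom with the mixed patterns account for \eqref{eq35}--\eqref{eq40} together with their $\mathcal{A}$/$\mathcal{B}$ mirrors \eqref{eq44}--\eqref{eq49}, and similarly the $\succ$-axiom accounts for \eqref{eq41}--\eqref{eq43} and \eqref{eq50}--\eqref{eq52}, with the remaining "middle" axiom providing the cross-links. A convenient organizational principle is: for each axiom, enumerate the six mixed patterns, read off the $\mathcal{A}$- and $\mathcal{B}$-valued identities they produce, and pair them with the exact relations in the statement.

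The main obstacle is the combinatorial bookkeeping rather than any deep idea: for each of the three axioms there are $2^{3}=8$ possible assignments of the slots to $\mathcal{A}$ or $\mathcal{B}$, giving six mixed sub-cases per axiom. Each sub-case is itself a short computation using the defining formulas, the bimodule axioms, and one (or two) of the relations \eqref{eq35}--\eqref{eq52}, but the sheer number of sub-cases, combined with the need to keep track of which of $\succ_{\mathcal{A}},\prec_{\mathcal{A}},\succ_{\mathcal{B}},\prec_{\mathcal{B}}$ and which of the eight bimodule maps appears in each term, is the only genuinely demanding part of the verification. Once the case analysis is laid out systematically, each individual step is mechanical.
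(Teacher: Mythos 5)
Your proposal is correct and follows essentially the same route as the paper, which proves this theorem by the direct expansion used for the matched pair theorem of Hom-associative algebras: substitute the defining products into the three Hom-dendriform axioms on $\mathcal{A}\oplus\mathcal{B}$, separate the $\mathcal{A}$- and $\mathcal{B}$-components for each assignment of slots, and observe that the pure cases reduce to the axioms of $\mathcal{A}$ and $\mathcal{B}$ while the mixed cases reduce to the bimodule conditions together with the compatibility relations \eqref{eq35}--\eqref{eq52}. The only caveat is that your attribution of specific relations to specific axioms is a heuristic that would need to be confirmed in the actual bookkeeping, but the strategy is the intended one.
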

\begin{proof}
It  is obtained in a similar way as for Theorem \ref{theo. of matched pairs}.
\qed
\end{proof}
 Let $ \mathcal{A} \bowtie^{l_{\succ_{\mathcal{A}}}, r_{\succ_{\mathcal{A}}},
l_{\prec_{\mathcal{A}}}, r_{\prec_{\mathcal{A}}}, \beta}_{l_{\succ_{\mathcal{B}}}, r_{\succ_{\mathcal{B}}}, l_{\prec_{\mathcal{B}}},
r_{\prec_{\mathcal{B}}}, \alpha} \mathcal{B} $ or simply $ \mathcal{A} \bowtie \mathcal{B} $  denote this Hom-dendriform algebra.
\begin{definition}
Let $ (\mathcal{A}, \succ_{\mathcal{A}}, \prec_{\mathcal{A}}, \alpha) $ and $  (\mathcal{B}, \succ_{\mathcal{B}}, \prec_{\mathcal{B}}, \beta) $
be two Hom-dendriform algebras. Suppose there are linear maps
$ l_{\succ_{\mathcal{A}}}, r_{\succ_{\mathcal{A}}}, l_{\prec_{\mathcal{A}}}, r_{\prec_{\mathcal{A}}} : \mathcal{A} \rightarrow gl(\mathcal{B}) $
and $ l_{\succ_{\mathcal{B}}}, r_{\succ_{\mathcal{B}}}, l_{\prec_{\mathcal{B}}}, r_{\prec_{\mathcal{B}}} : \mathcal{B} \rightarrow gl(\mathcal{A}) $
 such that $(l_{\succ_{\mathcal{A}}}, r_{\succ_{\mathcal{A}}}, l_{\prec_{\mathcal{A}}}, r_{\prec_{\mathcal{A}}}, \beta)$ is a bimodule of $ \mathcal{A}, $
and $(l_{\succ_{\mathcal{B}}}, r_{\succ_{\mathcal{B}}}, l_{\prec_{\mathcal{B}}}, r_{\prec_{\mathcal{B}}}, \alpha)$ is a bimodule of $ \mathcal{B} $.
If \eqref{eq35} - \eqref{eq52} are satisfied, then $(\mathcal{A}, \mathcal{B}, l_{\succ_{\mathcal{A}}},
r_{\succ_{\mathcal{A}}}, l_{\prec_{\mathcal{A}}}, r_{\prec_{\mathcal{A}}}, \beta,   l_{\succ_{\mathcal{B}}}, r_{\succ_{\mathcal{B}}}, l_{\prec_{\mathcal{B}}},
 r_{\prec_{\mathcal{B}}}, \alpha)$ is called a \textbf{matched pair of Hom-dendriform algebras}.
\end{definition}

\begin{corollary}\label{match dendriform-associative algebras}
If $(\mathcal{A}, \mathcal{B}, l_{\succ_{\mathcal{A}}}, r_{\succ_{\mathcal{A}}}, l_{\prec_{\mathcal{A}}},
r_{\prec_{\mathcal{A}}}, \beta,
 l_{\succ_{\mathcal{B}}}, r_{\succ_{\mathcal{B}}}, l_{\prec_{\mathcal{B}}}, r_{\prec_{\mathcal{B}}},
 \alpha) $ is a matched pair of Hom-dendriform algebras, then
 \begin{eqnarray*}(\mathcal{A}, \mathcal{B}, l_{\succ_{\mathcal{A}}} + l_{\prec_{\mathcal{A}}},
r_{\succ_{\mathcal{A}}} + r_{\prec_{\mathcal{A}}},
l_{\succ_{\mathcal{B}}} + l_{\prec_{\mathcal{B}}},  r_{\succ_{\mathcal{B}}} + r_{\prec_{\mathcal{B}}}, \alpha + \beta)\end{eqnarray*} is a matched pair of the associated
Hom-associative algebras $ (\mathcal{A}, \ast_{\mathcal{A}}, \alpha) $ and  $(\mathcal{B}, \ast_{\mathcal{B}}, \beta)$.
\end{corollary}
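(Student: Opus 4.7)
The plan is to reduce the corollary to two facts already established earlier in the paper, thereby bypassing a direct case-by-case manipulation of the eighteen relations \eqref{eq35}--\eqref{eq52}. First I would invoke the matched-pair theorem for Hom-dendriform algebras stated just above the corollary: from the given data one obtains a Hom-dendriform algebra $(\mathcal{A}\oplus\mathcal{B},\succ,\prec,\alpha+\beta)$, and hence, by the proposition that every Hom-dendriform algebra has an associated Hom-associative algebra, a Hom-associative algebra $(\mathcal{A}\oplus\mathcal{B},\ast,\alpha+\beta)$ with $\ast=\succ+\prec$ in which $(\mathcal{A},\ast_\mathcal{A},\alpha)$ and $(\mathcal{B},\ast_\mathcal{B},\beta)$ sit as Hom-subalgebras.

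Next, a direct expansion of the explicit dendriform matched-pair product gives
\[
(x+a)\ast(y+b) = \bigl(x\ast_{\mathcal{A}} y + l_{\mathcal{B}}(a)y + r_{\mathcal{B}}(b)x\bigr) + \bigl(a\ast_{\mathcal{B}} b + l_{\mathcal{A}}(x)b + r_{\mathcal{A}}(y)a\bigr),
\]
which is precisely the associative matched-pair product \eqref{match. pair product} formed from the summed actions $l_\mathcal{A}=l_{\succ_\mathcal{A}}+l_{\prec_\mathcal{A}}$, $r_\mathcal{A}=r_{\succ_\mathcal{A}}+r_{\prec_\mathcal{A}}$, $l_\mathcal{B}=l_{\succ_\mathcal{B}}+l_{\prec_\mathcal{B}}$, $r_\mathcal{B}=r_{\succ_\mathcal{B}}+r_{\prec_\mathcal{B}}$. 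Moreover, by the earlier proposition stating that $(l_\succ+l_\prec, r_\succ+r_\prec, \beta, V)$ is a bimodule of the associated Hom-associative algebra whenever $(l_\succ, r_\succ, l_\prec, r_\prec, \beta, V)$ is a bimodule of the Hom-dendriform algebra, the quadruples $(l_\mathcal{A}, r_\mathcal{A}, \beta, \mathcal{B})$ and $(l_\mathcal{B}, r_\mathcal{B}, \alpha, \mathcal{A})$ are respectively bimodules of $(\mathcal{A}, \ast_\mathcal{A}, \alpha)$ and $(\mathcal{B}, \ast_\mathcal{B}, \beta)$.

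Finally, I would invoke the bidirectional character of the computation used to prove Theorem~\ref{theo. of matched pairs} (and exploited explicitly in the proof of Theorem~\ref{Frobenius theorem}): once one knows that $(\mathcal{A}\oplus\mathcal{B}, \ast, \alpha+\beta)$ is Hom-associative, that each factor is Hom-associative, that the product has the form \eqref{match. pair product}, and that the relevant bimodule axioms hold, the remaining content of Hom-associativity is exactly the conjunction of the six matched-pair axioms \eqref{match. pair1}--\eqref{match. pair6} for the summed actions. Since every one of these hypotheses has just been verified, \eqref{match. pair1}--\eqref{match. pair6} follow for the sextuple $(\mathcal{A}, \mathcal{B}, l_\mathcal{A}, r_\mathcal{A}, l_\mathcal{B}, r_\mathcal{B}, \alpha+\beta)$, which is the desired conclusion.

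The main obstacle is the identification of the summed dendriform product $\succ+\prec$ on $\mathcal{A}\oplus\mathcal{B}$ with the associative matched-pair product \eqref{match. pair product} built from the summed actions; this is purely a matter of regrouping terms according to $l_\mathcal{A}=l_{\succ_\mathcal{A}}+l_{\prec_\mathcal{A}}$ and the analogous identities for $r_\mathcal{A}$, $l_\mathcal{B}$, $r_\mathcal{B}$, so the argument is conceptually routine and requires no new computation beyond this bookkeeping.
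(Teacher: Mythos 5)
Your proposal is correct and follows essentially the same route as the paper: the paper's proof likewise identifies the associated Hom-associative algebra of $\mathcal{A}\bowtie\mathcal{B}$ with the matched-pair Hom-associative algebra built from the summed actions $l_{\mathcal{A}}=l_{\succ_{\mathcal{A}}}+l_{\prec_{\mathcal{A}}}$, etc., via the product \eqref{match. pair product}. You merely spell out the supporting steps (the bimodule proposition for $l_{\succ}+l_{\prec}$, $r_{\succ}+r_{\prec}$ and the equivalence underlying Theorem~\ref{theo. of matched pairs}) that the paper leaves implicit.
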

\begin{proof}
In fact, the associated Hom-associative algebra $(\mathcal{A} \bowtie \mathcal{B}, \ast, \alpha + \beta)$ is exactly the Hom-associative algebra obtained
from the matched pair  of Hom-associative algebras $(\mathcal{A}, \mathcal{B}, l_{\mathcal{A}}, r_{\mathcal{A}}, \beta,  l_{\mathcal{B}}, r_{\mathcal{B}}, \alpha)$ with
\begin{eqnarray*}
(x + a)\ast (y + b)&=& x\ast_{\mathcal{A}} y + l_{\mathcal{B}}(a)y + r_{\mathcal{B}}(b)x + a \ast_{\mathcal{B}} b +  l_{\mathcal{A}}(x)b +
r_{\mathcal{A}}(y)a,\cr
(\alpha\oplus\beta)(x + a)&=&\alpha(x) + \beta(a)
\end{eqnarray*}
for all $ x, y \in \mathcal{A}, a, b \in \mathcal{B} $, where $ l_{\mathcal{A}} = l_{\succ_{\mathcal{A}}} + l_{\prec_{\mathcal{A}}}, r_{\mathcal{A}}
= r_{\succ_{\mathcal{A}}} + r_{\prec_{\mathcal{A}}}, l_{\mathcal{B}} = l_{\succ_{\mathcal{B}}} + l_{\prec_{\mathcal{B}}}, r_{\mathcal{B}}=
r_{\succ_{\mathcal{B}}} + r_{\prec_{\mathcal{B}}}  $.
\qed
\end{proof}
\subsection{Double constructions of involutive symplectic Hom-associative algebras}
\label{subsec:dblcnstrinvsymhomassal:dblcnstrinvsymhomassal}
In this sequel, we suppose that $\alpha$ is involutive.
\begin{proposition}
Let ($l_{\succ}, r_{\succ}, l_{\prec}, r_{\prec}, \beta, V$) be a bimodule of a  Hom-dendriform algebra $(\mathcal{A},
\succ, \prec, \alpha)$, and let $(\mathcal{A}, \ast, \alpha)$ be the associated involutive Hom-associative algebra.
\begin{enumerate}[label=\upshape{\arabic*)}]
\item Let $ l^{\ast}_{\succ},r^{\ast}_{\succ},l^{\ast}_{\prec}, r^{\ast}_{\prec} : \mathcal{A}
\rightarrow gl(V^{\ast}) $ be the linear maps given by
\begin{eqnarray*}
\langle l^{\ast}_{\succ}(x)a^{\ast}, y \rangle &=& \langle l_{\succ}(x)y, a^{\ast} \rangle,
\langle r^{\ast}_{\succ}(x)a^{\ast}, y \rangle = \langle r_{\succ}(x)y, a^{\ast} \rangle, \cr
\langle l^{\ast}_{\prec}(x)a^{\ast}, y \rangle &=& \langle l_{\prec}(x)y, a^{\ast} \rangle,
\langle r^{\ast}_{\prec}(x)a^{\ast}, y \rangle = \langle r_{\prec}(x)y, a^{\ast} \rangle.
\end{eqnarray*}
Then,  $(r^{\ast}_{\succ} +  r^{\ast}_{\prec}, -l^{\ast}_{\prec}, -r^{\ast}_{\succ}, l^{\ast}_{\succ} + l^{\ast}_{\prec}, \beta^{\ast}, V^{\ast})$ is a bimodule of $(\mathcal{A},\succ,\prec, \alpha);$
\item $(r^{\ast}_{\succ} +  r^{\ast}_{\prec}, 0, 0, l^{\ast}_{\succ} + l^{\ast}_{\prec}, \beta^{\ast}, V^{\ast})$
 and $(r^{\ast}_{\prec}, 0, 0, l^{\ast}_{\succ}, \beta^{\ast}, V^{\ast})$ are bimodules \\ 
 of $ (\mathcal{A}, \ast, \alpha); $
\item $(r^{\ast}_{\succ} +  r^{\ast}_{\prec}, l^{\ast}_{\succ} + l^{\ast}_{\prec}, \beta^{\ast}, V^{\ast})$ and
$(r^{\ast}_{\prec}, l^{\ast}_{\succ}, \beta^{\ast}, V^{\ast})$ are bimodules of $(\mathcal{A}, \ast, \alpha); $
\item The Hom-dendriform algebras  $ \mathcal{A} \times_{r^{\ast}_{\succ} + r^{\ast}_{\prec} ,
-l^{\ast}_{\prec}, -r^{\ast}_{\succ}, l^{\ast}_{\succ} + l^{\ast}_{\prec}, \alpha, \beta^{\ast}} V^{\ast} $ and \\
$ \mathcal{A} \times_{r^{\ast}_{\prec}, 0, 0, l^{\ast}_{\succ}, \alpha, \beta^{\ast}} V^{\ast} $ have the same Hom-associative algebra \\
$\mathcal{A} \times_{r^{\ast}_{\prec}, l^{\ast}_{\succ}, \alpha, \beta^{\ast}} V^{\ast} $.
\end{enumerate}
\end{proposition}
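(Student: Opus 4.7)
The plan is to carry out a systematic duality argument, reducing each new bimodule identity to the corresponding already-known identity for the original sextuple via the natural pairing $\langle\cdot,\cdot\rangle:V\times V^{\ast}\to\mathcal{K}$. I would first recall the identities
\begin{eqnarray*}
\langle l_{\succ}^{\ast}(x)a^{\ast},y\rangle=\langle y,l_{\succ}(x)^{\ast}a^{\ast}\rangle^{\text{swap}}=\langle l_{\succ}(x)y,a^{\ast}\rangle,
\end{eqnarray*}
and analogously for $r_{\succ}^{\ast}, l_{\prec}^{\ast}, r_{\prec}^{\ast}$, together with $\beta^{\ast}\circ l_{\succ}^{\ast}(x)=l_{\succ}^{\ast}(\alpha(x))\circ\beta^{\ast}$ (and its variants), which follow immediately from the five commutativity axioms of the original bimodule and the involutivity $\alpha^{2}=\id$. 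These are the only tools required.

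For part (1), I would set the candidate dual operators
$\tilde l_{\succ}:=r_{\succ}^{\ast}+r_{\prec}^{\ast}$, $\tilde r_{\succ}:=-l_{\prec}^{\ast}$, $\tilde l_{\prec}:=-r_{\succ}^{\ast}$, $\tilde r_{\prec}:=l_{\succ}^{\ast}+l_{\prec}^{\ast}$, and verify the nine Hom-dendriform bimodule axioms one by one. For each axiom, I would apply both sides to $a^{\ast}\in V^{\ast}$, pair with a test vector $y\in V$, and transport across the pairing: every occurrence of a $\tilde{}$-operator becomes a corresponding $l_{\sharp}$ or $r_{\sharp}$ acting on $y$, and each $\beta^{\ast}$ becomes a $\beta$. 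After collecting terms, the equality requested is exactly one of the ten original bimodule identities (or a linear combination of them), with the roles of the arguments $x$ and $y$ possibly swapped. The signs on $\tilde r_{\succ}$ and $\tilde l_{\prec}$ conspire to make the mixed axioms (those involving both $\succ$ and $\prec$) collapse correctly, because each mixed axiom in the original bimodule equates an $l\text{-}r$ composition to an $r\text{-}l$ composition; the transposition reverses composition order and reintroduces the missing signs.

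Part (2) would follow by specializing a known earlier fact: from any Hom-dendriform bimodule $(l_{\succ},r_{\succ},l_{\prec},r_{\prec},\beta,V)$, the sextuples $(l_{\succ}+l_{\prec},0,0,r_{\succ}+r_{\prec},\beta,V)$ and $(l_{\succ},0,0,r_{\prec},\beta,V)$ are again Hom-dendriform bimodules (this is proved by direct substitution in the defining relations, using the dendriform identities on $\mathcal{A}$). Applying this to the dual bimodule of (1) produces the two sextuples claimed. Part (3) is then read off by passing from the Hom-dendriform bimodule to the induced bimodule of the associated Hom-associative algebra, using that $(l_{\succ}+l_{\prec},r_{\succ}+r_{\prec},\beta,V)$ is a bimodule of $(\mathcal{A},\ast,\alpha)$; for the sextuples from (2), this produces precisely $(r_{\succ}^{\ast}+r_{\prec}^{\ast},l_{\succ}^{\ast}+l_{\prec}^{\ast},\beta^{\ast},V^{\ast})$ and $(r_{\prec}^{\ast},l_{\succ}^{\ast},\beta^{\ast},V^{\ast})$.

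For part (4), I would compute $\succ+\prec$ in the two semidirect-sum Hom-dendriform algebras built from the sextuples in (2), and observe that both yield the same product on $\mathcal{A}\oplus V^{\ast}$, namely the semidirect product of $(\mathcal{A},\ast,\alpha)$ with the Hom-associative bimodule $(r_{\succ}^{\ast}+r_{\prec}^{\ast},l_{\succ}^{\ast}+l_{\prec}^{\ast},\beta^{\ast},V^{\ast})$ from (3); this is a direct formula check. The main obstacle is bookkeeping in part (1): with nine bimodule axioms times two sign-carrying entries among the four operators, one must be disciplined to ensure that signs match in the mixed (succ-prec) axioms rather than cancel where they should not. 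Organizing the verification into three blocks (pure $\prec$, pure $\succ$, and mixed) and handling the $\beta$-equivariance separately at the end makes this manageable.
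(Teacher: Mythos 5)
Your proposal is correct in substance. Note first that the paper itself gives no proof of this proposition at all: it is stated and immediately followed by an example, so there is nothing to compare against line by line. Your dualization strategy is exactly the argument the authors use elsewhere for the analogous statements (the Lemma in Section~3 on $(r^{\ast},l^{\ast},\beta^{\ast},V^{\ast})$ being a bimodule of a Hom-associative algebra), and it is the Hom-twisted version of Bai's classical computation for dendriform bimodules; I verified representative axioms and the sign pattern on $-l^{\ast}_{\prec}$ and $-r^{\ast}_{\succ}$ does work out as you claim, with each transposed identity reducing to an original axiom evaluated at $\alpha$-shifted arguments. Two small points of bookkeeping: the defining list for a Hom-dendriform bimodule in this paper has nine structural identities \emph{plus four} $\beta$-equivariance conditions (your ``nine'' versus ``ten'' is an internal inconsistency, though you do handle the equivariance separately), and your reduction genuinely needs both the involutivity $\alpha^{2}=\id$ and the multiplicativity $\alpha(x\prec y)=\alpha(x)\prec\alpha(y)$, $\alpha(x\succ y)=\alpha(x)\succ\alpha(y)$ together with $\beta(l_{\sharp}(x)v)=l_{\sharp}(\alpha(x))\beta(v)$, so these hypotheses should be invoked explicitly rather than left implicit. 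Your treatment of parts (2)--(4) by specializing the earlier proposition on Hom-dendriform bimodules and summing the two products in the semidirect sums is exactly right, and the arithmetic $\tilde l_{\succ}+\tilde l_{\prec}=r^{\ast}_{\prec}$, $\tilde r_{\succ}+\tilde r_{\prec}=l^{\ast}_{\succ}$ confirms the stated quadruples.
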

\begin{example}
Let $(\mathcal{A}, \prec, \succ, \alpha)$ be an involutive Hom-dendriform algebra. Then,
\begin{eqnarray*}
(L_{\succ}, R_{\succ}, L_{\prec}, R_{\prec}, \alpha, \mathcal{A}), (L_{\succ}, 0, 0, R_{\prec}, \alpha, \mathcal{A}),
 (L_{\succ} + L_{\prec}, 0, 0,  R_{\succ} + R_{\prec}, \alpha, \mathcal{A})
\end{eqnarray*}
are bimodules of $(\mathcal{A}, \prec, \succ, \alpha)$. On the other hand,
\begin{eqnarray*}
 (R^{\ast}_{\succ} + R^{\ast}_{\prec}, -L^{\ast}_{\prec}, -R^{\ast}_{\succ},  L^{\ast}_{\succ} +
 L^{\ast}_{\prec}, \alpha^{\ast}, \mathcal{A}^{\ast}),  (R^{\ast}_{\prec}, 0, 0,  L^{\ast}_{\succ}, \alpha^{\ast}, \mathcal{A}^{\ast}),\cr (R^{\ast}_{\succ}
+ R^{\ast}_{\prec}, 0, 0,  L^{\ast}_{\succ} + L^{\ast}_{\prec}, \alpha^{\ast}, \mathcal{A}^{\ast})
\end{eqnarray*}
are bimodules of $(\mathcal{A}, \succ, \prec, \alpha)$  too. There are two compatible Hom-dendriform algebra structures,
\begin{eqnarray*}
\mathcal{A} \times_{R^{\ast}_{\succ} + R^{\ast}_{\prec}, -L^{\ast}_{\prec}, -R^{\ast}_{\succ},
L^{\ast}_{\succ} + L^{\ast}_{\prec}, \alpha, \alpha^{\ast}} \mathcal{A}^{\ast} \mbox { and } \mathcal{A} \times_{R^{\ast}_{\succ} + R^{\ast}_{\prec}, 0, 0,  L^{\ast}_{\succ} + L^{\ast}_{\prec}, \alpha, \alpha^{\ast}}\mathcal{A}^{\ast},
\end{eqnarray*}
on the same Hom-associative algebra $\mathcal{A} \times_{ R^{\ast}_{\prec},  L^{\ast}_{\succ}, \alpha, \alpha^{\ast}}\mathcal{A}^{\ast} $.
\end{example}
\begin{definition}
Let $(\mathcal{A}, \alpha)$ be a Hom-associative algebra. We say that $(\mathcal{A}, \alpha, \omega)$ is a symplectic Hom-associative algebra if $\omega$ is a non-degenerate skew-symmetric bilinear form on $\mathcal{A}$ such that the following identity (invariance condition) is  satisfied for all $x, y, z\in\mathcal{A}$:
\begin{eqnarray}
\omega(\alpha(x)\alpha(y), \alpha(z)) + \omega(\alpha(y)\alpha(z), \alpha(x)) + \omega(\alpha(z)\alpha(x), \alpha(y))=0.
\end{eqnarray}
\end{definition}
\begin{theorem}\label{dendriform-symplectic theorem}
Let $(\mathcal{A}, \ast, \alpha)$ be an involutive Hom-associative algebra, and let $\omega $ be an $\alpha$-invariant non-degenerate skew-symmetric bilinear form on $\mathcal{A}$.
Then,  there exists a compatible Hom-dendriform algebra structure $ \succ, \prec $ on $( \mathcal{A}, \alpha)$ given by
\begin{eqnarray} \label{dendriform-symplectic equation}
\omega(x \succ y, z) = \omega(y, z \ast x), \ \ \ \omega(x \succ y, z) = \omega(x, y \ast z) \mbox { for all } x, y \in \mathcal{A}.
\end{eqnarray}
\end{theorem}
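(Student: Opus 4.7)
The plan is to deduce this theorem from Corollary \ref{dendriform-invertible operator} by exhibiting an invertible $\mathcal{O}$-operator on $(\mathcal{A},\ast,\alpha)$ built from $\omega$. First, non-degeneracy of $\omega$ yields a linear isomorphism $T\colon\mathcal{A}^{\ast}\to\mathcal{A}$ characterised by
\[
 \omega(T(a^{\ast}),y)=\langle a^{\ast},y\rangle,\qquad \forall\, y\in\mathcal{A},\ a^{\ast}\in\mathcal{A}^{\ast},
\]
whose inverse is $T^{-1}\colon x\mapsto\omega(x,\cdot)$. By the dualisation lemma in Section \ref{subsec:dblcnstrinvhomfrobal:dcifral} applied to the regular bimodule $(L_{\ast},R_{\ast},\alpha,\mathcal{A})$, the quadruple $(R^{\ast}_{\ast},L^{\ast}_{\ast},\alpha^{\ast},\mathcal{A}^{\ast})$ is a bimodule of $(\mathcal{A},\ast,\alpha)$, and this is the bimodule to which $T$ will be associated.

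Next I would verify the two $\mathcal{O}$-operator conditions. For the intertwining $\alpha\circ T=T\circ\alpha^{\ast}$, one pairs both sides against an arbitrary $y\in\mathcal{A}$ and uses $\alpha^{2}=\id$ to reduce to $\omega(\alpha(x),y)=\omega(x,\alpha(y))$, obtained from the $\alpha$-invariance of $\omega$ via a suitable substitution. For the operator multiplicativity
\[
T(a^{\ast})\ast T(b^{\ast})=T\bigl(R^{\ast}_{\ast}(T(a^{\ast}))b^{\ast}+L^{\ast}_{\ast}(T(b^{\ast}))a^{\ast}\bigr),
\]
setting $x=T(a^{\ast})$, $y=T(b^{\ast})$ and pairing with an arbitrary $z\in\mathcal{A}$ through $T^{-1}$ reduces it to the cyclic identity
\[
 \omega(x\ast y,z)=\omega(x,y\ast z)+\omega(y,z\ast x),
\]
which follows from the $\alpha$-invariance of $\omega$ applied at $\alpha(x),\alpha(y),\alpha(z)$, together with $\alpha^{2}=\id$ and the skew-symmetry of $\omega$.

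Finally, Corollary \ref{dendriform-invertible operator} supplies the induced compatible Hom-dendriform structure
\[
 x\succ y=T\bigl(R^{\ast}_{\ast}(x)T^{-1}(y)\bigr),\qquad x\prec y=T\bigl(L^{\ast}_{\ast}(y)T^{-1}(x)\bigr).
\]
Pairing with an arbitrary $z\in\mathcal{A}$ under $\omega$ and unfolding the dual operators yields
\[
\omega(x\succ y,z)=\langle T^{-1}(y),z\ast x\rangle=\omega(y,z\ast x),\quad \omega(x\prec y,z)=\langle T^{-1}(x),y\ast z\rangle=\omega(x,y\ast z),
\]
which are precisely the announced formulas.

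The hard part will be the bookkeeping of the twists by $\alpha$: both the intertwining $\alpha T=T\alpha^{\ast}$ and the cyclic identity for $\omega$ must be teased out of the sole stated $\alpha$-invariance, and at several points one has to redistribute $\alpha$'s across the pairing using $\alpha^{2}=\id$ combined with Hom-associativity $\alpha(a)\ast(b\ast c)=(a\ast b)\ast\alpha(c)$. Once these $\alpha$-shifts are carried out systematically, the remainder of the argument is the routine dualisation, in the Hom-setting, of the classical passage from a symplectic associative algebra to a compatible dendriform structure.
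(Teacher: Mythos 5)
Your proposal follows essentially the same route as the paper: define the isomorphism $T$ induced by $\omega$, show it is an invertible $\mathcal{O}$-operator for $(\mathcal{A},\ast,\alpha)$ associated to the dual bimodule $(R^{\ast}_{\ast},L^{\ast}_{\ast},\alpha^{\ast})$, and invoke Corollary \ref{dendriform-invertible operator}; the only difference is notational (your $T$ is the paper's $T^{-1}$) and that you spell out the verification of the $\mathcal{O}$-operator axioms via the cyclic identity for $\omega$, which the paper merely asserts.
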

\begin{proof}
Define a linear map $ T : \mathcal{A} \rightarrow \mathcal{A}^{\ast} $ by $ \langle T(x), y \rangle = \omega(x, y) $ for all $ x, y \in \mathcal{A} $.
Then, $ T $ is invertible and $ T^{-1} $ is an $  \mathcal{O}-$operator of the involutive Hom-associative algebra $(\mathcal{A}, \ast, \alpha)$ associated to the
bimodule $(R^{\ast}_{\ast}, L^{\ast}_{\ast}, \alpha^{\ast})$. By Corollary \ref{dendriform-invertible operator}, there is a compatible Hom-dendriform algebra structure
$ \succ, \prec $ on $(\mathcal{A}, \ast)$ given by
\begin{eqnarray*}
x \succ y = T^{-1}R^{\ast}_{\ast}(x)T(y), \ \ x \prec y =  T^{-1}L^{\ast}_{\ast}(y)T(x)
\end{eqnarray*}
for all $ x, y \in \mathcal{A} $, which gives exactly the equation \eqref{dendriform-symplectic equation}.
\qed
\end{proof}
\begin{definition}
We call $(\mathcal{A}, \alpha, \mathcal{B})$ a \textbf{double construction of involutive symplectic Hom-associative algebra} associated to
$(\mathcal{A}_1, \alpha_{1})$ and $({\mathcal A}_1^*, \alpha^{\ast}_{1})$ if it satisfies the following conditions:
\begin{enumerate}[label=\upshape{\arabic*)}]
\item $ \mathcal{A} = \mathcal{A}_{1} \oplus \mathcal{A}^{\ast}_{1} $ as the direct sum of vector
spaces;
\item $(\mathcal{A}_1, \alpha_{1})$ and $({\mathcal A}_1^*, \alpha^{\ast}_{1})$ are Hom-associative subalgebras of $
(\mathcal{A}, \alpha)$ with $\alpha=\alpha_{1}\oplus\alpha^{\ast}_{1}$;
\item $ \omega $ is the natural non-degenerate antisymmetric ($\alpha_{1}\oplus\alpha^{\ast}_{1}$)-invariant
bilinear form on $ \mathcal{A}_{1} \oplus \mathcal{A}^{\ast}_{1} $
given by
\begin{eqnarray} \label{symplectic form}
 \omega(x + a^{\ast}, y +  b^{\ast})&=& -\langle x, b^{\ast} \rangle + \langle a^{\ast}, y \rangle, \cr
 \omega((\alpha + \alpha^{\ast})(x + a^{\ast}), y + b^{\ast})&=&  \omega(x + a^{\ast}, (\alpha + \alpha^{\ast})(y + b^{\ast}))
\end{eqnarray}
for all  $x, y \in \mathcal{A}_{1}, a^{\ast}, b^{\ast} \in \mathcal{A}^{\ast}_{1},$
where $ \langle  , \rangle $ is the natural pair between the vector space $ \mathcal{A}_{1} $ and its dual space  $ \mathcal{A}^{\ast}_{1} $.
\end{enumerate}
\end{definition}

 Let $(\mathcal{A}, \ast_{\mathcal{A}}, \alpha)$ be an involutive
 Hom-associative algebra, and suppose that there is an involutive Hom-associative algebra structure $ \ast_{\mathcal{A}^{\ast}} $ on its dual
space $ \mathcal{A}^{\ast} $. We construct an involutive symplectic Hom-associative algebra structure on the direct sum
 $ \mathcal{A} \oplus \mathcal{A}^{\ast} $ of the underlying vector spaces of $ \mathcal{A} $ and $ \mathcal{A}^{\ast} $ such
 that both $ \mathcal{A} $ and $ \mathcal{A}^{\ast} $ are Hom-subalgebras,  equipped with the natural non-degenerate antisymmetric ($\alpha_{1}\oplus\alpha^{\ast}_{1}$)-invariant bilinear form on $\mathcal{A} \oplus \mathcal{A}^{\ast}$ given by
 the equation \eqref{symplectic form}. Such a construction is called
double construction of involutive symplectic Hom-associative algebras associated to $(\mathcal{A}, \ast_{\mathcal{A}}, \alpha)$ and
$ (\mathcal{A}^{\ast}, \ast_{\mathcal{A}^{\ast}}, \alpha^{\ast}),$  denoted by $ (T(\mathcal{A}) = \mathcal{A}
\bowtie^{\alpha^{\ast}}_{\alpha} \mathcal{A}^{\ast}, \omega)$.

\begin{corollary}
Let $ (T(\mathcal{A}) = \mathcal{A}
\bowtie^{\alpha^{\ast}}_{\alpha} \mathcal{A}^{\ast}, \omega)$
 be a double construction of involutive symplectic Hom-associative algebras. Then,
 there exists a compatible involutive Hom-dendriform algebra structure $ \succ, \prec $ on $(T(\mathcal{A}), \alpha\oplus\alpha^{\ast})$
defined by the equation \eqref{symplectic form}.
  Moreover, $ \mathcal{A} $ and $ \mathcal{A}^{\ast} $, endowed with this product,
 are Hom-dendriform subalgebras.
\end{corollary}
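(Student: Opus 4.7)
The plan is to deduce this corollary in two parts. First, the existence of the compatible Hom-dendriform structure $(\succ, \prec)$ on the whole of $(T(\mathcal{A}), \alpha\oplus\alpha^{\ast})$ is immediate: by hypothesis $(T(\mathcal{A}), \alpha\oplus\alpha^{\ast})$ is an involutive Hom-associative algebra, and $\omega$ is a non-degenerate skew-symmetric $(\alpha\oplus\alpha^{\ast})$-invariant bilinear form on it. Thus Theorem \ref{dendriform-symplectic theorem} applies verbatim and yields operations $\succ,\prec$ on $T(\mathcal{A})$ satisfying, for all $X,Y,Z \in T(\mathcal{A})$,
\begin{eqnarray*}
\omega(X\succ Y, Z)= \omega(Y, Z\ast X),\quad \omega(X\prec Y, Z)= \omega(X, Y\ast Z),
\end{eqnarray*}
which is precisely the structure described by \eqref{symplectic form} combined with \eqref{dendriform-symplectic equation}.

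The substantive remaining content is the subalgebra claim: if $x,y\in\mathcal{A}$ then $x\succ y,\,x\prec y\in\mathcal{A}$, and symmetrically for $\mathcal{A}^{\ast}$. The plan is to exploit the isotropy of $\mathcal{A}$ and $\mathcal{A}^{\ast}$ with respect to $\omega$, which is explicit from \eqref{symplectic form}: $\omega(u,v)=0$ whenever $u,v\in\mathcal{A}$, and likewise whenever $u,v\in\mathcal{A}^{\ast}$. For $x,y\in\mathcal{A}$, write $x\succ y = u + a^{\ast}$ with $u\in\mathcal{A}$, $a^{\ast}\in\mathcal{A}^{\ast}$. For any $z\in\mathcal{A}$, using that $\mathcal{A}$ is a Hom-subalgebra of $T(\mathcal{A})$ so $z\ast x\in\mathcal{A}$, we get
\begin{eqnarray*}
\langle a^{\ast}, z\rangle = \omega(u+a^{\ast}, z)= \omega(x\succ y, z) = \omega(y, z\ast x) = 0.
\end{eqnarray*}
By non-degeneracy of the natural pairing $\langle\,,\rangle$, this forces $a^{\ast}=0$, whence $x\succ y\in\mathcal{A}$. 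The same argument with $\omega(x\prec y,z)=\omega(x,y\ast z)$ and $y\ast z\in\mathcal{A}$ gives $x\prec y\in\mathcal{A}$.

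By perfect symmetry, applied with roles of $\mathcal{A}$ and $\mathcal{A}^{\ast}$ exchanged (using that $\mathcal{A}^{\ast}$ is also isotropic for $\omega$ and closed under $\ast$), one shows $a^{\ast}\succ b^{\ast},\,a^{\ast}\prec b^{\ast}\in\mathcal{A}^{\ast}$ for all $a^{\ast},b^{\ast}\in\mathcal{A}^{\ast}$. Hence the restrictions of $(\succ,\prec)$ to $\mathcal{A}$ and to $\mathcal{A}^{\ast}$ endow each of them with a Hom-dendriform subalgebra structure. The main (and essentially only) obstacle is bookkeeping: checking that one is really entitled to apply Theorem \ref{dendriform-symplectic theorem} with $\omega$ replaced by its restriction to each factor, but this follows from the isotropy argument above combined with non-degeneracy of $\langle\,,\rangle$, so no genuine new computation is required.
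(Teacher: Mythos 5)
Your proposal is correct and follows essentially the same route as the paper: the existence of $\succ,\prec$ on $T(\mathcal{A})$ is quoted from Theorem \ref{dendriform-symplectic theorem}, and the subalgebra claim is obtained by decomposing $x\succ y$ along $\mathcal{A}\oplus\mathcal{A}^{\ast}$ and using the isotropy of the two factors together with $\omega(x\succ y,\mathcal{A})=\omega(y,\mathcal{A}\ast x)=0$ and non-degeneracy to kill the $\mathcal{A}^{\ast}$-component. The only cosmetic difference is that you invoke non-degeneracy of the natural pairing $\langle\,,\rangle$ where the paper invokes non-degeneracy of $\omega$ on all of $T(\mathcal{A})$; these are equivalent here, and your closing worry about "restricting $\omega$ to each factor" is moot since the subalgebra structure is just the restriction of the products, not a new application of the theorem.
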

\begin{proof}
The first  part follows from Theorem \ref{dendriform-symplectic theorem}.
Let $ x, y \in \mathcal{A} $. Set $ x \succ y = a + b^{\ast},$ where
$ a \in \mathcal{A}, b^{\ast} \in \mathcal{A}^{\ast} $. Since $ \mathcal{A} $ is a Hom-associative subalgebra of $(T(\mathcal{A}), \alpha\oplus\alpha^{\ast})$
and $ \omega(\mathcal{A}, \mathcal{A}) = \omega(\mathcal{A}^{\ast}, \mathcal{A}^{\ast}) = 0 $, we have
\begin{eqnarray*}
\omega(b^{\ast}, \mathcal{A}^{\ast}) = \omega(b^{\ast}, \mathcal{A}) = \omega(x \succ y, \mathcal{A}) = \omega(y, \mathcal{A} \ast x) = 0.
\end{eqnarray*}
Therefore,
 $ b^{\ast} = 0 $ due to the non-dependence of $ \omega $. Hence, $ x \succ y = a \in \mathcal{A}$. Similarly, $ x \prec y  \in \mathcal{A} $.
 Thus, $ \mathcal{A} $ is a Hom-dendriform subalgebra of $ T(\mathcal{A}) $ with the product $ \prec, \succ $.
 By symmetry of $ \mathcal{A} $, $ \mathcal{A}^{\ast} $ is also a Hom-dendriform subalgebra.
 \qed
\end{proof}
\begin{theorem}
Let $(\mathcal{A}, \succ_{\mathcal{A}}, \prec_{\mathcal{A}}, \alpha)$ be an involutive Hom-dendriform algebra, and $(\mathcal{A}, \ast_{\mathcal{A}}, \alpha)$ be
the associated involutive Hom-associative algebra. Suppose  there is an involutive Hom-dendriform algebra  structure
$"\succ_{\mathcal{A}^{\ast}}, \prec_{\mathcal{A}^{\ast}}, \alpha^{\ast}"$ on its dual space $\mathcal{A}^{\ast},$ and
 $ (\mathcal{A}^{\ast}, \ast_{\mathcal{A}^{\ast}}, \alpha^{\ast})$ is the associated involutive Hom-associative algebra. Then, there exists a double construction
of involutive symplectic Hom-associative algebras associated to $(\mathcal{A}, \ast_{\mathcal{A}}, \alpha)$ and $(\mathcal{A}, \ast_{\mathcal{A}^{\ast}}, \alpha^{\ast})$
if and only if the octuple
$(\mathcal{A}, \mathcal{A}^{\ast}, R^{\ast}_{\prec_{\mathcal{A}}}, L^{\ast}_{\succ_{\mathcal{A}}}, \alpha^{\ast}, R^{\ast}_{\prec_{\mathcal{A}^{\ast}}},
L^{\ast}_{\succ_{\mathcal{A}^{\ast}}}, \alpha)$ is a matched pair of Hom-associative algebras.
\end{theorem}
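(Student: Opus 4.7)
The plan is to mirror the strategy used in the proof of Theorem \ref{Frobenius theorem}, replacing the symmetric Frobenius-type invariance of $\mathcal{B}$ with the cyclic (symplectic) invariance of $\omega$, and replacing the bimodule $(R^{\ast}_{\cdot}, L^{\ast}_{\cdot}, \alpha^{\ast})$ arising from the natural pairing with the bimodule $(R^{\ast}_{\prec_{\mathcal{A}}}, L^{\ast}_{\succ_{\mathcal{A}}}, \alpha^{\ast})$ of the associated Hom-associative algebra $(\mathcal{A},\ast_\mathcal{A},\alpha)$ coming from the Hom-dendriform splitting. By the proposition on dualized bimodules proved earlier in Section \ref{subsec:dblcnstrinvsymhomassal:dblcnstrinvsymhomassal}, the quadruples $(R^{\ast}_{\prec_{\mathcal{A}}}, L^{\ast}_{\succ_{\mathcal{A}}}, \alpha^{\ast}, \mathcal{A}^{\ast})$ and $(R^{\ast}_{\prec_{\mathcal{A}^{\ast}}}, L^{\ast}_{\succ_{\mathcal{A}^{\ast}}}, \alpha, \mathcal{A})$ are already bimodules of the respective Hom-associative algebras, so the matched-pair hypothesis is about the six compatibility conditions \eqref{match. pair1}--\eqref{match. pair6}.

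For the \emph{if} direction, I would assume the octuple is a matched pair of Hom-associative algebras and invoke Theorem \ref{theo. of matched pairs} to obtain a Hom-associative algebra structure $\ast$ on $\mathcal{A}\oplus\mathcal{A}^{\ast}$ given by \eqref{match. pair product} with $l_{\mathcal{A}}=R^{\ast}_{\prec_{\mathcal{A}}},\; r_{\mathcal{A}}=L^{\ast}_{\succ_{\mathcal{A}}},\; l_{\mathcal{B}}=R^{\ast}_{\prec_{\mathcal{A}^{\ast}}},\; r_{\mathcal{B}}=L^{\ast}_{\succ_{\mathcal{A}^{\ast}}}$. The key verification is that $\omega$ defined by \eqref{symplectic form} satisfies the cyclic identity
\begin{eqnarray*}
\omega(X\ast Y,\alpha_{\oplus}(Z))+\omega(Y\ast Z,\alpha_{\oplus}(X))+\omega(Z\ast X,\alpha_{\oplus}(Y))=0
\end{eqnarray*}
for $X=x+a^{\ast}$, $Y=y+b^{\ast}$, $Z=z+c^{\ast}$ and $\alpha_{\oplus}=\alpha\oplus\alpha^{\ast}$. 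Expanding with the definition of $\ast$ and the pairing yields six types of scalar terms, each of the form $\langle x\cdot_{?}y,\; c^{\ast}\rangle$ or $\langle a^{\ast}\circ_{?}b^{\ast},\; z\rangle$ (where $?$ stands for $\succ$ or $\prec$). Using the involutivity $\alpha^{2}=\mathrm{id}$, $\alpha^{\ast 2}=\mathrm{id}$ and the defining identities $\langle R^{\ast}_{\prec}(x)a^{\ast},y\rangle=\langle y\prec x,a^{\ast}\rangle$, $\langle L^{\ast}_{\succ}(x)a^{\ast},y\rangle=\langle x\succ y,a^{\ast}\rangle$, these terms pair off via the Hom-dendriform axioms of $\mathcal{A}$ and $\mathcal{A}^{\ast}$ to cancel cyclically. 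This is essentially the antisymmetric analogue of the invariance computation carried out in the proof of Theorem \ref{Frobenius theorem}, and is where the anti-symmetry of $\omega$ (as opposed to symmetry of $\mathcal{B}$) forces the appearance of $R^{\ast}_{\prec}, L^{\ast}_{\succ}$ rather than $R^{\ast}_{\cdot}, L^{\ast}_{\cdot}$.

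For the \emph{only if} direction, I would assume a double construction $(\mathcal{A}\oplus\mathcal{A}^{\ast},\alpha\oplus\alpha^{\ast},\omega)$ exists. Since $\mathcal{A}$ and $\mathcal{A}^{\ast}$ are Hom-associative subalgebras, the cross-products decompose as
\begin{eqnarray*}
x\ast a^{\ast}=l_{\mathcal{A}}(x)a^{\ast}+r_{\mathcal{A}^{\ast}}(a^{\ast})x,\qquad a^{\ast}\ast x=l_{\mathcal{A}^{\ast}}(a^{\ast})x+r_{\mathcal{A}}(x)a^{\ast},
\end{eqnarray*}
for some linear maps $l_{\mathcal{A}},r_{\mathcal{A}},l_{\mathcal{A}^{\ast}},r_{\mathcal{A}^{\ast}}$. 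The cyclic invariance of $\omega$, tested on triples $(x,y,a^{\ast})$ with two entries in $\mathcal{A}$ and one in $\mathcal{A}^{\ast}$ (and symmetrically), forces, via the Hom-dendriform decomposition $\ast=\succ+\prec$, the identifications $l_{\mathcal{A}}=R^{\ast}_{\prec_{\mathcal{A}}}$, $r_{\mathcal{A}}=L^{\ast}_{\succ_{\mathcal{A}}}$, $l_{\mathcal{A}^{\ast}}=R^{\ast}_{\prec_{\mathcal{A}^{\ast}}}$, $r_{\mathcal{A}^{\ast}}=L^{\ast}_{\succ_{\mathcal{A}^{\ast}}}$. Once these identifications are made, the Hom-associativity of $\ast$ on $\mathcal{A}\oplus\mathcal{A}^{\ast}$ translates exactly into the matched-pair conditions \eqref{match. pair1}--\eqref{match. pair6} through Theorem \ref{theo. of matched pairs}.

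The main obstacle will be the cyclic invariance computation in the \emph{if} direction: unlike the symmetric Frobenius case, where invariance is a two-term identity and terms pair directly, here the three-term cyclic identity requires re-grouping the six scalar contributions using simultaneously the Hom-dendriform axioms on both $\mathcal{A}$ and $\mathcal{A}^{\ast}$ and the compatibility conditions \eqref{match. pair1}--\eqref{match. pair6}. The bookkeeping is delicate because the anti-symmetry forces mixing of $\succ$- and $\prec$-type terms that remain separated in the symmetric case. Conversely, in the \emph{only if} direction, one must carefully exploit the non-degeneracy of $\omega$ and the antisymmetric pairing to isolate each of the four actions, which is cleaner but still requires testing invariance on all admissible triples to exclude spurious contributions.
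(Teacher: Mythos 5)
Your proposal follows essentially the same route as the paper: the paper likewise reduces both directions to Theorem \ref{theo. of matched pairs} with $l_{\mathcal{A}}=R^{\ast}_{\prec_{\mathcal{A}}}$, $r_{\mathcal{A}}=L^{\ast}_{\succ_{\mathcal{A}}}$, $l_{\mathcal{A}^{\ast}}=R^{\ast}_{\prec_{\mathcal{A}^{\ast}}}$, $r_{\mathcal{A}^{\ast}}=L^{\ast}_{\succ_{\mathcal{A}^{\ast}}}$, and its only substantive computation is exactly the three-term cyclic cancellation of the six pairing contributions that you identify as the key step, with the converse obtained by reading the matched-pair identities off the Hom-associativity of $\ast_{\mathcal{A}\oplus\mathcal{A}^{\ast}}$. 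No gap to report.
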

\begin{proof}
The conclusion can be obtained by a similar proof as in Theorem \ref{Frobenius theorem}.
Then, if  $(\mathcal{A}, \mathcal{A}^{\ast}, R^{\ast}_{\prec_{\mathcal{A}}}, L^{\ast}_{\succ_{\mathcal{A}}}, \alpha^{\ast},
R^{\ast}_{\prec_{\mathcal{A}^{\ast}}},L^{\ast}_{\succ_{\mathcal{A}^{\ast}}}, \alpha)$ is a matched pair of the involutive Hom-associative algebras $(\mathcal{A}, \ast_{\mathcal{A}}, \alpha)$ and $(\mathcal{A}, \ast_{\mathcal{A}^{\ast}}, \alpha^{\ast})$,
it is straightforward to show that the bilinear form \eqref{symplectic form} is an  ($\alpha_{1}\oplus\alpha^{\ast}_{1}$)-invariant on the Hom-associative algebra
$ \mathcal{A} \bowtie^{R^{\ast}_{\prec_{\mathcal{A}}}, L^{\ast}_{\succ_{\mathcal{A}}}, \alpha^{\ast}}_{R^{\ast}_{\prec_{\mathcal{A}^{\ast}}},
L^{\ast}_{\succ_{\mathcal{A}^{\ast}}}, \alpha} \mathcal{A}^{\ast} $ given by
\begin{eqnarray*}
(x + a^{\ast})\ast_{\mathcal{A} \oplus \mathcal{A}^{\ast}} (y + b^{\ast}) &=&(x \ast_{\mathcal{A}} y + R^{\ast}_{\prec_{\mathcal{A}^{\ast}}}(a^{\ast})y
+ L^{\ast}_{\succ_{\mathcal{A}^{\ast}}}(b^{\ast})x)\cr
&&+ (a^{\ast} \ast_{\mathcal{A}^{\ast}} b^{\ast} + R^{\ast}_{\prec_{\mathcal{A}}}(x)b^{\ast} + L^{\ast}_{\succ_{\mathcal{A}}}(y)a^{\ast}).
\end{eqnarray*}
In fact, we have
\begin{eqnarray*}
&&\omega[(\alpha(x) + \alpha^{\ast}(a^{\ast}))\ast_{\mathcal{A} \oplus \mathcal{A}^{\ast}} (\alpha(y) + \alpha^{\ast}(b^{\ast})), \alpha(z) + \alpha^{\ast}(c^{\ast})]\cr
&& \quad + \omega[(\alpha(y) + \alpha^{\ast}(b^{\ast}))\ast_{\mathcal{A} \oplus \mathcal{A}^{\ast}} (\alpha(z) + \alpha^{\ast}(c^{\ast})), \alpha(x) + \alpha^{\ast}(a^{\ast})] \cr
&& \quad +\omega[(\alpha(z) + \alpha^{\ast}(c^{\ast}))\ast_{\mathcal{A} \oplus \mathcal{A}^{\ast}} (\alpha(x) + \alpha^{\ast}(a^{\ast})), \alpha(y) + \alpha^{\ast}(b^{\ast})] \cr
&&= -\langle \alpha(x) \ast_{\mathcal{A}} \alpha(y) + R^{\ast}_{\prec_{\mathcal{A}^{\ast}}}(\alpha^{\ast}(a^{\ast}))\alpha(y) +
L^{\ast}_{\succ_{\mathcal{A}^{\ast}}}(\alpha^{\ast}(b^{\ast}))\alpha(x), \alpha^{\ast}(c^{\ast})\rangle \cr
&& \quad + \langle \alpha^{\ast}(a^{\ast})
\ast_{\mathcal{A}^{\ast}}\alpha^{\ast}(b^{\ast}) + R^{\ast}_{\prec_{\mathcal{A}}}(\alpha(x))\alpha^{\ast}(b^{\ast})
 + L^{\ast}_{\succ_{\mathcal{A}}}(\alpha(y))\alpha^{\ast}(a^{\ast}), \alpha(z)\rangle \cr
&& \quad-\langle \alpha(y)\ast_{\mathcal{A}} \alpha(z) + R^{\ast}_{\prec_{\mathcal{A}^{\ast}}}(\alpha^{\ast}(b^{\ast}))\alpha(z) +
L^{\ast}_{\succ_{\mathcal{A}^{\ast}}}(\alpha^{\ast}(c^{\ast}))\alpha(y), \alpha^{\ast}(a^{\ast})\rangle \cr
&& \quad + \langle \alpha^{\ast}(b^{\ast})
\ast_{\mathcal{A}^{\ast}}\alpha^{\ast}(c^{\ast}) + R^{\ast}_{\prec_{\mathcal{A}}}(\alpha^{\ast}(y))\alpha^{\ast}(c^{\ast})
+ L^{\ast}_{\succ_{\mathcal{A}}}(\alpha(z))\alpha^{\ast}(b^{\ast}), \alpha(x)\rangle \cr
&& \quad -\langle \alpha(z)\ast_{\mathcal{A}} \alpha(x) + R^{\ast}_{\prec_{\mathcal{A}^{\ast}}}(\alpha^{\ast}(c^{\ast}))\alpha(x) +
L^{\ast}_{\succ_{\mathcal{A}^{\ast}}}(\alpha^{\ast}(a^{\ast}))\alpha(z), \alpha^{\ast}(b^{\ast})\rangle \cr
&& \quad + \langle \alpha^{\ast}(c^{\ast})\ast_{\mathcal{A}^{\ast}}\alpha(a^{\ast})+ R^{\ast}_{\prec_{\mathcal{A}}}(\alpha(z))\alpha^{\ast}(a^{\ast}) +
L^{\ast}_{\prec_{\mathcal{A}}}(\alpha(x))\alpha^{\ast}(c^{\ast}),  \alpha(y)\rangle \cr
&& = -\langle \alpha(x)\prec_{\mathcal{A}}\alpha(y), \alpha^{\ast}(c^{\ast})\rangle  -\langle \alpha(x)\succ_{\mathcal{A}} \alpha(y),
 \alpha^{\ast}(c^{\ast})\rangle \cr
&&\quad -\langle \alpha^{\ast}(c^{\ast})\prec_{\mathcal{A}^{\ast}} \alpha^{\ast}(a^{\ast}), \alpha(y)\rangle\cr
&& \quad - \langle \alpha^{\ast}(b^{\ast})\succ_{\mathcal{A}^{\ast}}\alpha^{\ast}(c^{\ast}), \alpha(x)\rangle + \langle \alpha^{\ast}(a^{\ast})\prec_{\mathcal{A}^{\ast}}\alpha^{\ast}(b^{\ast}), \alpha(z)\rangle \cr
&& \quad + \langle \alpha^{\ast}(a^{\ast})\succ_{\mathcal{A}^{\ast}} \alpha^{\ast}(b^{\ast}), \alpha(z)\rangle + \langle \alpha(z)\prec_{\mathcal{A}} \alpha(x),
\alpha^{\ast}(b^{\ast})\rangle\cr
&& \quad + \langle \alpha(y)\succ_{\mathcal{A}}\alpha(z), \alpha^{\ast}(a^{\ast})\rangle\cr
&& \quad - \langle \alpha(y)\succ_{\mathcal{A}}\alpha(z), \alpha^{\ast}(a^{\ast})\rangle - \langle \alpha(y)\prec_{\mathcal{A}}\alpha(z), \alpha^{\ast}(a^{\ast})\rangle\cr
&&  \quad - \langle \alpha^{\ast}(a^{\ast})\prec_{\mathcal{A}^{\ast}}\alpha^{\ast}(b^{\ast}), \alpha(z)\rangle - \langle \alpha^{\ast}(a^{\ast})
\succ_{\mathcal{A}^{\ast}}\alpha^{\ast}(c^{\ast}), \alpha(y)\rangle\cr
&& \quad + \langle \alpha^{\ast}(b^{\ast}) \prec_{\mathcal{A}^{\ast}}\alpha^{\ast}(c^{\ast}), \alpha(x)\rangle\cr
&& \quad + \langle \alpha^{\ast}(b^{\ast}) \succ_{\mathcal{A}^{\ast}}\alpha^{\ast}(c^{\ast}), \alpha(x)\rangle + \langle \alpha(x)\prec_{\mathcal{A}} \alpha(y), \alpha(c^{\ast})\rangle \cr
&& \quad + \langle \alpha(z)\succ_{\mathcal{A}} \alpha(x), \alpha^{\ast}(b^{\ast})\rangle \cr
&&  \quad - \langle \alpha(z)\prec_{\mathcal{A}}\alpha(x), \alpha^{\ast}(b^{\ast})\rangle - \langle \alpha(z)\succ_{\mathcal{A}}\alpha(x), \alpha^{\ast}(b^{\ast})\rangle \cr
&& \quad -\langle \alpha^{\ast}(b^{\ast})\prec_{\mathcal{A}^{\ast}}\alpha^{\ast}(c^{\ast}),
  \alpha(x)\rangle -  \langle\alpha^{\ast}(a^{\ast})\succ_{\mathcal{A}^{\ast}}\alpha^{\ast}(b^{\ast}), \alpha(z)\rangle\cr
&& \quad +  \langle \alpha^{\ast}(c^{\ast})\prec_{\mathcal{A}^{\ast}} \alpha^{\ast}(a^{\ast}), \alpha(y)\rangle  \cr
&& \quad +  \langle \alpha^{\ast}(a^{\ast})\succ_{\mathcal{A}^{\ast}}\alpha^{\ast}(c^{\ast}), \alpha(y) \rangle  + \langle \alpha(y) \prec_{\mathcal{A}} \alpha(z), \alpha^{\ast}(a^{\ast} \rangle \cr
&& \quad + \langle \alpha(x) \succ_{\mathcal{A}} \alpha(y), \alpha^{\ast}(c^{\ast})\rangle = 0.
\end{eqnarray*}
Conversely, if there exists a double construction of involutive symplectic Hom-associative algebras associated to $(\mathcal{A}, \ast_{\mathcal{A}}, \alpha)$
and $(\mathcal{A}, \ast_{\mathcal{A}^{\ast}}, \alpha^{\ast})$, then, the octuple
$(\mathcal{A}, \mathcal{A}^{\ast}, R^{\ast}_{\prec_{\mathcal{A}}},
L^{\ast}_{\succ_{\mathcal{A}}}, \alpha^{\ast}, R^{\ast}_{\prec_{\mathcal{A}^{\ast}}}, L^{\ast}_{\succ_{\mathcal{A}^{\ast}}}, \alpha)$
is a matched pair of the involutive
 Hom-associative algebras  given by the following equations:
\begin{eqnarray} \label{eq1.28}
R^{\ast}_{\prec_{\mathcal{A}}}(\alpha(x))(a^{\ast} \ast_{\mathcal{A}^{\ast}} b^{\ast})
= R^{\ast}_{\prec_{\mathcal{A}}}(L_{\prec_{\mathcal{A}^{\ast}}}(a^{\ast})x)\alpha^{\ast}(b^{\ast}) +
(R^{\ast}_{\prec_{\mathcal{A}}}(x)a^{\ast}) \ast_{\mathcal{A}^{\ast}}\alpha^{\ast}(b^{\ast}),\nonumber
\\
\label{eq1.29}
L^{\ast}_{\succ_{\mathcal{A}}}(\alpha(x))(a^{\ast} \ast_{\mathcal{A}^{\ast}} b^{\ast}) =
L^{\ast}_{\succ_{\mathcal{A}}}(R_{\prec_{\mathcal{A}}}(b^{\ast})x)\alpha^{\ast}(a^{\ast}) +
\alpha^{\ast}(a^{\ast})\ast_{\mathcal{A}^{\ast}} (L^{\ast}_{\succ_{\mathcal{A}}}(x)b^{\ast}), \nonumber
\\
\label{eq1.30}
 R^{\ast}_{\prec_{\mathcal{A}^{\ast}}}(\alpha^{\ast}(a^{\ast}))(x \ast_{\mathcal{A}} y) =
R^{\ast}_{\prec_{\mathcal{A}^{\ast}}}(L_{\succ_{\mathcal{A}}}(x)a^{\ast})\alpha(y) +
 (R^{\ast}_{\prec_{\mathcal{A}^{\ast}}}(a^{\ast})x) \ast_{\mathcal{A}}\alpha(y), \nonumber
\\
\label{eq1.31}
 L^{\ast}_{\succ_{\mathcal{A}^{\ast}}}(\alpha^{\ast}(a^{\ast}))(x \ast_{\mathcal{A}} y) =
L^{\ast}_{\succ_{\mathcal{A}^{\ast}}}(R_{\prec_{\mathcal{A}}}(y)a^{\ast})\alpha(x) +
 \alpha(x)\ast_{\mathcal{A}} (L^{\ast}_{\succ_{\mathcal{A}^{\ast}}}(a^{\ast})y),  \nonumber
\\
\label{eq1.32}
\begin{array}{ll}
R^{\ast}_{\prec_{\mathcal{A}}}(R^{\ast}_{\prec_{\mathcal{A}^{\ast}}}(a^{\ast})x)\alpha^{\ast}(b^{\ast}) & +
(L^{\ast}_{\prec_{\mathcal{A}}}(x)a^{\ast})\ast_{\mathcal{A}^{\ast}}\alpha^{\ast}(b^{\ast}) - \cr
& L^{\ast}_{\succ_{A}} (L^{\ast}_{\succ_{\mathcal{A}^{\ast}}}(b^{\ast})x)\alpha^{\ast}(a^{\ast}) - \alpha^{\ast}(a^{\ast}) \ast_{\mathcal{A}^{\ast}}(R^{\ast}_{\prec_{\mathcal{A}}}(x)b^{\ast}) = 0, \nonumber
\end{array}
\\
\begin{array}{ll} \label{eq1.33}
 R^{\ast}_{\prec_{\mathcal{A}}}(R^{\ast}_{\prec_{\mathcal{A}^{\ast}}}(x)a^{\ast})\alpha(y) & +
(L^{\ast}_{\succ_{\mathcal{A}^{\ast}}}(a^{\ast})x)\ast_{A}\alpha(y) - \cr
& L^{\ast}_{\succ_{A^{\ast}}}  (L^{\ast}_{\succ_{A}}(y)a^{\ast})\alpha(x) - \alpha(x)\ast_{A}(R^{\ast}_{\prec_{A^{\ast}}}(a^{\ast})y) = 0, \nonumber
\end{array}
\end{eqnarray}
since the operation $ \ast_{A \oplus A^{\ast}} $ is Hom-associative.
\qed
\end{proof}
\begin{corollary}
Let $ (\mathcal{A}, \succ, \prec, \alpha) $ be an involutive Hom-dendriform algebra, and the triple                            $(R^{\ast}_{\prec}, L^{\ast}_{\succ}, \alpha^{\ast})$ be
the bimodule of the associated involutive Hom-associative algebra $(\mathcal{A}, \ast, \alpha)$. Then,
 $ (T(\mathcal{A}) = \mathcal{A} \times _{R^{\ast}_{\prec}, L^{\ast}_{\succ}, \alpha, \alpha^{\ast}} \mathcal{A}^{\ast}, \omega)$ is a double construction of the involutive symplectic Hom-associative algebras.
\end{corollary}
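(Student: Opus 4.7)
The plan is to reduce this corollary to the immediately preceding theorem by choosing the trivial involutive Hom-dendriform structure on $\mathcal{A}^{\ast}$, namely $\succ_{\mathcal{A}^{\ast}} = \prec_{\mathcal{A}^{\ast}} = 0$, whose associated involutive Hom-associative product is $\ast_{\mathcal{A}^{\ast}} = 0$, and whose dualized actions $R^{\ast}_{\prec_{\mathcal{A}^{\ast}}}$ and $L^{\ast}_{\succ_{\mathcal{A}^{\ast}}}$ therefore vanish identically. Under this substitution, the Hom-associative product on $\mathcal{A} \oplus \mathcal{A}^{\ast}$ produced by the preceding theorem collapses to $(x + a^{\ast}) \ast (y + b^{\ast}) = x \ast_{\mathcal{A}} y + R^{\ast}_{\prec}(x) b^{\ast} + L^{\ast}_{\succ}(y) a^{\ast}$, which agrees term by term with the product of the semidirect product $\mathcal{A} \times_{R^{\ast}_{\prec}, L^{\ast}_{\succ}, \alpha, \alpha^{\ast}} \mathcal{A}^{\ast}$ featured in the statement.

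First I would confirm that $(R^{\ast}_{\prec}, L^{\ast}_{\succ}, \alpha^{\ast}, \mathcal{A}^{\ast})$ is indeed a bimodule of the associated involutive Hom-associative algebra $(\mathcal{A}, \ast, \alpha)$: this is immediate from the earlier proposition stating that $(L_{\succ}, R_{\prec}, \alpha, \mathcal{A})$ is a bimodule of $(\mathcal{A}, \ast, \alpha)$, combined with the dualization lemma sending $(l, r, \beta)$ to $(r^{\ast}, l^{\ast}, \beta^{\ast})$. This simultaneously ensures that the semidirect product is a legitimate involutive Hom-associative algebra and that both $\mathcal{A}$ and $\mathcal{A}^{\ast}$ (the latter with zero product) sit inside it as involutive Hom-associative subalgebras, as required for a double construction.

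Next I would verify that $(\mathcal{A}, \mathcal{A}^{\ast}, R^{\ast}_{\prec}, L^{\ast}_{\succ}, \alpha^{\ast}, 0, 0, \alpha)$ is a matched pair of involutive Hom-associative algebras. Substituting $l_{\mathcal{A}} = R^{\ast}_{\prec}$, $r_{\mathcal{A}} = L^{\ast}_{\succ}$, $l_{\mathcal{B}} = r_{\mathcal{B}} = 0$ and $\circ = 0$ into conditions \eqref{match. pair1}--\eqref{match. pair6}, every summand of \eqref{match. pair1}, \eqref{match. pair2}, \eqref{match. pair5}, \eqref{match. pair6} vanishes, while \eqref{match. pair3} and \eqref{match. pair4} collapse to the tautology $0 = 0$. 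Thus all matched pair axioms are satisfied automatically.

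Having checked the hypotheses, the preceding theorem then delivers the double construction of involutive symplectic Hom-associative algebras associated to $(\mathcal{A}, \ast, \alpha)$ and $(\mathcal{A}^{\ast}, 0, \alpha^{\ast})$: the bilinear form $\omega$ of \eqref{symplectic form} is automatically non-degenerate, skew-symmetric, and $(\alpha \oplus \alpha^{\ast})$-invariant in the Connes sense. I do not expect any serious obstacle here; the entire content of the corollary is essentially the observation that trivializing the Hom-dendriform structure on the dual kills every nontrivial instance of the matched pair identities, so the corollary is a clean specialization of the previous theorem, with the only explicit verification being the term-by-term match of the two product formulas.
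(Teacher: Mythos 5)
The paper states this corollary without any proof, so there is nothing to compare against directly; your argument is correct and is the natural derivation. Specializing the preceding theorem to the zero Hom-dendriform (hence zero Hom-associative) structure on $\mathcal{A}^{\ast}$ does trivialize all six matched-pair identities, the resulting matched-pair product on $\mathcal{A}\oplus\mathcal{A}^{\ast}$ collapses exactly to the semidirect-product algebra $\mathcal{A}\times_{R^{\ast}_{\prec},\,L^{\ast}_{\succ},\,\alpha,\,\alpha^{\ast}}\mathcal{A}^{\ast}$, and the bimodule fact you invoke is already recorded in the paper (item 3) of the proposition on dual bimodules of Hom-dendriform algebras), so the ``if'' direction of the preceding theorem --- whose proof contains the cyclic invariance computation for $\omega$ --- delivers the claim.
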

\begin{theorem}
Let $ (\mathcal{A}, \succ_{\mathcal{A}}, \prec_{\mathcal{A}}, \alpha)$ be an involutive  Hom-dendriform algebra,  and
$ (\mathcal{A}, \ast_{\mathcal{A}}, \alpha) $  be the associated involutive Hom-associative algebra. Suppose that there is an involutive Hom-dendriform algebra structure
 $\succ_{\mathcal{A}^{\ast}}, \prec_{\mathcal{A}^{\ast}}, \alpha^{\ast}$ on its dual space $ \mathcal{A}^{\ast},$ and
$ (\mathcal{A}^{\ast}, \ast_{\mathcal{A}^{\ast}}, \alpha^{\ast}) $ is its associated involutive Hom-associative algebra. Then,
 $(\mathcal{A}, \mathcal{A}^{\ast}, R^{\ast}_{\prec_{\mathcal{A}}},L^{\ast}_{\succ_{\mathcal{A}}}, \alpha^{\ast},
R^{\ast}_{\prec{\mathcal{A}^{\ast}}}, L^{\ast}_{\succ_{\mathcal{A}^{\ast}}}, \alpha)$
is a matched pair of involutive Hom-associative algebras if and only if
\begin{multline*}
(\mathcal{A}, \mathcal{A}^{\ast}, R^{\ast}_{\succ_{\mathcal{A}}} + R^{\ast}_{\prec_{\mathcal{A}}}, -L^{\ast}_{\prec_{\mathcal{A}}}, -R^{\ast}_{\succ_{\mathcal{A}}}, L^{\ast}_{\succ_{\mathcal{A}}} + L^{\ast}_{\prec_{\mathcal{A}}}, \alpha^{\ast}, R^{\ast}_{\succ_{\mathcal{A}^{\ast}}}
+ R^{\ast}_{\prec_{\mathcal{A}^{\ast}}},
 \cr
 - L^{\ast}_{\prec_{\mathcal{A}^{\ast}}}, -R^{\ast}_{\succ_{\mathcal{A}^{\ast}}}, L^{\ast}_{\succ_{\mathcal{A}^{\ast}}} + L^{\ast}_{\prec_{\mathcal{A}^{\ast}}}, \alpha)
\end{multline*}
is a matched pair of involutive  Hom-dendriform algebras.
\end{theorem}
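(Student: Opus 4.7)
The plan is to establish the equivalence in two directions, exploiting the identity $\ast=\succ+\prec$ and the transpose relations between the dendriform bimodule and the associative bimodule on $\mathcal{A}\oplus\mathcal{A}^{\ast}$.

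For the direction ``dendriform matched pair $\Rightarrow$ associative matched pair'', I would invoke Corollary~\ref{match dendriform-associative algebras} directly: summing the left actions of the displayed dendriform bimodule yields
\begin{eqnarray*}
l_{\succ_{\mathcal{A}}}+l_{\prec_{\mathcal{A}}}
&=&\bigl(R^{\ast}_{\succ_{\mathcal{A}}}+R^{\ast}_{\prec_{\mathcal{A}}}\bigr)+\bigl(-R^{\ast}_{\succ_{\mathcal{A}}}\bigr)=R^{\ast}_{\prec_{\mathcal{A}}},
\end{eqnarray*}
and similarly $r_{\succ_{\mathcal{A}}}+r_{\prec_{\mathcal{A}}}=L^{\ast}_{\succ_{\mathcal{A}}}$, with the analogous identities on $\mathcal{A}^{\ast}$. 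Thus the matched pair of associated Hom-associative algebras produced by Corollary~\ref{match dendriform-associative algebras} is precisely the one appearing on the ``if'' side of the theorem.

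For the converse, the strategy is to pair each of the eighteen dendriform matched pair conditions \eqref{eq35}--\eqref{eq52} with arbitrary test vectors in $\mathcal{A}$ or $\mathcal{A}^{\ast}$, and then use the defining dualities
\begin{eqnarray*}
\langle R^{\ast}_{\prec_{\mathcal{A}}}(x)a^{\ast},y\rangle=\langle y\prec_{\mathcal{A}} x,a^{\ast}\rangle,\quad
\langle L^{\ast}_{\succ_{\mathcal{A}}}(x)a^{\ast},y\rangle=\langle x\succ_{\mathcal{A}} y,a^{\ast}\rangle,
\end{eqnarray*}
together with their $\mathcal{A}^{\ast}$-analogues, to rewrite each dendriform identity as an equality of scalars. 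On the other hand, each of the six Hom-associative matched pair axioms \eqref{match. pair1}--\eqref{match. pair6}, specialized to $l_{\mathcal{A}}=R^{\ast}_{\prec_{\mathcal{A}}}$, $r_{\mathcal{A}}=L^{\ast}_{\succ_{\mathcal{A}}}$, and the symmetric choices on $\mathcal{A}^{\ast}$, splits under $\ast=\succ+\prec$ into a sum of several ``pure'' terms distinguished by whether they involve $\succ$ or $\prec$ on each of the two algebras. Matching the $\succ\succ$, $\succ\prec$, $\prec\succ$, and $\prec\prec$ components of the associative identities against the scalar equalities produced from \eqref{eq35}--\eqref{eq52} yields exactly the required equivalences. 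The involutivity of $\alpha$ and $\alpha^{\ast}$ is used throughout to shift $\alpha^{\pm 1}$ across $L^{\ast}, R^{\ast}$ and preserve the matched pair compatibility conditions \eqref{Hombimodule:bl:eq1},\eqref{Hombimodule:br:eq2}.

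The main obstacle is bookkeeping: the decomposition of each of the six associative identities into four (or more) pieces distinguished by the $\succ/\prec$ splitting, and the correct identification of which pieces correspond to which of the eighteen dendriform identities, requires careful organization and the systematic use of the minus signs appearing in the bimodule $(R^{\ast}_{\succ}+R^{\ast}_{\prec},-L^{\ast}_{\prec},-R^{\ast}_{\succ},L^{\ast}_{\succ}+L^{\ast}_{\prec})$. A helpful organizing principle is that \eqref{eq35}--\eqref{eq43} arise from splitting \eqref{match. pair1}, \eqref{match. pair3}, \eqref{match. pair5} with respect to the $\mathcal{A}$-side dendriform structure, while \eqref{eq44}--\eqref{eq52} arise symmetrically from \eqref{match. pair2}, \eqref{match. pair4}, \eqref{match. pair6} with respect to the $\mathcal{A}^{\ast}$-side; hence the argument is essentially self-dual, and proving the forward direction in detail for, say, \eqref{match. pair1} $\Leftrightarrow$ \eqref{eq37}--\eqref{eq43} suffices, with the remaining equivalences following by the same method.
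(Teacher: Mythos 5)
Your first direction (dendriform matched pair implies associative matched pair) is correct and is exactly how the paper argues: Corollary \ref{match dendriform-associative algebras} applied to the displayed bimodules, with the sums collapsing to $R^{\ast}_{\prec_{\mathcal{A}}}$, $L^{\ast}_{\succ_{\mathcal{A}}}$ and their $\mathcal{A}^{\ast}$-counterparts.

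The converse is where your proposal has a genuine gap. Each of the six associative conditions \eqref{match. pair1}--\eqref{match. pair6}, once you substitute $l_{\mathcal{A}}=R^{\ast}_{\prec_{\mathcal{A}}}$, $r_{\mathcal{A}}=L^{\ast}_{\succ_{\mathcal{A}}}$ and expand $\ast=\succ+\prec$, is indeed the \emph{sum} of several of the dendriform conditions \eqref{eq35}--\eqref{eq52} (for instance \eqref{match. pair1} decomposes into the pieces \eqref{eq37}, \eqref{eq40}, \eqref{eq43}). But an identity of the form $A+B+C=0$ does not entail $A=B=C=0$, and ``matching the $\succ\succ$, $\succ\prec$, $\prec\succ$, $\prec\prec$ components'' is precisely the inference that needs justification --- the components are not independent coordinates of anything. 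This is not a bookkeeping problem; it is the entire content of the implication. The paper closes this gap by a detour your proposal does not take: from the associative matched pair it first builds the double construction of an involutive symplectic Hom-associative algebra $(\mathcal{A}\bowtie\mathcal{A}^{\ast},\omega)$ with $\omega$ as in \eqref{symplectic form}, then uses the invertible $\mathcal{O}$-operator attached to $\omega$ (Theorem \ref{dendriform-symplectic theorem} via Corollary \ref{dendriform-invertible operator}) to induce a compatible Hom-dendriform structure on the \emph{whole} double through \eqref{dendriform-symplectic equation}; it checks that $\mathcal{A}$ and $\mathcal{A}^{\ast}$ are dendriform subalgebras and computes the four cross-products explicitly, which is exactly where the operators $R^{\ast}_{\succ}+R^{\ast}_{\prec}$, $-L^{\ast}_{\prec}$, $-R^{\ast}_{\succ}$, $L^{\ast}_{\succ}+L^{\ast}_{\prec}$ and their minus signs come from. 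The dendriform axioms of the global structure then deliver all eighteen conditions simultaneously. To repair your argument you would either have to reproduce this symplectic/$\mathcal{O}$-operator step, or supply an independent reason why each pure component of the expanded associative identities vanishes separately; as written, neither is present.
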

\begin{proof}
The   necessary condition  follows from  Corollary \ref{match dendriform-associative algebras}. We need to prove the
 sufficient condition  only.
If $(\mathcal{A}, \mathcal{A}^{\ast}, R^{\ast}_{\prec_{\mathcal{A}}}, L^{\ast}_{\succ_{\mathcal{A}}}, \alpha^{\ast}, R^{\ast}_{\prec{\mathcal{A}^{\ast}}},
L^{\ast}_{\succ_{\mathcal{A}^{\ast}}}, \alpha)$ is a matched pair of involutive Hom-associative algebras, then $ ( \mathcal{A} \bowtie^{R^{\ast}_{\prec_{\mathcal{A}}},
L^{\ast}_{\succ_{\mathcal{A}}}, \alpha^{\ast}}_{R^{\ast}_{\prec_{\mathcal{A}^{\ast}}}, L^{\ast}_{\succ_{\mathcal{A}^{\ast}}}, \alpha} \mathcal{A}^{\ast} , \omega) $
is a double construction of  involutive symplectic Hom-associative algebras. Hence,  there exists a compatible involutive Hom-dendriform algebra structure on
$\mathcal{A} \bowtie^{R^{\ast}_{\prec_{\mathcal{A}}}, L^{\ast}_{\succ_{\mathcal{A}}}, \alpha^{\ast}}_{R^{\ast}_{\prec_{\mathcal{A}^{\ast}}},
L^{\ast}_{\succ_{\mathcal{A}^{\ast}}}, \alpha} \mathcal{A}^{\ast}$ given by \eqref{dendriform-symplectic equation}. By a simple and direct computation, we show that
$ \mathcal{A} $ and $ \mathcal{A}^{\ast} $ are its subalgebras, and the other products are given for any $ x \in \mathcal{A}, a^{\ast} \in \mathcal{A}^{\ast} $ by
\begin{eqnarray*}
x \succ a^{\ast} &=& ( R^{\ast}_{\succ_{\mathcal{A}}} + R^{\ast}_{\prec_{\mathcal{A}}})(x)a^{\ast} - L^{\ast}_{\prec_{\mathcal{A}^{\ast}}}x, \cr
x \prec a^{\ast} &=& - R^{\ast}_{\succ_{\mathcal{A}}} (x)a^{\ast} + (L^{\ast}_{\succ_{\mathcal{A}^{\ast}}} +
L^{\ast}_{\prec_{\mathcal{A}^{\ast}}})(a^{\ast})x, \cr
a^{\ast} \succ x &=& ( R^{\ast}_{\succ_{\mathcal{A}^{\ast}}} + R^{\ast}_{\prec_{\mathcal{A}^{\ast}}})(a^{\ast})x -
L^{\ast}_{\prec_{\mathcal{A}}}(x)a^{\ast}, \cr
a^{\ast} \prec x &=& -  R^{\ast}_{\succ_{\mathcal{A}^{\ast}}}(a^{\ast})x + (L^{\ast}_{\succ_{\mathcal{A}}} +
L^{\ast}_{\prec_{\mathcal{A}}})(x)a^{\ast}.
\end{eqnarray*}
Hence,
\begin{eqnarray*}
(\mathcal{A}, \mathcal{A}^{\ast}, R^{\ast}_{\succ_{\mathcal{A}}} + R^{\ast}_{\prec_{\mathcal{A}}}, -L^{\ast}_{\prec_{\mathcal{A}}}, -
R^{\ast}_{\succ_{\mathcal{A}}},  L^{\ast}_{\succ_{\mathcal{A}}} + L^{\ast}_{\prec_{\mathcal{A}}}, \alpha^{\ast}, R^{\ast}_{\succ_{\mathcal{A}^{\ast}}} + R^{\ast}_{\prec_{\mathcal{A}^{\ast}}},  \cr
 - L^{\ast}_{\prec_{\mathcal{A}^{\ast}}}, -R^{\ast}_{\succ_{\mathcal{A}^{\ast}}},L^{\ast}_{\succ_{\mathcal{A}^{\ast}}} + L^{\ast}_{\prec_{\mathcal{A}^{\ast}}}, \alpha)
\end{eqnarray*}
is a matched pair of involutive Hom-dendriform algebras.
\qed
\end{proof}

 \subsection{Hom-dendriform D-bialgebras}
 \label{subsec:dblcnstrinvsymhomassal:homdendDbialg}
\begin{theorem}
Let $ (\mathcal{A}, \succ_{\mathcal{A}}, \prec_{\mathcal{A}}, \alpha) $ be an involutive Hom-dendriform algebra whose products are given by two
 linear maps $ \beta^{\ast}_{\succ}, \beta^{\ast}_{\prec} : \mathcal{A} \otimes \mathcal{A} \rightarrow \mathcal{A} $.
  Further,  suppose that there is an involutive Hom-dendriform algebra structure $ \succ_{\mathcal{A}^{\ast}},
   \prec_{\mathcal{A}^{\ast}}, \alpha^{\ast} $ on its dual space $ \mathcal{A}^{\ast} $ given by two linear maps
$ \Delta^{\ast}_{\succ}, \Delta^{\ast}_{\prec} : \mathcal{A}^{\ast} \otimes \mathcal{A}^{\ast} \rightarrow
    \mathcal{A}^{\ast} $. Then,  $ (\mathcal{A}, \mathcal{A}^{\ast}, R^{\ast}_{\prec_{\mathcal{A}}},  L^{\ast}_{\succ_{\mathcal{A}}}, \alpha^{\ast},
      R^{\ast}_{\prec_{\mathcal{A}^{\ast}}},  L^{\ast}_{\succ_{\mathcal{A}^{\ast}}}, \alpha) $ is a matched pair of
      involutive Hom-associ\-ative algebras if and only if
\begin{eqnarray} \label{eq55}
\Delta_{\prec}\circ\alpha(x \ast_{\mathcal{A}} y) = (\alpha\otimes L_{\prec_{\mathcal{A}}}(x))\Delta_{\prec}(y) + (R_{\mathcal{A}}(y)\otimes\alpha)\Delta_{\prec}(y),
\\
\label{eq56}
\Delta_{\succ}\circ\alpha(x \ast_{\mathcal{A}} y) = (\alpha\otimes L_{\prec_{\mathcal{A}}}(x))\Delta_{\succ}(y) + (R_{\prec_{A}}(y)\otimes\alpha)\Delta_{\succ}(y),
\\
\label{eq57}
\beta_{\prec}\circ\alpha^{\ast}(a^{\ast} \ast_{\mathcal{A}^{\ast}} b^{\ast}) = (\alpha^{\ast}\otimes L_{\prec_{\mathcal{A}^{\ast}}}(a^{\ast}))\beta_{\prec}(b^{\ast}) + (R_{\mathcal{A}^{\ast}}(b^{\ast})\otimes\alpha^{\ast})\beta_{\prec}(a^{\ast})
\\
\label{eq58}
\beta_{\succ}\circ\alpha^{\ast}(a^{\ast}\ast_{\mathcal{A}^{\ast}} b^{\ast}) = (\alpha^{\ast}\otimes L_{\mathcal{A}^{\ast}}(a^{\ast}))\beta_{\succ}(b^{\ast}) + (R_{\prec_{\mathcal{A}^{\ast}}}(b^{\ast})\otimes\alpha^{\ast})\beta_{\succ}(a^{\ast}),
\\
\label{eq59}
\begin{array}{ll}
(L_{\mathcal{A}}(x)\otimes\alpha & - \alpha\otimes R_{\prec_{\mathcal{A}}}(x))\Delta_{\prec}(y)\cr
& + \sigma[(L_{\succ_{\mathcal{A}}}(y)\otimes (-\alpha)\otimes R_{\mathcal{A}}(y))\Delta_{\prec}(y)] = 0 ,
\end{array}
\\
\label{eq60}
\begin{array}{ll}
(L_{\mathcal{A}^{\ast}}(a^{\ast})\otimes\alpha^{\ast} & - \alpha^{\ast}\otimes R_{\prec_{\mathcal{A}^{\ast}}}(a^{\ast}))\beta_{\prec}(b^{\ast}) \cr
& + \sigma[(L_{\succ_{\mathcal{A}^{\ast}}}(b^{\ast})\otimes(-\alpha^{\ast})\otimes R_{\mathcal{A}^{\ast}}(b^{\ast}))\beta_{\succ}(a^{\ast})] = 0
\end{array}
\end{eqnarray}
hold for any $ x, y \in \mathcal{A} $ and
$ a^{\ast}, b^{\ast} \in \mathcal{A}^{\ast} $,
where
\begin{alignat*}{4}
L_{\mathcal{A}} &= L_{\succ_{\mathcal{A}}} + L_{\prec_{\mathcal{A}}},
\quad & R_{\mathcal{A}} &= R_{\succ_{\mathcal{A}}} + R_{\prec_{\mathcal{A}}}, \cr
L_{\mathcal{A}^{\ast}} &= L_{\succ_{\mathcal{A}^{\ast}}} + L_{\prec_{\mathcal{A}^{\ast}}}, & R_{\mathcal{A}^{\ast}} &= R_{\succ_{\mathcal{A}^{\ast}}} + R_{\prec_{\mathcal{A}^{\ast}}}.
\end{alignat*}
\end{theorem}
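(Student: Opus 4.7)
The plan is to mimic the strategy used in the proofs of Theorems \ref{hombialgebra theorem} and \ref{bihombialgebra theorem}. Namely, I will translate each of the six matched-pair conditions \eqref{match. pair1}--\eqref{match. pair6}, specialized to $l_{\mathcal{A}} = R^{\ast}_{\prec_{\mathcal{A}}}$, $r_{\mathcal{A}} = L^{\ast}_{\succ_{\mathcal{A}}}$, $l_{\mathcal{A}^{\ast}} = R^{\ast}_{\prec_{\mathcal{A}^{\ast}}}$, $r_{\mathcal{A}^{\ast}} = L^{\ast}_{\succ_{\mathcal{A}^{\ast}}}$, into dual identities for the co-operations $\Delta_{\succ}, \Delta_{\prec}$ (dualizing $\succ_{\mathcal{A}^{\ast}}, \prec_{\mathcal{A}^{\ast}}$) and $\beta_{\succ}, \beta_{\prec}$ (dualizing $\succ_{\mathcal{A}}, \prec_{\mathcal{A}}$), and then verify that these dual identities bundle exactly into equations \eqref{eq55}--\eqref{eq60}.

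First I would fix a basis $\{e_{1},\dots,e_{n}\}$ of $\mathcal{A}$ with dual basis $\{e^{\ast}_{1},\dots,e^{\ast}_{n}\}$ of $\mathcal{A}^{\ast}$, and write structure constants $e_{i}\prec_{\mathcal{A}} e_{j}=\sum_{k}p^{k}_{ij}e_{k}$, $e_{i}\succ_{\mathcal{A}} e_{j}=\sum_{k}q^{k}_{ij}e_{k}$, together with the corresponding $p^{\ast k}_{ij}, q^{\ast k}_{ij}$ for the dual dendriform structure; these give $\Delta_{\prec}(e_{k})=\sum p^{\ast k}_{ij}e_{i}\otimes e_{j}$ and $\Delta_{\succ}(e_{k})=\sum q^{\ast k}_{ij}e_{i}\otimes e_{j}$, and analogously for $\beta_{\prec},\beta_{\succ}$. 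The matrix entries of $R^{\ast}_{\prec_{\mathcal{A}}}, L^{\ast}_{\succ_{\mathcal{A}}}, R^{\ast}_{\prec_{\mathcal{A}^{\ast}}}, L^{\ast}_{\succ_{\mathcal{A}^{\ast}}}$ then read off directly from these structure constants, and the involutivity $\alpha^{2}=\id_{\mathcal{A}}$, $\alpha^{\ast 2}=\id_{\mathcal{A}^{\ast}}$ together with the duality identities $\alpha^{\ast}(R^{\ast}_{\prec_{\mathcal{A}}}(x)a^{\ast})=R^{\ast}_{\prec_{\mathcal{A}}}(\alpha(x))\alpha^{\ast}(a^{\ast})$ etc.\ allow me to move $\alpha$ and $\alpha^{\ast}$ through each expression freely.

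Next I would split the six matched-pair conditions into two groups. Conditions \eqref{match. pair1}--\eqref{match. pair4} are the ``infinitesimal-type'' conditions: pairing each against a test element and extracting coefficients shows, by exactly the same coefficient-chase as in Theorem \ref{hombialgebra theorem}, that \eqref{match. pair1} corresponds to \eqref{eq55}, \eqref{match. pair2} to \eqref{eq56}, \eqref{match. pair3} to \eqref{eq57} and \eqref{match. pair4} to \eqref{eq58} (after using the involutivity of $\alpha,\alpha^{\ast}$ and the fact that $L_{\mathcal{A}}=L_{\succ_{\mathcal{A}}}+L_{\prec_{\mathcal{A}}}$, $R_{\mathcal{A}}=R_{\succ_{\mathcal{A}}}+R_{\prec_{\mathcal{A}}}$, and similarly for $\mathcal{A}^{\ast}$). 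The pairing of \eqref{match. pair5} with $\langle\,\cdot\,,y\rangle$, expanded via the definitions of $R^{\ast}_{\prec_{\mathcal{A}}}, L^{\ast}_{\succ_{\mathcal{A}}},R^{\ast}_{\prec_{\mathcal{A}^{\ast}}},L^{\ast}_{\succ_{\mathcal{A}^{\ast}}}$, produces exactly \eqref{eq59}, and \eqref{match. pair6} yields \eqref{eq60} by a symmetric argument. The symmetry $\mathcal{A}\leftrightarrow\mathcal{A}^{\ast}$ makes one half of each computation automatic from the other.

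The principal obstacle is bookkeeping rather than conceptual: each of the four ``infinitesimal'' relations mixes one $L$ or $R$ from $\succ$ with one from $\prec$, so when the $\ast_{\mathcal{A}}$ or $\ast_{\mathcal{A}^{\ast}}$ on the left-hand side is split into $\succ + \prec$, the resulting eight terms have to be reassembled consistently with the $\alpha\otimes L_{\prec}$ and $R_{\prec}\otimes\alpha$ (respectively $\alpha\otimes L$, $R\otimes\alpha$) factors appearing on the right-hand sides of \eqref{eq55}--\eqref{eq58}. I will handle this by treating the $L_{\succ}$ and $R_{\prec}$ summands separately and then adding them, using the bimodule identities of Subsection \ref{subsec:dblcnstrinvsymhomassal:bimodmatchedphomdendal} to recognize that the ``cross-terms'' vanish. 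Once the dictionary between the operator-form and coalgebra-form identities is in place, the equivalence of the two sets of statements follows exactly as in Theorem \ref{hombialgebra theorem}, completing the proof.
\qed
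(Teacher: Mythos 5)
Your overall strategy is the paper's own: fix a basis and dual basis, expand everything in structure constants, and match coefficients to convert each of the six matched-pair conditions into one of the six coproduct identities, carrying out one case in detail and invoking the $\mathcal{A}\leftrightarrow\mathcal{A}^{\ast}$ symmetry for the rest. However, the dictionary you assert is systematically transposed, and this is a concrete error rather than a labelling quibble. Condition \eqref{match. pair1}, specialized to $l_{\mathcal{A}}=R^{\ast}_{\prec_{\mathcal{A}}}$, $r_{\mathcal{A}^{\ast}}=L^{\ast}_{\succ_{\mathcal{A}^{\ast}}}$, is an identity \emph{in} $\mathcal{A}^{\ast}$ in which $\ast_{\mathcal{A}^{\ast}}$ appears as a genuine multiplication and the product $\prec_{\mathcal{A}}$ appears only through the operator $R^{\ast}_{\prec_{\mathcal{A}}}$; dualizing therefore turns $\prec_{\mathcal{A}}$ into the coproduct $\beta_{\prec}$ on $\mathcal{A}^{\ast}$ while keeping the $\mathcal{A}^{\ast}$-multiplications, which is precisely the shape of \eqref{eq57}, not of \eqref{eq55}. (Equation \eqref{eq55} involves $\Delta_{\prec}$, the coproduct encoding $\prec_{\mathcal{A}^{\ast}}$, together with multiplications on $\mathcal{A}$, and is the dual form of \eqref{match. pair3}.) The correct correspondence, which the paper's coefficient computation establishes for the first case and asserts for the others, is \eqref{match. pair1}$\Leftrightarrow$\eqref{eq57}, \eqref{match. pair2}$\Leftrightarrow$\eqref{eq58}, \eqref{match. pair3}$\Leftrightarrow$\eqref{eq55}, \eqref{match. pair4}$\Leftrightarrow$\eqref{eq56}, \eqref{match. pair5}$\Leftrightarrow$\eqref{eq60}, \eqref{match. pair6}$\Leftrightarrow$\eqref{eq59} --- the exact opposite of the pairing you propose.

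Because the theorem only asserts the equivalence of the two \emph{conjunctions}, a permutation of the dictionary would not by itself invalidate the final conclusion, so the error is repairable: the coefficient chase you describe, if actually carried out on \eqref{match. pair1}, would produce the relation the paper labels \eqref{eq1.40} and identify it with the coefficient relation of \eqref{eq57}, at which point you would be forced to correct the pairing. But as written, each of the six individual equivalences you claim to prove is false, and the "symmetric argument" you invoke for \eqref{match. pair5} and \eqref{match. pair6} would likewise deliver \eqref{eq60} and \eqref{eq59} respectively, not the other way around. You should also carry out at least one coefficient computation explicitly (as the paper does for \eqref{match. pair1} versus \eqref{eq57}, using the involutivity relation $\sum_{q}f^{i}_{q}f^{q}_{j}=\delta^{i}_{j}$ to pass between the two index identities); as it stands your proposal defers the entire verification to an analogy with Theorem \ref{hombialgebra theorem} while attaching it to the wrong targets.
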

\begin{proof}
Let $ \lbrace e_{1},\dots, e_{n} \rbrace $ be a basis of $ \mathcal{A}, $ and $ \lbrace e^{\ast}_{1},\dots, e^{\ast}_{n} \rbrace $ be its dual basis. Set
\begin{alignat*}{6}
e_{i} \succ_{\mathcal{A}} e_{j} &= \sum^{n}_{k = 1} a^{k}_{ij} e_{k},
\quad \ e_{i} \prec_{\mathcal{A}} e_{j} &&= \sum^{n}_{k = 1} b^{k}_{ij} e_{k},
& \quad \alpha(e_{i}) &= \sum^{n}_{q=1}f^{i}_{q}e_{q}, \cr
e^{\ast}_{i} \succ_{\mathcal{A}^{\ast}} e^{\ast}_{j} &= \sum^{n}_{k = 1} c^{k}_{ij} e^{\ast}_{k},
\quad  e^{\ast}_{i} \prec_{\mathcal{A}^{\ast}} e^{\ast}_{j} &&= \sum^{n}_{k = 1} d^{k}_{ij} e^{\ast}_{k},
&\quad \alpha^{\ast}(e^{\ast}_{i}) &= \sum^{n}_{q=1}f^{\ast i}_{q}e^{\ast}_{q}.
\end{alignat*}
We have
$\langle \alpha^{\ast}(e^{\ast}_{i}), e_{j}\rangle  =  f^{\ast j}_{i}=\langle e^{\ast}, \alpha(e_{j})\rangle=f^{i}_{j} \quad \Rightarrow \quad f^{\ast j}_{i}= f^{i}_{j}$,
\begin{alignat*}{4}
\beta_{\succ}(e^{\ast}_{k}) &= \sum^{n}_{i, j= 1}a^{k}_{ij} e^{\ast}_{i}\otimes e^{\ast}_{j}, & \beta_{\prec}(e^{\ast}_{k})
&=\sum^{n}_{i, j= 1}b^{k}_{ij} e^{\ast}_{i}\otimes e^{\ast}_{j}, \cr
\Delta_{\succ}(e_{k}) &= \sum^{n}_{i, j= 1}c^{k}_{ij} e_{i}\otimes e_{j},
& \Delta_{\prec}(e_{k}) &=\sum^{n}_{i, j}d^{k}_{ij} e_{i}\otimes e_{j}, \cr
R^{\ast}_{\succ_{\mathcal{A}}}(e_{i})e^{\ast}_{j} &= \sum^{n}_{k = 1}a^{j}_{ki} e^{\ast}_{k},
& R^{\ast}_{\prec_{\mathcal{A}}}(e_{i})e^{\ast}_{j}
&= \sum^{n}_{k = 1}b^{j}_{ki} e^{\ast}_{k}, \cr
R^{\ast}_{\succ_{\mathcal{A}^{\ast}}}(e^{\ast}_{i})e_{j}
&= \sum^{n}_{k = 1}c^{j}_{ki} e_{k},
& R^{\ast}_{\prec_{A^{\ast}}}(e^{\ast}_{i})e_{j}
&= \sum^{n}_{k = 1}d^{j}_{ki} e_{k}, \cr
L^{\ast}_{\succ_{\mathcal{A}}}(e_{i})e^{\ast}_{j} &= \sum^{n}_{k = 1}a^{j}_{ik} e^{\ast}_{k},
& L^{\ast}_{\prec_{\mathcal{A}}}(e_{i})e^{\ast}_{j}
& = \sum^{n}_{k = 1}b^{j}_{ik} e^{\ast}_{k}, \cr
L^{\ast}_{\succ_{\mathcal{A}^{\ast}}}(e^{\ast}_{i})e_{j} &= \sum^{n}_{k = 1}c^{j}_{ik} e_{k},
& L^{\ast}_{\prec_{\mathcal{A}^{\ast}}}(e^{\ast}_{i})e_{j}
& = \sum^{n}_{k = 1}d^{j}_{ik} e_{k}.
\end{alignat*}
Therefore,  the coefficient of $ e^{\ast}_{l} $ in
\begin{eqnarray*}
R^{\ast}_{\prec_{\mathcal{A}}}(\alpha(e_{i}))(e^{\ast}_{j} \ast_{\mathcal{A}^{\ast}} e^{\ast}_{k}) =
R^{\ast}_{\prec_{\mathcal{A}}}(L_{\prec_{\mathcal{A}^{\ast}}}(e^{\ast}_{j})e_{i})\alpha^{\ast}(e^{\ast}_{k}) +
(R^{\ast}_{\prec_{\mathcal{A}}}(e_{i})e^{\ast}_{j}) \ast_{\mathcal{A}^{\ast}}\alpha^{\ast}(e^{\ast}_{k})
\end{eqnarray*}
gives the following relation for any $ i, j, l, k, q:$
\begin{eqnarray} \label{eq1.40}
\sum^{n}_{m=1}f^{i}_{q}b^{m}_{lq}(c^{m}_{jk} + d^{m}_{jk}) = \sum^{n}_{m=1}f^{k}_{q}[c^{i}_{jm}b^{q}_{lm} + b^{j}_{mi}(c^{l}_{mq} + d^{l}_{mq})].
\end{eqnarray}
In fact, we have
\begin{eqnarray*}
&&R^{\ast}_{\prec_{\mathcal{A}}}(\alpha(e_{i}))(e^{\ast}_{j} \ast_{\mathcal{A}^{\ast}} e^{\ast}_{k}) = R^{\ast}_{\prec_{A}}(\alpha(e_{i})) (e^{\ast}_{j} \succ_{\mathcal{A}^{\ast}} e^{\ast}_{k} + e^{\ast}_{j} \prec_{\mathcal{A}^{\ast}} e^{\ast}_{k}) \cr
&& \quad = R^{\ast}_{\prec_{A}}(\sum^{n}_{q=1}f^{i}_{q}e_{q}) \sum^{n}_{m = 1}(c^{m}_{jk}  + d^{m}_{jk})e^{\ast}_{m}= \sum^{n}_{m,q= 1}f^{i}_{q}(c^{m}_{jk}  + d^{m}_{jk})R^{\ast}_{\prec_{\mathcal{A}}}(e_{q})e^{\ast}_{m}\cr
&& \quad = \sum^{n}_{m, q= 1}f^{i}_{q}(c^{m}_{jk}  + d^{m}_{jk})(\sum^{n}_{l = 1} b^{m}_{lq})e^{\ast}_{l} =\sum^{n}_{l= 1}[\sum^{n}_{m, q= 1}f^{i}_{q}b^{m}_{lq}(c^{m}_{jk}  + d^{m}_{jk})]e^{\ast}_{l}, \cr
&& R^{\ast}_{\prec_{A}}(L_{\prec_{\mathcal{A}^{\ast}}}(e^{\ast}_{j})e_{i})\alpha^{\ast}(e^{\ast}_{k}) =
R^{\ast}_{\prec_{\mathcal{A}}}(\sum^{n}_{m = 1}c^{i}_{jm}e_{m})(\sum^{n}_{q=1}f^{k}_{q} e^{\ast}_{q})\cr
&& \quad = \sum^{n}_{m, q= 1}c^{i}_{jm}f^{k}_{q}R^{\ast}_{\prec_{\mathcal{A}}}(e_{m})e^{\ast}_{q}
=\sum^{n}_{m, q= 1}c^{i}_{jm}f^{k}_{q}(\sum^{n}_{l= 1}b^{q}_{lm} e^{\ast}_{l})
= \sum^{n}_{l = 1}(\sum^{n}_{m, q= 1}c^{i}_{jm}f^{k}_{q}b^{q}_{lm})e^{\ast}_{l}, \cr
&&(R^{\ast}_{\prec_{\mathcal{A}}}(e_{i})e^{\ast}_{j}) \ast_{\mathcal{A}^{\ast}}\alpha(e^{\ast}_{k})= (\sum^{n}_{m = 1}b^{j}_{mi} e^{\ast}_{m}) \ast_{\mathcal{A}^{\ast}}(\sum^{n}_{q=1}f^{k}_{q}e^{\ast}_{q})  \cr
&& \quad = \sum^{n}_{m, q= 1}f^{k}_{q}b^{j}_{mi}(e^{\ast}_{m}\succ_{A^{\ast}}e^{\ast}_{q} +   e^{\ast}_{m} \prec_{A^{\ast}} e^{\ast}_{q})
=\sum^{n}_{m, q= 1}f^{k}_{q}b^{j}_{mi}[\sum^{n}_{l = 1}(c^{l}_{mq} + d^{l}_{mq})]e^{\ast}_{l}\cr
&& \quad =  \sum^{n}_{l = 1} [\sum^{n}_{m, q= 1}f^{k}_{q}b^{j}_{mi}(c^{l}_{mq} + d^{l}_{mq})]e^{\ast}_{l},
\end{eqnarray*}
giving the equation \eqref{eq1.40}.
Also, the coefficient of $ e^{\ast}_{l} \otimes  e^{\ast}_{i}  $ in
\begin{eqnarray*}
&& \beta_{\prec}\circ\alpha^{\ast}(e^{\ast}_{j} \ast_{\mathcal{A}^{\ast}} e^{\ast}_{k}) = (\alpha^{\ast}\otimes L_{\prec_{\mathcal{A}^{\ast}}}(e^{\ast}_{j}))\beta_{\prec}(e^{\ast}_{k}) + (R_{\mathcal{A}^{\ast}}(e^{\ast}_{k})\otimes \alpha^{\ast})\beta_{\prec}(e^{\ast}_{j}), \cr
&&\beta_{\prec}\circ\alpha^{\ast}(e^{\ast}_{j} \ast_{\mathcal{A}^{\ast}} e^{\ast}_{k}) = \beta_{\prec}\circ\alpha^{\ast}(e^{\ast}_{j} \succ_{\mathcal{A}^{\ast}} e^{\ast}_{k}  + e^{\ast}_{j} \prec_{\mathcal{A}^{\ast}} e^{\ast}_{k}) \cr
&& \quad =  \beta_{\prec}\circ\alpha^{\ast}[\sum^{n}_{m = 1}(c^{m}_{jk} + d^{m}_{jk})e^{\ast}_{m}] = \sum^{n}_{m = 1}(c^{m}_{jk} + d^{m}_{jk}) \beta_{\prec}\circ\alpha^{\ast}(e^{\ast}_{m}) \cr
&& \quad =  \sum^{n}_{m = 1}(c^{m}_{jk} + d^{m}_{jk}) (\sum^{n}_{l, i, q= 1}f^{m}_{q}b^{q}_{li} e^{\ast}_{l}\otimes e^{\ast}_{i}) = \sum^{n}_{l, i, q= 1}
[\sum^{n}_{m = 1}f^{m}_{q}b^{q}_{li}(c^{m}_{jk} + d^{m}_{jk})]e^{\ast}_{l}\otimes e^{\ast}_{i}, \cr
&&(\alpha^{\ast}\otimes L_{\prec_{\mathcal{A}^{\ast}}}(e^{\ast}_{j}))\beta_{\prec}(e^{\ast}_{k}) = (\alpha^{\ast}\otimes L_{\prec_{\mathcal{A}^{\ast}}}(e^{\ast}_{j}))(\sum^{n}_{l, m = 1} b^{k}_{lm} e^{\ast}_{l}\otimes e^{\ast}_{m}) \cr
&& \quad = \sum^{n}_{l, m= 1} b^{k}_{lm}\alpha^{\ast}(e^{\ast}_{l})\otimes(e^{\ast}_{j} \prec_{\mathcal{A}^{\ast}} e^{\ast}_{m}) =
\sum^{n}_{l, m, q= 1} b^{k}_{lm}f^{l}_{q}e^{\ast}_{q}\otimes(\sum^{n}_{i = 1} c^{i}_{jm}e^{\ast}_{i}) \cr
&& \quad = \sum^{n}_{l, i, q= 1} (\sum^{n}_{m = 1} f^{l}_{q}b^{k}_{lm}c^{i}_{jm})e^{\ast}_{q}\otimes e^{\ast}_{i}, \cr
&& (R_{\mathcal{A}^{\ast}}(e^{\ast}_{k})\otimes \alpha^{\ast})\beta_{\prec}(e^{\ast}_{j}) = (R_{\mathcal{A}^{\ast}}(e^{\ast}_{k})\otimes \alpha^{\ast})(\sum^{n}_{m, i = 1}b^{j}_{mi}e^{\ast}_{m}\otimes e^{\ast}_{i})\cr
&& \quad = \sum^{n}_{m, i, q= 1}f^{i}_{q}b^{j}_{mi}(e^{\ast}_{m} \ast_{\mathcal{A}^{\ast}} e^{\ast}_{k})\otimes e^{\ast}_{q} = \sum^{n}_{m, i, q = 1}f^{i}_{q}b^{j}_{mi}[(e^{\ast}_{m} \succ_{\mathcal{A}^{\ast}} e^{\ast}_{k} + e^{\ast}_{m} \prec_{\mathcal{A}^{\ast}} e^{\ast}_{k})\otimes e^{\ast}_{q}] \cr
&& \quad = \sum^{n}_{m, i, q= 1}f^{i}_{q} b^{j}_{mi}[(e^{\ast}_{m} \succ_{\mathcal{A}^{\ast}} e^{\ast}_{k})\otimes e^{\ast}_{q} +
( e^{\ast}_{m}\prec_{\mathcal{A}^{\ast}} e^{\ast}_{k})\otimes e^{\ast}_{q}] \cr
&& \quad = \sum^{n}_{m, i, q= 1}f^{i}_{q}b^{j}_{mi}[\sum^{n}_{l = 1}c^{l}_{mk}e^{\ast}_{l}\otimes e^{\ast}_{q} +  \sum^{n}_{l1}d^{l}_{mk}e^{\ast}_{l}\otimes e^{\ast}_{q}]\cr
&& \quad = \sum^{n}_{l, i, q= 1} [\sum^{n}_{m = 1}f^{i}_{q}b^{j}_{mi}(c^{l}_{mk} + d^{l}_{mk})]e^{\ast}_{l}\otimes e^{\ast}_{q}
\end{eqnarray*}
gives the relation
\begin{eqnarray}\label{1.40'}
 \sum^{n}_{m = 1}[f^{m}_{q}b^{q}_{li}(c^{m}_{jk} + d^{m}_{jk}) + f^{l}_{q}b^{k}_{lm}c^{i}_{jm}]= \sum^{n}_{m = 1}[f^{i}_{q}b^{j}_{mi}(c^{l}_{mk} + d^{l}_{mk})].
\end{eqnarray}
Thus, \eqref{eq1.40} corresponds to  \eqref{1.40'}. Therefore, \eqref{eq1.28} $ \Leftrightarrow $ \eqref{eq57}.

 So, in the case $ l_{\mathcal{A}} = R^{\ast}_{\prec_{\mathcal{A}}},  r_{\mathcal{A}} =
  L^{\ast}_{\succ_{\mathcal{A}}}, l_{\mathcal{B}} = l_{\mathcal{A}^{\ast}} =
   R^{\ast}_{\prec_{\mathcal{A}^{\ast}}}, r_{\mathcal{B}} = r_{\mathcal{A}^{\ast}} =
   L^{\ast}_{\succ_{\mathcal{A}^{\ast}}}$, we have
   \eqref{match. pair1} $ \Leftrightarrow $ \eqref{eq1.28} $ \Leftrightarrow $ \eqref{eq57}.
Similarly, in this situation,
\begin{eqnarray*}
&& \eqref{match. pair2} \Leftrightarrow \eqref{eq1.29} \Leftrightarrow \eqref{eq58},  \eqref{match. pair3} \Leftrightarrow \eqref{eq1.30} \Leftrightarrow \eqref{eq55}, \cr
&&\eqref{match. pair4} \Leftrightarrow \eqref{eq1.31} \Leftrightarrow \eqref{eq56}, \cr
&& \eqref{match. pair5} \Leftrightarrow \eqref{eq1.32} \Leftrightarrow \eqref{eq60}, \ \ \ \
\eqref{match. pair6} \Leftrightarrow \eqref{eq1.33} \Leftrightarrow \eqref{eq59}.
\end{eqnarray*}
Therefore, the conclusion holds due to Theorem \ref{theo. of matched pairs}.
\qed
\end{proof}
\begin{definition}
Let $ \mathcal{A} $ be a vector space. A \textbf{Hom-dendriform D-bialgebra} structure on $ \mathcal{A} $ is a set of linear maps
$ (\Delta_{\prec}, \Delta_{\succ}, \alpha, \beta_{\prec}, \beta_{\succ}, \alpha^{\ast}) $, $ \Delta_{\prec}, \Delta_{\succ} : \mathcal{A} \rightarrow \mathcal{A} \otimes \mathcal{A} $, \ \
$ \beta_{\prec},  \beta_{\succ} : \mathcal{A}^{\ast} \rightarrow \mathcal{A}^{\ast} \otimes \mathcal{A}^{\ast}, \alpha: \mathcal{A}\rightarrow\mathcal{A}, \alpha^{\ast}: \mathcal{A}^{\ast}\rightarrow\mathcal{A}^{\ast}$, such that
\begin{enumerate}[label=\upshape{\alph*)}]
\item $ (\Delta^{\ast}_{\prec}, \Delta^{\ast}_{\succ}, \alpha^{\ast}) : \mathcal{A}^{\ast} \otimes \mathcal{A}^{\ast} \rightarrow \mathcal{A}^{\ast} $
 defines a Hom-dendriform algebra structure $(\succ_{\mathcal{A}^{\ast}}, \prec_{\mathcal{A}^{\ast}}, \alpha^{\ast}) $ on $ \mathcal{A}^{\ast} $;
\item $ (\beta^{\ast}_{\prec}, \beta^{\ast}_{\succ}, \alpha) : \mathcal{A} \otimes \mathcal{A} \rightarrow \mathcal{A} $ defines a Hom-dendriform algebra
 structure $(\succ_{\mathcal{A}}, \prec_{\mathcal{A}}, \alpha)$ on $  \mathcal{A} $;
\item the equations \eqref{eq55} -  \eqref{eq60}    are satisfied.
\end{enumerate}
We  denote it by $ (\mathcal{A}, \mathcal{A}^{\ast}, \Delta_{\succ}, \Delta_{\prec}, \alpha, \beta_{\succ}, \beta_{\prec}, \alpha^{\ast})$
 or simply $(\mathcal{A}, \mathcal{A}^{\ast}, \alpha, \alpha^{\ast})$.
 \end{definition}
\begin{theorem}
Let $(\mathcal{A}, \prec_{\mathcal{A}}, \succ_{\mathcal{A}}, \alpha)$ and $(\mathcal{A}^{\ast}, \prec_{\mathcal{A}^{\ast}}, \succ_{\mathcal{A}^{\ast}}, \alpha^{\ast})$
be involutive Hom-dendriform algebras. Let $(\mathcal{A}, \ast_{\mathcal{A}}, \alpha)$ and   $(\mathcal{A}^{\ast}, \ast_{\mathcal{A}^{\ast}}, \alpha^{\ast})$ be the
  corresponding associated involutive Hom-associative algebras. Then, the following conditions are equivalent:
\begin{enumerate}[label=\upshape{(\roman*)}]
\item There is a double construction of involutive symplectic Hom-associative algebras associated to $(\mathcal{A}, \ast_{\mathcal{A}}, \alpha)$ and
$(\mathcal{A}^{\ast}, \ast_{\mathcal{A}^{\ast}}, \alpha^{\ast})$;
\item $ (\mathcal{A}, \mathcal{A}^{\ast}, R^{\ast}_{\prec_{\mathcal{A}}},  L^{\ast}_{\succ_{\mathcal{A}}}, \alpha^{\ast},  R^{\ast}_{\prec_{\mathcal{A}^{\ast}}},
 L^{\ast}_{\succ_{\mathcal{A}^{\ast}}}, \alpha) $ is a matched pair of involutive Hom-associative algebras;
 \item $(\mathcal{A}, \mathcal{A}^{\ast}, R^{\ast}_{\succ_{\mathcal{A}}} + R^{\ast}_{\prec_{\mathcal{A}}},
- L^{\ast}_{\prec_{\mathcal{A}}}, -R^{\ast}_{\succ_{\mathcal{A}}}, L^{\ast}_{\succ_{\mathcal{A}}} +
 L^{\ast}_{\prec_{\mathcal{A}}}, \alpha^{\ast}, R^{\ast}_{\succ_{\mathcal{A}^{\ast}}} + R^{\ast}_{\prec_{\mathcal{A}^{\ast}}},
-L^{\ast}_{\prec_{\mathcal{A}^{\ast}}},\\ -R^{\ast}_{\succ_{\mathcal{A}^{\ast}}}, L^{\ast}_{\succ_{\mathcal{A}^{\ast}}}
 + L^{\ast}_{\prec_{\mathcal{A}^{\ast}}}, \alpha)$  is a matched pair of involutive Hom-dendriform algebras;
\item $ (\mathcal{A}, \mathcal{A}^{\ast}, \alpha, \alpha^{\ast}) $ is an involutive Hom-dendriform $ D- $bialgebra.
\end{enumerate}
\end{theorem}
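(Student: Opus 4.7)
The plan is to assemble the theorem from the three characterization results that immediately precede it, so the proof is really a concatenation of equivalences rather than a new computation. Concretely, the structure I have in mind is the cycle (i) $\Leftrightarrow$ (ii) $\Leftrightarrow$ (iii), together with (ii) $\Leftrightarrow$ (iv), which together give the four-way equivalence.

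First I would handle (i) $\Leftrightarrow$ (ii) by directly invoking the Theorem on double constructions of involutive symplectic Hom-associative algebras proved earlier in Subsection \ref{subsec:dblcnstrinvsymhomassal:dblcnstrinvsymhomassal}. That result identifies a double construction built on $\mathcal{A}\oplus \mathcal{A}^{\ast}$ with the bilinear form \eqref{symplectic form} as precisely a matched pair of the involutive Hom-associative algebras $(\mathcal{A},\ast_{\mathcal{A}},\alpha)$ and $(\mathcal{A}^{\ast},\ast_{\mathcal{A}^{\ast}},\alpha^{\ast})$ with bimodule data $(R^{\ast}_{\prec_{\mathcal{A}}},L^{\ast}_{\succ_{\mathcal{A}}},\alpha^{\ast})$ and $(R^{\ast}_{\prec_{\mathcal{A}^{\ast}}},L^{\ast}_{\succ_{\mathcal{A}^{\ast}}},\alpha)$, so nothing further needs to be verified.

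Next I would obtain (ii) $\Leftrightarrow$ (iii) by quoting the Theorem proved right before this corollary, which says exactly that, for an involutive Hom-dendriform algebra together with an involutive Hom-dendriform structure on its dual, the octuple in (ii) being a matched pair of Hom-associative algebras is equivalent to the big twelve-tuple of (iii) being a matched pair of Hom-dendriform algebras. One direction there is Corollary \ref{match dendriform-associative algebras}, and the other direction is the computation exhibiting the cross products $x\succ a^{\ast}$, $x\prec a^{\ast}$, $a^{\ast}\succ x$, $a^{\ast}\prec x$ on the double $T(\mathcal{A})$; both directions have already been written out, so I can simply cite that theorem.

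Finally, for (ii) $\Leftrightarrow$ (iv), I would invoke the Theorem characterizing when $(\mathcal{A},\mathcal{A}^{\ast},R^{\ast}_{\prec_{\mathcal{A}}},L^{\ast}_{\succ_{\mathcal{A}}},\alpha^{\ast},R^{\ast}_{\prec_{\mathcal{A}^{\ast}}},L^{\ast}_{\succ_{\mathcal{A}^{\ast}}},\alpha)$ is a matched pair of involutive Hom-associative algebras in terms of the coproducts $\Delta_{\prec},\Delta_{\succ},\beta_{\prec},\beta_{\succ}$ satisfying \eqref{eq55}--\eqref{eq60}. Since those six compatibility equations are precisely axiom (c) in the Definition of a Hom-dendriform $D$-bialgebra, while axioms (a) and (b) of that definition are the hypothesis that $\Delta^{\ast}_{\succ},\Delta^{\ast}_{\prec}$ and $\beta^{\ast}_{\succ},\beta^{\ast}_{\prec}$ define involutive Hom-dendriform structures on $\mathcal{A}^{\ast}$ and $\mathcal{A}$ respectively, (ii) holds if and only if $(\mathcal{A},\mathcal{A}^{\ast},\alpha,\alpha^{\ast})$ is an involutive Hom-dendriform $D$-bialgebra, which is (iv).

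There is no serious obstacle here: the only thing that must be double-checked is bookkeeping, namely that the bimodule data appearing in (ii), the dendriform bimodule data appearing in (iii), and the coproducts in (iv) refer to the same underlying operations on $\mathcal{A}$ and $\mathcal{A}^{\ast}$, so that the cited theorems apply verbatim. Once this dictionary is spelled out in one line, the chain (i) $\Leftrightarrow$ (ii) $\Leftrightarrow$ (iii) and (ii) $\Leftrightarrow$ (iv) finishes the argument, and the proof can be concluded with \qed.
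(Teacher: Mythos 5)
Your proposal is correct and matches the paper's (implicit) approach: the paper actually states this theorem without any written proof, but the analogous corollaries in Sections \ref{sec:dblcnstrinvhomfrobal} and \ref{sec:dblcnstrinvbihomfrobal} are proved exactly as you describe, by chaining the immediately preceding characterization theorems. Your decomposition (i)$\Leftrightarrow$(ii) from the symplectic double-construction theorem, (ii)$\Leftrightarrow$(iii) from the matched-pair transfer theorem, and (ii)$\Leftrightarrow$(iv) from the theorem establishing \eqref{eq55}--\eqref{eq60}, together with the observation that conditions (a) and (b) of the Hom-dendriform $D$-bialgebra definition are already part of the hypotheses, is precisely the intended argument.
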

\section{Matched pairs of biHom-associative algebras}
\label{sec:matchedpbihomassal}
\subsection{Bihom-dendriform algebras}
\label{subsec:matchedpbihomassal:bihomdendal}
\begin{definition}
A biHom-dendriform algebra is a quintuple $(\mathcal{A}, \prec, \succ, \alpha, \beta)$ consisting of a vector space $\mathcal{A}$ on which the operations $\prec, \succ: \mathcal{A}\otimes \mathcal{A}\rightarrow \mathcal{A}$ and $\alpha, \beta: \mathcal{A}\rightarrow \mathcal{A}$ are linear maps satisfying
\begin{eqnarray*}
&&\alpha\circ\beta=\beta\circ\alpha,\cr
&&\alpha(x\prec y)=\alpha(x)\prec\alpha(y), \alpha(x\succ y)=\alpha(x)\succ\alpha(y),\cr
&&\beta(x\prec y)=\beta(x)\prec\beta(y), \beta(x\succ y)=\beta(x)\succ\beta(y),\cr
&&(x \prec y)\prec \beta(z) = \alpha(x)\prec (y\ast z), \cr
&&(x\succ y)\prec\beta(z)=\alpha(x)\succ(y \prec z), \cr
&&\alpha(x)\succ (y \succ z ) = (x\ast y)\succ\beta(z),
\end{eqnarray*}
where $ x \ast y = x \prec y + x \succ y $.
\end{definition}
\begin{definition}
Let $(\mathcal{A}, \prec, \succ, \alpha, \beta)$ and $(\mathcal{A}', \prec',  \succ', \alpha', \beta')$ be biHom-dendriform algebras. A linear map $f: \mathcal{A}\rightarrow \mathcal{A}'$ is a biHom-dendriform algebra morphism if
\begin{eqnarray*}
\prec'\circ(f\otimes f)= f\circ\prec,\ \succ'\circ(f\otimes f)= f\circ\succ, f\circ\alpha= \alpha'\circ f \mbox{ and } f\circ\beta= \beta'\circ f.
\end{eqnarray*}
\end{definition}
\begin{proposition}
Let $(\mathcal{A}, \prec, \succ, \alpha, \beta)$ be a biHom-dendriform algebra.

Then, $(\mathcal{A}, \ast, \alpha, \beta)$ is a biHom-associative algebra.
\end{proposition}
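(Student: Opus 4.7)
The plan is to verify the three defining properties of a biHom-associative algebra for $(\mathcal{A}, \ast, \alpha, \beta)$: the commutativity $\alpha\circ\beta = \beta\circ\alpha$, the multiplicativity of $\alpha$ and $\beta$ with respect to $\ast$, and finally the biHom-associativity $\alpha(x)\ast(y\ast z) = (x\ast y)\ast\beta(z)$. The argument parallels the Hom-dendriform to Hom-associative computation already carried out in the paper, with $\alpha$ on the left slot replaced by $\alpha$ and $\alpha$ on the right slot replaced by $\beta$ to match the biHom axioms.

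First, the commutativity $\alpha\beta = \beta\alpha$ is inherited directly from the biHom-dendriform structure, so nothing to prove there. For multiplicativity, I would expand
\[
\alpha(x\ast y) = \alpha(x\prec y) + \alpha(x\succ y) = \alpha(x)\prec\alpha(y) + \alpha(x)\succ\alpha(y) = \alpha(x)\ast\alpha(y),
\]
using the multiplicativity of $\alpha$ on each of $\prec$ and $\succ$, and the analogous one-line computation for $\beta$.

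The core step is biHom-associativity. I would expand
\[
(x\ast y)\ast\beta(z) = (x\prec y)\prec\beta(z) + (x\prec y)\succ\beta(z) + (x\succ y)\prec\beta(z) + (x\succ y)\succ\beta(z),
\]
then group the four terms so as to apply the three biHom-dendriform axioms. The terms $(x\prec y)\succ\beta(z) + (x\succ y)\succ\beta(z)$ combine via distributivity to $(x\ast y)\succ\beta(z)$, which equals $\alpha(x)\succ(y\succ z)$ by the third axiom. The remaining terms yield $\alpha(x)\prec(y\ast z)$ and $\alpha(x)\succ(y\prec z)$ by the first and second axioms respectively. Summing,
\[
(x\ast y)\ast\beta(z) = \alpha(x)\prec(y\ast z) + \alpha(x)\succ(y\prec z) + \alpha(x)\succ(y\succ z),
\]
and recombining $\alpha(x)\succ(y\prec z) + \alpha(x)\succ(y\succ z) = \alpha(x)\succ(y\ast z)$ gives $\alpha(x)\ast(y\ast z)$, as required.

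There is no serious obstacle here; the proof is essentially a transcription of the Hom-dendriform case where the right-hand occurrence of $\alpha$ gets relabelled as $\beta$ to match biHom-associativity. The only subtlety, if any, is keeping track of which twisting map appears on which side in each of the three biHom-dendriform axioms and making sure the grouping of the four expanded terms matches those axioms exactly; this is a matter of bookkeeping rather than ideas.
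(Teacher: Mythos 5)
Your proposal is correct and follows essentially the same route as the paper's proof: expand $(x\ast y)\ast\beta(z)$ into four terms, absorb the two $\succ\beta(z)$ terms into $(x\ast y)\succ\beta(z)$ and apply the third axiom, handle the remaining two terms by the first and second axioms, and recombine; the multiplicativity check is the same one-line computation. No issues.
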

\begin{proof}
We have, for all $x, y, z\in\mathcal{A}$,
\begin{eqnarray*}
(x\ast y)\ast\beta(z)&=&(x\prec y)\prec\beta(z) + (x\prec y)\succ\beta(z) + (x\succ y)\prec\beta(z) + (x\succ y)\succ\beta(z)\cr
&=&(x\prec y)\prec\beta(z) + (x\succ y)\prec\beta(z) + (x\prec y)\succ\beta(z) + (x\succ y)\succ\beta(z)\cr
&=& (x\prec y)\prec\beta(z) + (x\succ y)\prec\beta(z) + (x\ast y)\succ\beta(z)\cr
&=& \alpha(x)\prec(y\ast z) + \alpha(x)\succ(y\prec z) + \alpha(x)\succ(y\succ z)\cr
&=& \alpha(x)\prec(y\ast z) + \alpha(x)\succ(y\ast z)= \alpha(x)\ast(y\ast z), \cr
\alpha(x\ast y)&=&\alpha(x\succ y) + \alpha(x\prec y)\cr
&=& \alpha(x)\succ\alpha(y) + \alpha(x)\prec\alpha(y)\cr
&=& \alpha(x)\ast\alpha(y),
\end{eqnarray*}
which completes the proof. \qed
\end{proof}
We call $ (\mathcal{A}, \ast, \alpha, \beta)$ the biHom-associative algebra of $(\mathcal{A}, \prec, \succ, \alpha, \beta),$ and the quintuple  $(\mathcal{A}, \succ, \prec, \alpha, \beta)$ is called a compatible biHom-dendriform algebra structure on the biHom-associative algebra $(\mathcal{A}, \ast, \alpha, \beta)$.
\begin{proposition}
Let $(\mathcal{A}, \prec, \succ, \alpha, \beta)$ be a biHom-dendriform algebra. Suppose that $(\mathcal{A}, \ast, \beta, \alpha)$ is a biHom-associative algebra.  Then, $ (L_{\succ}, R_{\prec}, \beta, \alpha, \mathcal{A})$ is a bimodule of $(\mathcal{A}, \ast, \beta, \alpha)$.
\end{proposition}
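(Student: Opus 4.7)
The plan is to verify each of the seven defining axioms of a bimodule of a biHom-associative algebra (equations \eqref{Cond.biHom-bimod}--\eqref{biHom-bimodule eq2}) directly from the axioms of the biHom-dendriform algebra $(\mathcal{A}, \prec, \succ, \alpha, \beta)$, specialized to $l=L_\succ$, $r=R_\prec$, $\alpha_1=\beta$, $\alpha_2=\alpha$, $\beta_1=\beta$, $\beta_2=\alpha$, $V=\mathcal{A}$. Since the bimodule conditions translate, under these choices, into statements about iterated products $\succ$ and $\prec$ composed with $\alpha$ and $\beta$, each one will correspond to one of the three biHom-dendriform associativity-type axioms or to the multiplicativity of $\alpha$ and $\beta$ with respect to $\succ$ and $\prec$.

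First I would handle the three ``biHom-associativity'' conditions. For the left-action condition $L_\succ(x\ast y)\beta(v)=L_\succ(\alpha(x))L_\succ(y)v$, the identity to check is $(x\ast y)\succ\beta(v)=\alpha(x)\succ(y\succ v)$, which is exactly the third biHom-dendriform axiom. For the right-action condition $R_\prec(x\ast y)\alpha(v)=R_\prec(\beta(y))R_\prec(x)v$, the identity reduces to $\alpha(v)\prec(x\ast y)=(v\prec x)\prec\beta(y)$, which is the first biHom-dendriform axiom (with the roles of the letters permuted). For the mixed condition $L_\succ(\alpha(x))R_\prec(y)v=R_\prec(\beta(y))L_\succ(x)v$, one must show $\alpha(x)\succ(v\prec y)=(x\succ v)\prec\beta(y)$, which is the second biHom-dendriform axiom.

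Next I would verify the four compatibility conditions \eqref{biHom-bimodule eq1}--\eqref{biHom-bimodule eq2} between $\alpha,\beta$ and $L_\succ,R_\prec$. These unfold to
\begin{equation*}
\beta(x\succ v)=\beta(x)\succ\beta(v), \quad \beta(v\prec x)=\beta(v)\prec\beta(x),
\end{equation*}
\begin{equation*}
\alpha(x\succ v)=\alpha(x)\succ\alpha(v), \quad \alpha(v\prec x)=\alpha(v)\prec\alpha(x),
\end{equation*}
which are precisely the multiplicativity axioms of $\alpha$ and $\beta$ with respect to the two products $\succ$ and $\prec$ in the definition of a biHom-dendriform algebra.

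Since every one of the seven bimodule axioms is either a rewriting of a biHom-dendriform axiom or a rewriting of multiplicativity, there is no real obstacle: the proof is a routine bookkeeping of which axiom matches which. The only subtle point worth flagging is the order of $(\alpha,\beta)$ in the hypothesis: the biHom-associative algebra is $(\mathcal{A},\ast,\beta,\alpha)$ (not $(\mathcal{A},\ast,\alpha,\beta)$), so in the bimodule conditions the roles $\alpha_1=\beta$ and $\alpha_2=\alpha$ must be used consistently. Once this bookkeeping is set up correctly, the verification is a one-line application of the corresponding axiom in each of the seven cases.
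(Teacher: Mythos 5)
Your proposal is correct and follows essentially the same route as the paper: the three action axioms reduce, under the substitution $l=L_\succ$, $r=R_\prec$, $\alpha_1=\beta_1=\beta$, $\alpha_2=\beta_2=\alpha$, to the three biHom-dendriform associativity axioms, and the remaining compatibility conditions are just multiplicativity of $\alpha$ and $\beta$ with respect to $\succ$ and $\prec$. In fact you are slightly more complete than the paper, which only writes out the three associativity-type identities and leaves the multiplicativity checks implicit.
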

\begin{proof}
For $x, y, v\in\mathcal{A}$, we have
\begin{eqnarray*}
L_{\succ}(x\ast y)\beta(v) &=& (x\ast y)\succ\beta(v)=\alpha(x)\succ(y\succ v)=L_{\succ}(\alpha(x))L_{\succ}(y)v, \cr
R_{\prec}(x\ast y)\alpha(v) &=& \alpha(v)\prec(x\ast y)= (v\prec x)\prec\beta(y)=R_{\prec}(\beta(y))R_{\prec}(x)v, \cr
L_{\succ}(\alpha(x))R_{\prec}(y)v &=& \alpha(x)\succ(v\prec y)=(x\succ v)\prec\beta(y)= R_{\prec}(\beta(y))L_{\succ}(x)v,
\end{eqnarray*}
which completes the proof. \qed
\end{proof}
\begin{remark}
If $(\mathcal{A}, \prec, \succ, \alpha, \beta)$ is a biHom-dendriform algebra, $(L_{\succ}, R_{\prec}, \alpha, \beta, \mathcal{A})$ is not a bimodule of associated  biHom-associative algebra $(\mathcal{A}, \ast, \alpha, \beta)$ .
\end{remark}
\begin{proposition}
Let $(\mathcal{A}, \prec, \succ, \alpha, \beta)$ be a biHom-dendriform algebra, and suppose
$\alpha^{2}=\beta^{2}=\alpha\circ\beta=\beta\circ\alpha=\id.$ Then, $(\mathcal{A}, \prec,
\succ, \alpha, \beta)\cong(\mathcal{A}, \prec, \succ, \beta, \alpha).$
\end{proposition}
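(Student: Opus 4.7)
The plan is to exhibit an explicit isomorphism of biHom-dendriform algebras, and the natural candidate is $\phi = \alpha$ itself (equivalently $\beta$, since the hypotheses will force $\alpha = \beta$ at the level of linear maps on $\mathcal{A}$). First I would observe that $\alpha^{2} = \mathrm{Id}$ makes $\alpha$ a bijection with $\alpha^{-1} = \alpha$, so $\phi$ is at least a linear bijection of the underlying vector space.

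Next I would verify that $\phi$ is a homomorphism of the two binary operations. This is immediate from the definition of biHom-dendriform algebra, which already imposes $\alpha(x \prec y) = \alpha(x) \prec \alpha(y)$ and $\alpha(x \succ y) = \alpha(x) \succ \alpha(y)$, so $\phi(x \prec y) = \phi(x) \prec \phi(y)$ and likewise for $\succ$. No new computation is needed here; it is just the multiplicativity axiom of $\alpha$.

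The only remaining point is the compatibility of $\phi$ with the twisting maps, namely $\phi \circ \alpha = \beta \circ \phi$ and $\phi \circ \beta = \alpha \circ \phi$, since on the target the roles of $\alpha$ and $\beta$ are swapped. Using the stated hypotheses $\alpha^{2} = \beta^{2} = \alpha \circ \beta = \beta \circ \alpha = \mathrm{Id}$, one has
\begin{eqnarray*}
\phi \circ \alpha = \alpha \circ \alpha = \mathrm{Id} = \beta \circ \alpha = \beta \circ \phi,
\\
\phi \circ \beta = \alpha \circ \beta = \mathrm{Id} = \alpha \circ \alpha = \alpha \circ \phi,
\end{eqnarray*}
which settles both intertwining identities at once.

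There is no real obstacle; the whole content of the statement is that under the rigid hypothesis $\alpha \beta = \beta \alpha = \alpha^{2} = \beta^{2} = \mathrm{Id}$, the two twisting maps become mutual inverses that are each involutions, and therefore $\alpha = \beta$ as endomorphisms, so the swap $(\alpha, \beta) \mapsto (\beta, \alpha)$ is realised by $\phi = \alpha$ (or equally well by $\mathrm{Id}$). The mild care is only in choosing to present $\phi = \alpha$ so that the intertwining equations are verified uniformly from the algebraic hypotheses rather than by first collapsing $\alpha = \beta$.
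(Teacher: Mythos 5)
Your proof is correct, and it takes a genuinely different route from the paper's. The paper never names the isomorphism: it takes the defining identity $\alpha(x)(yz)=(xy)\beta(z)$, substitutes $x\mapsto(\alpha\circ\beta)(x)$ and $z\mapsto(\beta\circ\alpha)(z)$, and uses $\alpha^{2}=\beta^{2}=\mathrm{Id}$ to obtain $\beta(x)(yz)=(xy)\alpha(z)$, i.e.\ it verifies that the (bi\,Hom-associativity-type) axioms still hold with the roles of $\alpha$ and $\beta$ exchanged, the identity map then being the implicit isomorphism. You instead observe that the hypotheses force $\beta=\alpha^{-1}=\alpha$, so the two structures coincide as sets of data, and you exhibit $\phi=\alpha$ explicitly, checking bijectivity, multiplicativity for $\prec$ and $\succ$ (which is an axiom of the structure), and the two intertwining identities $\phi\circ\alpha=\beta\circ\phi$ and $\phi\circ\beta=\alpha\circ\phi$ demanded by the paper's definition of a biHom-dendriform morphism. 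Your version buys an explicit morphism and makes transparent that the rigidity hypothesis collapses the two twisting maps into one, so the statement is nearly tautological; the paper's substitution argument is the one you would want if the hypothesis were weakened to something that does not force $\alpha=\beta$, since it re-derives the swapped axioms directly. One very small addition: to call $\phi$ an isomorphism you should also record that the target $(\mathcal{A},\prec,\succ,\beta,\alpha)$ is itself a biHom-dendriform algebra; in your argument this is immediate from $\alpha=\beta$ (the target is literally the source), but it deserves a sentence, and it is precisely the point the paper's computation is addressing.
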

\begin{proof}
Let $x, y, z\in\mathcal{A}.$ We have
\begin{eqnarray*}
\alpha(x)(yz)&=&(xy)\beta(z) \Leftrightarrow\cr
\alpha((\alpha\circ\beta)(x))(yz)&=&(xy)\beta((\beta\circ\alpha)(z))\Leftrightarrow\cr
\alpha^{2}(\beta(x))(yz)&=&(xy)\beta^{2}(\alpha(z))\Leftrightarrow\cr
\beta(x)(yz)&=&(xy)\alpha(z).
\end{eqnarray*}
Then $(\mathcal{A}, \prec,
\succ, \alpha, \beta)\cong(\mathcal{A}, \prec, \succ, \beta, \alpha).$
\qed
\end{proof}
\subsection{$ \mathcal{O} $-operators and biHom-dendriform algebras}
\label{subsec:matchedpbihomassal:oopsbihomdendal}
\begin{definition}
Let $(\mathcal{A}, \cdot, \alpha_{1}, \alpha_{2})$ be a biHom-associative algebra, and $(l, r, \beta_{1}, \beta_{2}, V)$ be a bimodule. A linear map $T : V \rightarrow \mathcal{A}$
is called an \textbf{$ \mathcal{O} $-operator associated} to $(l, r, \beta_{1}, \beta_{2}, V)$,  if $ T $ satisfies
\begin{eqnarray*}
\alpha_{1} T= T\beta_{2}, \alpha_{2} T= T\beta_{1} \mbox{ and } T(u)\cdot T(v) = T(l(T(u))v + r(T(v))u) \mbox { for all } u, v \in V.
\end{eqnarray*}
\end{definition}

\begin{example}
Let $(\mathcal{A}, \cdot, \alpha_{1}, \alpha_{2})$ be a multiplicative biHom-associative algebra. Then,
 the identity map $\id$ is an $ \mathcal{O} $-operator associated to the bimodule $(L,0, \alpha_{1}, \alpha_{2})$ or $(0,R, \alpha_{1}, \alpha_{2})$.
\end{example}

\begin{example}
Let $(\mathcal{A}, \cdot, \alpha, \beta)$ be a multiplicative biHom-associative algebra.
A linear map $ f : \mathcal{A} \rightarrow \mathcal{A} $ is called a \textbf{Rota-Baxter operator} on $ \mathcal{A} $ of weight zero if $ f $
satisfies
\begin{eqnarray*}
f\circ\alpha=\alpha\circ f, f\circ\beta=\beta\circ f \mbox{ and } f(x)\cdot f(y) = f(f(x)\cdot y + x\cdot f(y)) \mbox { for all } x, y \in \mathcal{A}.
\end{eqnarray*}
A Rota-Baxter operator on $ \mathcal{A} $ is just an $ \mathcal{O} $-operator associated to the bimodule $(L, R, \alpha, \beta)$.
\end{example}
\begin{theorem}
Let $(\mathcal{A}, \cdot, \alpha_{1}, \alpha_{2})$ be a biHom-associative algebra, and $(l, r, \beta_{1}, \beta_{2}, V) $ be a bimodule.
Let $ T : V \rightarrow \mathcal{A} $ be an $ \mathcal{O} $-operator associated to $(l, r, \beta_{1}, \beta_{2}, V)$. Then, there exists a biHom-dendriform
algebra structure on $V$ given by
\begin{eqnarray*}
 u\succ v = l(T(u))v , \quad u\prec v = r(T(v))u
\end{eqnarray*}
for all $u, v \in V$. So, there is an associated biHom-associative algebra structure on $ V $ given by the equation \eqref{associative-dendriform},  and $ T $
is a homomorphism of biHom-associative algebras. Moreover, $ T(V) = \lbrace { T(v) \setminus v \in V }  \rbrace  \subset \mathcal{A} $ is a biHom-associative
subalgebra of $ \mathcal{A}, $ and there is an induced biHom-dendriform algebra structure on $ T(V) $ given by
\begin{eqnarray}
T(u) \succ T(v) = T(u \succ v),\quad T(u) \prec T(v) = T(u \prec v)
\end{eqnarray}
for all $ u, v \in V $. Its corresponding associated biHom-associative algebra structure on $ T(V) $ given by the equation \eqref{associative-dendriform} is
just the biHom-associative subalgebra structure of $ \mathcal{A}, $ and $ T $ is a homomorphism of biHom-dendriform algebras.
\end{theorem}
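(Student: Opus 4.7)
The plan is to verify the theorem's four assertions in order, following the structure of the analogous Hom case (the theorem preceding Corollary \ref{dendriform-invertible operator}), but carefully tracking the biHom asymmetry between $(\alpha_1,\alpha_2)$ and $(\beta_1,\beta_2)$.

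The key preliminary observation is that the $\mathcal{O}$-operator identity yields
\[T(u)\cdot T(v) = T(l(T(u))v + r(T(v))u) = T(u\succ v + u\prec v) = T(u\ast v)\]
for all $u,v\in V$. I would equip $V$ with the biHom-dendriform endomorphisms $(\alpha_V,\beta_V)=(\beta_2,\beta_1)$, the assignment dictated by requiring $T$ to intertwine them with $(\alpha_1,\alpha_2)$ via the $\mathcal{O}$-operator conditions. Each biHom-dendriform axiom on $V$ is then checked by direct unfolding. The axiom $\alpha_V(u)\succ(v\succ w)=(u\ast v)\succ\beta_V(w)$ unfolds to an identity that matches the bimodule rule $l(xy)\beta_1(v)=l(\alpha_2(x))l(y)v$ after applying the intertwining. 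The axiom $(u\prec v)\prec\beta_V(w)=\alpha_V(u)\prec(v\ast w)$ reduces analogously to $r(xy)\beta_2(v)=r(\alpha_1(y))r(x)v$. The mixed axiom $(u\succ v)\prec\beta_V(w)=\alpha_V(u)\succ(v\prec w)$ uses exactly the third bimodule identity $l(\alpha_2(x))r(y)v=r(\alpha_1(y))l(x)v$ at $x=T(u),y=T(w)$. Multiplicativity and commutativity of $\beta_1,\beta_2$ with respect to $\succ,\prec$ then follow from $\beta_i(l(x)v)=l(\alpha_i(x))\beta_i(v)$ and $\beta_i(r(x)v)=r(\alpha_i(x))\beta_i(v)$ combined with multiplicativity of $\alpha_1,\alpha_2$.

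The associated biHom-associative structure on $V$ is $u\ast v = u\succ v + u\prec v$, and the displayed identity together with the $\mathcal{O}$-operator intertwinings shows that $T$ is a biHom-associative algebra homomorphism. The image $T(V)$ is then closed under $\cdot$ and under $\alpha_1,\alpha_2$, hence is a biHom-associative subalgebra of $\mathcal{A}$. I would define the induced biHom-dendriform structure on $T(V)$ by transport: $T(u)\succ T(v):=T(u\succ v)=T(l(T(u))v)$ and $T(u)\prec T(v):=T(u\prec v)=T(r(T(v))u)$; the dendriform axioms on $T(V)$ follow by pushing forward those on $V$, and the associated biHom-associative product on $T(V)$ matches the restriction of $\cdot$ by the key identity. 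The main obstacle I foresee is the well-definedness of $\succ,\prec$ on $T(V)$ when $T$ is not injective: one must check that $T(l(T(u))v)$ and $T(r(T(v))u)$ depend only on $T(u),T(v)$. This is typically handled either by restricting to invertible $\mathcal{O}$-operators (as in the analogue of Corollary \ref{dendriform-invertible operator}) or by verifying that $\ker T$ is stable under $l(T(u))$ and $r(T(v))$ for all $u,v\in V$, which follows from the bimodule identities and the intertwinings.
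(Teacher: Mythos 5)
Your overall route is the same as the paper's: derive $T(u)\cdot T(v)=T(u\ast v)$ from the $\mathcal{O}$-operator identity, verify the three biHom-dendriform axioms on $V$ by unfolding $\succ,\prec$ and converting $T\beta_i$ into $\alpha_j T$, and then transport the structure to $T(V)$. Your worry about well-definedness on $T(V)$ when $T$ is not injective is legitimate and is a point the paper passes over in silence; note, though, that it is settled by the $\mathcal{O}$-operator identity itself rather than by the bimodule axioms: if $T(w)=0$ then $0=T(u)\cdot T(w)=T\bigl(l(T(u))w+r(T(w))u\bigr)=T\bigl(l(T(u))w\bigr)$, and symmetrically for $r$, so $\ker T$ is stable under $l(T(u))$ and $r(T(v))$.

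The genuine gap is your assignment of the structure maps on $V$. You set $(\alpha_V,\beta_V)=(\beta_2,\beta_1)$, but this is incompatible with the bimodule identity you then invoke. Unfolding the mixed axiom gives $(u\succ v)\prec\beta_V(w)=r(T\beta_V(w))\,l(T(u))\,v$ and $\alpha_V(u)\succ(v\prec w)=l(T\alpha_V(u))\,r(T(w))\,v$; matching this against $l(\alpha_2(x))r(y)v=r(\alpha_1(y))l(x)v$ at $x=T(u)$, $y=T(w)$ forces $T\alpha_V=\alpha_2T=T\beta_1$ and $T\beta_V=\alpha_1T=T\beta_2$, i.e.\ $\alpha_V=\beta_1$ and $\beta_V=\beta_2$ --- the opposite of your choice, and exactly the choice implicit in the paper's computation of $(x\succ y)\prec\beta_2(z)-\beta_1(x)\succ(y\prec z)=0$. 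With your $(\beta_2,\beta_1)$ the mixed axiom would instead require $l(\alpha_1(x))r(y)v=r(\alpha_2(y))l(x)v$, which is not a hypothesis, so the verification fails at the one axiom that is actually written out. Be warned also that even with the corrected assignment the two outer axioms do not reduce ``analogously'' as you claim: unfolding $(u\prec v)\prec\beta_2(w)$ via $r(x\cdot y)\beta_2(v)=r(\alpha_1(y))r(x)v$ yields $\beta_2(u)\prec(v\ast w)$ rather than $\beta_1(u)\prec(v\ast w)$, and dually $\beta_1(u)\succ(v\succ w)$ yields $(u\ast v)\succ\beta_1(w)$ rather than $(u\ast v)\succ\beta_2(w)$; so the assertion that the remaining axioms follow ``in a similar way'' conceals a genuine mismatch of twists that your write-up would need to confront explicitly rather than assert.
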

\begin{proof}
For any $x, y, z\in V,$ we have
\begin{eqnarray*}
(x\succ y)\prec\beta_{2}(z) &-& \beta_{1}(x)\succ(y\prec z)=l(T(x)y)\prec\beta_{2}(z)-\beta_{1}(x)\succ r(T(z)y)\cr
&=& r(T\beta_{2}(z))l(T(x))y-l(T\beta_{1}(x)y)r(T(z)y) \cr
&=& r(\alpha_{1}(T(z)))l(T(x))y-l(\alpha_{2}(T(x)))r(T(z))y=0.
\end{eqnarray*}
The two other axioms are checked in a similar way.
\qed
\end{proof}
\begin{corollary}\label{bidendriform-invertible operator}
Let $(\mathcal{A}, \ast, \alpha, \beta)$ be a multiplicative biHom-associative algebra. There is a compatible multiplicative biHom-dendriform algebra structure on $ \mathcal{A} $
if and only if there exists an invertible $ \mathcal{O} $-operator of $ (\mathcal{A}, \ast, \alpha, \beta)$.
\end{corollary}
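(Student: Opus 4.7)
The plan is to mirror the proof of the Hom-associative analogue, Corollary \ref{dendriform-invertible operator}, carrying along the second twisting map throughout. The two directions are structurally distinct: one is an appeal to the already-established bimodule statement, while the other relies on transporting a biHom-dendriform structure across an invertible intertwining map.

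For the necessity direction, suppose $(\succ, \prec)$ is a compatible biHom-dendriform structure on $(\mathcal{A}, \ast, \alpha, \beta)$. I will invoke the earlier proposition that $(L_{\succ}, R_{\prec}, \beta, \alpha, \mathcal{A})$ is a bimodule of the associated biHom-associative algebra, and then verify that $\id_{\mathcal{A}}$ is an invertible $\mathcal{O}$-operator for this bimodule. Invertibility is clear, the two intertwining requirements reduce to tautologies (with the labeling $\beta_1=\beta$, $\beta_2=\alpha$), and the defining operator identity
\begin{equation*}
\id(u) \ast \id(v) = \id(L_{\succ}(\id(u))v + R_{\prec}(\id(v))u)
\end{equation*}
collapses to $u \ast v = u \succ v + u \prec v$, which is precisely \eqref{associative-dendriform}.

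For the sufficiency direction, let $T: V \rightarrow \mathcal{A}$ be an invertible $\mathcal{O}$-operator associated to a bimodule $(l, r, \beta_1, \beta_2, V)$. The preceding theorem produces a biHom-dendriform structure on $V$ by $u \succ_V v = l(T(u))v$ and $u \prec_V v = r(T(v))u$; I will transport it along $T$ by setting
\begin{equation*}
x \succ y := T(l(x)T^{-1}(y)), \qquad x \prec y := T(r(y)T^{-1}(x))
\end{equation*}
for $x, y \in \mathcal{A}$. Compatibility with $\ast$ is immediate from the operator identity, since
\begin{equation*}
x \succ y + x \prec y = T(l(x)T^{-1}(y) + r(y)T^{-1}(x)) = T(T^{-1}(x)) \ast T(T^{-1}(y)) = x \ast y.
\end{equation*}
The three biHom-dendriform axioms, together with the multiplicativity conditions for $\alpha$ and $\beta$, then follow from the corresponding identities on $V$ by pushing through $T$ and using the intertwining relations $\alpha T = T\beta_2$, $\beta T = T\beta_1$ (hence $T^{-1}\alpha = \beta_2 T^{-1}$ and $T^{-1}\beta = \beta_1 T^{-1}$) together with the bimodule equations \eqref{Cond.biHom-bimod}--\eqref{biHom-bimodule eq2}.

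The main obstacle I anticipate is bookkeeping the swap $\alpha \leftrightarrow \beta_2$ and $\beta \leftrightarrow \beta_1$ built into the definition of an $\mathcal{O}$-operator, which forces careful tracking of which twisting map governs each factor when verifying, for instance, $\alpha(x \succ y) = \alpha(x) \succ \alpha(y)$ and the mixed axiom $(x \succ y) \prec \beta(z) = \alpha(x) \succ (y \prec z)$. Apart from this indexing care, the computations are formally identical to those in the Hom case treated in Corollary \ref{dendriform-invertible operator}.
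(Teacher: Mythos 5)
Your proposal follows essentially the same route as the paper: the converse direction takes $\id$ as an $\mathcal{O}$-operator for the bimodule built from $L_{\succ}$ and $R_{\prec}$, and the forward direction transports the dendriform structure along $T$ via $x \succ y = T(l(x)T^{-1}(y))$, $x \prec y = T(r(y)T^{-1}(x))$, exactly as in the paper's proof. If anything, you are more careful than the paper about the ordering of the twisting maps: the paper's proof writes the bimodule as $(L_{\succ}, R_{\prec}, \alpha, \beta)$ even though its own Remark states that this ordering does \emph{not} give a bimodule of $(\mathcal{A}, \ast, \alpha, \beta)$, whereas your labelling $(\beta_1,\beta_2)=(\beta,\alpha)$ matches the earlier Proposition.
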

\begin{proof}
In fact, if the homomorphism  $ T $ is an invertible $ \mathcal{O}- $operator associated to a bimodule $(l, r, \alpha, \beta, V)$, then
the compatible multiplicative biHom-dendriform algebra structure on $ \mathcal{A} $ is given by
\begin{eqnarray*}
x \succ y = T(l(x)T^{-1}(y)), x \prec y = T(r(y)T^{-1}(x)) \mbox { for all } x, y \in \mathcal{A}.
\end{eqnarray*}
Conversely, let $(\mathcal{A}, \succ, \prec, \alpha, \beta)$ be a multiplicative biHom-dendriform algebra, and $(\mathcal{A}, \ast, \alpha, \beta)$
 be its associated biHom-associative algebra. Then, the identity map $\id$ is an $ \mathcal{O}- $operator
associated to the bimodule $(L_{\succ}, R_{\prec}, \alpha, \beta)$ of $(\mathcal{A}, \ast, \alpha, \beta)$.
\qed
\end{proof}

\subsection{Bimodules and matched pairs of biHom-dendriform algebras}
\label{subsec:matchedpbihomassal:bimodmatchedpbihomdendal}
\begin{definition}
Let $(\mathcal{A}, \succ, \prec, \alpha_{1}, \alpha_{2})$ be a biHom-dendriform algebra,  and $V$  be a vector space.
Let $l_{\succ}, r_{\succ}, l_{\prec}, r_{\prec} : \mathcal{A} \rightarrow gl(V),$ and $\beta_{1}, \beta_{2}: V \rightarrow V$ be six linear maps. Then, ( $ l_{\succ}, r_{\succ}, l_{\prec}, r_{\prec}, \beta_{1}, \beta_{2}, V$) is called a \textbf{bimodule} of $ \mathcal{A} $
 if the following equations hold for any $ x, y \in \mathcal{A} $ and $v\in V$:
\begin{eqnarray*}
l_{\prec}(x \prec y)\beta_{2}(v)&=& l_{\prec}(\alpha_{1}(x))l_{\ast}(y)v, r_{\prec}(\alpha_{2}(x))l_{\prec}(y)v=l_{\prec}(\alpha_{1}(y))r_{\ast}(x)v,\cr
r_{\prec}(\alpha_{2}(y))r_{\prec}(y)v &=& r_{\prec}(x\ast y)\beta_{1}(v), l_{\prec}(x \succ y)\beta_{2}(v) = l_{\succ}(\alpha_{1}(x))l_{\prec}(y)v,\cr
r_{\prec}(\alpha_{2}(x))l_{\succ}(y)v &=& l_{\succ}(\alpha_{1}(y))r_{\prec}(x)v, r_{\prec}(\alpha_{2}(x))r_{\succ}(y)v = r_{\succ}(y\prec x)\beta_{1}(v),\cr
l_{\succ}(x\ast y)\beta_{2}(v) &=& l_{\succ}(\alpha_{1}(x))l_{\succ}(y)v, r_{\succ}(\alpha_{2}(x))l_{\ast}(y)v= l_{\succ}(\alpha_{1}(y))r_{\succ}(x)v,\cr
r_{\succ}(\alpha_{2}(x))r_{\ast}(y)v &=& r_{\succ}(y \succ x)\beta_{1}(v),
\end{eqnarray*}
where $ x \ast y = x \succ y + x \prec y, l_{\ast} = l_{\succ} + l_{\prec}, r_{\ast} = r_{\succ} + r_{\prec} $.
\end{definition}
\begin{proposition}
Let $(l_{\succ}, r_{\succ}, l_{\prec}, r_{\prec}, \beta_{1}, \beta_{2}, V)$ be a bimodule of a biHom-dendriform algebra $(\mathcal{A},\succ, \prec, \alpha_{1}, \alpha_{2}).$ Then, there exists a biHom-dendriform algebra structure on the direct sum $\mathcal{A}\oplus V $ of the underlying vector spaces of $ \mathcal{A} $ and $V$ given by
\begin{eqnarray*}
(x + u) \succ (y + v) &=& x \succ y + l_{\succ}(x)v + r_{\succ}(y)u, \cr
(x + u) \prec (y + v) &=& x \prec y + l_{\prec}(x)v + r_{\prec}(y)u
\end{eqnarray*}
for all $ x, y \in \mathcal{A}, u, v \in V $. We denote it by $ \mathcal{A} \times_{l_{\succ},r_{\succ}, l_{\prec}, r_{\prec}, \alpha_{1}, \alpha_{2}, \beta_{1}, \beta_{2}} V$.
\end{proposition}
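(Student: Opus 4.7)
The plan is to show that the quintuple $(\mathcal{A} \oplus V, \succ, \prec, \alpha_1 \oplus \beta_1, \alpha_2 \oplus \beta_2)$ satisfies each clause of the biHom-dendriform algebra definition. I will first equip the direct sum with the twisting maps $\widetilde{\alpha}_1 = \alpha_1 \oplus \beta_1$ and $\widetilde{\alpha}_2 = \alpha_2 \oplus \beta_2$, and then verify, in order: (a) commutativity $\widetilde{\alpha}_1 \widetilde{\alpha}_2 = \widetilde{\alpha}_2 \widetilde{\alpha}_1$; (b) multiplicativity of $\widetilde{\alpha}_i$ with respect to both $\succ$ and $\prec$; and (c) the three biHom-dendriform associativity axioms.

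Step (a) is componentwise: on $\mathcal{A}$ it follows from $\alpha_1\alpha_2 = \alpha_2\alpha_1$ (given because $\mathcal{A}$ is biHom-dendriform), and on $V$ from the analogous commutativity $\beta_1\beta_2=\beta_2\beta_1$, which should be assumed as part of the biHom-module structure. Step (b) decomposes, for each product and each twist, into the multiplicativity of $\alpha_i$ on $\mathcal{A}$ (given) and the mixed relations $\beta_i(l_\star(x)u) = l_\star(\alpha_i(x))\beta_i(u)$ and $\beta_i(r_\star(x)u) = r_\star(\alpha_i(x))\beta_i(u)$ for $\star \in \{\succ,\prec\}$ and $i \in \{1,2\}$; these are the compatibility conditions of the bimodule (exactly the biHom analogues of \eqref{biHom-bimodule eq1}--\eqref{biHom-bimodule eq2} already used for biHom-associative bimodules).

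For step (c), the strategy is identical to the proof that $\mathcal{A} \times_{l,r,\alpha_1,\alpha_2,\beta_1,\beta_2} V$ is biHom-associative. I expand each of the three axioms
\[
(x \prec y)\prec \widetilde{\alpha}_2(z) = \widetilde{\alpha}_1(x)\prec (y\ast z), \quad
(x\succ y)\prec \widetilde{\alpha}_2(z)=\widetilde{\alpha}_1(x)\succ(y\prec z), \quad
\widetilde{\alpha}_1(x)\succ(y\succ z) = (x\ast y)\succ \widetilde{\alpha}_2(z)
\]
for $x = x_1 + u_1$, $y = x_2+u_2$, $z = x_3+u_3$. Expanding by bilinearity splits each equation into a pure $\mathcal{A}$ part, which is exactly the corresponding biHom-dendriform axiom in $\mathcal{A}$, plus three $V$-valued parts obtained by setting exactly one of $u_1,u_2,u_3$ nonzero. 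The usual ``choose $u_j=0$'' argument (valid because linear maps send $0$ to $0$) shows that the identity in the direct sum is equivalent to the conjunction of these four separate identities. The three $V$-valued identities then match, one by one, the nine relations listed in the bimodule definition of a biHom-dendriform algebra.

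The main obstacle, which is really only bookkeeping, is tracking which twist (the $\alpha_1,\beta_1$ twists on the \emph{left}-slot or the $\alpha_2,\beta_2$ twists on the \emph{right}-slot) appears where, so that each $V$-component of each axiom lands on exactly one of the nine bimodule relations; a secondary subtle point is that no axiom mixes two or three $V$-factors nontrivially (such terms vanish because $V$ is not multiplied with itself), so the $u_1 u_2$, $u_2 u_3$, $u_1 u_3$, and $u_1u_2u_3$ contributions are automatically zero on both sides. Once the correspondence is set up, each of the three biHom-dendriform axioms contributes three bimodule equations, giving the nine defining relations exactly, and the proof reduces to a routine expansion with no residual obstruction.
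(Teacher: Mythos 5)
Your proposal is correct and follows essentially the same route as the paper: the paper's proof likewise expands the three biHom-dendriform axioms on $\mathcal{A}\oplus V$ evaluated at $x_{i}+v_{i}$ and reads off the nine bimodule relations from the $V$-components. You are in fact slightly more careful than the paper, which silently skips the commutativity $\beta_{1}\beta_{2}=\beta_{2}\beta_{1}$ and the compatibilities $\beta_{i}(l_{\star}(x)v)=l_{\star}(\alpha_{i}(x))\beta_{i}(v)$ that you correctly note must be supplied (they are not among the nine listed bimodule axioms) for the twist maps on the direct sum to be multiplicative.
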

\begin{proof}
Let $v_{1}, v_{2}, v_{3}\in V$ and $x_{1}, x_{2},x_{3}\in \mathcal{A}.$ Setting and computing
\begin{equation}\label{condit. of Bimod. of biHom-dendriform}
\begin{aligned}[l]
[(x_{1} + v_{1})\prec (x_{2} + v_{2})]\prec(\alpha_{2}(x_{3})+ \beta_{2}(v_{3}))=\cr
(\alpha_{1}(x_{1})+ \beta_{1}(v_{1}))\prec[(x_{2} + v_{2}) \ast (x_{3} + v_{3})],\cr
[(x_{1} + v_{1})\succ(x_{2} + v_{2})]\prec(\alpha_{2}(x_{3})+ \beta_{2}(v_{3}))=\cr
(\alpha_{1}(x_{1})+ \beta_{1}(v_{1}))\succ[(x_{2} + v_{2})\prec (x_{3} + v_{3})],\cr
[\alpha_{1}(x_{1})+ \beta_{1}(v_{1})]\succ[(x_{2})+ v_{2})\succ(x_{3} + v_{3})]=\cr
[(x_{1} + v_{1})\ast(x_{2} + v_{2})]\succ(\alpha_{2}(x_{3})+ \beta_{2}(v_{3})),
\end{aligned}
\end{equation}
 one obtains the conditions of the bimodule of a biHom-dendriform algebra, which  completes the proof.
\qed
\end{proof}
\begin{proposition}
Let ($l_{\succ}, r_{\succ}, l_{\prec}, r_{\prec}, \beta_{1}, \beta_{2}, V$) be a bimodule of a biHom-dendriform algebra $(\mathcal{A}, \succ, \prec, \alpha_{1}, \alpha_{2})$.
Then
\begin{enumerate}[label=\upshape{\arabic*)}]
\item $(l_{\succ}, r_{\prec}, \beta_{2}, \beta_{1}, V) $ and $ (l_{\succ} + l_{\prec}, r_{\succ} + r_{\prec}, \beta_{1}, \beta_{2}, V)$ are bimodules \\ of $ (\mathcal{A}, \ast, \alpha_{2}, \alpha_{1}); $
\item for any bimodule $(l, r, \beta_{1}, \beta_{2}, V)$ of $(\mathcal{A}, \ast, \alpha_{1}, \alpha_{2})$, $(l, 0, 0, r, \beta_{2}, \beta_{1}, V)$ is a bimodule of $(\mathcal{A}, \succ, \prec, \alpha_{2}, \alpha_{1}).$
\item $(l_{\succ} + l_{\prec}, 0, 0,  r_{\succ} + r_{\prec}, \beta_{1}, \beta_{2}, V)$ and $(l_{\succ}, 0, 0, r_{\prec}, \beta_{1}, \beta_{2}, V)$ are bimodules
 of\\ $(\mathcal{A}, \succ, \prec, \alpha_{1}, \alpha_{2});$
\item  the biHom-dendriform algebras
\begin{eqnarray*}
 \mathcal{A} \times_{l_{\succ}, r_{\succ}, l_{\prec}, r_{\prec}, \alpha_{1}, \alpha_{2}, \beta_{1}, \beta_{2}}V \mbox{ and }  \mathcal{A} \times_{l_{\succ} +  l_{\prec} , 0, 0, r_{\succ} + r_{\prec}, \alpha_{1}, \alpha_{2}, \beta_{1}, \beta_{2}} V
 \end{eqnarray*} have the same associated
biHom-associative algebra $$\mathcal{A} \times_{l_{\succ} +  l_{\prec}, r_{\succ} + r_{\prec}, \alpha_{1}, \alpha_{2}, \beta_{1}, \beta_{2}} V.$$
 \end{enumerate}
\end{proposition}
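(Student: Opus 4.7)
The plan is to verify each of the four claims by straightforward substitution into the defining bimodule identities, exploiting the decomposition $\ast=\succ+\prec$ and the additive structure of the left/right actions $l_{\ast}=l_{\succ}+l_{\prec}$, $r_{\ast}=r_{\succ}+r_{\prec}$.

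For (1), I would first show that $(l_{\succ},r_{\prec},\beta_{2},\beta_{1},V)$ is a bimodule of $(\mathcal{A},\ast,\alpha_{2},\alpha_{1})$. The five identities to verify are exactly \eqref{Cond.biHom-bimod}--\eqref{biHom-bimodule eq2} with $l=l_{\succ}$, $r=r_{\prec}$, and with the roles of $\alpha_{1},\alpha_{2}$ (and $\beta_{1},\beta_{2}$) exchanged. The biHom-dendriform bimodule axioms give $l_{\succ}(x\ast y)\beta_{2}(v)=l_{\succ}(\alpha_{1}(x))l_{\succ}(y)v$, $r_{\prec}(\alpha_{2}(x))r_{\prec}(y)v=r_{\prec}(x\ast y)\beta_{1}(v)$ and $r_{\prec}(\alpha_{2}(x))l_{\succ}(y)v=l_{\succ}(\alpha_{1}(y))r_{\prec}(x)v$, which are precisely what is needed after the $\alpha_{1}\leftrightarrow\alpha_{2}$ swap. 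The analogous claim for $(l_{\succ}+l_{\prec},r_{\succ}+r_{\prec},\beta_{1},\beta_{2},V)$ follows by summing the appropriate pairs of dendriform bimodule identities: for instance, adding the relations for $l_{\prec}(x\prec y)\beta_{2}(v)$, $l_{\prec}(x\succ y)\beta_{2}(v)$, $l_{\succ}(x\ast y)\beta_{2}(v)$, one collects the left hand side to $l_{\ast}(x\ast y)\beta_{2}(v)$ and the right hand side to $l_{\ast}(\alpha_{1}(x))l_{\ast}(y)v$, after regrouping using $l_{\ast}=l_{\succ}+l_{\prec}$.

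For (2), given a bimodule $(l,r,\beta_{1},\beta_{2},V)$ of $(\mathcal{A},\ast,\alpha_{1},\alpha_{2})$, I set $l_{\succ}=l$, $l_{\prec}=0$, $r_{\succ}=0$, $r_{\prec}=r$, so that $l_{\ast}=l$ and $r_{\ast}=r$. The nine bimodule-of-biHom-dendriform identities then collapse: those involving $l_{\prec}$ on the left or $r_{\succ}$ on the left become $0=0$, while the three surviving nontrivial identities reduce exactly to \eqref{Cond.biHom-bimod} for $(\mathcal{A},\ast)$, with the $\alpha$'s in the swapped order $(\alpha_{2},\alpha_{1})$ that is dictated by the bimodule-of-dendriform axioms. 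Claim (3) is verified in the same style: for $(l_{\succ}+l_{\prec},0,0,r_{\succ}+r_{\prec})$ one applies (2) to the bimodule in the second part of (1); for $(l_{\succ},0,0,r_{\prec})$ one verifies the three nontrivial dendriform bimodule identities directly from the corresponding identities of $(l_{\succ},r_{\succ},l_{\prec},r_{\prec},\beta_{1},\beta_{2},V)$.

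For (4), by construction the sum operation on $\mathcal{A}\oplus V$ of a biHom-dendriform bimodule is
\[
(x+u)\ast(y+v)=(x+u)\succ(y+v)+(x+u)\prec(y+v),
\]
which unfolds to $x\ast y+(l_{\succ}+l_{\prec})(x)v+(r_{\succ}+r_{\prec})(y)u$, and this is exactly the product of the biHom-associative bimodule $(l_{\ast},r_{\ast},\beta_{1},\beta_{2},V)$ on $\mathcal{A}\oplus V$. The same unfolding applied to the second biHom-dendriform structure, where the left/right actions are concentrated in the $\prec$ component as $(0,0,l_{\ast},r_{\ast})$, gives the identical sum, so the two dendriform algebras produce the same associated biHom-associative algebra $\mathcal{A}\times_{l_{\ast},r_{\ast},\alpha_{1},\alpha_{2},\beta_{1},\beta_{2}}V$.

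The only mild obstacle is bookkeeping: the swap $(\alpha_{1},\alpha_{2})\leftrightarrow(\alpha_{2},\alpha_{1})$ that appears in parts (1) and (2) must be tracked carefully against the asymmetric way in which $\alpha_{1}$ and $\alpha_{2}$ enter the biHom-dendriform bimodule axioms (with $\alpha_{1}$ on the left slot and $\alpha_{2}$ on the right slot). No new idea is required beyond the additive decomposition $\ast=\succ+\prec$; all computations are linear regroupings of the dendriform bimodule identities. \qed
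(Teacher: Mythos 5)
The paper offers no argument for this proposition at all (its Hom-dendriform analogue is dismissed with ``It results from a direct computation''), so your route --- direct substitution into the bimodule axioms plus the additive decomposition $\ast=\succ+\prec$, $l_{\ast}=l_{\succ}+l_{\prec}$, $r_{\ast}=r_{\succ}+r_{\prec}$ --- is exactly the intended one, and the product-identity computations you display are correct. Two points, however, do not close as written. First, the summation you describe yields $l_{\ast}(x\ast y)\beta_{2}(v)=l_{\ast}(\alpha_{1}(x))l_{\ast}(y)v$ and, on the right-action side, $r_{\ast}(x\ast y)\beta_{1}(v)=r_{\ast}(\alpha_{2}(y))r_{\ast}(x)v$. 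Comparing with \eqref{Cond.biHom-bimod}, these are precisely the axioms of a bimodule $(l_{\ast},r_{\ast},\beta_{2},\beta_{1},V)$ of $(\mathcal{A},\ast,\alpha_{2},\alpha_{1})$: the twisting maps come out in the order $(\beta_{2},\beta_{1})$, exactly as for the first bimodule in part (1), and not in the order $(\beta_{1},\beta_{2})$ stated in the proposition. You assert that the literal claim ``follows by summing''; it does not unless $\beta_{1}=\beta_{2}$. This is almost certainly a typo in the statement, but a verification must detect it rather than paper over it, and it propagates into your part (3), where feeding ``the bimodule in the second part of (1)'' into (2) produces the stated conclusion only with the corrected ordering $(\beta_{2},\beta_{1})$.

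Second, you announce that ``the five identities to verify are exactly \eqref{Cond.biHom-bimod}--\eqref{biHom-bimodule eq2}'' but then verify only the three product identities. The intertwining conditions \eqref{biHom-bimodule eq1} and \eqref{biHom-bimodule eq2} cannot be derived from the hypotheses, because the paper's definition of a bimodule of a biHom-dendriform algebra contains no analogous conditions of the form $\beta_{i}(l_{\succ}(x)v)=l_{\succ}(\alpha_{i}(x))\beta_{i}(v)$ (unlike the Hom-dendriform bimodule definition, which does include them). So either those conditions must be added to the hypotheses, or the conclusion has to be read with the weaker notion of bimodule satisfying only \eqref{Cond.biHom-bimod} and the mixed identity; as it stands, that portion of your claimed verification is left unaddressed. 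The remaining parts (2), (3) and (4) are fine: the ``collapse to $0=0$'' argument and the unfolding of $\succ+\prec$ on $\mathcal{A}\oplus V$ are exactly the direct computation the paper has in mind.
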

\begin{theorem}
Let $(\mathcal{A}, \succ_{\mathcal{A}}, \prec_{\mathcal{A}}, \alpha_{1}, \alpha_{2})$ and $(\mathcal{B}, \succ_{\mathcal{B}}, \prec_{\mathcal{B}}, \beta_{1}, \beta_{2})$
 be two biHom-dendriform algebras. Suppose that there are linear maps
$ l_{\succ_{\mathcal{A}}},   r_{\succ_{\mathcal{A}}},  l_{\prec_{\mathcal{A}}},  r_{\prec_{\mathcal{A}}} : \mathcal{A} \rightarrow gl(\mathcal{B}),$
and $ l_{\succ_{\mathcal{B}}},   r_{\succ_{\mathcal{B}}},  l_{\prec_{\mathcal{B}}},  r_{\prec_{\mathcal{B}}} : \mathcal{B} \rightarrow gl(\mathcal{A})$
such that ($ l_{\succ_{\mathcal{A}}},   r_{\succ_{\mathcal{A}}},  l_{\prec_{\mathcal{A}}},
r_{\prec_{\mathcal{A}}}, \beta_{1}, \beta_{2},\\ \mathcal{B}$) is a bimodule of $\mathcal{A},$ and
($l_{\succ_{\mathcal{B}}},   r_{\succ_{\mathcal{B}}},  l_{\prec_{\mathcal{B}}},  r_{\prec_{\mathcal{B}}},
\alpha_{1}, \alpha_{2}, \mathcal{A})$ ) is a bimodule  of $\mathcal{B},$
satisfy
\begin{eqnarray}
\label{bieq35}
r_{\prec_{\mathcal{A}}}(\alpha_{2}(x))(a \prec_{\mathcal{B}} b) = \beta_{1}(a)\prec_{\mathcal{B}}( r_{\mathcal{A}}(x)b) + r_{\prec_{\mathcal{A}}}(l_{\mathcal{B}}(x)\beta_{1}(a)),
\\
\label{bieq36}
\begin{array}{l}
l_{\prec_{\mathcal{A}}}(l_{\prec_{\mathcal{B}}}(x))\beta_{2}(b) + (r_{\prec_{\mathcal{A}}}(x)a) \prec_{\mathcal{B}}\beta_{2}(b)= \cr
\beta_{1}(a) \prec_{\mathcal{B}} (l_{\prec_{\mathcal{A}}}(x)b) + r_{\prec_{\mathcal{A}}}(r_{\prec_{\mathcal{B}}}(b)x)\beta_{1}(a),
\end{array} \\
\label{bieq37}
l_{\prec_{\mathcal{A}}}(\alpha_{1}(x))(a \ast_{\mathcal{B}} b) = (l_{\prec_{\mathcal{A}}}(x)a) \ast_{\mathcal{B}} \beta_{2}(b) +
 l_{\prec_{\mathcal{A}}}(r_{\prec_{\mathcal{A}}}(a)x)\beta_{2}(b),
\end{eqnarray}
\begin{eqnarray} \label{bieq38}
r_{\prec_{\mathcal{A}}}(\alpha_{2}(x))(a \succ_{\mathcal{B}} b) = r_{\succ_{\mathcal{A}}}(l_{\prec_{\mathcal{B}}}(b)x)\beta_{1}(a) +
\beta_{1}(a)\succ_{\mathcal{B}} (r_{\prec_{\mathcal{A}}}(x)b), \\
\label{bieq39}
\begin{array}{ll}
l_{\prec_{\mathcal{A}}}(l_{\succ_{\mathcal{B}}}(a)x)\beta_{2}(b) & + (r_{\succ_{\mathcal{A}}}(x)a) \prec_{\mathcal{B}}\beta_{2}(b)=\cr
& \beta_{1}(a)\succ_{B} (l_{\prec_{\mathcal{A}}}(x)b) + r_{\succ_{\mathcal{A}}}(r_{\prec_{\mathcal{B}}}(b)x)\beta_{1}(a)
\end{array} \\
\label{bieq40}
l_{\succ_{\mathcal{A}}}(\alpha_{1}(x))(a \prec_{\mathcal{B}} b) = ( l_{\succ_{\mathcal{A}}}(x)a) \prec_{\mathcal{B}}\beta_{2}(b) +
 l_{\prec_{\mathcal{A}}}(r_{\succ_{\mathcal{B}}}(a)x)\beta_{2}(b),
\\
\label{bieq41}
r_{\succ_{\mathcal{A}}}(\alpha_{2}(x))(a \ast_{\mathcal{B}} b)= \beta_{1}(a)\succ_{\mathcal{B}} (r_{\succ_{\mathcal{A}}}(x)b) +
 r_{\succ_{\mathcal{A}}}(l_{\succ_{\mathcal{B}}}(b)x)\beta_{1}(a),
\\
\label{bieq42}
\begin{array}{ll}
\beta_{1}(a)\succ_{\mathcal{B}} (l_{\succ_{\mathcal{A}}}(x)b) &+ r_{\succ_{\mathcal{A}}}(r_{\succ_{\mathcal{B}}}(b)x)\beta_{1}(a)=\cr
&l_{\succ_{\mathcal{A}}}(l_{\mathcal{B}}(a)x)\beta_{2}(b) + (r_{\mathcal{A}}(x)a) \succ_{\mathcal{B}}\beta_{2}(b),
\end{array} \\
\label{bieq43}
l_{\succ_{\mathcal{A}}}(\alpha_{1}(x))(a \succ_{\mathcal{B}} b) = (l_{\mathcal{A}}(x)a) \succ_{\mathcal{B}}\beta_{2}(b) + l_{\succ_{\mathcal{A}}}(r_{\mathcal{B}}(a)x)\beta_{2}(b),
\\
\label{bieq44}
r_{\prec_{\mathcal{B}}}(\beta_{2}(a))(x \prec_{\mathcal{A}} y) = \alpha_{1}(x)\prec_{\mathcal{A}} (r_{\mathcal{B}}(a)y) + r_{\prec_{\mathcal{B}}}(l_{\mathcal{A}}(y)a)\alpha_{1}(x),
\\
\label{bieq45}
\begin{array}{ll}
l_{\prec_{B}}(l_{\prec_{\mathcal{A}}}(x)a)\alpha_{2}(y) &+ (r_{\prec_{\mathcal{B}}}(a)x) \prec_{\mathcal{A}}\alpha_{2}(y)=\cr
&\alpha_{1}(x)\prec_{\mathcal{A}} (l_{\mathcal{B}}(a)y) + r_{\prec_{\mathcal{B}}}(r_{\mathcal{A}}(y)a)\alpha_{1}(x),
\end{array} \\
\label{bieq46}
l_{\prec_{\mathcal{B}}}(\beta_{1}(a))(x \ast_{\mathcal{A}} y) = (l_{\prec_{\mathcal{B}}}(a)x) \prec_{\mathcal{A}}\alpha_{2}(y) +
l_{\prec_{\mathcal{B}}}(r_{\prec_{\mathcal{A}}}(x)a)\alpha_{2}(y),
\\
\label{bieq47}
r_{\prec_{\mathcal{B}}}(\beta_{2}(a))(x \succ_{\mathcal{A}} y) = r_{\succ_{\mathcal{B}}}(l_{\prec_{\mathcal{B}}}(y)a)\alpha_{1}(x) +
 \alpha_{1}(x)\succ_{\mathcal{A}} (r_{\prec_{\mathcal{B}}}(a)y),
\\
\label{bieq48}
\begin{array}{ll}
l_{\prec_{\mathcal{B}}}(l_{\succ_{\mathcal{A}}}(x)a)\alpha_{2}(y) &+ (r_{\succ_{\mathcal{B}}}(a)x) \prec_{\mathcal{A}}\alpha_{2}(y)=\cr
&\alpha_{1}(x)\succ_{\mathcal{A}} (l_{\prec_{\mathcal{B}}}(a)y) + r_{\succ_{\mathcal{B}}}(r_{\prec_{\mathcal{A}}}(y)a)\alpha_{1}(x),
\end{array} \\
\label{bieq49}
l_{\succ_{\mathcal{B}}}(\beta_{1}(a))(x \prec_{\mathcal{A}} y) = (l_{\succ_{\mathcal{B}}}(a)x) \prec_{\mathcal{A}}\alpha_{2}(y) +
l_{\prec_{\mathcal{B}}}(r_{\succ_{\mathcal{A}}}(x)a)\alpha_{2}(y),
\\
\label{bieq50}
 r_{\succ_{\mathcal{B}}}(\beta_{2}(a))(x \ast_{\mathcal{A}} y)= \alpha_{1}(x)\succ_{\mathcal{A}} (r_{\succ_{\mathcal{B}}}(a)y) +
r_{\succ_{\mathcal{B}}}(l_{\succ_{\mathcal{A}}}(y)a)\alpha_{1}(x),
\\
\label{bieq51}
\begin{array}{ll}
\alpha_{1}(x)\succ_{\mathcal{A}} (l_{\succ_{\mathcal{B}}}(a)y) & + r_{\succ_{\mathcal{B}}}(r_{\succ_{\mathcal{A}}}(y)a)\alpha_{1}(x)=\cr
& l_{\succ_{\mathcal{B}}}(l_{\mathcal{A}}(x)a)\alpha_{2}(y) + (r_{B}(a)x) \succ_{\mathcal{A}}\alpha_{2}(y),
\end{array} \\
\label{bieq52}
l_{\succ_{\mathcal{B}}}(\beta_{1}(a))(x \succ_{\mathcal{A}} y) = (l_{\mathcal{B}}(a)x) \succ_{\mathcal{A}}\alpha_{2}(y) +
l_{\succ_{\mathcal{B}}}(r_{\mathcal{A}}(x)a)\alpha_{2}(y)
\end{eqnarray}
for any $ x, y \in \mathcal{A}, a, b \in \mathcal{B} $ and $ l_{\mathcal{A}} = l_{\succ_{\mathcal{A}}} +
 l_{\prec_{\mathcal{A}}}, r_{\mathcal{A}} =  r_{\succ_{\mathcal{A}}} +  r_{\prec_{\mathcal{A}}}, l_{\mathcal{B}} =
 l_{\succ_{\mathcal{B}}} +  l_{\prec_{\mathcal{B}}} , r_{\mathcal{B}} =  r_{\succ_{\mathcal{B}}} +  r_{\prec_{\mathcal{B}}} $.
 Then, there is a biHom-dendriform algebra structure on the direct sum $ \mathcal{A} \oplus \mathcal{B} $ of the underlying vector spaces of
 $ \mathcal{A} $ and $ \mathcal{B} $ given by
\begin{eqnarray*}
(x + a) \succ ( y + b ) &=& (x \succ_{\mathcal{A}} y + r_{\succ_{\mathcal{B}}}(b)x + l_{\succ_{\mathcal{B}}}(a)y)\cr
&+&(l_{\succ_{\mathcal{A}}}(x)b + r_{\succ_{\mathcal{A}}}(y)a + a \succ_{\mathcal{B}} b ), \cr
(x + a) \prec ( y + b ) &=& (x \prec_{\mathcal{A}} y + r_{\prec_{\mathcal{B}}}(b)x + l_{\prec_{\mathcal{B}}}(a)y)\cr
&+& (l_{\prec_{\mathcal{A}}}(x)b + r_{\prec_{\mathcal{A}}}(y)a + a \prec_{\mathcal{B}} b )
\end{eqnarray*}
for any $ x, y \in \mathcal{A}, a, b \in \mathcal{B} $.
\end{theorem}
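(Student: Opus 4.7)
The plan is to mimic, step by step, the proof technique used earlier for the matched pair of Hom-dendriform algebras, adapting it to the biHom setting where two commuting twisting maps intervene on each side. Concretely, I would verify that the binary operations $\succ$ and $\prec$ on $\mathcal{A}\oplus\mathcal{B}$ defined in the statement satisfy the three biHom-dendriform axioms
\begin{align*}
(X\prec Y)\prec(\alpha_{2}\oplus\beta_{2})(Z) &= (\alpha_{1}\oplus\beta_{1})(X)\prec(Y\ast Z),\\
(X\succ Y)\prec(\alpha_{2}\oplus\beta_{2})(Z) &= (\alpha_{1}\oplus\beta_{1})(X)\succ(Y\prec Z),\\
(\alpha_{1}\oplus\beta_{1})(X)\succ(Y\succ Z) &= (X\ast Y)\succ(\alpha_{2}\oplus\beta_{2})(Z),
\end{align*}
together with the multiplicativity of $\alpha_{1}\oplus\beta_{1}$ and $\alpha_{2}\oplus\beta_{2}$ with respect to both $\succ$ and $\prec$, and the commutativity $(\alpha_{1}\oplus\beta_{1})\circ(\alpha_{2}\oplus\beta_{2})=(\alpha_{2}\oplus\beta_{2})\circ(\alpha_{1}\oplus\beta_{1})$.

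Since each operand $X,Y,Z$ decomposes as $x+a$ with $x\in\mathcal{A}$ and $a\in\mathcal{B}$, expanding each axiom by linearity produces eight terms on each side. I would group those terms by their homogeneous type: the pure $\mathcal{A}$-part, the pure $\mathcal{B}$-part, and the six mixed components sitting in $\mathcal{A}$ or $\mathcal{B}$ according to whether an even or odd number of $\mathcal{B}$-entries occur. The pure components reduce to the three biHom-dendriform axioms on $(\mathcal{A},\succ_{\mathcal{A}},\prec_{\mathcal{A}},\alpha_{1},\alpha_{2})$ and $(\mathcal{B},\succ_{\mathcal{B}},\prec_{\mathcal{B}},\beta_{1},\beta_{2})$, hence they vanish automatically. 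The mixed components split into six groups indexed by the position of the unique element from one algebra among two elements from the other, and each group yields exactly one of the identities \eqref{bieq35}--\eqref{bieq52} after using the bimodule conditions $\beta_{i}\circ l_{\mathcal{A}}(x)=l_{\mathcal{A}}(\alpha_{i}(x))\circ\beta_{i}$, $\alpha_{i}\circ l_{\mathcal{B}}(a)=l_{\mathcal{B}}(\beta_{i}(a))\circ\alpha_{i}$ and their $r$-analogues, to pass the twisting maps through the representation maps. This gives a bijective correspondence between the axioms to verify and the eighteen assumptions listed in the theorem.

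For the multiplicativity of $\alpha_{1}\oplus\beta_{1}$ and $\alpha_{2}\oplus\beta_{2}$ with respect to $\succ$ and $\prec$, I would argue directly from the definition: applying $\alpha_{i}\oplus\beta_{i}$ to a product $(x+a)\succ(y+b)$ produces $\alpha_{i}$ or $\beta_{i}$ on each summand, and the biHom-module equivariance conditions on $(l_{\mathcal{A}},r_{\mathcal{A}})$ and $(l_{\mathcal{B}},r_{\mathcal{B}})$ allow these twists to be pushed through the representation maps, yielding $(\alpha_{i}\oplus\beta_{i})(x+a)\succ(\alpha_{i}\oplus\beta_{i})(y+b)$. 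The commutativity of the two twists on the direct sum is inherited componentwise from the commutativity assumptions on each factor.

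The only real obstacle is bookkeeping: disentangling the large number of terms in the expansions of the three axioms and correctly matching each tuple of mixed terms with the precise compatibility equation among \eqref{bieq35}--\eqref{bieq52} that cancels it. The computation itself is routine but lengthy, and one must carefully track which $\beta_{i}$ (versus $\beta_{j}$, $j\neq i$) appears in each term, since the biHom setting is asymmetric in $\alpha_{1},\alpha_{2}$. Once the six tuples are identified in order, the verification is mechanical and the proof proceeds exactly as in the analogous matched-pair theorem for Hom-associative and Hom-dendriform algebras established earlier.
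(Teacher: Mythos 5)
Your proposal is correct and follows essentially the same route as the paper, which itself only sketches this proof by referring back to the matched-pair theorem for Hom-associative algebras: expand the three biHom-dendriform axioms on $\mathcal{A}\oplus\mathcal{B}$ by linearity, observe that the pure components reduce to the axioms on $\mathcal{A}$ and $\mathcal{B}$, and identify the $3\times 6=18$ mixed components with conditions \eqref{bieq35}--\eqref{bieq52}, using the bimodule equivariance relations to commute the twisting maps past the representation maps. Your account is in fact more detailed than the paper's one-line proof.
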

\begin{proof}
The proof  is obtained in a similar way as for Theorem \ref{theo. of matched pairs}.
\qed
\end{proof}
 Let $ \mathcal{A} \bowtie^{l_{\succ_{\mathcal{A}}}, r_{\succ_{\mathcal{A}}},
l_{\prec_{\mathcal{A}}}, r_{\prec_{\mathcal{A}}}, \beta_{1}, \beta_{1}}_{l_{\succ_{\mathcal{B}}}, r_{\succ_{\mathcal{B}}}, l_{\prec_{\mathcal{B}}},
r_{\prec_{\mathcal{B}}}, \alpha_{1}, \alpha_{2}} \mathcal{B} $ denote this biHom-dendriform algebra.
\begin{definition}
Let $ (\mathcal{A}, \succ_{\mathcal{A}}, \prec_{\mathcal{A}}, \alpha_{1}, \alpha_{2}) $ and $  (\mathcal{B}, \succ_{\mathcal{B}}, \prec_{\mathcal{B}}, \beta_{1}, \beta_{2}) $
be two biHom-dendriform algebras. Suppose there exist linear maps
$ l_{\succ_{\mathcal{A}}}, r_{\succ_{\mathcal{A}}}, l_{\prec_{\mathcal{A}}}, r_{\prec_{\mathcal{A}}} : \mathcal{A} \rightarrow gl(\mathcal{B}),$
and $ l_{\succ_{\mathcal{B}}}, r_{\succ_{\mathcal{B}}}, l_{\prec_{\mathcal{B}}}, r_{\prec_{\mathcal{B}}} : \mathcal{B} \rightarrow gl(\mathcal{A}) $
 such that $(l_{\succ_{\mathcal{A}}}, r_{\succ_{\mathcal{A}}}, l_{\prec_{\mathcal{A}}}, r_{\prec_{\mathcal{A}}}, \beta_{1}, \beta_{2})$ is a bimodule of $ \mathcal{A},$
and $(l_{\succ_{\mathcal{B}}}, r_{\succ_{\mathcal{B}}}, l_{\prec_{\mathcal{B}}}, r_{\prec_{\mathcal{B}}}, \alpha_{1}, \alpha_{2})$ is a bimodule of $ \mathcal{B} $.
If  \eqref{bieq35} - \eqref{bieq52} are satisfied, $(\mathcal{A}, \mathcal{B}, l_{\succ_{\mathcal{A}}},
r_{\succ_{\mathcal{A}}}, l_{\prec_{\mathcal{A}}}, r_{\prec_{\mathcal{A}}}, \beta_{1}, \beta_{2}, l_{\succ_{\mathcal{B}}}, r_{\succ_{\mathcal{B}}}, l_{\prec_{\mathcal{B}}},
 r_{\prec_{\mathcal{B}}}, \alpha_{1}, \alpha_{2})$ is called a \textbf{matched pair of biHom-dendriform algebras}.
\end{definition}

\begin{corollary}
Let $(\mathcal{A}, \mathcal{B}, l_{\succ_{\mathcal{A}}}, r_{\succ_{\mathcal{A}}}, l_{\prec_{\mathcal{A}}}, r_{\prec_{\mathcal{A}}}, \beta_{1}, \beta_{2},
 l_{\succ_{\mathcal{B}}}, r_{\succ_{\mathcal{B}}}, l_{\prec_{\mathcal{B}}}, r_{\prec_{\mathcal{B}}}, \alpha_{1}, \alpha_{2}) $ be a matched pair of biHom-dendriform algebras.
Then, $(\mathcal{A}, \mathcal{B}, l_{\succ_{\mathcal{A}}} + l_{\prec_{\mathcal{A}}}, r_{\succ_{\mathcal{A}}} + r_{\prec_{\mathcal{A}}},
l_{\succ_{\mathcal{B}}} + l_{\prec_{\mathcal{B}}},  r_{\succ_{\mathcal{B}}} + r_{\prec_{\mathcal{B}}}, \alpha_{1} + \beta_{1}, \alpha_{2} + \beta_{2})$ is a matched pair of the associated
biHom-associative algebras $(\mathcal{A}, \ast_{\mathcal{A}}, \alpha_{1}, \alpha_{2})$ and  $(\mathcal{B}, \ast_{\mathcal{B}}, \beta_{1}, \beta_{2})$.
\end{corollary}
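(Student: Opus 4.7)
The plan is to mirror the argument already given for the Hom case in Corollary \ref{match dendriform-associative algebras}, transported to the biHom setting. I would start from the observation that the conclusion is purely structural: the matched pair of biHom-associative algebras one must exhibit has its left/right actions obtained as sums of the dendriform actions, so the statement reduces to checking that passing to the associated biHom-associative algebra is compatible with the matched-pair construction.

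First, I would show that $(l_{\mathcal{A}}, r_{\mathcal{A}}, \beta_{1}, \beta_{2}, \mathcal{B})$, with $l_{\mathcal{A}} = l_{\succ_{\mathcal{A}}} + l_{\prec_{\mathcal{A}}}$ and $r_{\mathcal{A}} = r_{\succ_{\mathcal{A}}} + r_{\prec_{\mathcal{A}}}$, is a bimodule of $(\mathcal{A}, \ast_{\mathcal{A}}, \alpha_{1}, \alpha_{2})$ in the sense of \eqref{Cond.biHom-bimod}--\eqref{biHom-bimodule eq2}. This is a direct combination of the nine axioms defining a bimodule of a biHom-dendriform algebra: adding the appropriate triples recovers $l_{\mathcal{A}}(x \ast y)\beta_{1} = l_{\mathcal{A}}(\alpha_{2}(x)) l_{\mathcal{A}}(y)$, $r_{\mathcal{A}}(x \ast y)\beta_{2} = r_{\mathcal{A}}(\alpha_{1}(y)) r_{\mathcal{A}}(x)$, and $l_{\mathcal{A}}(\alpha_{2}(x)) r_{\mathcal{A}}(y) = r_{\mathcal{A}}(\alpha_{1}(y)) l_{\mathcal{A}}(x)$. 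The equivariance conditions \eqref{biHom-bimodule eq1}--\eqref{biHom-bimodule eq2} are immediate from the corresponding equivariance of $l_\succ, r_\succ, l_\prec, r_\prec$ with $\alpha_1, \alpha_2$ (which are part of the dendriform bimodule data). A symmetric argument yields that $(l_{\mathcal{B}}, r_{\mathcal{B}}, \alpha_{1}, \alpha_{2}, \mathcal{A})$ is a bimodule of $(\mathcal{B}, \ast_{\mathcal{B}}, \beta_{1}, \beta_{2})$.

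Next, I would verify the six matched-pair compatibility axioms \eqref{bimatch. pair1}--\eqref{bimatch. pair6} for the biHom-associative algebras. The efficient way is to add the appropriate triples of the eighteen dendriform compatibilities \eqref{bieq35}--\eqref{bieq52}: for instance, summing \eqref{bieq37}, \eqref{bieq40} and \eqref{bieq43} gives exactly \eqref{bimatch. pair1} with $l_{\mathcal{A}} = l_{\succ_{\mathcal{A}}} + l_{\prec_{\mathcal{A}}}$ (one needs to regroup the $\succ$/$\prec$ products into $\ast_{\mathcal{B}}$ on the right-hand side). Similarly, summing \eqref{bieq35}, \eqref{bieq38}, \eqref{bieq41} yields \eqref{bimatch. pair2}; \eqref{bieq46}, \eqref{bieq49}, \eqref{bieq52} yield \eqref{bimatch. pair3}; \eqref{bieq44}, \eqref{bieq47}, \eqref{bieq50} yield \eqref{bimatch. pair4}; and the two pentagon-type identities \eqref{bimatch. pair5}, \eqref{bimatch. pair6} arise by summing \eqref{bieq36}, \eqref{bieq39}, \eqref{bieq42} and \eqref{bieq45}, \eqref{bieq48}, \eqref{bieq51} respectively.

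A cleaner conceptual alternative, which I would mention as well, is to observe that the biHom-associative algebra on $\mathcal{A} \oplus \mathcal{B}$ associated to the biHom-dendriform matched pair $\mathcal{A} \bowtie \mathcal{B}$ has, by direct unfolding of $(x+a) \ast (y+b) = (x+a) \succ (y+b) + (x+a) \prec (y+b)$, exactly the form of the biHom-associative matched pair product \eqref{bimatch. pair product} with $l_{\mathcal{A}}, r_{\mathcal{A}}, l_{\mathcal{B}}, r_{\mathcal{B}}$ as in the statement. Since $(\mathcal{A} \oplus \mathcal{B}, \ast, \alpha_{1}+\beta_{1}, \alpha_{2}+\beta_{2})$ is a biHom-associative algebra (being associated to a biHom-dendriform algebra), the conditions for a matched pair of biHom-associative algebras are automatically satisfied. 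The main bookkeeping obstacle is ensuring the indexing of $\alpha_{1}, \alpha_{2}, \beta_{1}, \beta_{2}$ on the two sides of each identity lines up correctly when regrouping $\succ$ and $\prec$ terms into $\ast$; this is a clerical task rather than a conceptual one, but with eighteen dendriform identities feeding into six associative ones the care required is nontrivial.
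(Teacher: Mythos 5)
Your proposal is correct, and your ``cleaner conceptual alternative'' is in fact precisely the paper's entire proof: the authors simply observe that unfolding $(x+a)\ast(y+b)=(x+a)\succ(y+b)+(x+a)\prec(y+b)$ on $\mathcal{A}\oplus\mathcal{B}$ reproduces the matched-pair product \eqref{bimatch. pair product} with $l_{\mathcal{A}}=l_{\succ_{\mathcal{A}}}+l_{\prec_{\mathcal{A}}}$, etc., and since this product is biHom-associative (being associated to the biHom-dendriform algebra $\mathcal{A}\bowtie\mathcal{B}$), the matched-pair axioms hold --- exactly the argument of Corollary \ref{match dendriform-associative algebras} transported to the biHom setting. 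Your primary route, the term-by-term summation of the eighteen identities \eqref{bieq35}--\eqref{bieq52} into the six conditions \eqref{bimatch. pair1}--\eqref{bimatch. pair6} together with a separate check that the summed actions satisfy \eqref{Cond.biHom-bimod}--\eqref{biHom-bimodule eq2}, is a legitimate and more explicit alternative; what it buys is precisely the bookkeeping you flag at the end. In the biHom setting the placement of $\alpha_{1}$ versus $\alpha_{2}$ is genuinely delicate: the paper's own proposition on bimodules of biHom-dendriform algebras records that $(l_{\succ}+l_{\prec},\,r_{\succ}+r_{\prec},\beta_{1},\beta_{2},V)$ is a bimodule of $(\mathcal{A},\ast,\alpha_{2},\alpha_{1})$ with the twisting maps in \emph{swapped} order, and the dendriform compatibilities \eqref{bieq37}, \eqref{bieq40}, \eqref{bieq43} carry $\alpha_{1}(x)$ and $\beta_{2}(b)$ where \eqref{bimatch. pair1} carries $\alpha_{2}(x)$ and $\beta_{1}(b)$. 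Your explicit route would force this mismatch into the open (and would show whether the corollary's conclusion should really read $(\mathcal{A},\ast_{\mathcal{A}},\alpha_{2},\alpha_{1})$ rather than $(\mathcal{A},\ast_{\mathcal{A}},\alpha_{1},\alpha_{2})$), whereas the paper's one-line argument, which you also reproduce, glosses over it. So: same essential idea as the paper for your short route, plus a more careful verification that the paper omits.
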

\begin{proof}
The associated biHom-associative algebra $(\mathcal{A} \bowtie \mathcal{B}, \ast, \alpha_{1} + \beta_{1}, \alpha_{2} + \beta_{2})$ is exactly the biHom-associative algebra obtained
from the matched pair of biHom-associative algebras, $(\mathcal{A}, \mathcal{B}, l_{\mathcal{A}}, r_{\mathcal{A}}, \beta_{1}, \beta_{2}, l_{\mathcal{B}}, r_{\mathcal{B}}, \alpha_{1}, \alpha_{2}),$  with
\begin{eqnarray*}
(x + a)\ast (y + b) = x\ast_{\mathcal{A}} y + l_{\mathcal{B}}(a)y + r_{\mathcal{B}}(b)x + a \ast_{\mathcal{B}} b +  l_{\mathcal{A}}(x)b +
r_{\mathcal{A}}(y)a
\end{eqnarray*}
for all $ x, y \in \mathcal{A}, a, b \in \mathcal{B} $, where 
$ l_{\mathcal{A}} = l_{\succ_{\mathcal{A}}} + l_{\prec_{\mathcal{A}}}$, $r_{\mathcal{A}}
= r_{\succ_{\mathcal{A}}} + r_{\prec_{\mathcal{A}}}$, $l_{\mathcal{B}} = l_{\succ_{\mathcal{B}}} + l_{\prec_{\mathcal{B}}}$, $r_{\mathcal{B}}=
r_{\succ_{\mathcal{B}}} + r_{\prec_{\mathcal{B}}}  $.
\qed
\end{proof}
\section{Concluding Remarks}
\label{sec:conclremarks}
In this work, we have
constructed a biHom-associative algebra with a decomposition into  direct sum of the underlying vector spaces of a biHom-associative algebra and its dual  such that both of them are biHom-subalgebras, with either the natural symmetric bilinear form being invariant, or the natural antisymmetric bilinear form being  a Connes cocycle. Then, we have performed the double constructions of biHom-Frobenius algebras and Connes cocycle, and provided the bialgebra structures.

 \section*{Aknowledgement}
This work is supported by TWAS Research Grant RGA No. 17 - 542 RG / MATHS / AF / AC \_G  -FR3240300147. The ICMPA-UNESCO Chair is in partnership with Daniel Iagolnitzer Foundation (DIF), France, and the Association pour la Promotion Scientifique de l'Afrique (APSA), supporting the development of mathematical physics in Africa. Mahouton Norbert Hounkonnou thanks  Professor Sergei D. Silvestrov  for the invitation to attend the $2^{\mbox{nd}}$ International Conference on Stochastic Processes and Algebraic Structures (SPAS2019) and the hospitality during his stay  as visiting professor at the Mathematics and Applied Mathematics research environment MAM, Division of Applied Mathematics, M\"alardalen University, V\"aster\'as, Sweden, where this work has been finalized. Partial support from the Swedish International Development Agency and International Science Program, (ISP) for capasity buildning in Mathematics in Africa is also gratefully acknowledged.

\end{document}